\numberwithin{equation}{section}
\def\Xint#1{\mathchoice
   {\XXint\displaystyle\textstyle{#1}}%
   {\XXint\textstyle\scriptstyle{#1}}%
   {\XXint\scriptstyle\scriptscriptstyle{#1}}%
   {\XXint\scriptscriptstyle\scriptscriptstyle{#1}}%
   \!\int}
\def\XXint#1#2#3{{\setbox0=\hbox{$#1{#2#3}{\int}$}
     \vcenter{\hbox{$#2#3$}}\kern-.5\wd0}}
\def\aver#1{\Xint-_{#1}}
\newtheorem{theorem}{Theorem}[section]
\newtheorem{lemma}[theorem]{Lemma}
\newtheorem{corollary}[theorem]{Corollary}
\newtheorem{proposition}[theorem]{Proposition}
\newtheorem{definition}[theorem]{Definition}
\theoremstyle{remark}
\newtheorem{remark}[theorem]{Remark}
\DeclareMathOperator*{\essinf}{ess\,inf}
\DeclareMathOperator*{\esssup}{ess\,sup}
\begin{document}

\title[The Dirichlet problem with data in K\"othe function spaces]
{The Dirichlet problem for elliptic systems with \\[4pt]
data in K\"othe function spaces}


\author[J.M. Martell, D. Mitrea, I. Mitrea, and M. Mitrea]{Jos\'e Mar{\'\i}a Martell, Dorina Mitrea, Irina Mitrea, and Marius Mitrea}


\address{{\sc Jos\'e Mar{\'\i}a Martell}
\\
Instituto de Ciencias Matem\'aticas CSIC-UAM-UC3M-UCM
\\
Consejo Superior de Investigaciones Cient{\'\i}ficas
\\
C/ Nicol\'as Cabrera, 13-15
\\
E-28049 Madrid, Spain} 
\email{chema.martell@icmat.es}

\address{{\sc Dorina Mitrea}
\\
Department of Mathematics
\\
University of Missouri
\\
Columbia, MO 65211, USA} 
\email{mitread@missouri.edu}

\address{{\sc Irina Mitrea}
\\
Department of Mathematics
\\
Temple University\!
\\
1805\,N.\,Broad\,Street
\\
Philadelphia, PA 19122, USA}
\email{imitrea@temple.edu}

\address{{\sc Marius Mitrea}
\\
Department of Mathematics
\\
University of Missouri
\\
Columbia, MO 65211, USA}
\email{mitream@missouri.edu}


\subjclass[2010]{Primary 35C15, 35J57, 42B37, 46E30; Secondary 35B65, 35E05, 42B25, 42B30, 42B35, 74B05}


\keywords{Dirichlet problem, second-order elliptic system, nontangential maximal
function, Hardy-Littlewood maximal operator, Poisson kernel, Green function,
K\"othe function space, Muckenhoupt weight, Lebesgue space, variable
exponent Lebesgue space, Lorentz space, Zygmund space, Orlicz space, Hardy space, Beurling algebra,
Hardy-Beurling space, semigroup, Fatou type theorem}

%
%

\begin{abstract}
We show that the boundedness of the Hardy-Littlewood maximal operator on a K\"othe
function space ${\mathbb{X}}$ and on its K\"othe dual ${\mathbb{X}}'$ is equivalent to
the well-posedness of the $\mathbb{X}$-Dirichlet and $\mathbb{X}'$-Dirichlet problems in
$\mathbb{R}^{n}_{+}$ in the class of all second-order, homogeneous, elliptic systems,
with constant complex coefficients. As a consequence, we obtain that the Dirichlet problem
for such systems is well-posed for boundary data in Lebesgue spaces, variable
exponent Lebesgue spaces, Lorentz spaces, Zygmund spaces, as well as their weighted versions.
We also discuss a version of the aforementioned result which contains, as a particular case,
the Dirichlet problem for elliptic systems with data in the classical Hardy space $H^1$, and
the Beurling-Hardy space ${\rm HA}^p$ for $p\in(1,\infty)$.
Based on the well-posedness of the $L^p$-Dirichlet problem we then prove the uniqueness
of the Poisson kernel associated with such systems, as well as the fact that they
generate a strongly continuous semigroup in natural settings. Finally, we establish
a general Fatou type theorem guaranteeing the existence of the pointwise nontangential
boundary trace for null-solutions of such systems.
\end{abstract}

%
%

\allowdisplaybreaks

\maketitle


\section{Introduction, Statement of Main Results, and Examples}
\setcounter{equation}{0}
\label{S-1}

Let $M\in{\mathbb{N}}$ be fixed and consider the second-order, homogeneous, $M\times M$ system,
with constant complex coefficients, written (with the usual convention of summation over
repeated indices in place) as
\begin{equation}\label{L-def}
Lu:=\Bigl(\partial_r(a^{\alpha\beta}_{rs}\partial_s u_\beta)
\Bigr)_{1\leq\alpha\leq M},
\end{equation}
when acting on a ${\mathscr{C}}^2$ vector-valued function $u=(u_\beta)_{1\leq\beta\leq M}$
defined in the upper-half space ${\mathbb{R}}^n_{+}:=\{(x',x_n)\in{\mathbb{R}}^{n-1}
\times{\mathbb{R}}:\,x_n>0\}$, $n\geq 2$. A standing assumption in this paper is
that $L$ is {\tt elliptic} in the sense that there exists a real number $\kappa_o>0$ such
that the following Legendre-Hadamard condition is satisfied:
\begin{equation}\label{L-ell.X}
\begin{array}{c}
{\rm Re}\,\bigl[a^{\alpha\beta}_{rs}\xi_r\xi_s\overline{\eta_\alpha}
\eta_\beta\,\bigr]\geq\kappa_o|\xi|^2|\eta|^2\,\,\mbox{ for every}
\\[8pt]
\xi=(\xi_r)_{1\leq r\leq n}\in{\mathbb{R}}^n\,\,\mbox{ and }\,\,
\eta=(\eta_\alpha)_{1\leq\alpha\leq M}\in{\mathbb{C}}^M.
\end{array}
\end{equation}
Two basic examples to keep in mind are the Laplacian $L:=\Delta$ in ${\mathbb{R}}^n$,
and the Lam\'e system
\begin{align}\label{TYd-YG-76g}
Lu:=\mu\Delta u+(\lambda+\mu)\nabla{\rm div}\,u,\qquad u=(u_1,\dots,u_n)\in{\mathscr{C}}^2,
\end{align}
where the constants $\lambda,\mu\in{\mathbb{R}}$ (typically called Lam\'e moduli)
are assumed to satisfy
\begin{align}\label{Yfhv-8yg}
\mu>0\quad\mbox{ and }\quad 2\mu+\lambda>0,
\end{align}
a condition actually equivalent to the demand that the Lam\'e system
\eqref{TYd-YG-76g} satisfies the Legendre-Hadamard ellipticity condition \eqref{L-ell.X}.

As is known from the seminal work of S.\,Agmon, A.\,Douglis, and L.\,Nirenberg in
\cite{ADNI} and \cite{ADNII}, every operator $L$ as in \eqref{L-def}-\eqref{L-ell.X} has
a Poisson kernel, denoted by $P^L$, an object whose properties mirror the most
basic characteristics of the classical harmonic Poisson kernel
\begin{equation}\label{Uah-TTT}
P^{\Delta}(x'):=\frac{2}{\omega_{n-1}}\frac{1}{\big(1+|x'|^2\big)^{\frac{n}{2}}},
\qquad\forall\,x'\in{\mathbb{R}}^{n-1},
\end{equation}
where $\omega_{n-1}$ is the area of the unit sphere $S^{n-1}$ in ${\mathbb{R}}^n$.
For details, see Theorem~\ref{ya-T4-fav} below. Here we only wish to note that, using the
notation $P_t(x'):=t^{1-n}P(x'/t)$ for each $t\in(0,\infty)$ and $x'\in{\mathbb{R}}^{n-1}$,
where $P$ is a generic function defined in ${\mathbb{R}}^{n-1}$, it follows that
there exists some $C\in(0,\infty)$ such that
\begin{equation}\label{tghn-jan-1}
|P^L_t(x')|\leq C\frac{t}{(t^2+|x'|^2)^{\frac{n}{2}}},
\qquad\forall\,x'\in{\mathbb{R}}^{n-1},\quad\forall\,t\in(0,\infty).
\end{equation}

The main goal of this paper is to establish well-posedness results for the Dirichlet problem
for a system $L$, as above, in ${\mathbb{R}}^n_{+}$ formulated in terms of certain types of
function spaces (made precise below).

Prior to formulating the most general result in this paper, some comments on the notation used
are in order. The symbol ${\mathcal{M}}$ is reserved for the Hardy-Littlewood maximal operator
in ${\mathbb{R}}^{n-1}$; see \eqref{MMax}.
Also, given a function $u$ defined in ${\mathbb{R}}^n_{+}$, by ${\mathcal{N}}u$ we shall denote
the nontangential maximal function of $u$; see \eqref{NT-Fct} for a precise definition.
Next, by $u\big|^{{}^{\rm n.t.}}_{\partial{\mathbb{R}}^n_{+}}$ we denote the nontangential
limit of the given function $u$ on the boundary of the upper half-space
(canonically identified with ${\mathbb{R}}^{n-1}$), as defined in \eqref{nkc-EE-2}.
Going further, denote by $\mathbb{M}$ the collection of all (equivalence classes of)
Lebesgue measurable functions $f:\mathbb{R}^{n-1}\rightarrow [-\infty,\infty]$ such that
$|f|<\infty$ a.e.~in ${\mathbb{R}}^{n-1}$.
Also, call a subset ${\mathbb{Y}}$ of $\mathbb{M}$ a {\tt function} {\tt lattice} if
the following properties hold:

\begin{enumerate}
\item[(i)] whenever $f,g\in{\mathbb{M}}$ satisfy $0\leq f\leq g$ a.e.~in
$\mathbb{R}^{n-1}$ and $g\in{\mathbb{Y}}$ then necessarily $f\in{\mathbb{Y}}$;
\item[(ii)] $0\leq f\in{\mathbb{Y}}$ implies $\lambda f\in{\mathbb{Y}}$ for every $\lambda\in(0,\infty)$;
\item[(iii)] $0\leq f,g\in{\mathbb{Y}}$ implies $\max\{f,g\}\in{\mathbb{Y}}$.
\end{enumerate}

In passing, note that, granted $({\rm i})$, one may replace ${\rm (ii)}$-${\rm (iii)}$
above by the condition: $0\leq f,g\in{\mathbb{Y}}$ implies $f+g\in{\mathbb{Y}}$.
As usual, we set $\log_{+}t:=\max\big\{0\,,\,\ln t\big\}$ for each $t\in(0,\infty)$.
Finally, we alert the reader that the notation employed does not always distinguish
between vector and scalar valued functions (which should be clear from context).

\begin{theorem}\label{Them-General}
Let $L$ be a system as in \eqref{L-def}-\eqref{L-ell.X}, and assume
that $\mathbb{X},\mathbb{Y}$ are arbitrary collections of measurable functions satisfying 
\begin{align}\label{Fi-AN.1}
& {\mathbb{X}}\subset
L^1\Big({\mathbb{R}}^{n-1}\,,\,\frac{1}{1+|x'|^n}\,dx'\Big),\qquad
{\mathbb{Y}}\subset
L^1\Big({\mathbb{R}}^{n-1}\,,\,\frac{1+\log_{+}|x'|}{1+|x'|^{n-1}}\,dx'\Big),
\\[6pt]
& \hskip 0.80in
\mbox{$\mathbb{Y}$ is a function lattice},\qquad
{\mathcal{M}}\mathbb{X}\subset\mathbb{Y}.
\label{Fi-AN.2}
\end{align}

Then the $(\mathbb{X},\mathbb{Y})$-Dirichlet boundary value problem for
$L$ in $\mathbb{R}^{n}_{+}$,
\begin{equation}\label{Dir-BVP-XY}
\left\{
\begin{array}{l}
u\in{\mathscr{C}}^\infty({\mathbb{R}}^n_{+}),
\\[4pt]
Lu=0\,\,\mbox{ in }\,\,\mathbb{R}^{n}_{+},
\\[4pt]
\mathcal{N}u\in\mathbb{Y},
\\[6pt]
u\bigl|_{\partial\mathbb{R}^{n}_{+}}^{{}^{\rm n.t.}}=f\in\mathbb{X},
\end{array}
\right.
\end{equation}
has a unique solution. Moreover, the solution $u$ of \eqref{Dir-BVP-XY}
is given by
\begin{equation}\label{eq:FDww}
u(x',t)=(P^L_t\ast f)(x')\,\,\,\mbox{ for all }\,\,\,(x',t)\in{\mathbb{R}}^n_{+},
\end{equation}
where $P^L$ is the Poisson kernel for $L$ in $\mathbb{R}^{n}_{+}$, and satisfies
\begin{equation}\label{eJHBb}
\mathcal{N} u(x')\leq C\,\mathcal{M} f(x'),
\qquad\forall\,x'\in\mathbb{R}^{n-1},
\end{equation}
for some constant $C\in[1,\infty)$ that depends only on $L$ and $n$.
\end{theorem}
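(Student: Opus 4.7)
I would build the solution $u$ via the Poisson integral \eqref{eq:FDww}, verify it meets every requirement of \eqref{Dir-BVP-XY}, and reduce uniqueness to a general uniqueness result for the $L$-Dirichlet problem in the weighted $L^1$ class arising from the second inclusion in \eqref{Fi-AN.1}. Because $\mathbb{X}\subset L^1(\mathbb{R}^{n-1},(1+|x'|^n)^{-1}\,dx')$ and \eqref{tghn-jan-1} yields $|P^L_t(x'-y')|\leq C_{x',t}(1+|y'|^n)^{-1}$, the convolution in \eqref{eq:FDww} converges absolutely at every $(x',t)\in\mathbb{R}^n_+$. Differentiating under the integral sign, using analogous size estimates for the derivatives of $P^L$ supplied by Theorem~\ref{ya-T4-fav}, one obtains $u\in\mathscr{C}^\infty(\mathbb{R}^n_+)$ with $Lu=0$.

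\textbf{Pointwise bound and trace.} Estimate \eqref{tghn-jan-1} exhibits an integrable, radially decreasing majorant for $P^L_t$; a standard argument (majorize $|P^L_t|$ by a dilate of a radial decreasing $L^1$ function and dyadically decompose the resulting convolution) yields $|(P^L_t\ast f)(y')|\leq C\,\mathcal{M}f(x')$ whenever $|y'-x'|<t$, which is exactly \eqref{eJHBb}. Combined with the hypothesis $\mathcal{M}\mathbb{X}\subset\mathbb{Y}$ and lattice property (i) of $\mathbb{Y}$, this delivers $\mathcal{N}u\in\mathbb{Y}$. The identity $u\bigl|_{\partial\mathbb{R}^{n}_{+}}^{{}^{\rm n.t.}}=f$ at every Lebesgue point of $f$ then follows from the approximate-identity property $\int_{\mathbb{R}^{n-1}}P^L=I_{M\times M}$ (part of Theorem~\ref{ya-T4-fav}) together with the pointwise bound just derived, via a routine dominated convergence argument along nontangential approach cones.

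\textbf{Uniqueness, the principal obstacle.} Given two solutions $u_1,u_2$ of \eqref{Dir-BVP-XY}, the difference $w:=u_1-u_2$ is a smooth null solution of $L$ in $\mathbb{R}^n_+$ with vanishing nontangential boundary trace a.e.\ and $\mathcal{N}w\in\mathbb{Y}$. The second inclusion in \eqref{Fi-AN.1} then places $\mathcal{N}w$ in the weighted $L^1$ class with weight $(1+\log_{+}|x'|)/(1+|x'|^{n-1})$, and I would invoke the general uniqueness result for the $L$-Dirichlet problem in this weighted class to conclude $w\equiv 0$. The weight in \eqref{Fi-AN.1} is carefully tuned for precisely this purpose: the $(1+|x'|^{n-1})^{-1}$ factor matches the decay at infinity of the Green function for $L$ on $\mathbb{R}^n_+$, while the $\log_{+}$ factor accommodates the logarithmic singularity of that Green function when $n=2$. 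The technical core of the argument is concentrated here; once the Agmon--Douglis--Nirenberg kernel $P^L$ is in hand, existence and \eqref{eJHBb} are essentially classical, but uniqueness requires a careful Green-function/Fatou-type analysis matching the boundary decay of $\mathcal{N}w$ to the admissible growth of $L$-null solutions in the upper half-space.
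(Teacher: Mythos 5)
Your proposal follows exactly the paper's route: construct $u$ via the Poisson integral (Theorem~\ref{thm:existence}), derive the pointwise bound $\mathcal{N}u\leq C\,\mathcal{M}f$, and reduce uniqueness to a stand-alone uniqueness theorem for null-solutions whose nontangential maximal function lies in the weighted $L^1$ class induced by the second inclusion in \eqref{Fi-AN.1} (the paper's Theorem~\ref{thm:uniqueness}). Two small points are worth flagging. First, asserting $\mathcal{N}w\in\mathbb{Y}$ for the difference $w=u_1-u_2$ is not automatic, since $\mathbb{Y}$ is merely a function lattice and not assumed to be a vector space; one needs the chain $0\leq\mathcal{N}w\leq\mathcal{N}u_1+\mathcal{N}u_2\leq 2\max\{\mathcal{N}u_1,\mathcal{N}u_2\}$ together with lattice properties $(ii)$ and $(iii)$. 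Second, your motivational remark attributes the $\log_{+}$ factor in the weight to a logarithmic singularity occurring when $n=2$; in fact, per the decay estimate \eqref{bound-NK-G} and Remark~\ref{Tgg-GRR}, that logarithm stems from the possible non-radiality of the fundamental solution $E^L$ of a general system $L$ and appears in every dimension, being dispensable precisely when $E^L$ is radial (e.g., for $L=\Delta$).
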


Regarding the formulation of Theorem~\ref{Them-General}, we wish to note that
the first condition in \eqref{Fi-AN.1} is actually redundant, and we have only included
it for its pedagogical value (as it makes the proof of the existence of a solution for
\eqref{Dir-BVP-XY} most natural). Indeed, a more general result of this flavor holds, namely:
\begin{equation}\label{eq:jlk}
{\mathbb{X}}\subset{\mathbb{M}}\, \mbox{ and }\, 
{\mathcal{M}}f  \not\equiv\infty \mbox{ for each }f\in{\mathbb{X}}\,\Longrightarrow\,{\mathbb{X}}\subset
L^1\Big({\mathbb{R}}^{n-1} ,\,\frac{1}{1+|x'|^n}\,dx'\Big).
\end{equation}
Granted this, it is clear that the first inclusion in \eqref{Fi-AN.1} is implied
by the last condition in \eqref{Fi-AN.2} and the second condition in \eqref{Fi-AN.1}.
As regards the justification 
of \eqref{eq:jlk}, let $f\in{\mathbb{X}}$ be arbitrary.
Then the hypotheses in \eqref{eq:jlk} imply that there exists some
$x'_0\in{\mathbb{R}}^{n-1}$ such that ${\mathcal{M}}f(x'_0)<\infty$
in which case, for some finite constant $C=C(n,x'_0)>0$, we may estimate
\begin{equation}\label{eq:ERbv}
\int_{\mathbb{R}^{n-1}}|f(x')|\frac{1}{1+|x'|^n}\,dx'
\leq C\int_{\mathbb{R}^{n-1}}\frac{|f(x')|}{1+|x'-x'_0|^n}\,dx'
\leq C{\mathcal{M}}f(x'_0)<\infty,
\end{equation}
where the next-to-last inequality follows from a familiar dyadic annular decomposition argument
(in the spirit of \eqref{Drsgy-1jab}). Thus,  \eqref{eq:jlk} is true.

The particular case ${\mathbb{X}}={\mathbb{Y}}$ holds a special significance (in this vein,
see Theorem~\ref{Theorem-Nice} below). Incidentally, in this scenario the first condition
in \eqref{Fi-AN.1} is simply implied by the second condition in \eqref{Fi-AN.1} alone.
This being said, the case ${\mathbb{X}}\not={\mathbb{Y}}$ is natural to consider, as it arises
commonly in practice. For example, the Dirichlet problem \eqref{Dir-BVP-XY} is well-posed for any
system $L$ as in \eqref{L-def}-\eqref{L-ell.X} provided, for a given $p\in(1,\infty)$,
\begin{equation}\label{eq:BVCc}
{\mathbb{X}}:=L^1({\mathbb{R}}^{n-1})\cap L^p({\mathbb{R}}^{n-1})
\,\,\,\mbox{ and }\,\,\,{\mathbb{Y}}:=L^{1,\infty}({\mathbb{R}}^{n-1})\cap L^p({\mathbb{R}}^{n-1}),
\end{equation}
since conditions \eqref{Fi-AN.1}-\eqref{Fi-AN.2} are easily verified in this case.
We stress that in the formulation of Theorem~\ref{Them-General} the set ${\mathbb{X}}$
is not required to be a function lattice, and this is a relevant observation for
the $(H^1,L^1)$-Dirichlet problem discussed below in Corollary~\ref{tfav.tRR}
(cf. also Corollary~\ref{tfav.tRR.BEUR} for a similar phenomenon).

The proof of Theorem~\ref{Them-General} in \S\ref{S-4} makes strong use of the
results established in \S\ref{S-3}. More specifically, the second inclusion in \eqref{Fi-AN.1}
ensures (keeping in mind the function lattice property for ${\mathbb{Y}}$)
the applicability of Theorem~\ref{thm:uniqueness}, which yields uniqueness.
The first inclusion in \eqref{Fi-AN.1} guarantees the applicability of Theorem~\ref{thm:existence},
which shows that the function $u$ as in \eqref{eq:FDww} belongs to 
${\mathscr{C}}^\infty({\mathbb{R}}^n_{+})$ and satisfies $Lu=0$ in ${\mathbb{R}}^n_{+}$
as well as $u\bigl|_{\partial\mathbb{R}^{n}_{+}}^{{}^{\rm n.t.}}=f$ and \eqref{eJHBb}. 
Granted the latter property (and bearing in mind the function lattice property for
${\mathbb{Y}}$), the last condition in \eqref{Fi-AN.2} then guarantees that
\begin{equation}\label{eq:BV333}
\mbox{$f\in\mathbb{X}$ and $u$ as in \eqref{eq:FDww}}\,\Longrightarrow\,\mathcal{N}u\in\mathbb{Y}.
\end{equation}
This proves existence in Theorem~\ref{Them-General}.
It is worth noting that ${\mathcal{M}}{\mathbb{X}}\subset{\mathbb{Y}}$ may be replaced
in the formulation of Theorem~\ref{Them-General} (without affecting the conclusions)
by the weaker condition \eqref{eq:BV333}. This is significant, because the latter holds
even though the former fails in the important case of the Dirichlet problem with data from the
Hardy space, when
\begin{equation}\label{eq:BVCc.2}
{\mathbb{X}}:=H^1({\mathbb{R}}^{n-1})\,\,\,\mbox{ and }\,\,\,{\mathbb{Y}}:=L^1({\mathbb{R}}^{n-1}).
\end{equation}
This permits us to prove (see \S\ref{S-4} for details) the following well-posedness result.

\begin{corollary}\label{tfav.tRR}
The $(H^1,L^1)$-Dirichlet boundary value problem
in $\mathbb{R}^{n}_{+}$ is well-posed for each system $L$ as in \eqref{L-def}-\eqref{L-ell.X}.
\end{corollary}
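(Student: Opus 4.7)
The plan is to apply Theorem~\ref{Them-General} with $\mathbb{X}:=H^1(\mathbb{R}^{n-1})$ and $\mathbb{Y}:=L^1(\mathbb{R}^{n-1})$, using the strengthened form pointed out right after that theorem, in which the hypothesis $\mathcal{M}\mathbb{X}\subset\mathbb{Y}$ is weakened to the direct implication \eqref{eq:BV333}. This weakening is essential for the current application, because the Hardy--Littlewood maximal operator $\mathcal{M}$ does not send $H^1(\mathbb{R}^{n-1})$ into $L^1(\mathbb{R}^{n-1})$.

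The remaining hypotheses of Theorem~\ref{Them-General} are easy to verify. Since $H^1(\mathbb{R}^{n-1})\subset L^1(\mathbb{R}^{n-1})$ and the weights $\frac{1}{1+|x'|^n}$ and $\frac{1+\log_{+}|x'|}{1+|x'|^{n-1}}$ are uniformly bounded on $\mathbb{R}^{n-1}$ (for $n\geq 2$), both inclusions in \eqref{Fi-AN.1} hold at once, and $\mathbb{Y}=L^1(\mathbb{R}^{n-1})$ is evidently a function lattice.

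The crux of the matter is the verification of \eqref{eq:BV333}. Given $f\in H^1(\mathbb{R}^{n-1})$, I would set $u(x',t):=(P^L_t\ast f)(x')$ and then bound $\|\mathcal{N}u\|_{L^1(\mathbb{R}^{n-1})}$ by $\|f\|_{H^1(\mathbb{R}^{n-1})}$. The pointwise estimate \eqref{tghn-jan-1} shows that $P^L_t$ is controlled, up to a multiplicative constant, by the harmonic Poisson kernel $P^\Delta_t$ from \eqref{Uah-TTT}; moreover, the smoothness of $P^L$ inherited from the ellipticity of $L$ places it in the class of admissible ``good kernels'' whose associated nontangential maximal operator is dominated pointwise by the grand maximal function. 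The real-variable (Fefferman--Stein) theory of the Hardy space $H^1(\mathbb{R}^{n-1})$ then yields the estimate $\|\mathcal{N}u\|_{L^1(\mathbb{R}^{n-1})}\leq C\,\|f\|_{H^1(\mathbb{R}^{n-1})}$, giving $\mathcal{N}u\in L^1(\mathbb{R}^{n-1})=\mathbb{Y}$ as required.

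With all hypotheses now in hand, the strengthened form of Theorem~\ref{Them-General} delivers both existence (with the solution given explicitly by $u=P^L_t\ast f$) and uniqueness, the latter coming from Theorem~\ref{thm:uniqueness} whose applicability reduces to the second inclusion in \eqref{Fi-AN.1} combined with the lattice structure of $\mathbb{Y}$, both already verified. The main obstacle is precisely the failure of the boundedness $\mathcal{M}:H^1(\mathbb{R}^{n-1})\to L^1(\mathbb{R}^{n-1})$; this is circumvented through the Poisson-kernel estimate \eqref{tghn-jan-1} paired with the classical Hardy space theory, which together supply \eqref{eq:BV333} directly.
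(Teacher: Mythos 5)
Your overall strategy matches the paper's: both invoke Theorem~\ref{Them-General} with $\mathbb{X}=H^1(\mathbb{R}^{n-1})$, $\mathbb{Y}=L^1(\mathbb{R}^{n-1})$ after replacing the hypothesis ${\mathcal{M}}\mathbb{X}\subset\mathbb{Y}$ by the weaker \eqref{eq:BV333}, and both observe that the remaining hypotheses of Theorem~\ref{Them-General} are immediate. Where you diverge is in how \eqref{eq:BV333} is verified. The paper proves directly that for any $(1,q)$-atom $a$, the solution $u=P^L_t\ast a$ obeys a uniform bound $\|\mathcal{N}_\kappa u\|_{L^1}\leq C$: near $2\sqrt{n}Q$ this uses $\mathcal{N}u\lesssim\mathcal{M}a$ together with the $L^q$-boundedness of $\mathcal{M}$, and far from $Q$ it exploits the vanishing moment of $a$, the Mean Value Theorem, and the kernel derivative estimate \eqref{derv-K}; it then passes to general $f\in H^1$ either through tent spaces or the direct quasi-optimal decomposition \eqref{Jbvv75r.A}--\eqref{Jbvv75r.D}. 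You instead cite the Fefferman--Stein grand maximal function characterization of $H^1$ as a black box, claiming that $P^L$ is an ``admissible good kernel'' whose nontangential maximal operator is pointwise dominated by the grand maximal function.

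There is an imprecision in that step which you should repair. The estimate \eqref{tghn-jan-1} you cite is a pure size bound, $|P^L_t|\lesssim P^\Delta_t$; by itself it delivers only $\mathcal{N}u\lesssim\mathcal{M}f$, which is exactly the estimate that \emph{fails} to map $H^1$ into $L^1$ and is the very reason the weakened form of Theorem~\ref{Them-General} is needed. Thus \eqref{tghn-jan-1} cannot be the operative ingredient for \eqref{eq:BV333}. What is actually required to place $P^L$ among the ``good kernels'' of the Fefferman--Stein theory is a Hörmander-type derivative decay such as $|\nabla P^L(x')|\lesssim(1+|x'|)^{-n-1}$; $C^\infty$-smoothness alone (which is all your appeal to ellipticity explicitly provides) does not guarantee this. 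That sharper gradient decay does hold here, but justifying it requires combining the $C^\infty(\overline{\mathbb{R}^n_+}\setminus B(0,\varepsilon))$ regularity of $K^L$ from Theorem~\ref{ya-T4-fav}, the homogeneity of degree $1-n$ recorded in part~\textit{(iv)} of Remark~\ref{Ryf-uyf}, and the fact that $K^L$ must vanish on $\partial\mathbb{R}^n_+\setminus\{0\}$ (as is forced by part $(a)$ of Definition~\ref{defi:Poisson}). Note, by contrast, that the paper's atomic computation is more economical: it only uses the cruder bound \eqref{derv-K}, which has gradient decay $(1+|x'|)^{-n}$, because the vanishing moment of the atom already provides the missing cancellation. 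So once the gradient decay is supplied, your route works, but as written the argument leans on an inadequate citation and leaves the decisive kernel estimate unjustified.
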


In fact, the weaker condition in the left-hand side of
\eqref{eq:BV333} is also relevant in other scenarios such as
the Dirichlet problem with data from the Beurling-Hardy space, when
\begin{equation}\label{eq:BVCc.2BEUR}
{\mathbb{X}}:={\rm HA}^p({\mathbb{R}}^{n-1})\,\,\mbox{ and }\,\,{\mathbb{Y}}:={\rm A}^p({\mathbb{R}}^{n-1})
\,\,\mbox{ for some }\,\,p\in(1,\infty).
\end{equation}
Above, ${\rm A}^p({\mathbb{R}}^{n-1})$ is the classical (convolution) algebra introduced by A.\,Beurling
in \cite{Beu}, while ${\rm HA}^p({\mathbb{R}}^{n-1})$ is the Hardy space associated with the Beurling
algebra ${\rm A}^p({\mathbb{R}}^{n-1})$ as in \cite{GC} (following work in the complex plane in \cite{CL}).
For concrete definitions the reader is referred to \S\ref{S-4}, where the proof of the following
well-posedness result may also be found.

\begin{corollary}\label{tfav.tRR.BEUR}
For each $p\in(1,\infty)$, the $({\rm HA}^p,{\rm A}^p)$-Dirichlet boundary value problem
in $\mathbb{R}^{n}_{+}$ is well-posed whenever $L$ is a system as in \eqref{L-def}-\eqref{L-ell.X}.
\end{corollary}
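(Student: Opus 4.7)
The plan is to apply Theorem~\ref{Them-General} with $\mathbb{X}:=\mathrm{HA}^p(\mathbb{R}^{n-1})$ and $\mathbb{Y}:=\mathrm{A}^p(\mathbb{R}^{n-1})$, using the sharpening explicitly noted in the paragraph preceding this corollary: the strong hypothesis $\mathcal{M}\mathbb{X}\subset\mathbb{Y}$ may be replaced by the weaker condition \eqref{eq:BV333}. This relaxation is essential, because the Hardy-Littlewood maximal operator fails to map $\mathrm{HA}^p$ into $\mathrm{A}^p$ (a phenomenon of Hardy-type spaces completely analogous to the failure $\mathcal{M}\colon H^1\not\to L^1$ exploited in Corollary~\ref{tfav.tRR}).

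The first step is to check that $\mathrm{A}^p(\mathbb{R}^{n-1})$ is a function lattice: Beurling's norm is built from weighted $L^p$ norms of $|f|$ over a fixed dyadic annular decomposition of $\mathbb{R}^{n-1}$, so properties (i)--(iii) are immediate. The second step is to verify the two integrability inclusions in \eqref{Fi-AN.1}. For the second inclusion, $\mathrm{A}^p\subset L^1(\mathbb{R}^{n-1},(1+\log_+|x'|)/(1+|x'|^{n-1})\,dx')$, I would split the integral over the same dyadic annuli used to define $\mathrm{A}^p$, apply H\"older on each piece, and sum the resulting geometric series against the Beurling weights; the logarithmic factor is absorbed trivially by the geometric decay coming from $(1+|x'|^{n-1})^{-1}$. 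For the first inclusion applied to $\mathbb{X}=\mathrm{HA}^p$, I would invoke the grand-maximal-function characterization of the Hardy-Beurling space from \cite{GC}: every $f\in\mathrm{HA}^p$ has $\mathcal{M}f$ pointwise dominated by a grand maximal function lying in $\mathrm{A}^p$, hence finite a.e.; implication \eqref{eq:jlk} then yields the first inclusion of \eqref{Fi-AN.1} at once.

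The main obstacle, and the crux of the proof, is verifying condition \eqref{eq:BV333}: for $f\in\mathrm{HA}^p$ and $u(x',t):=(P^L_t\ast f)(x')$, one must show that $\mathcal{N}u\in\mathrm{A}^p$. Here the pointwise bound \eqref{tghn-jan-1} on $P^L_t$ is decisive: it lets me dominate $\mathcal{N}u(x')$ pointwise by a nontangential maximal function associated with a bounded family of Schwartz approximations of the identity, which in turn is controlled by the grand maximal function used in defining $\mathrm{HA}^p$. Since that grand maximal function belongs to $\mathrm{A}^p$ by the very definition of $\mathrm{HA}^p$, and $\mathrm{A}^p$ is a function lattice, \eqref{eq:BV333} follows. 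With all hypotheses secured, Theorem~\ref{Them-General} delivers the Poisson-integral solution \eqref{eq:FDww}; the uniqueness part is handled by Theorem~\ref{thm:uniqueness}, whose applicability is guaranteed by the lattice property of $\mathrm{A}^p$ together with the second inclusion in \eqref{Fi-AN.1}.
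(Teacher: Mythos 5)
Your overall framework — invoking Theorem~\ref{Them-General} with the weakened hypothesis \eqref{eq:BV333}, noting that ${\rm A}^p$ is a function lattice, and reducing matters to \eqref{Fi-AN.1} and \eqref{eq:BV333} — is exactly what the paper does. Two quick remarks on the peripheral steps. First, the paper handles both inclusions in \eqref{Fi-AN.1} simply by observing ${\rm HA}^p\hookrightarrow H^1\subset L^1$ and ${\rm A}^p\hookrightarrow L^1$, cf.~\eqref{eq:rgGH} and \eqref{eq:RTTT}; your annular H\"older argument for the second inclusion works but is more than needed. Second, your claim that $\mathcal{M}f$ is ``pointwise dominated by a grand maximal function lying in ${\rm A}^p$'' for $f\in{\rm HA}^p$ contradicts your own (correct) observation that $\mathcal{M}\colon{\rm HA}^p\not\to{\rm A}^p$: since ${\rm A}^p$ is a lattice, any such domination would force $\mathcal{M}f\in{\rm A}^p$. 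Fortunately you only need $\mathcal{M}f\not\equiv\infty$ for \eqref{eq:jlk}, which is immediate from $f\in L^1$.

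The genuine gap is in the crux step, the verification of \eqref{eq:BV333}. The pointwise size bound \eqref{tghn-jan-1} says $|P^L_t(x')|\leq C\,P^\Delta_t(x')$; all this yields is $|(P^L_t\ast f)(x')|\leq C\,(P^\Delta_t\ast|f|)(x')$, and convolving with $|f|$ destroys the cancellation that any $H^1$-type estimate must exploit. A size bound on $P^L$ alone \emph{cannot} give a pointwise domination of $\mathcal{N}u$ by a grand maximal function of $f$: the bound is equally consistent with $P^L$ being a non-negative kernel, in which case $\mathcal{N}u$ would genuinely see only $|f|$ and the implication \eqref{eq:BV333} would fail. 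To make your route rigorous you would need to show that the (smooth but non-Schwartz, only $(1+|x'|)^{-n}$-decaying) kernel $P^L$ is ``Schwartz-controlled'' when convolved against the signed $f$ — e.g.\ by decomposing $P^L$ into Schwartz pieces and summing, together with a grand-maximal-function characterization of ${\rm HA}^p$ whose test family is rich enough to absorb those pieces. None of this is sketched, and it is not trivial. The paper instead works directly with central $(1,p)$-atoms: for an atom $a$ supported in $Q$, the near annuli of the ${\rm A}^p$-norm (those of generation $k\leq N_a$) are estimated via \eqref{exist:Nu-Mf} and the $L^p$-boundedness of $\mathcal{M}$, while the far annuli ($k>N_a$) use the vanishing moment of $a$, the Mean Value Theorem and \eqref{derv-K} to obtain the decay \eqref{rrff4rf.2BEU}; one then passes from atoms to general $f\in{\rm HA}^p$ via the subsequence argument of \eqref{Jbvv75r.A}--\eqref{Jbvv75r.D}, using ${\rm HA}^p\hookrightarrow H^1$. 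The cancellation that your size-only argument drops enters precisely through the atom's vanishing moment.
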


As is apparent from the statement of Theorem~\ref{Them-General}, devising practical ways
for checking the validity of the inclusions in \eqref{Fi-AN.1} becomes a significant issue
that deserves further attention. One natural, and also general, setting where the named
inclusions may be equivalently rephrased as the membership of the intervening weight
functions to dual spaces is that of K\"othe function spaces. Since the latter class
of function spaces plays a significant role for us here, we proceed to summarize their
definition and basic properties (more details may be found in Bennett and
Sharpley~\cite{bennett-sharpley88} where the terminology employed is that of
Banach function spaces; cf.~also \cite{CMP}, \cite{LT}, \cite{Zaa}). Specifically,
call a mapping $\|\cdot\|:\mathbb{M}\rightarrow[0,\infty]$ a {\tt function} {\tt norm}
provided the following properties are satisfied for all $f,\,g\in\mathbb{M}$:

\begin{list}{$(\theenumi)$}{\usecounter{enumi}\leftmargin=1cm
\labelwidth=1cm\itemsep=0.2cm\topsep=.2cm
\renewcommand{\theenumi}{\alph{enumi}}}

\item[(1)] $\|f\|=\big\||f|\big\|$, and $\|f\|=0$
if and only if $f=0$ a.e.~in $\mathbb{R}^{n-1}$;

\item[(2)] $\|f+g\|\leq\|f\|+\|g\|$, and $\|\lambda f\|=|\lambda|\,\|f\|$ for each $\lambda\in\mathbb{R}$;

\item[(3)] if  $|f|\leq |g|$ a.e.~in $\mathbb{R}^{n-1}$ then $\|f\|\leq\|g\|$;

\item[(4)] if $\{f_k\}_{k\in{\mathbb{N}}}\subset\mathbb{M}$ is a sequence such that $|f_k|$
increases to $|f|$ pointwise a.e.~in $\mathbb{R}^{n-1}$ as $k\to\infty$, then $\|f_k\|$
increases to $\|f\|$ as $k\to\infty$;

\item[(5)] if $E\subset\mathbb{R}^{n-1}$ is a measurable set of finite measure
then its characteristic function ${\bf 1}_E$ satisfies $\|{\bf 1}_E\|<\infty$,
and $\int_E |f(x')|\,dx'\leq C_E\|f\|$ where $C_E<\infty$ depends on $E$, but not on $f$.
\end{list}

Given a function norm $\|\cdot\|$, the set
\begin{equation}\label{Tfaca}
\mathbb{X}:=\big\{f\in\mathbb{M}:\,\|f\|<\infty\big\}
\end{equation}
is referred to as a {\tt K\"othe} {\tt function} {\tt space} on $({\mathbb{R}}^{n-1},dx')$.
In such a scenario, we shall write
$\|\cdot\|_\mathbb{X}$ in place of $\|\cdot\|$ in order to emphasize the connection between
the function norm $\|\cdot\|$ and its associated K\"othe function space $\mathbb{X}$.
Then $\big(\mathbb{X},\|\cdot\|_\mathbb{X}\big)$ is a complete normed vector subspace of $\mathbb{M}$,
hence a Banach space. It is apparent from the above definitions that many of the classical
function spaces in analysis are actually K\"othe function spaces. This includes ordinary 
Lebesgue spaces, variable exponent Lebesgue spaces, Orlicz spaces, Lorentz spaces, mixed-normed spaces, 
Marcinkiewicz spaces, Morrey spaces, etc. Typically, function spaces whose definitions take into 
account cancellation or differentiability properties of the functions, such as Hardy spaces, BMO, 
Sobolev spaces, etc., fail to be K\"othe function spaces.

\smallskip

Starting with a K\"othe function space $\mathbb{X}$, we can define its K\"othe dual (also known as
its associate space in the terminology of \cite{bennett-sharpley88}) according to
\begin{equation}\label{Ytr44}
\begin{array}{c}
\mbox{$\mathbb{X}':=\big\{f\in{\mathbb{M}}:\,\|f\|_{{\mathbb{X}}'}<\infty\big\}$}
\,\,\mbox{ where, for each }\,f\in\mathbb{M},
\\[8pt]
\displaystyle
\|f\|_{\mathbb{X}'}:=\sup\left\{\int_{\mathbb{R}^{n-1}} |f(x')\,g(x')|\,dx':\,
g\in\mathbb{X},\,\,\|g\|_\mathbb{X}\leq 1\right\}.
\end{array}
\end{equation}
One can check that $\|\cdot\|_{\mathbb{X}'}$ is indeed a function norm, hence
$\mathbb{X}'$ is itself a K\"othe function space.

An immediate consequence of the above definitions is the generalized
H\"older's inequality:
\begin{equation}\label{Holder-X}
\int_{\mathbb{R}^{n-1}} |f(x')\,g(x')|\,dx'
\leq\|f\|_\mathbb{X}\,\|g\|_{\mathbb{X}'},\quad\mbox{ for all }\,\,f\in\mathbb{X},\,\,g\in\mathbb{X}'.
\end{equation}
In this regard, let us also record here the following characterization of the K\"othe dual
given in \cite[Lemma~2.6, p.\,10]{bennett-sharpley88}:
\begin{equation}\label{YtrBS}
\mathbb{X}'=\left\{g\in{\mathbb{M}}:\,\int_{\mathbb{R}^{n-1}}|f(x')\,g(x')|\,dx'<\infty
\,\,\mbox{ for each }\,\,f\in\mathbb{X}\right\}.
\end{equation}
Moreover,
\begin{equation}\label{dual-X2}
(\mathbb{X}')'=\mathbb{X},
\end{equation}
i.e., the K\"othe dual space of $\mathbb{X}'$ is again
$\mathbb{X}$. As a consequence, the function norm on $\mathbb{X}$ may be expressed in terms of the
function norm on $\mathbb{X}'$ according to
\begin{equation}\label{dual-X}
\|f\|_\mathbb{X}=\sup\left\{\int_{\mathbb{R}^{n-1}}|f(x')\,g(x')|\,dx':\,
g\in\mathbb{X}',\,\|g\|_{\mathbb{X}'}\leq 1\right\},\qquad\forall\,f\in{\mathbb{X}}.
\end{equation}
For further reference it will be of interest to note that
\begin{equation}\label{Lab-1}
{\bf 1}_E\in\mathbb{X}\cap\mathbb{X}'\,\,\mbox{ if
$E\subset\mathbb{R}^{n-1}$ is a measurable set of finite measure},
\end{equation}
and
\begin{equation}\label{Lab-2}
\mathbb{X}\subset L^1_{\rm loc}({\mathbb{R}}^{n-1}),\qquad
\mathbb{X}'\subset L^1_{\rm loc}({\mathbb{R}}^{n-1}).
\end{equation}

The key observation is that whenever $\mathbb{X},\mathbb{Y}$ are K\"othe function spaces
then $\mathbb{Y}$ is a function lattice by design and, thanks to \eqref{YtrBS},
the inclusions in \eqref{Fi-AN.1} are equivalent to the memberships
\begin{equation}\label{Ygag-1}
\frac{1}{1+|x'|^n}\in{\mathbb{X}}'\quad\mbox{ and }\quad
\frac{1+\log_{+}|x'|}{1+|x'|^{n-1}}\in{\mathbb{Y}}'.
\end{equation}
Furthermore, if the last condition in \eqref{Fi-AN.2} is strengthened to
\begin{equation}\label{Ygag-2}
{\mathcal{M}}:\mathbb{X}\longrightarrow\mathbb{Y}\,\,\mbox{ boundedly,}
\end{equation}
then by \eqref{eJHBb} and the monotonicity of the function norm in ${\mathbb{Y}}$ it follows that
there exists a constant $C=C(n,L,\mathbb{X},\mathbb{Y})\in(0,\infty)$ with the property that
the solution $u$ of \eqref{Dir-BVP-XY} satisfies
\begin{equation}\label{Dir-BVP-X:bounds}
\|\mathcal{N} u\|_{\mathbb{Y}}\leq C\,\|f\|_{\mathbb{X}}.
\end{equation}
One convenient practical way of ensuring that \eqref{Ygag-1} holds is to check that
${\mathcal{M}}$ is bounded on $\mathbb{X}'$ and $\mathbb{Y}'$. This is a consequence
of \eqref{Lab-1} and Lemma~\ref{lemma:M-ball}, in the body of the paper.

In the important special case of K\"othe function spaces satisfying $\mathbb{X}=\mathbb{Y}$,
the first condition in \eqref{Ygag-1} becomes redundant (as it is implied by the second).
In this scenario, if
\begin{equation}\label{Ygag-1bis}
\frac{1+\log_{+}|x'|}{1+|x'|^{n-1}}\in{\mathbb{X}}'
\,\,\,\mbox{ and }\,\,\,\mathcal{M}\mathbb{X}\subset\mathbb{X}
\end{equation}
then the $\mathbb{X}$-Dirichlet boundary value problem for $L$ in
$\mathbb{R}^{n}_{+}$, formulated as in \eqref{Dir-BVP-XY} with $\mathbb{Y}=\mathbb{X}$,
is well-posed. Moreover,
\begin{equation}\label{Ygag-2bis}
\mbox{${\mathcal{M}}$ bounded on $\mathbb{X}$ implies
$\|\mathcal{N} u\|_{\mathbb{X}}\leq C\,\|f\|_{\mathbb{X}}$}.
\end{equation}
Let us also note here that, as seen from \eqref{Ygag-1bis} and Lemma~\ref{lemma:M-ball},
the first condition in \eqref{Ygag-1bis} may also be expressed in terms of the Hardy-Littlewood
maximal operator as
\begin{equation}\label{eq:12N34}
{\mathcal{M}}^{(2)}\big({\bf 1}_{B_{n-1}(0',1)}\big)\in{\mathbb{X}}',
\end{equation}
where ${\mathcal{M}}^{(2)}$ is the two-fold composition of ${\mathcal{M}}$ with itself,
and where $B_{n-1}(0',1)$ denotes the $(n-1)$-dimensional Euclidean ball of radius $1$
centered at the origin $0'=(0,\dots,0)\in{\mathbb{R}}^{n-1}$. In particular,
\begin{equation}\label{eq:12N35}
\mbox{if ${\mathcal{M}}$ is bounded on ${\mathbb{X}}'$ then
the first condition in \eqref{Ygag-1bis} holds.}
\end{equation}

As a consequence of the above considerations, we have the following notable result
showing that the boundedness of the Hardy-Littlewood maximal operator on ${\mathbb{X}}$ and
${\mathbb{X}}'$ is equivalent to the well-posedness of the $\mathbb{X}$-Dirichlet
and $\mathbb{X}'$-Dirichlet boundary value problems in $\mathbb{R}^{n}_{+}$ for the
class of all second-order, homogeneous, elliptic systems, with constant complex coefficients. 

\begin{theorem}\label{Theorem-Nice}
Assume that $L$ is a system as in \eqref{L-def}-\eqref{L-ell.X}, and
suppose $\mathbb{X}$ is a K\"othe function space such that
\begin{equation}\label{Nabb}
\mbox{${\mathcal{M}}$ is bounded both on $\mathbb{X}$ and $\mathbb{X}'$.}
\end{equation}

Then the $\mathbb{X}$-Dirichlet boundary value problem for $L$ in $\mathbb{R}^{n}_{+}$,
\begin{equation}\label{Dir-BVP-Xalone}
\left\{
\begin{array}{l}
u\in{\mathscr{C}}^\infty(\mathbb{R}^{n}_{+}),
\\[4pt]
Lu=0\,\,\mbox{ in }\,\,\mathbb{R}^{n}_{+},
\\[4pt]
\mathcal{N}u\in\mathbb{X},
\\[6pt]
u\big|_{\partial\mathbb{R}^{n}_{+}}^{{}^{\rm n.t.}}=f\in\mathbb{X},
\end{array}
\right.
\end{equation}
is well-posed. In addition, the solution $u$ of \eqref{Dir-BVP-Xalone}
is given by $u(x',t)=(P^L_t\ast f)(x')$ for all $(x',t)\in{\mathbb{R}}^n_{+}$,
where $P^L$ is the Poisson kernel for $L$ in $\mathbb{R}^{n}_{+}$. Also,
\begin{equation}\label{Dir-BVP-X:b}
\|\mathcal{N} u\|_{\mathbb{X}}\approx\|f\|_{\mathbb{X}},
\end{equation}
where the constants involved depend only on $\mathbb{X}$, $n$, and $L$.

Moreover, the $\mathbb{X}'$-Dirichlet boundary value problem for $L$ in $\mathbb{R}^{n}_{+}$,
formulated analogously to \eqref{Dir-BVP-Xalone} {\rm (}with $\mathbb{X}'$ replacing $\mathbb{X}${\rm )}
is also well-posed, and the solution enjoys the same type of properties as above.

Finally, the above result is sharp in the sense that 
the solvability of both the $\mathbb{X}$-Dirichlet and the $\mathbb{X}'$-Dirichlet boundary 
value problems for the class of all second-order, homogeneous, elliptic systems, with constant 
complex coefficients {\rm (}in the form of convolution with the Poisson kernel{\rm )} with naturally 
accompanying bounds {\rm (}as in \eqref{Dir-BVP-X:b}{\rm )} is equivalent to the boundedness of 
the Hardy-Littlewood maximal operator ${\mathcal{M}}$ both on $\mathbb{X}$ and on $\mathbb{X}'$.
\end{theorem}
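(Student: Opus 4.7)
My plan is to deduce the well-posedness assertions of Theorem~\ref{Theorem-Nice} as an instance of Theorem~\ref{Them-General} with $\mathbb{Y}=\mathbb{X}$ (respectively $\mathbb{Y}=\mathbb{X}'$). Every K\"othe function space automatically satisfies the lattice axioms (i)--(iii), so the function-lattice requirement in \eqref{Fi-AN.2} is free. The inclusion $\mathcal{M}\mathbb{X}\subset\mathbb{X}$ is precisely the assumption that $\mathcal{M}$ is bounded on $\mathbb{X}$. Hence the only nontrivial point is to verify the two inclusions in \eqref{Fi-AN.1}, which by the duality characterization \eqref{YtrBS} reduce to the two memberships displayed in \eqref{Ygag-1bis}. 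The second of these is the case $\mathbb{Y}=\mathbb{X}$ of $\mathcal{M}\mathbb{X}\subset\mathbb{X}$ already in hand. The first, $\tfrac{1+\log_{+}|x'|}{1+|x'|^{n-1}}\in\mathbb{X}'$, I would extract from \eqref{eq:12N35} combined with \eqref{Lab-1} and Lemma~\ref{lemma:M-ball}: boundedness of $\mathcal{M}$ on $\mathbb{X}'$ together with ${\bf 1}_{B_{n-1}(0',1)}\in\mathbb{X}'$ yields $\mathcal{M}^{(2)}({\bf 1}_{B_{n-1}(0',1)})\in\mathbb{X}'$, and a direct calculation shows this function is pointwise comparable to the weight in question. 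The $\mathbb{X}'$-problem is then handled by the identical argument with the roles of $\mathbb{X}$ and $\mathbb{X}'$ exchanged; this is legitimate because \eqref{dual-X2} gives $(\mathbb{X}')'=\mathbb{X}$, rendering \eqref{Nabb} symmetric.

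For the two-sided bound \eqref{Dir-BVP-X:b}, the estimate $\|\mathcal{N}u\|_{\mathbb{X}}\leq C\|f\|_{\mathbb{X}}$ is precisely \eqref{Ygag-2bis}. For the reverse direction, the nontangential trace condition forces the pointwise inequality $|f(x')|\leq\mathcal{N}u(x')$ for a.e.~$x'\in\mathbb{R}^{n-1}$, so the monotonicity axiom (3) of the function norm yields $\|f\|_{\mathbb{X}}\leq\|\mathcal{N}u\|_{\mathbb{X}}$.

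The heart of the proof lies in the sharpness claim, which I expect to be the main obstacle. The harmonic Poisson kernel $P^\Delta$ is nonnegative, radial, radially decreasing, and satisfies the bound of the form \eqref{tghn-jan-1}. A standard pointwise comparison (using the radial decreasing envelope and truncating to annuli) then yields constants $c_1,c_2\in(0,\infty)$ such that $c_1\mathcal{M}f(x')\leq\mathcal{N}(P^\Delta_t\ast f)(x')\leq c_2\mathcal{M}f(x')$ at every $x'\in\mathbb{R}^{n-1}$, for every $f$ in the weighted $L^1$ space of \eqref{Fi-AN.1}. Now assume the $\mathbb{X}$-Dirichlet problem for the Laplacian is solvable via convolution with $P^\Delta$, accompanied by the naturally expected estimate $\|\mathcal{N}(P^\Delta_t\ast f)\|_{\mathbb{X}}\leq C\|f\|_{\mathbb{X}}$. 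Combining this with the lower pointwise bound and the monotonicity of $\|\cdot\|_{\mathbb{X}}$ gives $\|\mathcal{M}f\|_{\mathbb{X}}\leq(C/c_1)\|f\|_{\mathbb{X}}$, i.e., $\mathcal{M}$ is bounded on $\mathbb{X}$. Applying the identical reasoning to $\mathbb{X}'$ produces the boundedness of $\mathcal{M}$ on $\mathbb{X}'$. The only technical subtlety I anticipate is ensuring that the pointwise Poisson/maximal comparison is available for every $f\in\mathbb{X}$ (respectively $\mathbb{X}'$) without extra integrability hypotheses; this, however, is automatic because the already-verified first membership in \eqref{Ygag-1bis} places $f$ in the weighted $L^1$ space where $P^\Delta_t\ast f$ is well-defined, so the argument closes and the equivalence is established.
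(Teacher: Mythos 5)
Your proposal matches the paper's own argument step for step: well-posedness and the upper bound in \eqref{Dir-BVP-X:b} follow from Theorem~\ref{Them-General} with $\mathbb{Y}=\mathbb{X}$ once \eqref{Ygag-1bis} is verified via \eqref{eq:12N35}, \eqref{Lab-1}, and Lemma~\ref{lemma:M-ball}, the lower bound comes from $|f|\leq\mathcal{N}u$ and monotonicity of the function norm, the $\mathbb{X}'$ case follows by symmetry through \eqref{dual-X2}, and sharpness follows from the pointwise lower bound $\mathcal{M}f\lesssim\mathcal{N}(P^\Delta_t\ast f)$ exactly as in \eqref{eq:TRFcc}--\eqref{eq:Rdac}. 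The one small caveat is that the two-sided pointwise comparison $c_1\mathcal{M}f\leq\mathcal{N}(P^\Delta_t\ast f)\leq c_2\mathcal{M}f$ you assert holds as stated only for $f\geq 0$ (for oscillating $f$, cancellation in the convolution can make the left inequality fail), but since $\mathcal{M}f=\mathcal{M}|f|$ and $\|f\|_{\mathbb{X}}=\||f|\|_{\mathbb{X}}$ one may reduce to nonnegative $f$, which is precisely what the paper does.
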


Assuming Theorem~\ref{Them-General}, the proof of Theorem~\ref{Theorem-Nice} is rather short.
Indeed, the discussion preceding its statement gives the well-posedness of
the $\mathbb{X}$-Dirichlet boundary value problem. Furthermore, since \eqref{dual-X2} entails
that the hypothesis \eqref{Nabb} is stable under replacing $\mathbb{X}$ by $\mathbb{X}'$,
the well-posedness of the $\mathbb{X}'$-Dirichlet boundary value problem follows as well.

As regards the sharpness claim from the last part of the statement, first assume
the solvability of the $\mathbb{X}$-Dirichlet boundary value problem
for the Laplacian in $\mathbb{R}^{n}_{+}$ (in the form of convolution with the
Poisson kernel) with naturally accompanying bounds. Note that, with
$P^{\Delta}$ as in \eqref{Uah-TTT}, whenever $0\leq f\in\mathbb{M}$ we may estimate,
for each $(x',t)\in{\mathbb{R}}^n_{+}$,
\begin{align}\label{eq:TRFcc}
u(x',t) &=(P^\Delta_t\ast f)(x')=\frac{2}{\omega_{n-1}}
\int_{\mathbb{R}^{n-1}}\frac{t}{\big(t^2+|x'-y'|^2\big)^{\frac{n}{2}}}\,f(y')\,dy'
\\[4pt]
& \geq C_n\aver{B_{n-1}(x',t)}f(y')\,dy'.\nonumber
\end{align}
Hence,
\begin{equation}\label{eq:Rdac}
({\mathcal{N}}u)(x')\geq\sup_{t>0}u(x',t)
\geq C_n({\mathcal{M}}f)(x'),\quad\mbox{for each }\,\,x'\in{\mathbb{R}}^{n-1},
\end{equation}
which, together with the upper estimate in \eqref{Dir-BVP-X:b}, implies
the boundedness of ${\mathcal{M}}$ on ${\mathbb{X}}$. Likewise, the
solvability of the $\mathbb{X}'$-Dirichlet boundary value problem
for the Laplacian in $\mathbb{R}^{n}_{+}$ yields the boundedness of
${\mathcal{M}}$ on ${\mathbb{X}}'$. This finishes the proof of Theorem~\ref{Theorem-Nice}.

\medskip

As is apparent from the above proof, the solvability of both the $\mathbb{X}$-Dirichlet and
the $\mathbb{X}'$-Dirichlet boundary value problems for the Laplacian in $\mathbb{R}^{n}_{+}$
{\rm (}in the form of convolution with the Poisson kernel{\rm )} with naturally accompanying bounds
implies the boundedness of ${\mathcal{M}}$ both on $\mathbb{X}$ and $\mathbb{X}'$.
As a consequence, the solvability of the $\mathbb{X}$-Dirichlet and
the $\mathbb{X}'$-Dirichlet boundary value problems for the Laplacian in $\mathbb{R}^{n}_{+}$
{\rm (}in the manner described above{\rm )} is equivalent to the
solvability of the $\mathbb{X}$-Dirichlet and the $\mathbb{X}'$-Dirichlet
boundary value problems in $\mathbb{R}^{n}_{+}$ for all systems $L$
as in \eqref{L-def}-\eqref{L-ell.X}.

Here we also wish to remark that, under the background assumptions in Theorem~\ref{Theorem-Nice},
the first three conditions in \eqref{Dir-BVP-Xalone} imply that the nontangential pointwise trace
$u\big|_{\partial\mathbb{R}^{n}_{+}}^{{}^{\rm n.t.}}$ exists a.e.~in ${\mathbb{R}}^{n-1}$.
Indeed, this becomes a consequence of a general Fatou type result established in
Theorem~\ref{tuFatou}, upon observing that 
\begin{equation}
{\mathbb{X}}\subset
L^1\Big({\mathbb{R}}^{n-1}\,,\,\frac{1+\log_{+}|x'|}{1+|x'|^{n-1}}\,dx'\Big)
\label{eq:sgetgt}.
\end{equation}
In turn, thanks to \eqref{YtrBS}, the latter condition is equivalent to
$\frac{1+\log_{+}|x'|}{1+|x'|^{n-1}}\in{\mathbb{X}}'$ which is further implied
by Lemma~\ref{lemma:M-ball} and \eqref{Lab-1}.

\medskip

At this stage we find it instructive to illustrate the scope of
Theorems~\ref{Them-General}-\ref{Theorem-Nice} by providing two examples of interest.

\medskip

\noindent\textbf{Example~1: Ordinary Lebesgue spaces.}
For $p\in(1,\infty)$, $\mathbb{X}:=L^p({\mathbb{R}}^{n-1})$ is a K\"othe function space,
with K\"othe dual $\mathbb{X}'=L^{p'}({\mathbb{R}}^{n-1})$ with $1/p+1/p'=1$. Hence,
in this case \eqref{Nabb} holds. As such, Theorem~\ref{Theorem-Nice} shows that
the $L^p$-Dirichlet boundary value problem in ${\mathbb{R}}^n_{+}$,
\begin{equation}\label{Dir-BVP-Lpw-intro}
\left\{
\begin{array}{l}
u\in{\mathscr{C}}^\infty(\mathbb{R}^{n}_{+}),
\\[4pt]
Lu=0\,\,\mbox{ in }\,\,\mathbb{R}^{n}_{+},
\\[6pt]
\mathcal{N}u\in L^p({\mathbb{R}}^{n-1}),
\\[4pt]
u\big|_{\partial\mathbb{R}^{n}_{+}}^{{}^{\rm n.t.}}=f\in L^p({\mathbb{R}}^{n-1}),
\end{array}
\right.
\end{equation}
is well-posed for any system $L$ as in \eqref{L-def}-\eqref{L-ell.X}. Moreover,
the solution is given by \eqref{eq:FDww} and satisfies naturally accompanying bounds.
Of course, one can also arrive at the same conclusion using Theorem~\ref{Them-General}
instead, since \eqref{Fi-AN.1}-\eqref{Fi-AN.2} are readily checked for
$\mathbb{X}=\mathbb{Y}:=L^p({\mathbb{R}}^{n-1})$ with $p\in(1,\infty)$.

In \S\ref{sect:Poisson}, the well-posedness of the $L^p$-Dirichlet problem \eqref{Dir-BVP-Lpw-intro}
is then used as a tool for establishing the uniqueness of the (Agmon-Douglis-Nirenberg) Poisson kernel
for the system $L$ (from Theorem~\ref{ya-T4-fav}), and to show that the said kernel
satisfies the semigroup property (cf. Theorem~\ref{taf87h6g}).

\medskip

\noindent\textbf{Example~2: Weighted Lebesgue spaces.}
Given $p\in(1,\infty)$, along with an a.e.~positive and finite measurable function $w$
defined on $\mathbb{R}^{n-1}$, let $L^p({\mathbb{R}}^{n-1},\,w)$ denote the Lebesgue space of $p$-th
power integrable functions in the measure space $\big(\mathbb{R}^{n-1},w(x')\,dx'\big)$. 
For a system $L$ as in \eqref{L-def}-\eqref{L-ell.X} the 
$L^p({\mathbb{R}}^{n-1},\,w)$-Dirichlet problem then reads:
\begin{equation}\label{Dir-BVP-Lpw-REal}
\left\{
\begin{array}{l}
u\in{\mathscr{C}}^\infty(\mathbb{R}^{n}_{+}),
\\[4pt]
Lu=0\,\,\mbox{ in }\,\,\mathbb{R}^{n}_{+},
\\[6pt]
\mathcal{N}u\in L^p({\mathbb{R}}^{n-1},\,w),
\\[4pt]
u\big|_{\partial\mathbb{R}^{n}_{+}}^{{}^{\rm n.t.}}=f\in L^p({\mathbb{R}}^{n-1},\,w).
\end{array}
\right.
\end{equation}
Theorem~\ref{Them-General} may then be invoked in order to show that
(with $A_p({\mathbb{R}}^{n-1})$ denoting the class of Muckenhoupt weights,
as defined in \eqref{Ap-CaD1})
\begin{equation}\label{LPDIRww}
\begin{array}{l}
\mbox{if $L$ is a system as in \eqref{L-def}-\eqref{L-ell.X}, $1<p<\infty$,
and $w\in A_p({\mathbb{R}}^{n-1})$ then}
\\[4pt]
\mbox{the $L^p({\mathbb{R}}^{n-1},\,w)$-Dirichlet problem 
\eqref{Dir-BVP-Lpw-REal} is well-posed, the solution}
\\[4pt]
\mbox{$u$ is given by \eqref{eq:FDww}, and satisfies }\,\,
\|\mathcal{N} u\|_{L^p({\mathbb{R}}^{n-1},\,w)}\leq C\|f\|_{L^p({\mathbb{R}}^{n-1},\,w)}.
\end{array}
\end{equation}

To see that this is the case, note that ${\mathbb{X}}={\mathbb{Y}}=L^p({\mathbb{R}}^{n-1},\,w)$
satisfy \eqref{Fi-AN.2} (taking into account Muckenhoupt's classical result), whereas the
second embedding in \eqref{Fi-AN.1} is checked by estimating for every $h\in L^p({\mathbb{R}}^{n-1},\,w)$
\begin{align}\label{Lpw-hyp}
&\int_{\mathbb{R}^{n-1}}\frac{1+\log_{+}|x'|}{1+|x'|^{n-1}}|h(x')|\,dx'
\\
&\qquad\qquad\qquad\le
C\,\int_{\mathbb{R}^{n-1}} |h(x')|\,w(x')^{\frac1p}\,
\mathcal{M}^{(2)}\big({\bf 1}_{B_{n-1}(0',1)}\big)(x')\,w(x')^{-\frac1p}\,dx'
\nonumber\\[4pt]
&\qquad\qquad\qquad\le
C\,\|h\|_{L^p({\mathbb{R}}^{n-1},\,w)}\,
\big\|\mathcal{M}^{(2)}\big({\bf 1}_{B_{n-1}(0',1)}\big)\big\|_{L^{p'}({\mathbb{R}}^{n-1},\,w^{1-p'})}
\nonumber\\[4pt]
&\qquad\qquad\qquad\le
C\,\|h\|_{L^p({\mathbb{R}}^{n-1},\,w)}\,
\big\|{\bf 1}_{B_{n-1}(0',1)}\big\|_{L^{p'}({\mathbb{R}}^{n-1},\,w^{1-p'})}\,
\nonumber\\[4pt]
&\qquad\qquad\qquad\le
C\,\|h\|_{L^p({\mathbb{R}}^{n-1},\,w)}\,w\big(B_{n-1}(0',1)\big)^{-\frac1{p}},\nonumber
\end{align}
where we have used Lemma~\ref{lemma:M-ball} for the first inequality,
H\"older's inequality for the second, that $\mathcal{M}$ is bounded on
$L^{p'}({\mathbb{R}}^{n-1},\,w^{1-p'})$ since $w\in A_p({\mathbb{R}}^{n-1})$
if and only if $w^{1-p'}\in A_{p'}({\mathbb{R}}^{n-1})$ in the third and, lastly,
that $w\in A_p({\mathbb{R}}^{n-1})$.
This takes care of the well-posedness, while the corresponding bound follows from
\eqref{eJHBb} and the boundedness of $\mathcal{M}$ on
$L^{p}({\mathbb{R}}^{n-1},\,w)$.

In this vein, it is worth noting that, as \eqref{eq:Rdac} shows, the bound in \eqref{LPDIRww}
in the case when $L=\Delta$ necessarily places the weight function $w$ in the Muckenhoupt class
$A_p({\mathbb{R}}^{n-1})$.

One may well wonder whether Theorem~\ref{Theorem-Nice} is also effective in the current setting.
However, this is not the case. To illustrate the root of the problem note that,
technically speaking, $L^p({\mathbb{R}}^{n-1},\,w)$ is not
a K\"othe function space on $({\mathbb{R}}^{n-1},dx')$
according to the terminology used earlier. Altering the definition so that
$L^p({\mathbb{R}}^{n-1},\,w)$ would be a K\"othe function space requires working with
$\big({\mathbb{R}}^{n-1},\,w(x')\,dx'\big)$ as the underlying measure space, and such a change
affects the manner in which the K\"othe dual is computed. Indeed, the K\"othe dual
of $L^p({\mathbb{R}}^{n-1},\,w)$ \big(which now has to be taken with respect to the measure space
$\big({\mathbb{R}}^{n-1},\,w(x')\,dx'\big)$\big) is $L^{p'}({\mathbb{R}}^{n-1},\,w)$. However, ${\mathcal{M}}$
is not necessarily bounded on this space, so \eqref{Nabb} cannot be ensured.

\medskip

So far we have seen that the Dirichlet problem with data in ordinary $L^p$ spaces can be treated
by Theorem~\ref{Theorem-Nice}, though this theorem ceases to be effective in the case of weighted $L^p$ spaces.
The question now becomes:
\begin{align}\label{NB-g43ka333}
\parbox{10.20cm}
{Is there a suitable version of Theorem~\ref{Theorem-Nice} targeted to more specialized
K\"othe function spaces, such as rearrangement invariant spaces, devised for the purpose of
treating not just $L^p({\mathbb{R}}^{n-1},\,w)$, but a variety of other weighted K\"othe spaces?}
\end{align}
Recall that a K\"othe function space $\big(\mathbb{X},\|\cdot\|_{\mathbb{X}}\big)$
is said to be {\tt rearrangement} {\tt inva\-riant} provided the function norm $\|f\|_{\mathbb{X}}$
of any $f\in{\mathbb{X}}$ may be expressed in terms of the measure of the level sets of that function.
The reader is referred to \S\ref{S-4} for a more detailed discussion, which also elaborates on the
notion of lower and upper Boyd indices, denoted by $p_{\mathbb{X}}$ and $q_{\mathbb{X}}$
(our definition ensures that $p_\mathbb{X}=q_\mathbb{X}=p$ if $\mathbb{X}=L^p({\mathbb{R}}^{n-1})$).
Given a weight $w$ on ${\mathbb{R}}^{n-1}$, if $f_w^\ast$ denotes the decreasing
rearrangement of $f$ with respect to the measure $w(x')\,dx'$, the weighted version
$\mathbb{X}(w)$ of the K\"othe function space $\mathbb{X}$ is defined as
\begin{equation}\label{eq:aaa.3sa}
\mathbb{X}(w):=\big\{f\in\mathbb{M}:\,\|f^\ast_w\|_{\overline{\mathbb{X}}}<\infty\big\},\qquad
\|f\|_{\mathbb{X}(w)}:=\|f_w^\ast\|_{\overline{\mathbb{X}}},
\end{equation}
where $\overline{\mathbb{X}}$ is the rearrangement invariant function space on $[0,\infty)$
associated with the original ${\mathbb{X}}$ as in Luxemburg's representation theorem.
One can check that if $\mathbb{X}:=L^p({\mathbb{R}}^{n-1})$, $p\in(1,\infty)$,
then $\mathbb{X}(w)=L^p({\mathbb{R}}^{n-1},\,w)$.

The theorem answering the question posed in \eqref{NB-g43ka333} is as follows.

\begin{theorem}\label{corol:Xw-Dir}
Let $L$ be a system as in \eqref{L-def}-\eqref{L-ell.X}, and let $\mathbb{X}$ be a
rearrangement invariant space whose lower and upper Boyd indices satisfy
\begin{equation}\label{eq:RRF}
1<p_\mathbb{X}\leq q_\mathbb{X}<\infty.
\end{equation}

Then for every Muckenhoupt weight $w\in A_{p_\mathbb{X}}({\mathbb{R}}^{n-1})$,
the $\mathbb{X}(w)$-Dirichlet boundary value problem for $L$ in $\mathbb{R}^{n}_{+}$,
\begin{equation}\label{Dir-BVP-Xw}
\left\{
\begin{array}{l}
u\in{\mathscr{C}}^\infty(\mathbb{R}^{n}_{+}),
\\[4pt]
Lu=0\,\,\mbox{ in }\,\,\mathbb{R}^{n}_{+},
\\[4pt]
\mathcal{N}u\in\mathbb{X}(w),
\\[6pt]
u\bigl|_{\partial\mathbb{R}^{n}_{+}}^{{}^{\rm n.t.}}=f\in\mathbb{X}(w),
\end{array}
\right.
\end{equation}
has a unique solution. Furthermore, the solution $u$ of \eqref{Dir-BVP-Xw}
is given by $u(x',t)=(P^L_t\ast f)(x')$ for all $(x',t)\in{\mathbb{R}}^n_{+}$,
where $P^L$ is the Poisson kernel for $L$ in $\mathbb{R}^{n}_{+}$, and there exists a
constant $C=C(n,L,\mathbb{X},w)\in(0,\infty)$ with the property that
\begin{equation}\label{Dir-BVP-Xw:bounds}
\|\mathcal{N} u\|_{\mathbb{X}(w)}
\leq C\,\|f\|_{\mathbb{X}(w)}.
\end{equation}
\end{theorem}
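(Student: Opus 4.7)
The plan is to deduce this from the general well-posedness result Theorem~\ref{Them-General}, applied with $\mathbb{Y}:=\mathbb{X}(w)$. Because $\mathbb{X}(w)$ is a K\"othe function space it is automatically a function lattice, and the first inclusion in \eqref{Fi-AN.1} is redundant by \eqref{eq:jlk}. Thus the verification collapses to two items: (a) the boundedness $\mathcal{M}\colon\mathbb{X}(w)\to\mathbb{X}(w)$; and (b) the inclusion $\tfrac{1+\log_{+}|x'|}{1+|x'|^{n-1}}\in\bigl(\mathbb{X}(w)\bigr)'$, which via \eqref{YtrBS} is equivalent to the second inclusion in \eqref{Fi-AN.1}.

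For (a), the hypothesis that the Boyd indices satisfy $1<p_{\mathbb{X}}\leq q_{\mathbb{X}}<\infty$ is precisely what allows me to invoke the Rubio de Francia type extrapolation theorem for rearrangement invariant spaces equipped with Muckenhoupt weights: since $\mathcal{M}$ is bounded on $L^{p_{\mathbb{X}}}(\mathbb{R}^{n-1},v)$ for every $v\in A_{p_{\mathbb{X}}}(\mathbb{R}^{n-1})$, that extrapolation machinery transfers boundedness to $\mathbb{X}(w)$ for every $w\in A_{p_{\mathbb{X}}}(\mathbb{R}^{n-1})$.

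For (b), I mimic the weighted-Lebesgue computation of \eqref{Lpw-hyp}. By Lemma~\ref{lemma:M-ball},
$$\frac{1+\log_{+}|x'|}{1+|x'|^{n-1}}\;\leq\;C\,\mathcal{M}^{(2)}\bigl({\bf 1}_{B_{n-1}(0',1)}\bigr)(x'),$$
so it is enough to show $\mathcal{M}^{(2)}\bigl({\bf 1}_{B_{n-1}(0',1)}\bigr)\in\bigl(\mathbb{X}(w)\bigr)'$. The K\"othe dual of $\mathbb{X}(w)$ lives in the same r.i./Muckenhoupt framework (its Boyd indices are the H\"older conjugates of those of $\mathbb{X}(w)$ and again lie strictly between $1$ and $\infty$, while the associated weight belongs to the corresponding Muckenhoupt class), so the same extrapolation principle applies and yields boundedness of $\mathcal{M}$ on $\bigl(\mathbb{X}(w)\bigr)'$. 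Since ${\bf 1}_{B_{n-1}(0',1)}\in\bigl(\mathbb{X}(w)\bigr)'$ by \eqref{Lab-1}, applying $\mathcal{M}$ twice preserves this membership.

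With (a) and (b) established, Theorem~\ref{Them-General} delivers existence, uniqueness, and the Poisson integral representation $u(x',t)=(P^{L}_{t}\ast f)(x')$. The quantitative bound \eqref{Dir-BVP-Xw:bounds} follows from the pointwise control \eqref{eJHBb}, the monotonicity of the function norm on $\mathbb{X}(w)$, and the maximal inequality in (a). The principal technical obstacle is having the extrapolation theorem for r.i.\ spaces with $A_{p_{\mathbb{X}}}$ weights available simultaneously on $\mathbb{X}(w)$ and on its K\"othe dual; this is precisely the ingredient missing in Example~2 (where Theorem~\ref{Theorem-Nice} is ineffective for weighted Lebesgue spaces), and the Boyd-index hypothesis $1<p_{\mathbb{X}}\leq q_{\mathbb{X}}<\infty$ is what puts this machinery in force.
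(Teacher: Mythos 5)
Your item (a) is fine: $\mathcal{M}$ is bounded on $\mathbb{X}(w)$ whenever $p_{\mathbb{X}}>1$ and $w\in A_{p_{\mathbb{X}}}$, which is Lemma~\ref{lemma:M:Xw}. The trouble is with (b), and it is a genuine gap, not just a shortcut.

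First, the reduction is not quite the right one. Since $\mathbb{X}(w)$ is a K\"othe space over the measure $w(x')\,dx'$ (not $dx'$), the characterization \eqref{YtrBS} reads: $g\in\big(\mathbb{X}(w)\big)'$ iff $\int|hg|\,w\,dx'<\infty$ for all $h\in\mathbb{X}(w)$. Hence $\mathbb{X}(w)\subset L^1\big(\tfrac{1+\log_{+}|x'|}{1+|x'|^{n-1}}\,dx'\big)$ is equivalent to $\tfrac{1+\log_{+}|x'|}{(1+|x'|^{n-1})\,w(x')}\in\big(\mathbb{X}(w)\big)'$, not to $\mathcal{M}^{(2)}({\bf 1}_{B_{n-1}(0',1)})\in\big(\mathbb{X}(w)\big)'$; the extra factor $w^{-1}$ cannot be ignored.

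Second, even setting that aside, the claimed boundedness of $\mathcal{M}$ on $\big(\mathbb{X}(w)\big)'$ does not follow from $w\in A_{p_{\mathbb{X}}}$. The K\"othe dual of $\mathbb{X}(w)$ (computed with respect to $w\,dx'$) is $\mathbb{X}'(w)$, and by Lemma~\ref{lemma:M:Xw} boundedness of $\mathcal{M}$ on $\mathbb{X}'(w)$ requires $w\in A_{p_{\mathbb{X}'}}$ with $p_{\mathbb{X}'}=(q_{\mathbb{X}})'$ by \eqref{Jbab-in}. For instance if $\mathbb{X}=L^p$ with $p>2$, then $p_{\mathbb{X}'}=p'<p$, and $w\in A_p$ need not belong to $A_{p'}$. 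This is precisely the obstruction spelled out at the end of Example~2: the K\"othe dual of $L^p(w)$ over $w\,dx'$ is $L^{p'}(w)$, on which $\mathcal{M}$ is in general \emph{not} bounded. The one-line duality step in \eqref{Lpw-hyp} works specifically because, for $L^p(w)$, H\"older pairs $L^p(w)$ with the \emph{unweighted-dual-normalized} space $L^{p'}(w^{1-p'})$, and the Muckenhoupt identity $w\in A_p\Longleftrightarrow w^{1-p'}\in A_{p'}$ then applies; no analogue of that pairing is available for a general $\mathbb{X}(w)$.

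The paper's Lemma~\ref{lemma:Xw-decay} gets around exactly this by \emph{not} dualizing at the r.i.~level: it fixes the target functional
$\int|h|\,\mathcal{M}^{(2)}({\bf 1}_{B_{n-1}(0',1)})\,dx'$, proves the scalar inequality \eqref{Lpw-hyp} for every $p\in(1,\infty)$ and every $w\in A_p$, forms the family of pairs $\big(I_N(h){\bf 1}_{B_{n-1}(0',1)},\,|h_N|\big)$, and invokes Rubio de Francia extrapolation (Theorem~\ref{thm:CMP-extrapol-Xw}) to promote that inequality to $\mathbb{X}(w)$, yielding \eqref{eqn:f-M(1B):Xw.45} directly. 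If you want a correct proof, you should replace your duality step in (b) with this extrapolation-of-the-embedding argument; the rest of your outline (invoking Theorem~\ref{Them-General} with $\mathbb{Y}=\mathbb{X}(w)$, and deriving \eqref{Dir-BVP-Xw:bounds} from \eqref{eJHBb} plus (a)) is correct as stated.
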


\vskip 0.08in

As a consequence of the classical result of Lorentz-Shimogaki,
given a rearrangement invariant space $\mathbb{X}$, condition \eqref{eq:RRF} is equivalent
to \eqref{Nabb}, i.e., to the fact that $\mathcal{M}$ is bounded on both $\mathbb{X}$ and $\mathbb{X}'$.
Thus, in the class of rearrangement invariant spaces, Theorem~\ref{corol:Xw-Dir} may be viewed as
a weighted version of Theorem~\ref{Theorem-Nice} (to which the latter reduces when the weight is a constant).
As was the case with Theorem~\ref{Theorem-Nice}, we also have that Theorem~\ref{corol:Xw-Dir} is sharp;
its proof is presented in \S\ref{S-4}, and the strategy relies on Theorem~\ref{Them-General}.
This requires verifying the embedding
\begin{equation}\label{Fi-AN.1BBB}
\mathbb{X}(w)\subset L^1\Big({\mathbb{R}}^{n-1}\,,\,\frac{1+\log_{+}|x'|}{1+|x'|^{n-1}}\,dx'\Big).
\end{equation}
A direct approach based on duality, along the lines of \eqref{Lpw-hyp}, quickly runs
into difficulties (due to the general nature of ${\mathbb{X}}(w)$, in contrast to the
particular case of $L^p({\mathbb{R}}^{n-1},\,w)$ considered in \eqref{Lpw-hyp}).
This being said, the fact that \eqref{Lpw-hyp} can be carried out for all
weights $w\in A_p({\mathbb{R}}^{n-1})$ eventually allows us to use Rubio de Francia's
extrapolation in the context of rearrangement invariant spaces (cf.~\cite{CMP}) in order to derive
a similar estimate in ${\mathbb{X}}(w)$ (cf.~Lemma~\ref{lemma:Xw-decay} for actual details).

In spite of its elegance and sharpness, Theorem~\ref{corol:Xw-Dir} is confined to the class
of rearrangement invariant spaces. An example of interest, lying outside the latter class, is
that of variable exponent Lebesgue spaces. As discussed below, in this setting it is
Theorem~\ref{Theorem-Nice} which may be employed in order to treat the corresponding
Dirichlet problem.

\medskip

\noindent\textbf{Example~3: Variable exponent Lebesgue spaces.}
Given a (Lebesgue) measurable function $p(\cdot):\mathbb{R}^{n-1}\rightarrow(1,\infty)$,
the variable Lebesgue space $L^{p(\cdot)}(\mathbb{R}^{n-1})$ is defined as the collection of all
measurable functions $f$ such that, for some $\lambda>0$,
\begin{equation}
\int_{\mathbb{R}^{n-1} }
\biggl(\frac{|f(x')|}{\lambda}\biggr)^{p(x')}\,dx'<\infty.
\end{equation}
Here and elsewhere, we follow the custom of writing $p(\cdot)$ instead of $p$
in order to emphasize that the exponent is a function and not necessarily a constant.
The set $L^{p(\cdot)}(\mathbb{R}^{n-1})$ becomes a K\"othe function space when equipped with
the function norm
\begin{equation}
\|f\|_{L^{p(\cdot)}(\mathbb{R}^{n-1})}:=
\inf\biggl\{\lambda>0:\int_{\mathbb{R}^{n-1} }
\biggl(\frac{|f(x')|}{\lambda}\biggr)^{p(x')}\,dx'\leq 1\biggr\}.
\end{equation}
This family of spaces generalizes the scale of ordinary Lebesgue spaces.
Indeed, if $p(x')\equiv p_0$, then $L^{p(\cdot)}(\mathbb{R}^{n-1})$ equals $L^{p_0}(\mathbb{R}^{n-1})$.
The K\"othe dual space of $L^{p(\cdot)}(\mathbb{R}^{n-1})$ is $L^{p'(\cdot)}(\mathbb{R}^{n-1})$,
where the conjugate exponent function $p'(\cdot)$ is uniquely defined by the demand that
\begin{equation}
\frac{1}{p(x')}+\frac{1}{p'(x')} = 1,\qquad\forall\,x'\in{\mathbb{R}}^{n-1}.
\end{equation}

Associated to $p(\cdot)$ we introduce the following natural parameters:
\begin{equation}
\label{eq:p-:p+}
p_{-}:=\essinf_{\mathbb{R}^{n-1}}p(\cdot)
\qquad\mbox{and}\qquad
p_{+}:=\esssup_{\mathbb{R}^{n-1}}p(\cdot).
\end{equation}

To apply Theorem~\ref{Theorem-Nice} to $\mathbb{X}:=L^{p(\cdot)}(\mathbb{R}^{n-1})$, we need
$\mathcal{M}$ to be bounded on both $L^{p(\cdot)}(\mathbb{R}^{n-1})$ and
$L^{p'(\cdot)}(\mathbb{R}^{n-1})$. This, in turn, is known to imply that $1<p_{-}\leq p_{+}<\infty$;
see \cite{cruz-uribe-fiorenza-neugebauer03}. Assuming $1<p_{-}\leq p_{+}<\infty$, it has been shown in
\cite{diening05} that $\mathcal{M}$ is bounded on $L^{p(\cdot)}(\mathbb{R}^{n-1})$ if and only if
$\mathcal{M}$ is bounded on $L^{p'(\cdot)}(\mathbb{R}^{n-1})$. Therefore, Theorem~\ref{Theorem-Nice}
gives the following result:
\begin{equation}\label{LPDIR}
\begin{array}{c}
\mbox{whenever $L$ is a second-order system as in \eqref{L-def}-\eqref{L-ell.X},}
\\[4pt]
\mbox{$1<p_{-}\leq p_{+}<\infty$, and if 
$\mathcal{M}$ is bounded on $L^{p(\cdot)}(\mathbb{R}^{n-1})$,}
\\[4pt]
\mbox{the $L^{p(\cdot)}(\mathbb{R}^{n-1})$-Dirichlet problem for $L$ 
in ${\mathbb{R}}^n_{+}$ is well-posed.}
\end{array}
\end{equation}
Moreover, the sharpness of Theorem~\ref{Theorem-Nice} yields a characterization of the
boundedness of the Hardy-Littlewood maximal operator on $L^{p(\cdot)}(\mathbb{R}^{n-1})$
and on $L^{p'(\cdot)}(\mathbb{R}^{n-1})$ in terms of the well-posedness of
the $L^{p(\cdot)}(\mathbb{R}^{n-1})$-Dirichlet and 
$L^{p'(\cdot)}(\mathbb{R}^{n-1})$-Dirich\-let problems in ${\mathbb{R}}^n_{+}$.

Let us further augment the above discussion by noting that, as proved in
\cite{cruz-uribe-fiorenza-neugebauer03} and \cite{nekvinda04}, the operator $\mathcal{M}$
is bounded on $L^{p(\cdot)}(\mathbb{R}^{n-1})$ if $p(\cdot)$ satisfies the following log-H\"older
continuity conditions: there exist constants $C\in[0,\infty)$ and $p_\infty\in[0,\infty)$ such that for
each $x',y'\in\mathbb{R}^{n-1}$,
\begin{equation}\label{logcond-1}
|p(x')-p(y')|\leq\frac{C}{-\log|x'-y'|}\quad\mbox{whenever }\,\,0<|x'-y'|\leq 1/2,
\end{equation}
and
\begin{equation}\label{logcond-2}
|p(x')-p_\infty|\leq\frac{C}{\log(e+|x'|)}.
\end{equation}
We refer the reader to \cite{CU-F} and \cite{DHHR} for full details and complete references.

\medskip

Moving on, we discuss two more classes of spaces for which Theorem~\ref{corol:Xw-Dir} applies.

\medskip

\noindent\textbf{Example~4: Weighted Lorentz spaces.}
Let $f^\ast$ denote the decreasing rearrangement of a function $f\in{\mathbb{M}}$
(cf.~\eqref{Ygav-iyg}). For $0<p,\,q<\infty$, define
\begin{equation}\label{eq:TRaa.1}
\|f\|_{L^{p,q}({\mathbb{R}}^{n-1})}:=\left(\int_0^\infty f^\ast(s)^q s^{q/p-1}\,ds\right)^{1/q},
\end{equation}
and, corresponding to $q=\infty$,
\begin{equation}\label{eq:TRaa.2}
\|f\|_{L^{p,\infty}({\mathbb{R}}^{n-1})}:=\sup_{0<s<\infty}\big[f^\ast(s)s^{1/p}\big].
\end{equation}
Then set
\begin{equation}\label{eq:MVc}
L^{p,q}({\mathbb{R}}^{n-1}):=\big\{f\in{\mathbb{M}}:\,\|f\|_{L^{p,q}({\mathbb{R}}^{n-1})}<\infty\big\}.
\end{equation}
For $0<p<\infty$ and $0<q\leq\infty$, the Lorentz spaces just defined
are only quasi-normed spaces, but when $1<p<\infty$ and $1\leq q\leq\infty$,
or when $p=1$ and $1\leq q<\infty$, they are equivalent to normed spaces.
Also,
\begin{equation}\label{eq:Asx}
\begin{array}{l}
\mbox{if $1<p<\infty$ and $1\leq q\leq\infty$,
or $p=1$ and $1\leq q<\infty$, then}
\\[4pt]
\mbox{$\mathbb{X}:=L^{p,q}({\mathbb{R}}^{n-1})$ is a rearrangement invariant function space}
\\[4pt]
\mbox{with lower and upper Boyd indices given by $p_\mathbb{X}=q_\mathbb{X}=p$.}
\end{array}
\end{equation}
The spaces $\mathbb{X}(w)$ are the weighted Lorentz spaces
$L^{p,q}\big({\mathbb{R}}^{n-1},w(x')dx'\big)$ obtained by replacing
$f^\ast$ with $f^\ast_w$ in \eqref{eq:TRaa.1}-\eqref{eq:MVc}.
Granted \eqref{eq:Asx}, Theorem~\ref{corol:Xw-Dir} applies and yields the well-posedness of
the Dirichlet problem in ${\mathbb{R}}^n_{+}$ for a system $L$ as in \eqref{L-def}-\eqref{L-ell.X}
with data in $L^{p,q}\big(\mathbb{R}^{n-1},\,w(x')dx'\big)$ provided $1<p<\infty$, $1\leq q\leq\infty$,
and $w\in A_p(\mathbb{R}^{n-1})$. In particular, this well-posedness result holds for data in
the standard Lorentz spaces $L^{p,q}(\mathbb{R}^{n-1})$ with $1<p<\infty$ and $1\leq q\leq\infty$.

\medskip

\noindent\textbf{Example~5: Weighted Orlicz spaces.}
Given a Young function $\Phi$, define the Orlicz space $L^\Phi(\mathbb{R}^{n-1})$ to
be the function space associated with the Luxemburg norm
\begin{equation}\label{eq:ORL}
\|f\|_{L^\Phi(\mathbb{R}^{n-1})}:=\inf\left\{\lambda>0:\,
\int_{\mathbb{R}^{n-1}}\Phi\left(\frac{|f(x')|}{\lambda}\right)\,dx'\leq 1\right\}.
\end{equation}
Then ${\mathbb{X}}:=L^\Phi(\mathbb{R}^{n-1})$ is a rearrangement invariant function space.
It turns out that its weighted version $\mathbb{X}(w)$, originally defined as in \eqref{eq:aaa.3sa},
may be described as above with the Lebesgue measure replaced by $w(x')\,dx'$.
Clearly the Lebesgue spaces are Orlicz spaces with $\Phi(t):=t^p$.
Other examples include the Zygmund spaces $L^p(\log L)^\alpha$,
$1<p<\infty$, $\alpha\in\mathbb{R}$, which are defined using
$\Phi(t):=t^p\log(e+t)^\alpha$. In this case, $p_\mathbb{X}=q_\mathbb{X}=p$,
so Theorem~\ref{corol:Xw-Dir} applies and yields the well-posedness of
the Dirichlet problem in ${\mathbb{R}}^n_{+}$ for a system $L$ as in \eqref{L-def}-\eqref{L-ell.X}
with data in the weighted Zygmund spaces
$L^p(\log L)^\alpha({\mathbb{R}}^{n-1},\,w(x')dx')$, $1<p<\infty$, $\alpha\in\mathbb{R}$, and
$w\in A_p({\mathbb{R}}^{n-1})$.

The spaces $L^p+L^q$ and $L^p\cap L^q$ can also be treated as Orlicz spaces, with
$\Phi(t)\approx\max\{t^p,t^q\}$ and $\Phi(t)\approx\min\{t^p,t^q\}$, respectively.
In both cases, $p_\mathbb{X}=\min\{p,q\}$ and $q_\mathbb{X}=\max\{p,q\}$.
Hence, if $1<\min\{p,q\}$ and $\max\{p,q\}<\infty$ then Theorem~\ref{corol:Xw-Dir} applies.
Note that for these and other Orlicz spaces, the Boyd indices can be computed directly
from the function $\Phi$ (see \cite[Chapter~4]{CMP}).

\vskip 0.08in
\begin{remark}\label{Rfav57}
As the alert reader has perhaps noted, in the applications of Theorem~\ref{Them-General}
(such as those discussed in \eqref{eq:BVCc}, \eqref{eq:BVCc.2}, Theorem~\ref{Theorem-Nice},
Theorem~\ref{corol:Xw-Dir}, as well as in Examples~1-5) we have
taken the set ${\mathbb{X}}$ to actually be a linear subspace of ${\mathbb{M}}$. This is no accident
since, in general, starting with $\mathbb{X},\mathbb{Y}$ merely satisfying \eqref{Fi-AN.1}-\eqref{Fi-AN.2},
if $\widehat{\mathbb{X}}$ is the linear span of $\mathbb{X}$ in $\mathbb{M}$, then
the pair $\widehat{\mathbb{X}},\mathbb{Y}$ continue to satisfy \eqref{Fi-AN.1}-\eqref{Fi-AN.2}.
Indeed, this is readily seen from the sublinearity of ${\mathcal{M}}$ and the fact that ${\mathbb{Y}}$
is a function lattice. In particular, for any system $L$ as in \eqref{L-def}-\eqref{L-ell.X}, the
$(\widehat{\mathbb{X}},\mathbb{Y})$-Dirichlet boundary value problem for $L$ in $\mathbb{R}^{n}_{+}$
is uniquely solvable in the same manner as before.
\end{remark}

We conclude our list of examples by discussing another significant case 
when Theorem~\ref{Them-General} applies. 

\medskip

\noindent\textbf{Example~6: Morrey spaces.}
Recall that the Morrey scale in ${\mathbb{R}}^{n-1}$ consists of spaces 
$\mathfrak{L}^{p,\lambda}({\mathbb{R}}^{n-1})$ defined for each $p\in(1,\infty)$ 
and $\lambda\in(0,n-1)$ according to 
\begin{equation}\label{MOR.1}
\mathfrak{L}^{p,\lambda}({\mathbb{R}}^{n-1}):=\Big\{f\in L^p_{\rm loc}({\mathbb{R}}^{n-1}):\,
\|f\|_{\mathfrak{L}^{p,\lambda}({\mathbb{R}}^{n-1})}<\infty\Big\}
\end{equation}
where
\begin{equation}\label{MOR.2}
\|f\|_{\mathfrak{L}^{p,\lambda}({\mathbb{R}}^{n-1})}:=
\sup_{x'\in{\mathbb{R}}^{n-1},\,r>0}\Big(r^{-\lambda}\int_{B_{n-1}(x',r)}|f(y')|^p\,dy'\Big)^{1/p}.
\end{equation}
Given a function $f\in L^p_{\rm loc}({\mathbb{R}}^{n-1})$, break up 
\begin{align}\label{MOR.3}
\int_{{\mathbb{R}}^{n-1}}|f(x')|\frac{1+\log_{+}|x'|}{1+|x'|^{n-1}}\,dx'
=\sum_{j=0}^\infty I_j
\end{align}
where 
\begin{align}\label{MOR.4}
I_0:=\int_{B_{n-1}(0',2)}|f(x')|\frac{1+\log_{+}|x'|}{1+|x'|^{n-1}}\,dx'
\end{align}
and, for each $j\in{\mathbb{N}}$, 
\begin{align}\label{MOR.5}
I_j:=\int_{B_{n-1}(0',2^{j+1})\setminus B_{n-1}(0',2^{j})}|f(x')|\frac{1+\log_{+}|x'|}{1+|x'|^{n-1}}\,dx'.
\end{align}
Use H\"older's inequality and \eqref{MOR.2} to estimate
\begin{align}\label{MOR.6}
I_j &\leq\frac{1+(j+1)\ln 2}{2^{j(n-1)}}\int_{B_{n-1}(0',2^{j+1})}|f(x')|\,dx'
\\[4pt]
&\leq 2^{n-1}\Big(\frac{\omega_{n-2}}{n-1}\Big)^{\frac{p-1}{p}}
\frac{1+(j+1)\ln 2}{2^{(j+1)(n-1)/p}}\Big(\int_{B_{n-1}(0',2^{j+1})}|f(x')|^p\,dx'\Big)^{1/p}
\nonumber\\[4pt]
&\leq 2^{n-1}\Big(\frac{\omega_{n-2}}{n-1}\Big)^{\frac{p-1}{p}}
\frac{1+(j+1)\ln 2}{2^{(j+1)(n-1-\lambda)/p}}
\|f\|_{\mathfrak{L}^{p,\lambda}({\mathbb{R}}^{n-1})},\qquad\forall\,j\in{\mathbb{N}},\nonumber
\end{align}
and, likewise, 
\begin{align}\label{MOR.7}
I_0 \leq 2^{n-1}\Big(\frac{\omega_{n-2}}{n-1}\Big)^{\frac{p-1}{p}}
\frac{1+\ln 2}{2^{(n-1-\lambda)/p}}
\|f\|_{\mathfrak{L}^{p,\lambda}({\mathbb{R}}^{n-1})}.
\end{align}
Bearing in mind that 
\begin{equation}\label{jDEgsgw}
\lambda<n-1\,\Longrightarrow\,
\sum_{j=0}^\infty \frac{1+(j+1)\ln 2}{2^{(j+1)(n-1-\lambda)/p}}<\infty,
\end{equation}
then yields 
\begin{align}\label{MOR.8}
\int_{{\mathbb{R}}^{n-1}}|f(x')|\frac{1+\log_{+}|x'|}{1+|x'|^{n-1}}\,dx'
\leq C_{n,p,\lambda}\|f\|_{\mathfrak{L}^{p,\lambda}({\mathbb{R}}^{n-1})}
\end{align}
for some finite constant $C_{n,p,\lambda}>0$ independent of $f$. This proves that if $p\in(1,\infty)$ and $\lambda\in(0,n-1)$ then
\begin{align}\label{MOR.9}
\mathfrak{L}^{p,\lambda}({\mathbb{R}}^{n-1})\subset
L^1\Big({\mathbb{R}}^{n-1}\,,\,\frac{1+\log_{+}|x'|}{1+|x'|^{n-1}}\,dx'\Big).
\end{align}
In addition, it is clear from \eqref{MOR.1}-\eqref{MOR.2} that
\begin{align}\label{MOR.10}
\text{$\mathfrak{L}^{p,\lambda}({\mathbb{R}}^{n-1})$ is a function lattice if 
$1<p<\infty$ and $0<\lambda<n-1$,}
\end{align}
and it has been proved by F. Chiarenza and M. Frasca in \cite{CF} that
\begin{align}\label{MOR.11}
\parbox{11.0cm}{the Hardy-Littlewood maximal operator ${\mathcal{M}}$ is bounded on 
the Morrey space $\mathfrak{L}^{p,\lambda}({\mathbb{R}}^{n-1})$ whenever 
$1<p<\infty$ and $0<\lambda<n-1$.}
\end{align}
Granted \eqref{MOR.9}-\eqref{MOR.11}, Theorem~\ref{Them-General} applies and gives that for 
any system $L$ as in \eqref{L-def}-\eqref{L-ell.X} the 
$\mathfrak{L}^{p,\lambda}({\mathbb{R}}^{n-1})$-Dirichlet problem 
\begin{equation}\label{Dir-BVP-Morrey}
\left\{
\begin{array}{l}
u\in{\mathscr{C}}^\infty(\mathbb{R}^{n}_{+}),
\\[4pt]
Lu=0\,\,\mbox{ in }\,\,\mathbb{R}^{n}_{+},
\\[6pt]
\mathcal{N}u\in\mathfrak{L}^{p,\lambda}({\mathbb{R}}^{n-1}),
\\[4pt]
u\big|_{\partial\mathbb{R}^{n}_{+}}^{{}^{\rm n.t.}}=f\in\mathfrak{L}^{p,\lambda}({\mathbb{R}}^{n-1}),
\end{array}
\right.
\end{equation}
is well-posed for arbitrary $p\in(1,\infty)$ and $\lambda\in(0,n-1)$. 

\medskip

In the last portion of this section we briefly comment on the literature dealing with 
Dirichlet boundary value problems for elliptic operators in the upper-half space. 
From the outset it is important to recognize that the nature 
of these problems is affected not only by the choice of the function space from which the
boundary datum $f$ is selected but also by the means through which the size of the solution $u$ 
is measured and the very manner in which its boundary trace is considered. For example, 
there is an enormous amount of work devoted to the case when the solution $u$ is sought in 
various Sobolev spaces in ${\mathbb{R}}^n_{+}$, the boundary datum $f$ is assumed to belong 
to suitable Besov spaces on ${\mathbb{R}}^{n-1}$, and the boundary trace of $u$ is taken in 
the generalized sense of Sobolev space theory. Classical references in this regard include 
\cite{ADNI}, \cite{ADNII}, \cite{LionsMagenes}, \cite{MazShap}, \cite{Taylor}, and
the reader is also invited to consult the literature cited therein.      

In this paper we are interested in the case when the size of $u$ is measured in terms of the 
nontangential maximal function and the trace of $u$ on the boundary of ${\mathbb{R}}^n_{+}$ 
is taken in a nontangential pointwise sense (cf. \eqref{nkc-EE-2}). 
In the particular case when $L=\Delta$, 
the Laplacian in ${\mathbb{R}}^n$, the boundary value problem \eqref{Dir-BVP-Lpw-intro} has been treated 
at length in a number of monographs, including \cite{ABR}, \cite{GCRF85}, \cite{St70}, \cite{Stein93}, 
and \cite{StWe71}. In all these works, the existence part makes use of the explicit form of the harmonic 
Poisson kernel from \eqref{Uah-TTT}, while the uniqueness relies on either the Maximum Principle, or 
the Schwarz reflection principle for harmonic functions. Neither of the latter techniques may be adapted
successfully to prove uniqueness in the case of general systems treated here, so we develop a new approach 
based on the properties of the Green function for an elliptic system in the upper half-space 
(reviewed in the appendix). While arguments involving Green functions have been successfully used 
in the past to prove uniqueness, the novelty here is that we succeed in constructing a Green function 
whose basic properties are compatible with the very formulation of the original boundary value problem. 
In our case, a key aspect is the specific manner in which the nontangential maximal function of 
the derivatives of the said Green function are controlled; cf. \eqref{bouMNN}, and other pertinent 
features from Theorem~\ref{ta.av-GGG.2A}. It is remarkable that such a detailed analysis may be 
carried out for the entire class of elliptic systems $L$ as in \eqref{L-def}-\eqref{L-ell.X}.  

There is also a sizable amount of work devoted to studying the (classical) Dirichlet problem 
for the Laplacian in the upper-half space with a continuous boundary datum $f$. In such 
a scenario, one seeks a harmonic function 
$u\in{\mathscr{C}}^\infty({\mathbb{R}}^n_{+})\cap{\mathscr{C}}^0(\overline{{\mathbb{R}}^n_{+}})$
satisfying $u|_{\partial{\mathbb{R}}^n_{+}}=f$. As noted by Helms in \cite[p.\,42 and p.\,158]{He}, 
even in the case when the boundary datum $f$ is a bounded continuous
function in ${\mathbb{R}}^{n-1}$ the solution $u$ of this classical Dirichlet problem is not
unique. To ensure uniqueness in such a setting one typically specifies the behavior of $u(x',t)$ 
as $t\to\infty$. A case in point is \cite{ST96}, where uniqueness is established in the class of harmonic 
functions $u\in{\mathscr{C}}^\infty({\mathbb{R}}^n_{+})\cap{\mathscr{C}}^0(\overline{{\mathbb{R}}^n_{+}})$ 
satisfying $u(x)=o(|x|\sec^\gamma\theta)$ as $|x|\to\infty$ (where $\theta:=\arccos(x_n/|x|)$ and 
$\gamma\in{\mathbb{R}}$ is arbitrary), by proving a Phragm\'en-Lindel\"of principle under the
latter growth condition. This builds on the work of \cite{Si88}, \cite{Wolf41}, and others.
In this regard, see also \cite{Yo96}. All these works rely on positivity and specialized 
properties of the Laplace operator, so the techniques employed do not extend to the considerably more 
general class of elliptic systems considered in the present paper. 

Much attention has also been paid to the case of the $L^p$-Dirichlet problem in the upper-half space 
for variable coefficient scalar elliptic operators in divergence form, $L={\rm div}A\nabla$, under 
various assumptions on the coefficient matrix $A=A(x',t)$ for $(x',t)\in{\mathbb{R}}^n_{+}$. 
For this topic, the interested reader is referred to the excellent exposition in Kenig's monograph 
\cite{Ke94}, as well as the more recent work in \cite{AAAHK}, \cite{HofMitMor} 
and in the references cited there. This body of work crucially relies on the De Giorgi-Nash-Moser 
theory, an ingredient not available for the type of systems considered in the
present paper. 

Finally, we wish to mention that in \cite{Shen} Shen has considered the well-posedness of the
Dirichlet problem for elliptic systems $L$ as in \eqref{L-def}-\eqref{L-ell.X} in a Lipschitz
domain $\Omega$ with boundary data from Morrey spaces on $\partial\Omega$. For this
more general class of domains he proved the well-posedness of a boundary value problem 
formulated as in \eqref{Dir-BVP-Morrey} with the upper-half space ${\mathbb{R}}^n_{+}$ replaced by 
a Lipschitz domain $\Omega$ but only when $p=2$. In relation to this, the novelty in our paper
is the consideration of the full range $p\in(1,\infty)$.

\section{Preliminary Matters}
\setcounter{equation}{0}
\label{S-2}

Throughout the paper, we let ${\mathbb{N}}$ stand for the collection of all strictly positive
integers, and set ${\mathbb{N}}_0:={\mathbb{N}}\cup\{0\}$. In this way $\mathbb{N}_0^k$,
where $k\in\mathbb{N}$, stands for the set of multi-indices $\alpha=(\alpha_1,\dots,\alpha_k)$ with
$\alpha_j\in\mathbb{N}_0$ for $1\leq j\leq k$. Also, fix $n\in{\mathbb{N}}$ with $n\geq 2$.
We shall work in the upper-half space
\begin{equation}\label{RRR-UpHs}
{\mathbb{R}}^{n}_{+}:=\big\{x=(x',x_n)\in
{\mathbb{R}}^{n}={\mathbb{R}}^{n-1}\times{\mathbb{R}}:\,x_n>0\big\},
\end{equation}
whose topological boundary $\partial{\mathbb{R}}^{n}_{+}={\mathbb{R}}^{n-1}\times\{0\}$
will be frequently identified with the horizontal hyperplane ${\mathbb{R}}^{n-1}$
via $(x',0)\equiv x'$. The origin in ${\mathbb{R}}^{n-1}$ is denoted by $0'$
and we let $B_{n-1}(x',r)$ stand for the $(n-1)$-dimensional Euclidean ball of radius $r$
centered at $x'\in{\mathbb{R}}^{n-1}$. Fix a number $\kappa>0$ and for each boundary point
$x'\in\partial{\mathbb{R}}^{n}_{+}$ introduce the conical nontangential approach region
with vertex at $x'$ as
\begin{equation}\label{NT-1}
\Gamma(x'):=\Gamma_\kappa(x'):=\big\{y=(y',t)\in{\mathbb{R}}^{n}_{+}:\,
|x'-y'|<\kappa\,t\big\}.
\end{equation}
Given a vector-valued function $u:{\mathbb{R}}^{n}_{+}\to{\mathbb{C}}^M$,
define the nontangential maximal function of $u$ by
\begin{equation}\label{NT-Fct}
\big({\mathcal{N}}u\big)(x'):=\big({\mathcal{N}}_\kappa u\big)(x')
:=\sup\big\{|u(y)|:\,y\in\Gamma_\kappa(x')\big\},\qquad
x'\in{\mathbb{R}}^{n-1}.
\end{equation}
It is well-known that the aperture of the cones used to define the nontangential maximal
operator plays only a secondary role; see Proposition~\ref{prop:cones-Lpw} for a concrete
result of this flavor. Whenever meaningful, we also define
\begin{equation}\label{nkc-EE-2}
u\Big|^{{}^{\rm n.t.}}_{\partial{\mathbb{R}}^{n}_{+}}(x')
:=\lim_{\Gamma_{\kappa}(x')\ni y\to (x',0)}u(y)
\quad\mbox{for }\,x'\in{\mathbb{R}}^{n-1}.
\end{equation}

In the sequel, we shall need to consider a localized version of the
nontangential maximal operator. Specifically, given any
$E\subset{\mathbb{R}}^n_{+}$, for each $u:E\to{\mathbb{C}}^M$
we set
\begin{equation}\label{NT-Fct.23}
\big({\mathcal{N}}^E u\big)(x'):=\big({\mathcal{N}}^E_\kappa u\big)(x')
:=\sup\big\{|u(y)|:\,y\in\Gamma_\kappa(x')\cap E\big\},
\quad x'\in{\mathbb{R}}^{n-1}.
\end{equation}
Hence, ${\mathcal{N}}^E_\kappa u={\mathcal{N}}_\kappa\widetilde{u}$
where $\widetilde{u}$ is the extension of $u$ to ${\mathbb{R}}^n_{+}$
by zero outside $E$. In the scenario when $u$ is originally defined in
the entire upper-half space ${\mathbb{R}}^n_{+}$ we may
therefore write
\begin{equation}\label{NT-Fct.23PPPP}
{\mathcal{N}}^E_\kappa u={\mathcal{N}}_\kappa({\bf 1}_E u),
\end{equation}
where ${\bf 1}_E $ denotes the characteristic function of $E$.
Corresponding to the special case when $E=\big\{(x',x_n)\in
{\mathbb{R}}^{n}_{+}:\,x_n<\varepsilon\big\}$, we simply write
${\mathcal{N}}^{(\varepsilon)}_\kappa$ in place of ${\mathcal{N}}^E_\kappa$.
That is,
\begin{equation}\label{Gruah.4}
{\mathcal{N}}^{(\varepsilon)}_\kappa u(x')
:=\sup_{\substack{y=(y',y_n)\in\Gamma_\kappa(x')\\ 0<y_n<\varepsilon}}
|u(y)|,\qquad x'\in\mathbb{R}^{n-1}.
\end{equation}

Throughout the paper we use the symbol $|E|$ to denote the Lebesgue measure of Lebesgue measurable
set $E\subset{\mathbb{R}}^n$. The Lebesgue measure itself in ${\mathbb{R}}^n$ will be
denoted by ${\mathscr{L}}^n$. We let $Q$ denote open cubes in $\mathbb{R}^{n-1}$ with
sides parallel to the coordinate axes, and employ $\ell(Q)$ to denote its side-length.
We will also use the standard convention $\lambda\,Q$, with $\lambda>0$,
for the cube concentric with $Q$ whose side-length is $\lambda\,\ell(Q)$.
For any $Q$ and any $h\in L^1_{\rm loc}(\mathbb{R}^{n-1})$, we write
\begin{equation}\label{nota-aver}
h_Q:=\aver{Q} h\,d{\mathscr{L}}^{n-1}:=\frac1{|Q|}\int_{Q} h(x')\,dx'.
\end{equation}
If the function $h$ is $\mathbb{C}^M$-valued, the average is taken componentwise.
The Hardy-Littlewood maximal operator on $\mathbb{R}^{n-1}$ is defined as
\begin{equation}\label{MMax}
\mathcal{M}f(x'):=\sup_{Q\ni x'}\aver{Q}|f(y')|\,dy',\qquad x'\in\mathbb{R}^{n-1}.
\end{equation}
Also, we write
\begin{equation}\label{Uga2}
\mathcal{M}^{(2)}:=\mathcal{M}\circ\mathcal{M}
\end{equation}
for the two-fold composition of $\mathcal{M}$ with itself. We follow the customary notation
$A\approx B$ in order to indicate that each quantity $A,B$ is dominated by a fixed multiple
of the other (via constants independent of the essential parameters intervening in $A,B$).

\begin{lemma}\label{lemma:M-ball}
For $x'\in\mathbb{R}^{n-1}$ one has
\begin{equation}\label{eq:M-ball}
\mathcal{M}\big({\bf 1}_{B_{n-1}(0',1)}\big)(x')
\approx\frac{1}{1+|x'|^{n-1}},
\end{equation}
and
\begin{equation}\label{eq:M2-ball}
\mathcal{M}^{(2)}\big({\bf 1}_{B_{n-1}(0',1)}\big)(x')
\approx\frac{1+\log_{+}|x'|}{1+|x'|^{n-1}},
\end{equation}
where the implicit constants depend only on $n$.
\end{lemma}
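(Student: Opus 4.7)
The plan is to establish the two estimates separately, in each case analyzing the supremum defining $\mathcal{M}$ by dividing into a bounded regime $|x'|\lesssim 1$ and a far regime $|x'|\gg 1$, and for the latter comparing cubes of various scales containing $x'$. Throughout, by equivalence of the Euclidean ball and cube geometries (up to universal constants), I may freely replace cubes by balls and vice versa when computing averages, and all implicit constants are allowed to depend on $n$.

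For \eqref{eq:M-ball}, write $f:={\bf 1}_{B_{n-1}(0',1)}$. When $|x'|\leq 2$ the trivial bound $\mathcal{M}f\leq\|f\|_{L^\infty}=1$ gives the upper estimate, while the lower estimate follows from averaging $f$ over a fixed cube $Q$ of side-length $10$ centered at the origin (which contains $x'$ and the unit ball), yielding $\mathcal{M}f(x')\gtrsim |B_{n-1}(0',1)|/|Q|\approx 1$. This matches $1/(1+|x'|^{n-1})\approx 1$. When $|x'|>2$, any cube $Q\ni x'$ with $Q\cap B_{n-1}(0',1)\ne\emptyset$ must have $\ell(Q)\gtrsim|x'|$, so the average of $f$ over $Q$ is bounded by $|B_{n-1}(0',1)|/|Q|\lesssim |x'|^{1-n}$. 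For the lower bound, take the cube of side-length $4|x'|$ centered at $x'$, which contains $B_{n-1}(0',1)$ and produces an average $\approx|x'|^{1-n}$. Together this gives the two-sided bound $\approx 1/(1+|x'|^{n-1})$.

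For \eqref{eq:M2-ball}, use \eqref{eq:M-ball} to reduce matters to showing
\[
\mathcal{M}g(x')\approx\frac{1+\log_{+}|x'|}{1+|x'|^{n-1}},\qquad g(y'):=\frac{1}{1+|y'|^{n-1}}.
\]
The central computation is a polar-coordinate integration: for $R\geq 1$,
\[
\int_{B_{n-1}(0',R)}\frac{dy'}{1+|y'|^{n-1}}
\approx\int_0^R\frac{r^{n-2}}{1+r^{n-1}}\,dr
\approx 1+\log R,
\]
whereas for $0<R\leq 1$ the same integral is $\approx R^{n-1}$. In the regime $|x'|\leq 2$, the estimate $\mathcal{M}g(x')\lesssim\|g\|_{L^\infty}=1$ gives the upper bound, and averaging over a small cube around $x'$ yields $\mathcal{M}g(x')\gtrsim g(x')\approx 1$, matching the claimed right-hand side which is $\approx 1$ there.

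In the far regime $|x'|>2$, the heart of the matter is to identify the scale of the optimal cube. For any cube $Q\ni x'$ of side-length $R$, consider two sub-cases. If $R<|x'|/4$ then $Q$ lies in the region $\{|y'|\geq|x'|/2\}$, so the average of $g$ over $Q$ is at most $\sup_{|y'|\geq |x'|/2}g(y')\lesssim|x'|^{1-n}$. If $R\geq|x'|/4$, then $Q\subset B_{n-1}(0',cR)$ for some dimensional constant $c$, and the above integral estimate gives average $\lesssim (1+\log R)/R^{n-1}$. Since the function $R\mapsto(1+\log R)/R^{n-1}$ is decreasing for $R\gtrsim 1$ (as $n\geq 2$), its maximum over $R\geq|x'|/4$ is attained at $R\approx|x'|$, yielding an upper bound $\lesssim (1+\log|x'|)/|x'|^{n-1}$. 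The matching lower bound comes by choosing explicitly a cube of side-length $\approx|x'|$ containing both $x'$ and $B_{n-1}(0',1)$: its integral of $g$ is $\gtrsim \log|x'|$ by the polar computation, while its measure is $\approx|x'|^{n-1}$. The main obstacle is precisely this balancing argument verifying that the optimal scale is $R\approx|x'|$; once that is in place, the remaining manipulations are routine.
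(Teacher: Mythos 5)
Your proof of \eqref{eq:M-ball} follows essentially the same route as the paper's (lower bound from a single cube of side $\approx 1+|x'|$ containing both $x'$ and the unit ball; upper bound by noting any cube $Q\ni x'$ meeting $B_{n-1}(0',1)$ must have $\ell(Q)\gtrsim|x'|$ when $|x'|>2$). For \eqref{eq:M2-ball}, however, your argument is genuinely different from the paper's and worth comparing. The paper invokes the pointwise equivalence $\mathcal{M}^{(2)}f\approx\mathcal{M}_{L\log L}f$ from \cite{curbera-garcia-cuerva-martell-perez06}, then exploits the exact identity $\|{\bf 1}_B\|_{L\log L,Q}=\varphi\big(\aver{Q}{\bf 1}_B\big)$ with $\varphi(t)=\big(\Phi^{-1}(t^{-1})\big)^{-1}$, pulls the continuous increasing $\varphi$ through the supremum to get $\mathcal{M}^{(2)}({\bf 1}_B)\approx\varphi\big(\mathcal{M}({\bf 1}_B)\big)$, and finishes by computing $\varphi(t)\approx t\log(e+t^{-1})$. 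You instead use the monotonicity of $\mathcal{M}$ to reduce to estimating $\mathcal{M}g$ for $g(y')=(1+|y'|^{n-1})^{-1}$ directly, and then balance scales via the polar integral $\int_{B(0',R)}g\approx 1+\log R$. Your route is more elementary and self-contained (no appeal to the $L\log L$ maximal function), at the cost of having to argue explicitly that the optimal cube scale is $\approx|x'|$; the paper's route offloads that entire balancing to the $\varphi$-machinery and is shorter once the external input is granted. Both are correct. One small imprecision in your write-up: the threshold ``$R<|x'|/4$'' ensuring $Q\subset\{|y'|\geq|x'|/2\}$ should really be $R<c_n|x'|$ with $c_n\approx 1/(2\sqrt{n-1})$, since a cube of side $R$ containing $x'$ has diameter $\sqrt{n-1}\,R$; this is harmless because the implicit constants are allowed to depend on $n$, but it should be stated as an $n$-dependent cutoff rather than the fixed fraction $1/4$.
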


\begin{proof}
The proof of \eqref{eq:M-ball} is elementary but we include it for completeness.
Note first that for every $x'\in\mathbb{R}^{n-1}$, if we denote by $Q_{x'}$ the
cube in $\mathbb{R}^{n-1}$ centered at the origin and with side-length $2\,(|x'|+1)$,
then $x'\in Q_{x'}$ and $B_{n-1}(0',1)\subset Q_{x'}$. Thus, we easily obtain
\begin{multline}\label{ytDd.1}
\mathcal{M}\big({\bf 1}_{B_{n-1}(0',1)}\big)(x')
\geq\aver{Q_{x'}} {\bf 1}_{B_{n-1}(0',1)}(y')\,dy'
\\
=\frac{|B_{n-1}(0',1)|}{|Q_{x'}|}
\geq\frac{C_n}{1+|x'|^{n-1}}.
\end{multline}
To obtain the converse inequality we first observe that, clearly,
\begin{equation}\label{ytDd.2}
\mathcal{M}\big({\bf 1}_{B_{n-1}(0',1)}\big)(x')
\leq 1\le\frac{C_n}{1+|x'|^{n-1}},
\qquad\mbox{whenever }\,\,|x'|\leq 2.
\end{equation}
Suppose next that $|x'|>2$. Notice that if $x'\in Q\subset\mathbb{R}^{n-1}$ and
there is some $y'\in Q\cap B_{n-1}(0',1)$ then
\begin{equation}\label{uyafav}
|x'|\leq|x'-y'|+|y'|\leq\sqrt{n}\,\ell(Q)+1\leq\sqrt{n}\,\ell(Q)+|x'|/2.
\end{equation}
Therefore $\ell(Q)>|x'|/(2\,\sqrt{n})$, which entails
\begin{equation}\label{yrd7u}
\aver{Q} {\bf 1}_{B_{n-1}(0',1)}(y')\,dy'\leq\frac{|B_{n-1}(0',1)|}{|Q|}
\leq\frac{C_n}{|x'|^{n-1}}\leq\frac{C_n}{1+|x'|^{n-1}}.
\end{equation}
The same inequality trivially holds in the case when $Q$ is disjoint from $B_{n-1}(0',1)$.
Taking the supremum of the most extreme sides of \eqref{yrd7u} over all cubes $Q$
containing $x'$ then yields the upper estimate in \eqref{eq:M-ball} in the case
when $|x'|>2$. This finishes the proof of \eqref{eq:M-ball}.

Turning to the proof of \eqref{eq:M2-ball}, we first invoke an auxiliary estimate
whose proof can be found in \cite{curbera-garcia-cuerva-martell-perez06}:
\begin{equation}\label{eq:M2-MLogL}
\begin{array}{c}
\mathcal{M}^{(2)}f(x')\approx\mathcal{M}_{L\,\log L} f(x'):=\sup_{Q\ni x'}\|f\|_{L\,\log L,Q}
\\[6pt]
\mbox{uniformly for $f\in L^1_{\rm loc}({\mathbb{R}}^{n-1})$ and $x'\in{\mathbb{R}}^{n-1}$,}
\end{array}
\end{equation}
where $\|\cdot\|_{L\,\log L,Q}$ stands for the localized and normalized Luxemburg norm
\begin{equation}\label{YGv-u6g}
\|f\|_{L\,\log L,Q}:=\inf\left\{\lambda>0:\,
\aver{Q}\Phi\left(\frac{|f(x')|}{\lambda}\right)\,dx'\leq 1\right\},
\end{equation}
with $\Phi(t):=t\,\log(e+t)$, $t\geq 0$.
Defining $\varphi(t):=\big(\Phi^{-1}(t^{-1})\big)^{-1}$ for $t\in(0,\infty)$
and $\varphi(0):=0$, easy calculations lead to
\begin{equation}\label{LlogL-1B}
\|1_{B_{n-1}(0',1)}\|_{L\,\log L,Q}
=\varphi\left(\frac{|B_{n-1}(0',1)\cap Q|}{|Q|}\right)
=\varphi\left(\aver{Q}{\bf 1}_{B_{n-1}(0',1)}(y')\,dy'\right).
\end{equation}
Using then \eqref{eq:M2-MLogL}, \eqref{LlogL-1B}, the fact that $\varphi$ is a continuous
strictly increasing function in $[0,\infty)$, and \eqref{eq:M-ball}, we conclude that
\begin{align}\label{M2-estimate}
\mathcal{M}^{(2)}\big(1_{B_{n-1}(0',1)}\big)(x')
&\approx\sup_{Q\ni x'}\varphi\left(\aver{Q}{\bf 1}_{B_{n-1}(0',1)}(y')\,dy'\right)
\\[4pt]
&=\varphi\left(\sup_{Q\ni x'}\aver{Q}{\bf 1}_{B_{n-1}(0',1)}(y')\,dy'\right)
\nonumber\\[4pt]
&=\varphi\big(\mathcal{M}(1_{B_{n-1}(0',1)})(x')\big)
\approx\varphi\left(\frac1{1+|x'|^{n-1}}\right),\nonumber
\end{align}
uniformly for $x'\in{\mathbb{R}}^{n-1}$.
Thus, to complete the proof of \eqref{eq:M2-ball} we only need to find a suitable estimate
for the last term above. To this end, one can easily check that $\Phi^{-1}(t)\approx t/\log(e+t)$
which gives that $\varphi(t)\approx t\,\log(e+t^{-1})$. This and \eqref{M2-estimate}
then yield
\begin{equation}\label{LbvVA}
\mathcal{M}^{(2)}\big(1_{B_{n-1}(0',1)}\big)(x')
\approx\frac1{1+|x'|^{n-1}}\,\log(e+1+|x'|^{n-1})\approx
\frac{1+\log_{+}|x'|}{1+|x'|^{n-1}},
\end{equation}
uniformly for $x'\in{\mathbb{R}}^{n-1}$, as desired.
\end{proof}

We next introduce the class of {\tt Muckenhoupt} {\tt weights}. Call a real-valued function $w$
defined on $\mathbb{R}^{n-1}$ a {\tt weight} if it is non-negative and measurable. Given a weight $w$
and $p\in[1,\infty]$, we write $L^p({\mathbb{R}}^{n-1},\,w)=L^p({\mathbb{R}}^{n-1},\,w\,dx')$.
If $1<p<\infty$, a weight $w$ belongs to the Muckenhoupt class $A_p=A_p({\mathbb{R}}^{n-1})$ if
\begin{equation}\label{Ap-CaD1}
[w]_{A_p}:=\sup_{Q\subset\mathbb{R}^{n-1}}\Bigl(\aver{Q} w(x')\,dx'\Bigr)
\Bigl(\aver{Q} w(x')^{1-p'}\,dx'\Bigr)^{p-1}<\infty,
\end{equation}
where $p'=p/(p-1)$ denotes the conjugate exponent of $p$. Corresponding to $p=1$,
the class $A_1=A_1({\mathbb{R}}^{n-1})$ is then defined as the collection of all weights $w$
in ${\mathbb{R}}^{n-1}$ for which
\begin{equation}\label{Ap-CaD2}
[w]_{A_1}:=\sup_{Q\subset\mathbb{R}^{n-1}}\Bigl(\essinf_Q\,w\Bigr)^{-1}
\Bigl(\aver{Q} w(x')\,dx'\Bigr)<\infty.
\end{equation}
In particular,
\begin{equation}\label{Ap-CaD3}
\aver{Q} w(y')\,dy'\leq [w]_{A_1}\,w(x')\,\,\mbox{ for a.e. }\,x'\in Q,
\end{equation}
for every cube $Q\subset\mathbb{R}^{n-1}$. Equivalently,
\begin{equation}\label{A1-alt}
\mathcal{M}w(x')\leq[w]_{A_1}\,w(x')\,\,\mbox{ for a.e. }\,x'\in\mathbb{R}^{n-1}.
\end{equation}
Finally, corresponding to $p=\infty$, we let $A_\infty$ stand for $\bigcup_{1\leq p<\infty}A_p$.

We summarize a number of well-known facts which are relevant for us here.
See, e.g., \cite{GCRF85} for a more detailed discussion, including the following basic properties:
\begin{list}{$(\theenumi)$}{\usecounter{enumi}\leftmargin=1cm
\labelwidth=1cm\itemsep=0.2cm\topsep=.2cm
\renewcommand{\theenumi}{\roman{enumi}}}
\item given $1<p<\infty$ and a weight $w$, then $w\in A_p$ if and only if ${\mathcal{M}}$
is bounded on $L^p({\mathbb{R}}^{n-1},\,w)$;
\item given $1<p<\infty$ and a weight $w$, then $w\in A_p$ if and only if $w^{1-p'}\in A_{p'}$,
and $[w^{1-p'}]_{A_{p'}}=[w]^{p'-1}_{A_p}$;
\item if $w_1,w_2\in A_1$ and $1\leq p<\infty$, then $w_1\,w_2^{1-p}\in A_p$ and
$[w_1\,w_2^{1-p}]_{A_p}\leq [w_1]_{A_1}\, [w_2]^{p-1}_{A_1}$;
\item the classes $A_p$, $1\leq p<\infty$, may be equivalently defined using balls in $\mathbb{R}^{n-1}$
(in place of cubes), in which scenario $[w]_{A_p}^{\rm balls}\approx [w]_{A_p}$ with
implicit constants depending only on $n$ and $p$.
\end{list}

\medskip

In the last part of this section we discuss the notion of Poisson kernel in ${\mathbb{R}}^n_{+}$
for an operator $L$ as in \eqref{L-def}-\eqref{L-ell.X}.

\begin{definition}[Poisson Kernel for $L$ in $\mathbb{R}^{n}_{+}$]\label{defi:Poisson}
Let $L$ be a second-order elliptic system with complex coefficients as in
\eqref{L-def}-\eqref{L-ell.X}.
A {\tt Poisson kernel} for $L$ in $\mathbb{R}^{n}_{+}$ is a matrix-valued function
$P^L=\big(P^L_{\alpha\beta}\big)_{1\leq\alpha,\beta\leq M}:
\mathbb{R}^{n-1}\to\mathbb{C}^{M\times M}$ such that the following conditions hold:
\begin{list}{$(\theenumi)$}{\usecounter{enumi}\leftmargin=.8cm
\labelwidth=.8cm\itemsep=0.2cm\topsep=.1cm
\renewcommand{\theenumi}{\alph{enumi}}}
\item there exists $C\in(0,\infty)$ such that
\begin{equation}
\displaystyle|P^L(x')|\leq\frac{C}{(1+|x'|^2)^{\frac{n}2}} \quad\mbox{for each
$x'\in\mathbb{R}^{n-1}$;}
\label{eq:po,gwet}
\end{equation}
\item the function $P^L$ is Lebesgue measurable and
$\displaystyle\int_{\mathbb{R}^{n-1}}P^L(x')\,dx'=I_{M\times M}$,
the $M\times M$ identity matrix;
\item if $K^L(x',t):=P^L_t(x'):=t^{1-n}P^L(x'/t)$, for each
$x'\in\mathbb{R}^{n-1}$ and $t\in(0,\infty)$, then the function
$K^L=\big(K^L_{\alpha\beta}\big)_{1\leq\alpha,\beta\leq M}$
satisfies {\rm (}in the sense of distributions{\rm )}
\begin{equation}\label{uahgab-UBVCX}
LK^L_{\cdot\beta}=0\,\,\mbox{ in }\,\,\mathbb{R}^{n}_{+}
\,\,\mbox{ for each }\,\,\beta\in\{1,\dots,M\},
\end{equation}
where $K^L_{\cdot\beta}:=\big(K^L_{\alpha\beta}\big)_{1\leq\alpha\leq M}$.
\end{list}
\end{definition}

\vskip 0.06in
\begin{remark}\label{Ryf-uyf}
The following comments pertain to Definition~\ref{defi:Poisson}.
\begin{list}{$(\theenumi)$}{\usecounter{enumi}\leftmargin=.8cm
\labelwidth=.8cm\itemsep=0.2cm\topsep=.1cm
\renewcommand{\theenumi}{\roman{enumi}}}
\item Condition $(a)$ ensures that the integral in part $(b)$ is
absolutely convergent.
\item Condition $(c)$ and the ellipticity of the operator $L$ ensure
(cf. \cite[Theorem~10.9, p.\,318]{DM}) that
$K^L\in{\mathscr{C}}^\infty(\mathbb{R}^{n}_{+})$. In particular, \eqref{uahgab-UBVCX}
holds in a pointwise sense. Also, given that $P^L(x')=K^L(x',1)$ for each
$x'\in{\mathbb{R}}^{n-1}$, we deduce that $P^L\in{\mathscr{C}}^\infty(\mathbb{R}^{n-1})$.
\item Condition $(b)$ is equivalent to $\lim\limits_{t\to 0^{+}}P^L_t(x')
=\delta_{0'}(x')\,I_{M\times M}$ in ${\mathcal{D}}'({\mathbb{R}}^{n-1})$,
where $\delta_{0'}$ is Dirac's distribution with mass at the origin $0'$
of ${\mathbb{R}}^{n-1}$.
\item For all $x\in{\mathbb{R}}^n_{+}$ and $\lambda>0$ we have $K^L(\lambda x)=\lambda^{1-n}K^L(x)$.
\end{list}
\end{remark}

Poisson kernels for elliptic boundary value problems in a half-space have
been studied extensively in \cite{ADNI}, \cite{ADNII}, \cite[\S{10.3}]{KMR2},
\cite{Sol}, \cite{Sol1}, \cite{Sol2}. Here we record a corollary of more general
work done by S.\,Agmon, A.\,Douglis, and L.\,Nirenberg in \cite{ADNII}.

\begin{theorem}\label{ya-T4-fav}
Any elliptic differential operator $L$ as in \eqref{L-def}-\eqref{L-ell.X} has a Poisson
kernel $P^L$ in the sense of Definition~\ref{defi:Poisson}, which has the additional
property that the function
\begin{equation}\label{eq:KDEF}
K^L(x',t):=P^L_t(x')\quad\mbox{for all }\,\,(x',t)\in{\mathbb{R}}^n_{+},
\end{equation}
satisfies $K^L\in{\mathscr{C}}^\infty\big(\overline{{\mathbb{R}}^n_{+}}\setminus B(0,\varepsilon)\big)$
for every $\varepsilon>0$.
\end{theorem}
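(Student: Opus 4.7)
The plan is to produce $P^L$ through Fourier analysis in the tangential variables and then verify, one by one, the properties listed in Definition~\ref{defi:Poisson} together with the claimed smoothness away from the origin. Fix $\xi'\in{\mathbb{R}}^{n-1}\setminus\{0'\}$. The Legendre-Hadamard condition \eqref{L-ell.X} ensures that the $M\times M$ matrix $A(\xi',\zeta):=\bigl(a^{\alpha\beta}_{rs}\xi_r\xi_s\bigr)_{\alpha,\beta}$, with $\xi=(\xi',\zeta)\in{\mathbb{R}}^{n-1}\times{\mathbb{C}}$, is invertible for all real $(\xi',\zeta)\neq 0$; consequently the polynomial $\zeta\mapsto\det A(\xi',\zeta)$ has no real roots, and its $2M$ zeros split evenly between the open upper and lower half-planes, yielding $M$ ``stable'' roots in $\{\operatorname{Im}\zeta>0\}$.

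Using the stable roots I would define, for each fixed $\xi'\neq 0'$ and $t>0$, the unique $M\times M$ matrix-valued function $\widehat{K}^L(\xi',t)$ satisfying the ODE $A(\xi',-i\partial_t)\widehat{K}^L(\xi',t)=0$ for $t>0$, decaying as $t\to\infty$, and with initial value $\widehat{K}^L(\xi',0^{+})=I_{M\times M}$. This function admits the contour-integral representation
\[
\widehat{K}^L(\xi',t)=\frac{1}{2\pi i}\int_{\gamma^{+}(\xi')}e^{i\zeta t}\,A(\xi',\zeta)^{-1}Q(\xi',\zeta)\,d\zeta,
\]
where $\gamma^{+}(\xi')$ is a closed contour enclosing exactly the $M$ stable roots and $Q(\xi',\zeta)$ is a polynomial factor calibrated to enforce the prescribed Cauchy datum. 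Applying the inverse tangential Fourier transform produces a function $K^L(x',t)\in{\mathscr{C}}^\infty({\mathbb{R}}^n_{+})$, and I would set $P^L(x'):=K^L(x',1)$. By uniqueness of the Cauchy problem for the ODE above one has $\widehat{K}^L(\lambda\xi',t/\lambda)=\widehat{K}^L(\xi',t)$ for every $\lambda>0$, and a change of variable on the Fourier side translates this into the identity $K^L(x',t)=t^{1-n}P^L(x'/t)$ demanded by Definition~\ref{defi:Poisson}~(c).

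Verifying the three axioms then proceeds as follows. Condition (c) is built in by construction, since $L\widehat{K}^L=0$ on the Fourier side. Condition (b) is equivalent, by part (iii) of Remark~\ref{Ryf-uyf}, to $K^L(\cdot,t)\to\delta_{0'}I_{M\times M}$ in ${\mathcal{D}}'({\mathbb{R}}^{n-1})$ as $t\to 0^{+}$, which follows from $\widehat{K}^L(\xi',0^{+})=I_{M\times M}$ together with dominated convergence. For the pointwise bound (a), I would combine the homogeneity identity $K^L(x',t)=t^{1-n}P^L(x'/t)$ with standard integration-by-parts estimates for the Fourier integral defining $K^L$: for $|x'|\leq t$ the trivial bound yields $|K^L(x',t)|\leq Ct^{1-n}$, whereas for $|x'|\geq t$ repeated integration by parts in $\xi'$ gains powers of $(t/|x'|)$ and produces $|K^L(x',t)|\leq Ct/|x'|^n$; specializing to $t=1$ gives the pointwise majorant in (a).

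The principal obstacle I anticipate is the assertion that $K^L\in{\mathscr{C}}^\infty\bigl(\overline{{\mathbb{R}}^n_{+}}\setminus B(0,\varepsilon)\bigr)$ for every $\varepsilon>0$, i.e., smoothness up to the flat portion of the boundary away from the origin. Interior hypoellipticity of $L$ already delivers $K^L\in{\mathscr{C}}^\infty({\mathbb{R}}^n_{+})$. To extend this across $\partial{\mathbb{R}}^n_{+}\setminus\{0\}$, I would exploit the fact that the distributional boundary trace of $K^L$ is the delta mass $\delta_{0'}I_{M\times M}$: for any $R>\varepsilon$, the function $K^L$ is a null-solution of $L$ in ${\mathbb{R}}^n_{+}\cap B(0,R)$ whose Dirichlet trace on $B_{n-1}(0',R)\setminus\overline{B_{n-1}(0',\varepsilon/3)}$ vanishes identically. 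Invoking the up-to-the-boundary regularity theory for homogeneous elliptic systems with the Dirichlet condition from~\cite{ADNI,ADNII} then upgrades $K^L$ to be $\mathscr{C}^\infty$ on $\overline{{\mathbb{R}}^n_{+}}\cap B(0,R)\setminus B(0,\varepsilon)$, and letting $R\to\infty$ completes the proof.
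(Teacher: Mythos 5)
The paper does not actually prove Theorem~\ref{ya-T4-fav}: it is explicitly labeled a ``corollary of more general work done by S.~Agmon, A.~Douglis, and L.~Nirenberg in [ADNII]'' and is imported as such. Your proposal is an attempt to reconstruct the ADN construction from scratch via the tangential Fourier transform, a contour integral over the stable roots, and scaling; this is indeed the mechanism underlying the cited theorem, so the spirit matches the source the paper leans on.

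That said, as a free-standing argument the sketch has two places where a step, as written, would not go through without further work. First, the assertion that the $2M$ roots of $\zeta\mapsto\det A(\xi',\zeta)$ ``split evenly between the open upper and lower half-planes'' is not automatic from the absence of real roots alone; proper ellipticity of the system must be established. For Legendre--Hadamard elliptic second-order systems this is true, but it requires an argument (e.g.~the homotopy $a^{\alpha\beta}_{rs}(\theta)=(1-\theta)a^{\alpha\beta}_{rs}+\theta\delta_{\alpha\beta}\delta_{rs}$ to the componentwise Laplacian, along which the Legendre--Hadamard condition persists and roots never cross $\mathbb{R}$, so the count in each half-plane is locally constant and equals $M$ at $\theta=1$). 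For $n\geq 3$ one may alternatively use connectedness of $\mathbb{R}^{n-1}\setminus\{0\}$ together with $\det A(-\xi',-\zeta)=\det A(\xi',\zeta)$, but the $n=2$ case, which this paper permits, needs the homotopy or a substitute. Second, the passage to smoothness up to $\partial\mathbb{R}^n_{+}\setminus\{0\}$ cannot be driven by the \emph{distributional} identity $K^L(\cdot,t)\to\delta_{0'}I$: that only says that pairings with test functions supported away from the origin tend to $0$, which is too weak an input for the ADN boundary-regularity machinery. What you actually need, and what you already have once the bound in Definition~\ref{defi:Poisson}(a) is in hand, is that the homogeneity $K^L(x',t)=t^{1-n}P^L(x'/t)$ forces $|K^L(x',t)|\leq C\,t/|x'|^n$ for $|x'|\geq t$, so $K^L$ extends continuously by zero to $\partial\mathbb{R}^n_{+}\setminus\{0\}$, and together with the interior estimates of Theorem~\ref{ker-sbav} this puts $K^L$ in $W^{1,p}$ on half-balls away from the origin with vanishing Sobolev trace; only then can the Agmon--Douglis--Nirenberg up-to-the-boundary estimates for the Dirichlet problem be invoked to bootstrap to ${\mathscr{C}}^\infty$. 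With those two repairs, your plan matches the ADN construction that the theorem is quoting.
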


\begin{remark}\label{YTrrea}
As a consequence of part ${\it (iv)}$ in Remark~\ref{Ryf-uyf} and the regularity of $K$
stated in Theorem~\ref{ya-T4-fav}, we have that for each multi-index $\alpha\in{\mathbb{N}}_0^n$
there exists $C_\alpha\in(0,\infty)$ with the property that
\begin{equation}\label{eq:KjG}
\big|(\partial^\alpha K^L)(x)\big|\leq C_\alpha\,|x|^{1-n-|\alpha|},\,\,\,\mbox{ for every }\,\,
x\in{\overline{{\mathbb{R}}^n_{+}}}\setminus\{0\}.
\end{equation}
In this respect, we wish to note that this estimate is stronger than
what a direct application of the properties of Poisson kernels listed in
Definition~\ref{defi:Poisson} would imply. Specifically,
as noted in part ${\it (ii)}$ of Remark~\ref{Ryf-uyf}, we have
$K^L\in{\mathscr{C}}^\infty(\mathbb{R}^{n}_{+})$ which, in concert with
part ${\it (iv)}$ of Remark~\ref{Ryf-uyf}, shows that \eqref{eq:KjG} holds for
$x\in\Gamma_\kappa(0')$, for each $\kappa>0$, with a constant also depending on the parameter $\kappa$.
\end{remark}

\section{Tools for Existence and Uniqueness}
\setcounter{equation}{0}
\label{S-3}

This section is devoted to proving the results stated in
Theorems~\ref{thm:existence}-\ref{thm:uniqueness} below. Here and elsewhere,
the convolution between two functions, which are matrix-valued and vector-valued, respectively,
takes into account the algebraic multiplication between a matrix and a vector in a natural fashion.

\begin{theorem}[Main Tool for the Existence Part]\label{thm:existence}
Let $L$ be a system as in \eqref{L-def}-\eqref{L-ell.X}.
Given a Lebesgue measurable function $f:\mathbb{R}^{n-1}\rightarrow\mathbb{C}^M$ satisfying
\begin{equation}\label{exist:f}
\int_{\mathbb{R}^{n-1}}\frac{|f(x')|}{1+|x'|^n}\,dx'<\infty,
\end{equation}
set
\begin{equation}\label{exist:u}
u(x',t):=(P^L_t\ast f)(x'),\qquad\forall\,(x',t)\in{\mathbb{R}}^n_{+},
\end{equation}
where $P^L$ is the Poisson kernel for $L$ in $\mathbb{R}^{n}_{+}$ from Theorem~\ref{ya-T4-fav}.
Then $u$ is meaningfully defined via an absolutely convergent integral,
\begin{equation}\label{exist:u2}
u\in\mathscr{C}^\infty(\mathbb{R}^n_{+}),\quad
Lu=0\,\,\mbox{ in }\,\,\mathbb{R}^{n}_{+},\quad
u\big|_{\partial\mathbb{R}^{n}_{+}}^{{}^{\rm n.t.}}=f\,\,\mbox{ a.e.~in }\,\,\mathbb{R}^{n-1}
\end{equation}
(convergence holds, for instance, in the set of Lebesgue points of $f$), and there exists
a constant $C=C(n,L)\in(0,\infty)$ with the property that
\begin{equation}\label{exist:Nu-Mf}
\mathcal{N} u(x')\leq C\,\mathcal{M} f(x'),
\qquad\forall\,x'\in\mathbb{R}^{n-1}.
\end{equation}
\end{theorem}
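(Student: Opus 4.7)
The plan is to verify \eqref{exist:u2} and \eqref{exist:Nu-Mf} in turn, relying on the decay estimate \eqref{tghn-jan-1} of $P^L_t$, the stronger derivative bounds \eqref{eq:KjG} for $K^L$, the normalization $\int_{\mathbb{R}^{n-1}} P^L(x')\,dx' = I_{M \times M}$, and the fact that $LK^L_{\cdot\beta}=0$ pointwise in $\mathbb{R}^n_+$. For any fixed $(x',t) \in \mathbb{R}^n_+$, a triangle-inequality argument applied to \eqref{tghn-jan-1} yields $|P^L_t(x'-y')| \leq C(x',t)(1+|y'|^n)^{-1}$ for every $y'$, so the integral defining $u(x',t)$ converges absolutely by \eqref{exist:f}. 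To obtain $u \in \mathscr{C}^\infty(\mathbb{R}^n_+)$ I would differentiate under the integral sign: on any compact $K \subset \mathbb{R}^n_+$, the bounds \eqref{eq:KjG} combined with $|(x'-y',t)| \geq |x'-y'| \gtrsim |y'|$ for $|y'|$ large furnish a dominating function of the form $C_K |f(y')|(1+|y'|^n)^{-1}$ for every $(x',t)$-derivative of $K^L(x'-y',t)$ of order at least $1$. Since $L$ acts in the $(x',t)$ variables and annihilates $K^L_{\cdot\beta}$ pointwise, the differentiation passes the PDE from the kernel to $u$, giving $Lu = 0$.

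For the nontangential maximal bound, fix $x' \in \mathbb{R}^{n-1}$ and any $(y',t) \in \Gamma_\kappa(x')$. The inequality $|y'-x'| < \kappa t$ produces $t^2+|y'-z'|^2 \geq c_\kappa(t^2+|x'-z'|^2)$ for every $z' \in \mathbb{R}^{n-1}$; plugged into \eqref{tghn-jan-1}, this gives
\[
|u(y',t)| \leq C \int_{\mathbb{R}^{n-1}} \frac{t}{(t^2+|x'-z'|^2)^{n/2}}\,|f(z')|\,dz'.
\]
I would then decompose $\mathbb{R}^{n-1}$ as $B_{n-1}(x',t) \cup \bigcup_{k\geq 1} A_k$, where $A_k := \{2^{k-1}t \leq |z'-x'| < 2^k t\}$. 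On the central ball the kernel is at most $Ct^{-(n-1)}$, so that piece is at most $C\mathcal{M}f(x')$; on $A_k$ the kernel is at most $Ct(2^kt)^{-n}$, producing at most $C 2^{-k}\,\mathcal{M}f(x')$. Summing the geometric series and taking the supremum over $\Gamma_\kappa(x')$ yields \eqref{exist:Nu-Mf}.

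For the boundary trace, let $x'$ be a Lebesgue point of $f$ and set $\omega(r) := \aver{B_{n-1}(x',r)}|f(z')-f(x')|\,dz'$, so that $\omega(r) \to 0$ as $r \to 0^+$. Using $\int P^L_t(y'-z')\,dz' = I_{M \times M}$, for $(y',t) \in \Gamma_\kappa(x')$ I can write
\[
u(y',t) - f(x') = \int_{\mathbb{R}^{n-1}} P^L_t(y'-z')\bigl(f(z') - f(x')\bigr)\,dz',
\]
and split this integral at a radius $r > 0$. The inner piece over $B_{n-1}(x',r)$ is handled by repeating the dyadic estimate of the previous paragraph with $f-f(x')$ in place of $f$, producing a bound of order $C\sup_{0<s\leq r}\omega(s)$. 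For the outer piece over the complement, when $t < r/(2\kappa)$ one has $|y'-z'| \gtrsim_\kappa |x'-z'|$, so \eqref{tghn-jan-1} gives a bound $C(x',r)\,t\,\int|f(z')|(1+|z'|^n)^{-1}\,dz' + C|f(x')|\,t\,r^{-1}$, which tends to $0$ as $t \to 0^+$ for each fixed $r$, thanks to \eqref{exist:f}. Sending first $t \to 0^+$ and then $r \to 0^+$ concludes the argument. The hard part will be precisely this last step: the local/global splitting must be arranged so that the far-field contribution is controlled purely by the weak weighted-$L^1$ hypothesis \eqref{exist:f}, while the near-field error is governed only by the Lebesgue-point averages $\omega(s)$, uniformly with respect to the nontangential approach of $(y',t)$ to $(x',0)$; the remaining pieces are essentially routine once Theorem~\ref{ya-T4-fav} and Remark~\ref{YTrrea} are in hand.
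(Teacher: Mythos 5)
Your argument is correct and follows essentially the same route as the paper, which packages the absolute convergence, the dyadic-annulus maximal bound, and the local/global split for the Lebesgue-point trace into Lemma~\ref{lennii}, and then in the proof of Theorem~\ref{thm:existence} invokes \eqref{eq:KjG} to differentiate under the integral and the pointwise equation $LK^L_{\cdot\beta}=0$ to transfer the PDE to $u$. The only cosmetic difference is in the near-field piece of the boundary-trace estimate: the paper introduces the cutoff $g=[f(\cdot+x'_0)-f(x'_0)]\,{\bf 1}_{B_{n-1}(0',\delta)}$ and reuses the already-established maximal inequality $(P^\Delta_t\ast|g|)(0')\leq C\,\mathcal{M}g(0')$, whereas you re-run the dyadic annular sum directly on $f-f(x')$ over $B_{n-1}(x',r)$, which is equally valid.
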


\medskip

\begin{theorem}[Main Tool for the Uniqueness Part]\label{thm:uniqueness}
Let $L$ be a system as in \eqref{L-def}-\eqref{L-ell.X}. Assume that
$u\in\mathscr{C}^\infty(\mathbb{R}^n_{+})$ is such that $Lu=0$ in $\mathbb{R}_{+}^n$,
its nontangential maximal function $\mathcal{N}u$ satisfies
\begin{equation}\label{unq:Nu}
\int_{\mathbb{R}^{n-1}}\mathcal{N}u(x')\,\frac{1+\log_{+}|x'|}{1+|x'|^{n-1}}\,\,dx'<\infty,
\end{equation}
and that $u\big|_{\partial\mathbb{R}^{n}_{+}}^{{}^{\rm n.t.}}=0$ a.e.~in $\mathbb{R}^{n-1}$.
Then $u\equiv 0$ in $\mathbb{R}^n_{+}$.
\end{theorem}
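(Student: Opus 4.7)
The plan is to recover $u$ at an arbitrary interior point of $\mathbb{R}^n_+$ via a Green-function representation formula on a bounded truncation of the half-space, then drive the cutoffs to the boundary and to infinity to reduce matters to a Poisson-type convolution of the boundary values (which vanish a.e.~by hypothesis).

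Fix $x_0 = (x_0', x_{0,n}) \in \mathbb{R}^n_+$. I would invoke the matrix-valued Green function $G^L(\cdot, x_0)$ for $L$ in $\mathbb{R}^n_+$ (developed in the appendix), which satisfies $L G^L(\cdot, x_0) = \delta_{x_0} I_{M\times M}$ with vanishing Dirichlet trace on $\partial \mathbb{R}^n_+$, and whose outward conormal derivative on $\partial \mathbb{R}^n_+$ yields (up to sign) the Poisson kernel $P^L$ of Theorem~\ref{ya-T4-fav}. For $0 < \varepsilon < x_{0,n} < R$, consider the truncated region
$$
\Omega_{R,\varepsilon} := \{y = (y', y_n) \in \mathbb{R}^n : |y'| < R,\ \varepsilon < y_n < R\},
$$
and apply Green's second identity for $L$ to the pair $(u, G^L(\cdot, x_0))$ there. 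Since $Lu = 0$ and the pole $x_0$ is interior, this produces a representation of $u(x_0)$ as a boundary integral on $\partial \Omega_{R,\varepsilon}$, which I split into three contributions: the bottom face $\{y_n = \varepsilon\}$, the top face $\{y_n = R\}$, and the lateral side $\{|y'| = R,\ \varepsilon < y_n < R\}$.

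The key estimates driving the bounds are: (i) the decay of the Green function for $|y|$ large compared to $|x_0|$, namely $|G^L(y, x_0)| \lesssim x_{0,n}/|y|^{n-1}$ and $|\nabla_y G^L(y, x_0)| \lesssim x_{0,n}/|y|^n$ (the extra $x_{0,n}$ factor reflecting the vanishing of $G^L$ on $\partial \mathbb{R}^n_+$, and following from the homogeneity in Remark~\ref{YTrrea} combined with a Taylor expansion about a reflected pole); and (ii) since $(y', y_n) \in \Gamma_\kappa(x')$ for every $x' \in B_{n-1}(y', \kappa y_n)$, the pointwise domination
$$
|u(y)| \leq \aver{B_{n-1}(y', \kappa y_n)} \mathcal{N}u(x')\, dx',
$$
together with the analogous gradient bound $|\nabla u(y)| \lesssim y_n^{-1}\, \aver{B_{n-1}(y', \kappa y_n)} \mathcal{N}u(x')\, dx'$ furnished by interior elliptic regularity on a ball of radius $\sim y_n$ about $y$.

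The main obstacle will be showing that the top and lateral contributions vanish as $R \to \infty$. The top face can be handled by a Fubini exchange leading to a bound of the form $C R^{-n} \int_{|x'| \lesssim R} \mathcal{N}u(x')\, dx'$, which a dyadic decomposition of \eqref{unq:Nu} (yielding $\int_{|x'|\leq R}\mathcal{N}u\, dx' = o(R^{n-1}/\log R)$) forces to $o(1)$. The lateral face is the genuinely delicate step: after substituting the averaged bound for $|u|$ on $\{|y'| = R\}$ and integrating in the height variable $y_n \in (\varepsilon, R)$, a factor of $\log R$ emerges from the $dy_n/y_n$-type integration, and the logarithmic gain $1 + \log_+|x'|$ in the hypothesis \eqref{unq:Nu} is precisely what absorbs this growth, revealing the sharpness of the weight in the statement. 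Once these two contributions disappear, identifying the conormal derivative of $G^L$ at height $\varepsilon$ with the Poisson kernel $P^L_{x_{0,n} - \varepsilon}$ (via a vertical shift) reduces the representation to
$$
u(x_0) = \int_{\mathbb{R}^{n-1}} P^L_{x_{0,n} - \varepsilon}(x_0' - y')\, u(y', \varepsilon)\, dy'.
$$
Finally, letting $\varepsilon \to 0^+$, the integrand is dominated by $C(1 + |y' - x_0'|)^{-n}\, \mathcal{N}u(y')$, which is globally integrable since \eqref{unq:Nu} implies $\int (1 + |y'|)^{-n}\, \mathcal{N}u(y')\, dy' < \infty$, and it converges pointwise a.e.~to zero by the nontangential vanishing of $u$; dominated convergence then delivers $u(x_0) = 0$.
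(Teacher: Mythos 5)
Your route is genuinely different from the paper's: you truncate the half-space to a bounded slab $\Omega_{R,\varepsilon}$ and send $R\to\infty$, whereas the paper introduces no lateral or top boundary at all. It instead builds the vector field $\vec{F}$ in \eqref{Yabnb-7t5.1} from $u$, the Green function $G^{L^\top}(\cdot,x^\star)$, and the purely vertical cutoff $\psi_\varepsilon(x)=\psi(x_n/\varepsilon)$, computes ${\rm div}\,\vec{F}$ so that the delta mass at $x^\star$ appears plus terms supported in the strip $\{\varepsilon<x_n<2\varepsilon\}$, and applies the half-space divergence theorem of Theorem~\ref{theor:div-thm}. Its hypothesis ${\mathcal N}_{\kappa}^{K_\star^c}\vec{F}\in L^1(\mathbb{R}^{n-1})$ is supplied \emph{exactly} by pairing the Green-function decay \eqref{bound-NK-G}--\eqref{bouMNN} against the weight in \eqref{unq:Nu}, after which a single limit $\varepsilon\to 0^+$ is dispatched by dominated convergence. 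The gain is that the delicate lateral-face estimate, which you yourself flag as the genuinely hard step, never has to be made.

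The substantive gap in your sketch is the Green-function decay. You assert $|G^L(y,x_0)|\lesssim x_{0,n}|y|^{1-n}$ and $|\nabla_y G^L(y,x_0)|\lesssim x_{0,n}|y|^{-n}$ via ``a Taylor expansion about a reflected pole''; but the image formula $G^L(x,y)=E^L(x-y)-E^L(x-\overline y)$ holds only when $E^L$ is radial (part (10) of Theorem~\ref{ta.av-GGG.2A}), which is not the generic situation for systems as in \eqref{L-def}-\eqref{L-ell.X}. For general $L$ the appendix gives \eqref{bound-NK-G}, which carries an additional $1+\log_{+}|x'|$, and \eqref{bouMNN}, which for a first derivative gives $|x'|^{1-n}$ -- one power, and the $x_{0,n}$ gain, short of what you claim. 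This is not a harmless loss: the logarithm in \eqref{unq:Nu} is already committed to neutralizing the one in \eqref{bound-NK-G} (compare Remark~\ref{Tgg-GRR}), leaving no slack to absorb the extra $\log R$ you concede arises from the $dy_n/y_n$ integration on the lateral face; moreover the $G\,\partial_\nu u$ contribution there, with $|\nabla u|\sim y_n^{-1}$, brings in a $y_n^{-2}$-weight near the bottom edge of that face. One can in fact still push the lateral and top contributions to zero for fixed $\varepsilon$ as $R\to\infty$ by dyadically decomposing \eqref{unq:Nu}, but the resulting bound behaves like $\varepsilon^{-1}o_R(1)$ rather than the cleaner $R^{-n}\int_{|x'|\lesssim R}{\mathcal N}u$ you state, so the iteration of limits has to be made explicit. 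Finally, because $G^{L^\top}(\cdot,x_0)$ vanishes only on $\{y_n=0\}$ and not on $\{y_n=\varepsilon\}$, your bottom face also carries a $G\,\partial_\nu u$ term that your sketch drops; showing that term vanishes as $\varepsilon\to 0$ is precisely the content of \eqref{Green-2terms}--\eqref{Gruah.3}, handled in the paper via dominated convergence and ${\mathcal N}^{(2\varepsilon)}_\kappa u\to 0$ a.e.
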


In preparation to presenting the proof of Theorem~\ref{thm:existence} we first deal with
a purely real variable lemma pertaining to the stability of the first weighted $L^1$ space
appearing in \eqref{Fi-AN.1} under convolutions with a fixed (matrix-valued) function whose
size is controlled by the harmonic Poisson kernel. In the same context, we also deal with
nontangential maximal function estimates and nontangential limits.

\begin{lemma}\label{lennii}
Let $P=\big(P_{\alpha\beta}\big)_{1\leq\alpha,\beta\leq M}:
\mathbb{R}^{n-1}\to\mathbb{C}^{M\times M}$ be a Lebesgue measurable
function satisfying, for some $c\in(0,\infty)$,
\begin{equation}\label{vznv.ADF}
|P(x')|\leq\frac{c}{(1+|x'|^2)^{\frac{n}2}}
\,\,\,\mbox{ for each }\,\,\,x'\in\mathbb{R}^{n-1},
\end{equation}
and recall that $P_t(x'):=t^{1-n}P(x'/t)$ for each $x'\in\mathbb{R}^{n-1}$ and $t\in(0,\infty)$.
Then, for each $t\in(0,\infty)$ fixed, the operator
\begin{equation}\label{eq:Eda}
L^1\Big({\mathbb{R}}^{n-1}\,,\,\frac{1}{1+|x'|^n}\,dx'\Big)\ni f\mapsto P_t\ast f\in
L^1\Big({\mathbb{R}}^{n-1}\,,\,\frac{1}{1+|x'|^n}\,dx'\Big)
\end{equation}
is well-defined, linear and bounded, with operator norm controlled by $C(t+1)$. Moreover,
for every $\kappa>0$ there exists a finite constant $C_\kappa>0$ with the property that
for each $x'\in\mathbb{R}^{n-1}$,
\begin{equation}\label{exTGFVC}
\sup_{|x'-y'|<\kappa t}\big|(P_t\ast f)(y')\big|\leq C_\kappa\,\mathcal{M} f(x'),
\qquad\forall\,f\in L^1\Big({\mathbb{R}}^{n-1}\,,\,\frac{1}{1+|x'|^n}\,dx'\Big).
\end{equation}
Finally, given any function
\begin{equation}\label{eq:aaAa}
f=(f_\beta)_{1\leq\beta\leq M}\in L^1\Big({\mathbb{R}}^{n-1}\,,\,\frac{1}{1+|x'|^n}dx'\Big)
\subset L^1_{\rm loc}({\mathbb{R}}^{n-1}),
\end{equation}
at every Lebesgue point $x'_0\in{\mathbb{R}}^{n-1}$ of $f$ there holds
\begin{equation}\label{exTGFVC.2s}
\lim_{\substack{(x',\,t)\to(x'_0,0)\\ |x'-x'_0|<\kappa t}}(P_t\ast f)(x')
=\left(\int_{\mathbb{R}^{n-1}}P(x')\,dx'\right)f(x'_0),
\end{equation}
and the function
\begin{equation}\label{exTefef}
{\mathbb{R}}^n_{+}\ni(x',t)\mapsto(P_t\ast f)(x')\in{\mathbb{C}}^M
\,\,\,\mbox{ is locally integrable in }\,\,{\mathbb{R}}^n_{+}.
\end{equation}
\end{lemma}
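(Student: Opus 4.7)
The plan is to prove the four assertions of the lemma in the stated order, each leveraging the pointwise decay \eqref{vznv.ADF} via, respectively, Fubini, an annular decomposition around the vertex, and a Lebesgue-point/approximate-identity argument.

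For \eqref{eq:Eda}, I would swap the order of integration to reduce the boundedness claim to verifying the pointwise estimate
\begin{equation}\label{plan:K}
K(x'):=\int_{\mathbb{R}^{n-1}}\frac{t\,dy'}{(t^2+|y'-x'|^2)^{n/2}(1+|y'|^n)}\le\frac{C(1+t)}{1+|x'|^n},
\end{equation}
uniformly for $x'\in\mathbb{R}^{n-1}$ and $t>0$. To handle \eqref{plan:K} I would split the $y'$-integration into the \emph{near region} $\{|y'-x'|\le(1+|x'|)/2\}$, on which $\frac{1}{1+|y'|^n}\lesssim\frac{1}{1+|x'|^n}$ and what remains is dominated by the $L^1$-norm of the harmonic Poisson kernel (a dimensional constant), and the \emph{far region}, further subdivided by whether $|y'|\le 2|x'|$ or $|y'|>2|x'|$. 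On each piece the trivial bound $\frac{t}{(t^2+|y'-x'|^2)^{n/2}}\le\frac{t}{|y'-x'|^n}$ combined with polar-coordinate integration of the weight produces the quantitative decay in \eqref{plan:K}.

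For \eqref{exTGFVC}, I plan to decompose $f=\sum_{j\ge 0}f\,\mathbf{1}_{A_j}$ with $A_0:=B_{n-1}(x',2t)$ and $A_j:=B_{n-1}(x',2^{j+1}t)\setminus B_{n-1}(x',2^{j}t)$ for $j\ge 1$. For $y'\in B_{n-1}(x',\kappa t)$ and $z'\in A_j$ with $j$ exceeding a threshold depending on $\kappa$, the triangle inequality gives $|y'-z'|\gtrsim_\kappa 2^{j}t$, so \eqref{vznv.ADF} yields $|P_t(y'-z')|\lesssim_\kappa t\,(2^{j}t)^{-n}$; integrating against $|f|\mathbf{1}_{A_j}$ then gives
\[
\bigl|\bigl(P_t\ast(f\mathbf{1}_{A_j})\bigr)(y')\bigr|\,\lesssim_\kappa\,\frac{t\,|B_{n-1}(x',2^{j+1}t)|}{(2^{j}t)^n}\,\mathcal{M}f(x')\,\lesssim_\kappa\,2^{-j}\,\mathcal{M}f(x'),
\]
and the finitely many low-index pieces are handled directly via $\|P_t\|_\infty\le C\,t^{1-n}$. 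Summing the resulting geometric series in $j$ yields \eqref{exTGFVC}.

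For \eqref{exTGFVC.2s}, at a Lebesgue point $x'_0$ of $f$, the scaling identity $\int P_t=\int P$ allows rewriting
\[
(P_t\ast f)(x')-\Bigl(\!\int\!P\Bigr)f(x'_0)\;=\;\int P_t(x'-y')\bigl[f(y')-f(x'_0)\bigr]\,dy',
\]
and I would split this integral at $\{|y'-x'_0|<\rho\}$. For $|x'-x'_0|<\kappa t$ and $t<\rho/(2\kappa)$, the far piece satisfies $|x'-y'|\ge|y'-x'_0|/2$; combining this with the elementary inequality $(1+|y'|^n)/|y'-x'_0|^n\le C(x'_0,\rho)$ on $\{|y'-x'_0|\ge\rho\}$ bounds the far contribution by $C(x'_0,\rho)\,t\,\|f\|_{L^1((1+|y'|^n)^{-1}dy')}$, which tends to zero as $t\to 0^{+}$. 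The near piece I would control via a dyadic annular decomposition of $\{|y'-x'_0|<\rho\}$ centered at $x'_0$: the radial decrease of $t/(t^2+|\cdot|^2)^{n/2}$ combined with the Lebesgue-point estimate $\int_{|z'|<r}|f(x'_0+z')-f(x'_0)|\,dz'\le\epsilon\,c_n\,r^{n-1}$ (valid for $r<\rho$ once $\rho$ is small enough) turns each annular contribution into a geometric term, and the total sum is $\le C_\kappa\,\epsilon$. Letting $\epsilon\to 0$ completes \eqref{exTGFVC.2s}. Finally, \eqref{exTefef} is an immediate Fubini consequence of \eqref{eq:Eda}: on any compact $\mathcal{K}\subset\mathbb{R}^{n}_{+}$, the height $t$ ranges over a compact interval $[t_0,T]\subset(0,\infty)$ and the operator norm of $f\mapsto P_t\ast f$ on $L^1((1+|y'|^n)^{-1}dy')$ is at most $C(1+T)$ uniformly in $t\in[t_0,T]$, so $\iint_{\mathcal{K}}|(P_t\ast f)(x')|\,dx'\,dt<\infty$.

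The main technical obstacle is the uniform bound \eqref{plan:K}: one must simultaneously accommodate the small-$t$ regime (where $P_t$ acts as a concentrated approximate identity near $x'$) and the large-$t$ regime (where spreading forces the factor $1+t$ to genuinely appear), which requires careful region-splitting driven by the relative sizes of $t$, $|x'|$, and $|y'|$.
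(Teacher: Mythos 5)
Your proof is correct, and for the most part it tracks the paper's own argument; the one place where you take a genuinely different route is \eqref{eq:Eda}. You reduce boundedness to the pointwise kernel estimate \eqref{plan:K} and then verify \eqref{plan:K} by splitting ${\mathbb{R}}^{n-1}$ into a near region $\{|y'-x'|\le(1+|x'|)/2\}$ (where $\frac{1}{1+|y'|^n}\lesssim\frac{1}{1+|x'|^n}$ and the Poisson kernel integrates to a constant) and a far region (where $Q(y'-x')\le t/|y'-x'|^n\lesssim t/(1+|x'|^n)$ is integrated against the integrable weight $(1+|y'|^n)^{-1}$). That computation is correct and in fact the additional case split on $|y'|\lessgtr 2|x'|$ you propose is superfluous — the far region can be handled in one stroke. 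By contrast the paper dominates $|P_t|\leq CP^\Delta_t$, notes $\frac{1}{1+|x'|^n}\approx P^\Delta_1(x')$, and then evaluates the Fubini'd integral as a convolution at the origin, invoking the semigroup identity $P^\Delta_t\ast P^\Delta_1=P^\Delta_{t+1}$ to collapse the whole estimate to one identity. Your approach is more elementary and self-contained; the paper's is shorter but leans on the known Poisson semigroup fact. For \eqref{exTGFVC} both proofs use the same dyadic annular decomposition centered at $x'$ after first upgrading the decay of $P_t$ to hold uniformly over the nontangential region $|y'-x'|<\kappa t$. For \eqref{exTGFVC.2s} both split near/far at the Lebesgue point; the paper packages the near contribution by applying the already-proved maximal bound \eqref{exTGFVC} to the truncated difference $g=[f(\cdot+x'_0)-f(x'_0)]\mathbf{1}_{B_{n-1}(0',\delta)}$ and then using $\mathcal{M}g(0')\le c_n\varepsilon$, whereas you carry out a direct dyadic annular sum over $\{|y'-x'_0|<\rho\}$; these are equivalent, and the paper's version is slightly cleaner because it reuses \eqref{exTGFVC} as a black box. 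Your deduction of \eqref{exTefef} from the uniform-in-$t$ operator norm bound on a compact $t$-interval is fine, though the paper gets it even more directly from the absolute-convergence estimate already established.
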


\begin{proof}
Pick a function $f$ as in \eqref{eq:aaAa} and fix some $t\in(0,\infty)$.
First, consider the issue whether $P_t\ast f$ is well-defined, via an absolutely
convergent integral. In this regard, note that for any $x',y'\in\mathbb{R}^{n-1}$
and $t\in(0,\infty)$ one has $|y'|\leq(1+|x'|/t)\,(t+|x'-y'|)$ and $1\leq (1/t)(t+|x'-y'|)$, hence
\begin{equation}\label{est1-Pt}
1+|y'|\leq(1+|x'|/t+1/t)\,(t+|x'-y'|).
\end{equation}
Thus, for each fixed $x'\in\mathbb{R}^{n-1}$ and $t\in(0,\infty)$, we have
\begin{multline}\label{Pt-abs-conv}
\int_{\mathbb{R}^{n-1}}\frac{t}{(t+|x'-y'|)^n}\,|f(y')|\,dy'
\\
\leq C\,t(1+|x'|/t+1/t)^n\,\int_{\mathbb{R}^{n-1}}\frac{|f(y')|}{1+|y'|^n}\,dy'<\infty,
\end{multline}
which, in light of \eqref{vznv.ADF}, shows that $P_t\ast f$ is meaningfully defined via
an absolutely convergent integral. To proceed, observe from \eqref{vznv.ADF} and
\eqref{Uah-TTT} that there exists some $C\in(0,\infty)$ with the property that
\begin{equation}\label{vznv.ADF.22}
|P_t(x')|\leq C P^{\Delta}_t(x')\,\,\,\mbox{ for all }\,\,x'\in\mathbb{R}^{n-1},\,\,t\in(0,\infty).
\end{equation}
Consequently,
\begin{align}\label{yagtyt}
\int_{{\mathbb{R}}^{n-1}}\frac{\big|(P_t\ast f)(x')\big|}{1+|x'|^n}\,dx'
& \leq C\int_{{\mathbb{R}}^{n-1}}\big(P^{\Delta}_t\ast|f|\big)(x')P^{\Delta}_1(x')\,dx'
\\[4pt]
& = C\Big(\big(P^{\Delta}_t\ast|f|\big)\ast P^{\Delta}_1\Big)(0')
\nonumber\\[4pt]
&
=C\Big(\big(P^{\Delta}_t\ast P^{\Delta}_1\big)\ast|f|\Big)(0')
\nonumber\\[4pt]
& =C\big(P^{\Delta}_{t+1}\ast|f|\big)(0')
\nonumber\\[4pt]
& \leq C\,(t+1)\,\int_{\mathbb{R}^{n-1}}\frac{|f(y')|}{1+|y'|^n}\,dy',\nonumber
\end{align}
where we have used the semigroup property for the harmonic Poisson kernel
(cf., e.g., \cite[(vi), p.\,62]{St70}), and where
the last inequality follows from \eqref{Pt-abs-conv} written with $t+1$ in place of $t$
and $x'=0'$. Now all desired conclusions concerning \eqref{eq:Eda} are seen from \eqref{yagtyt}.

Before proceeding with the rest of the proof, let us momentarily digress in order to note that,
once some $\kappa>0$ has been fixed, \eqref{vznv.ADF} self-improves in the sense that there
exists $C_\kappa\in(0,\infty)$ such that, for every $x'\in{\mathbb{R}}^{n-1}$ and $t\in(0,\infty)$,
\begin{equation}\label{TUhfg}
|P_t(x'-y')|\leq C_\kappa\,\frac{t}{(t^2+|x'|^2)^{\frac{n}{2}}}\qquad
\mbox{ whenever }\,\,|y'|<\kappa t.
\end{equation}
Indeed, this follows from the fact that $|x'|\leq \max\{1,\kappa\}(t+|x'-y'|)$ whenever
$x',y'\in{\mathbb{R}}^{n-1}$ and $t\in(0,\infty)$ are such that $|y'|<\kappa t$ which,
in turn, is easily justified by the triangle inequality.

To deal with \eqref{exTGFVC}, pick a function $f$ as in \eqref{eq:aaAa}. Also,
fix $x'\in{\mathbb{R}}^{n-1}$ and let $y'\in{\mathbb{R}}^{n-1}$ and $t\in(0,\infty)$
satisfy $|x'-y'|<\kappa t$.  Granted \eqref{TUhfg}, this implies
\begin{equation}\label{TUhfg.MM}
|P_t(y'-z')|\leq C_\kappa\,\frac{t}{(t^2+|x'-z'|^2)^{\frac{n}{2}}}\qquad
\mbox{ for every }\,\,z'\in{\mathbb{R}}^{n-1}.
\end{equation}
Based on \eqref{TUhfg.MM} we may then estimate
\begin{align}\label{Drsgy-1jab}
\big|(P_t\ast f)(y')\big| 
&\leq \int_{{\mathbb{R}}^{n-1}}|P_t(y'-z')|\,|f(z')|\,dz'
\\
&
\leq C_\kappa\int_{{\mathbb{R}}^{n-1}}\frac{t}{(t^2+|x'-z'|^2)^{\frac{n}{2}}}\,|f(z')|\,dz'
\nonumber\\[4pt]
&\leq C_\kappa\,\aver{B_{n-1}(x',t)}|f(z')|\,dz'
\nonumber\\[4pt]
&\hskip1cm
+\sum_{j=0}^{\infty}\int_{B_{n-1}(x',2^{j+1}t)\setminus B_{n-1}(x',2^{j}t)}
\frac{t}{(t^2+|x'-z'|^2)^{\frac{n}{2}}}\,|f(z')|\,dz'
\nonumber\\[4pt]
&\leq C_\kappa\,\sum_{j=0}^\infty 2^{-j}\aver{B_{n-1}(x',2^{j}t)}\,|f(z')|\,dz'
\leq C_\kappa{\mathcal{M}}f(x'),\nonumber
\end{align}
from which \eqref{exTGFVC} follows.

Let us now deal with \eqref{exTGFVC.2s}. To this end, abbreviate
\begin{equation}\label{vzJBb}
A:=\int_{\mathbb{R}^{n-1}}P(x')\,dx'\in{\mathbb{C}}^{M\times M}.
\end{equation}
Also, select a function $f$ as in \eqref{eq:aaAa} and introduce
\begin{equation}\label{defi-u-general}
u(x',t):=(P_t\ast f)(x')\,\,\mbox{ for each }\,\,\,(x',t)\in\mathbb{R}^{n}_{+}.
\end{equation}
From what we have proved already, this function is well-defined by an absolutely convergent
integral. In the remainder of the proof, we shall adapt the argument in \cite[p.\,198]{St70},
where the case $L=\Delta$ and $f\in L^p({\mathbb{R}}^{n-1})$, $1\leq p\leq\infty$, has been
treated. Specifically, fix a Lebesgue point $x'_0\in{\mathbb{R}}^{n-1}$ of $f$ and let
$\varepsilon>0$ be arbitrary. Then there exists $\delta>0$ such that
\begin{equation}\label{Drsgy}
\aver{B_{n-1}(0',r)}\big|f(z'+x'_0)-f(x'_0)\big|\,dz'<\varepsilon,\qquad
\forall\,r\in(0,\delta].
\end{equation}
In particular, if we set
\begin{equation}\label{eq:Gbab}
g:=\big[f(\cdot+x'_0)-f(x'_0)\big]{\bf 1}_{B_{n-1}(0',\delta)}\,\,\,
\mbox{ in }\,\,{\mathbb{R}}^{n-1},
\end{equation}
then \eqref{Drsgy} implies (for some dimensional constant $c_n>0$)
\begin{equation}\label{Drsgy-0}
{\mathcal{M}}g(0')\leq c_n\varepsilon.
\end{equation}
Then, bearing in mind \eqref{vzJBb}, for each $y'\in{\mathbb{R}}^{n-1}$ and
$t\in(0,\infty)$ we may write
\begin{align}\label{Drsgy-11}
u(y'+x'_0,t)-Af(x'_0)
&= \int_{{\mathbb{R}}^{n-1}}P_t(y'+x'_0-z')[f(z')-f(x'_0)]\,dz'
\\[4pt]
&= \int_{{\mathbb{R}}^{n-1}}P_t(y'-z')[f(z'+x'_0)-f(x'_0)]\,dz'.\nonumber
\end{align}
In turn, this and \eqref{TUhfg} then imply that, under the assumption that
$y'\in{\mathbb{R}}^{n-1}$ and $t\in(0,\infty)$ satisfy $|y'|<\kappa t$, we have
\begin{align}\label{Drsgy-10}
&\big|u(y'+x'_0,t)-Af(x'_0)\big|
\\
&\qquad\qquad\leq C_\kappa\int_{\{z'\in{\mathbb{R}}^{n-1}:\,|z'|<\delta\}}\frac{t}{(t^2+|z'|^2)^{\frac{n}{2}}}
\,|f(z'+x'_0)-f(x'_0)|\,dz'
\nonumber\\[4pt]
&\qquad\qquad\qquad \,\,+C_\kappa\int_{\{z'\in{\mathbb{R}}^{n-1}:\,|z'|\geq\delta\}}\frac{t}{(t^2+|z'|^2)^{\frac{n}{2}}}
\,|f(z'+x'_0)-f(x'_0)|\,dz'
\nonumber\\[4pt]
&
\qquad\qquad=: I_1+I_2.\nonumber
\end{align}
Note that thanks to \eqref{eq:Gbab}, \eqref{Uah-TTT}, and \eqref{exTGFVC} (used with
$P=P^\Delta$, $f=g$, and $x'=y'=0$), for some constant $C_\kappa\in(0,\infty)$ independent of
$\varepsilon$ and $f$ we have
\begin{multline}\label{Drsgy-1jab.2}
I_1=C_\kappa\int_{{\mathbb{R}}^{n-1}}\frac{t}{(t^2+|z'|^2)^{\frac{n}{2}}}\,|g(z')|\,dz'
\\
=C_\kappa(P^{\Delta}_t\ast |g|)(0')\leq C_\kappa{\mathcal{M}}g(0')\leq C_\kappa\varepsilon,
\end{multline}
where the last inequality is \eqref{Drsgy-0}. As regards $I_2$,
we first observe that if $|z'|\ge \delta$ then
\begin{equation}
1+|z'+x_0'|\leq(1+|x_0'|)\,(1+|z'|)\leq(1+|x_0'|)\,(1+\delta^{-1})\,|z'|.
\end{equation}
Thus,
\begin{align}\label{Drsgy-8}
I_2 & \leq C_\kappa t\int_{\{z'\in{\mathbb{R}}^{n-1}:\,|z'|\geq\delta\}}\frac{1}{|z'|^n}
\,\big|f(z'+x'_0)-f(x'_0)\big|\,dz'
\\[4pt]
& \leq C t\,\Bigg(
\int_{\{z'\in{\mathbb{R}}^{n-1}:\,|z'|\geq\delta\}}\frac{|f(z'+x'_0)|}{(1+|z'+x_0|)^n}\,dz'
+\frac{|f(x_0')|}{\delta}\Bigg)
\nonumber\\[4pt]
& \leq C\,t\left(\int_{\mathbb{R}^{n-1}} \frac{|f(x')|}{1+|x'|^n}\,dx'+|f(x_0')|\right),
\nonumber
\end{align}
where $C$ depends only on $n,\kappa,x_0'$, and $\delta$.
Hence $\lim\limits_{t\to 0^+}I_2=0$. This, \eqref{Drsgy-1jab.2}, and \eqref{Drsgy-10} then imply
\begin{equation}\label{Drsgy-7}
\limsup_{|y'|<\kappa t,\,t\to 0^+}
\big|u(y'+x'_0,t)-Af(x'_0)\big|\leq C_\kappa\varepsilon,
\end{equation}
for some $C_\kappa\in(0,\infty)$ independent of $\varepsilon$ and $f$. Now the claim in
\eqref{exTGFVC.2s} is clear from \eqref{Drsgy-7} and \eqref{vzJBb}-\eqref{defi-u-general}.
Finally, \eqref{Pt-abs-conv} implies $u\in L^1_{\rm loc}({\mathbb{R}}^n_{+})$, and this
takes care of \eqref{exTefef}.
\end{proof}

After these preparations, the proof of Theorem~\ref{thm:existence} is short and straightforward.

\vskip 0.08in
\begin{proof}[Proof of Theorem~\ref{thm:existence}]
That $u$ in \eqref{exist:u} is well-defined and satisfies \eqref{exist:Nu-Mf} as well as
$u\bigl|_{\partial\mathbb{R}^{n}_{+}}^{{}^{\rm n.t.}}=f$ a.e.~in $\mathbb{R}^{n-1}$ follows
immediately from Lemma~\ref{lennii}, Theorem~\ref{ya-T4-fav}, and the normalization of the
Poisson kernel (cf. part $(b)$ in Definition~\ref{defi:Poisson}). Next, given a
multi-index $\alpha\in{\mathbb{N}}_0^n$, from \eqref{eq:KjG} if $|\alpha|\geq 1$ and from
\eqref{eq:KDEF} combined with part $({\rm a})$ in Definition~\ref{defi:Poisson} if $|\alpha|=0$
we see that there exists a constant $C_\alpha\in(0,\infty)$ with the property that
\begin{equation}\label{derv-K}
\big|(\partial^\alpha K^L)(x',t)\big|
\leq C_\alpha\,t^{-|\alpha|}\,\frac{t}{(t+|x'|)^n},\qquad\forall\,(x',t)\in{\mathbb{R}}^n_{+}.
\end{equation}
In concert with \eqref{Pt-abs-conv}, this justifies differentiation under the
integral defining $u$ so, ultimately, $u\in\mathscr{C}^\infty(\mathbb{R}^n_{+})$.
Moreover, $Lu=0$ in ${\mathbb{R}}^n_{+}$ by \eqref{exist:u}, part ${\it (c)}$ in
Definition~\ref{defi:Poisson}, and part ${\it (ii)}$ in Remark~\ref{Ryf-uyf}.
\end{proof}

\begin{remark}\label{hyFF.a}
In the proof of Theorem~\ref{thm:existence}, the construction of a function
$u$ satisfying \eqref{exist:u2} is based on the formula \eqref{exist:u} in which
$P^L$ is the Agmon-Douglis-Nirenberg Poisson kernel for $L$ from Theorem~\ref{ya-T4-fav}.
Such a choice ensured that \eqref{derv-K} holds which, as noted in Remark~\ref{YTrrea},
is not immediately clear for a ``generic" Poisson kernel in the sense of Definition~\ref{defi:Poisson}.
Later on, in Theorem~\ref{taf87h6g}, we shall actually show that there exists precisely
one Poisson kernel for the system $L$ in the sense of Definition~\ref{defi:Poisson},
so this issue will eventually become a moot point. This being said, in the proof of
Theorem~\ref{taf87h6g} it is important to know that
\begin{align}\label{jnabn88}
\begin{array}{c}
\mbox{for any $P^L$ as in Definition~\ref{defi:Poisson}, properties }
\\[4pt]
\mbox{\eqref{exist:u2} and \eqref{exist:Nu-Mf} remain valid for $u$ as in \eqref{exist:f}-\eqref{exist:u}.}
\end{array}
\end{align}
To see that this is indeed the case, assume that $P^L$ is as in Definition~\ref{defi:Poisson}.
Then, if $u$ is as in \eqref{exist:f}-\eqref{exist:u}, it follows from Lemma~\ref{lennii} that
$u\in L^1_{\rm loc}({\mathbb{R}}^n_{+})$, $u\big|_{\partial\mathbb{R}^{n}_{+}}^{{}^{\rm n.t.}}=f$
a.e.~in $\mathbb{R}^{n-1}$, and ${\mathcal{N}}u\leq C{\mathcal{M}}f$. As such, there remains
to show that $u\in\mathscr{C}^\infty(\mathbb{R}^n_{+})$ and $Lu=0$ in $\mathbb{R}^{n}_{+}$.
The strategy is to prove that $Lu=0$ in the sense of distributions in $\mathbb{R}^{n}_{+}$,
which then forces $u\in\mathscr{C}^\infty(\mathbb{R}^n_{+})$ by elliptic regularity
(cf. \cite[Theorem~10.9, p.\,318]{DM}).
With this goal in mind, pick an arbitrary vector-valued test function
$\varphi\in{\mathscr{C}}^\infty_0({\mathbb{R}}^n_{+})$ and,
with $L^\top$ denoting the transposed of $L$, compute
\begin{align}\label{eq:hCa}
&\int_{{\mathbb{R}}^n_{+}}\big\langle u(x)\,,\,(L^\top\varphi)(x)\big\rangle\,dx
\\
&\hskip.5cm=\int_{{\mathbb{R}}^n_{+}}\Big\langle\int_{{\mathbb{R}}^{n-1}}P^L_t(x'-y')f(y')\,dy'
\,,\,(L^\top\varphi)(x',t)\Big\rangle\,dx'dt
\nonumber\\[4pt]
&\hskip.5cm=\int_{{\mathbb{R}}^{n-1}}\left(\int_{{\mathbb{R}}^n_{+}}\Big\langle P^L_t(x'-y')f(y')
\,,\,(L^\top\varphi)(x',t)\Big\rangle\,dx'dt\right)\,dy'
\nonumber\\[4pt]
&\hskip.5cm=\int_{{\mathbb{R}}^{n-1}}\left(\int_{{\mathbb{R}}^n_{+}}\Big\langle K^L(x)f(y')
\,,\,\big[L^\top\big(\varphi(\cdot+(y',0))\big)\big](x)\Big\rangle\,dx\right)\,dy'
\nonumber\\[4pt]
&\hskip.5cm=0.\nonumber
\end{align}
Above, the first equality uses \eqref{exist:u}, the second one is based on Fubini's theorem
(whose applicability is ensured by \eqref{Pt-abs-conv}), the third employs the definition of
$K^L$ and a natural change of variables, while the fourth one follows from \eqref{uahgab-UBVCX}.
Hence, $Lu=0$ in the sense of distributions in $\mathbb{R}^{n}_{+}$, and the proof of \eqref{jnabn88}
is complete.
\end{remark}

We now turn to the task of proving Theorem~\ref{thm:uniqueness}, which is the
key technical result of this paper. In the process, we shall make use of all the
auxiliary results from Appendix~\ref{sect:Green}, which the reader is invited to
review at this stage.

\vskip 0.08in
\begin{proof}[Proof of Theorem~\ref{thm:uniqueness}]
Fix $\kappa>0$ and let $u=(u_\beta)_{1\leq\beta\leq M}\in\mathscr{C}^\infty(\mathbb{R}^n_{+})$
be such that $Lu=0$ in $\mathbb{R}_{+}^n$, $\mathcal{N}_\kappa u$ satisfies \eqref{unq:Nu}, and
$u\big|_{\partial\mathbb{R}^{n}_{+}}^{{}^{\rm n.t.}}=0$ a.e. in ${\mathbb{R}}^{n-1}$.
The goal is to show that $u\equiv 0$ in ${\mathbb{R}}^n_{+}$.
To this end, fix an arbitrary point $x^\star\in\mathbb{R}^n_{+}$ and consider
the Green function $G=G(\,\cdot\,,x^\star)$ in $\mathbb{R}^n_{+}$ with
pole at $x^\star$ for $L^\top$, the transposed of the operator $L$
(cf.~Definition~\ref{ta.av-GGG} and Theorem~\ref{ta.av-GGG.2A} for
details on this matter). By design, this is a matrix-valued
function, say $G=(G_{\alpha\gamma})_{1\leq\alpha,\gamma\leq M}$.

We shall apply Theorem~\ref{theor:div-thm} to a suitably chosen vector field
and compact set. To set the stage, consider the compact set
\begin{equation}\label{eq:CpkT}
K_\star:=\overline{B(x^\star,r)}\subset{\mathbb{R}}^n_{+},\quad\mbox{ where }\,\,
r:=\tfrac34\,{\rm dist}\,(x^\star,\partial{\mathbb{R}}^n_{+}).
\end{equation}
Also, consider a function
\begin{equation}\label{TRavPPPi.i}
\begin{array}{c}
\psi\in{\mathscr{C}}^\infty(\mathbb{R})\,\,\mbox{ with the property that }\,\,
0\leq\psi\leq 1,
\\[4pt]
\psi(t)=0\,\,\mbox{ for }\,\,t\leq 1,
\,\,\,\mbox{ and }\,\,\,\psi(t)=1\,\,\mbox{ for }\,\,t\geq 2.
\end{array}
\end{equation}
Fix (for now) some $\varepsilon\in(0,r/4)$, and define
\begin{equation}\label{TRavPPPi.ii}
\psi_\varepsilon(x):=\psi(x_n/\varepsilon)\,\,\mbox{ for each }\,\,
x=(x_1,\dots,x_n)\in\mathbb{R}^n.
\end{equation}
In particular, the conditions on $\varepsilon$ and $r$ ensure that
\begin{equation}\label{oasg}
\psi_\varepsilon(x^\star)=1.
\end{equation}
To proceed, fix $\gamma\in\{1,\dots,M\}$ and define in ${\mathbb{R}}^n_{+}$ (as usual, using the summation
convention over repeated indices)
\begin{equation}\label{Yabnb-7t5.1}
\vec{F}:=\Big(\psi_\varepsilon\, G_{\alpha\gamma}\,a^{\alpha\beta}_{jk}\,
\partial_k u_\beta-\psi_\varepsilon\, u_\alpha a^{\beta\alpha}_{kj}\,
\partial_k G_{\beta\gamma}-u_\beta\,G_{\alpha\gamma}\,
a^{\alpha\beta}_{kj}\,\partial_k\psi_\varepsilon\Big)_{1\leq j\leq n}.
\end{equation}
From \eqref{maKnaTTGB}, \eqref{mainest2G.1}, \eqref{mainest2G.2}, and \eqref{Yabnb-7t5.1}
it follows that $\vec{F}\in L^1_{\rm loc}({\mathbb{R}}^n_{+},\mathbb{C}^n)$, and a direct
calculation shows that ${\rm div}\,\vec{F}$ (considered in the sense of distributions in
${\mathbb{R}}^n_{+}$) is given by
\begin{align}\label{Hab-ufaf}
{\rm div}\vec{F}
&=
(\partial_j\psi_\varepsilon)\,G_{\alpha\gamma}\,a^{\alpha\beta}_{jk}\,\partial_k u_\beta
+\psi_\varepsilon\,(\partial_j G_{\alpha\gamma})\,a^{\alpha\beta}_{jk}\,\partial_k u_\beta
\\[4pt]
&\qquad\quad
+\psi_\varepsilon\,G_{\alpha\gamma}\,a^{\alpha\beta}_{jk}\,(\partial_j\partial_k u_\beta)
-(\partial_j\psi_\varepsilon)\,u_\alpha\,a^{\beta\alpha}_{kj}\,\partial_k G_{\beta\gamma}
\nonumber\\[4pt]
&\qquad\quad
-\psi_\varepsilon\,(\partial_j u_\alpha)\,a^{\beta\alpha}_{kj}\,\partial_k G_{\beta\gamma}
-\psi_\varepsilon\,u_\alpha\,a^{\beta\alpha}_{kj}\,(\partial_j\partial_k G_{\beta\gamma})
\nonumber\\[4pt]
&\qquad\quad
-(\partial_j u_\beta)\,G_{\alpha\gamma}\,a^{\alpha\beta}_{kj}\,\partial_k\psi_\varepsilon
-u_\beta\,(\partial_j G_{\alpha\gamma})\,a^{\alpha\beta}_{kj}\,\partial_k\psi_\varepsilon
\nonumber\\[4pt]
&\qquad\quad
-u_\beta\,G_{\alpha\gamma}\,a^{\alpha\beta}_{kj}\,(\partial_j\partial_k\psi_\varepsilon)
\nonumber\\[4pt]
&
=:I_{1}+I_{2}+I_{3}+I_{4}+I_{5}+I_{6}+I_{7}+I_{8}+I_{9},\nonumber
\end{align}
where the last equality defines the $I_i$'s. Let us analyze some of these terms.
Changing variables $j'=k$ and $k'=j$ in $I_1$ yields
\begin{equation}\label{BaYNB-1}
I_1=(\partial_{k'}\psi_\varepsilon)\,G_{\alpha\gamma}\,
a^{\alpha\beta}_{k'j'}\,\partial_{j'} u_\beta=-I_7.
\end{equation}
For $I_2$ we change variables $j'=k$, $k'=j$, $\alpha'=\beta$,
$\beta'=\alpha$ in order to write
\begin{equation}\label{BaYNB-2}
I_2=\psi_\varepsilon\, (\partial_{k'} G_{\beta'\gamma})\,
a^{\beta'\alpha'}_{k'j'}\,\partial_{j'} u_{\alpha'}=-I_5.
\end{equation}
As regards $I_3$, we have
\begin{equation}\label{BaYNB-3}
I_3=\psi_\varepsilon\,G_{\alpha\gamma}\,(Lu)_\alpha=0,
\end{equation}
by the assumptions on $u$. For $I_6$ we observe that
(with $G_{\cdot\,\gamma}:=(G_{\mu\gamma})_{\mu}$)
\begin{equation}\label{BaYNB-4}
I_6=-\psi_\varepsilon\,u_\alpha (L^\top G_{\cdot\,\gamma})_\alpha
=-\psi_\varepsilon\,u_\alpha\delta_{\alpha\gamma}\delta_{x^\star}
=-\psi_\varepsilon\,u_\gamma\,\delta_{x^\star},
\end{equation}
thanks to \eqref{GHCewd-23.RRe} where we recall that $G=G(\cdot,x^\star)$ is the
Green function for $L^\top$ with pole at $x^\star$. Collectively, these
equalities permit us to conclude that
\begin{multline}\label{div-F}
{\rm div}\vec{F}  =-\psi_\varepsilon\,u_\gamma\,\delta_{x^\star}
-(\partial_j\psi_\varepsilon)\,u_\alpha a^{\beta\alpha}_{kj}\,\partial_k G_{\beta\gamma}
\\[4pt]
 \,\quad
-u_\beta\,(\partial_j G_{\alpha\gamma})a^{\alpha\beta}_{kj}\,\partial_k\psi_\varepsilon
-u_\beta\,G_{\alpha\gamma}\,a^{\alpha\beta}_{kj}\,(\partial_j\partial_k\psi_\varepsilon)
\,\,\,\mbox{ in }\,\,{\mathcal{D}}'({\mathbb{R}}^n_{+}).
\end{multline}
Notice that the first term in the right-hand side is a distribution supported at
the singleton $\{x^\star\}$ and therefore is in $\mathcal{E}'_{K_\star}({\mathbb{R}}^n_{+})$.
The remaining terms are in $L^1({\mathbb{R}}^n_{+})$, as seen from estimates
\eqref{Gruah}, \eqref{Gruah.3} established below. Thus, condition
$(a)$ in Theorem~\ref{theor:div-thm} holds.

To verify condition $(c)$ in Theorem~\ref{theor:div-thm}
we first observe that $\psi_\varepsilon\equiv 0$ in the horizontal strip
$\{x=(x_1,\dots,x_n)\in{\mathbb{R}}^n:\,0<x_n<\varepsilon\}$. In light of
\eqref{Yabnb-7t5.1}, this clearly implies that
\begin{equation}\label{Yagav-8iah}
\vec{F}\big|_{\partial{\mathbb{R}}^n_{+}}^{{}^{\rm n.t.}}=0
\,\,\mbox{ everywhere on }\,\,\partial\mathbb{R}^n_{+}.
\end{equation}
Let us now turn our attention to condition $(b)$ in Theorem~\ref{theor:div-thm}.
This is a purely qualitative membership, so bounds depending on $\varepsilon$
and $x^\star$ are permissible. We first observe from \eqref{Yabnb-7t5.1}
that there exists $C\in(0,\infty)$ such that
\begin{equation}\label{Yagav-8tds}
|\vec{F}|\leq C{\bf 1}_{\{x_n\geq\varepsilon\}}|G|\,|\nabla u|
+C{\bf 1}_{\{x_n\geq\varepsilon\}}|u|\,|\nabla G|
+C\varepsilon^{-1}{\bf 1}_{\{\varepsilon\leq x_n\leq 2\varepsilon\}}|u||G|,
\end{equation}
in ${\mathbb{R}}^n_{+}$.
Pick a point $x'\in\mathbb{R}^{n-1}$, and select
$y=(y',y_n)\in\Gamma_{\kappa/4}(x')$ with $y_n\geq\varepsilon$ (where $\kappa>0$ was fixed above).
Let $\rho:=\min\big\{1/4\,,\,9\kappa/(16+4\kappa)\big\}$. We claim that
\begin{equation}\label{eq:BFB}
B(y,\rho\,\varepsilon)\subset\Gamma_\kappa(x').
\end{equation}
Indeed, if $z=(z',z_n)\in B(y,\rho\,\varepsilon)$ then $z_n>3\,\varepsilon/4$
and $y_n<z_n+\varepsilon\,\rho$. Hence, $y_n<(4\,\rho/3+1)\,z_n$. Since
$|y'-x'|<(\kappa/4)\,y_n$ also holds, we obtain
\begin{equation}\label{Iy6ty}
|z'-x'|\leq|z-y|+|y'-x'|<\rho\,\varepsilon+(\kappa/4)\,y_n
<\big(4\,\rho/3+\kappa\rho/3+\kappa/4\big)\,z_n\leq\kappa\,z_n,
\end{equation}
ultimately proving \eqref{eq:BFB}. Using this and the interior estimates from Theorem~\ref{ker-sbav}
we may therefore write
\begin{multline}\label{TDY-igfd}
|\nabla u(y)|\leq C\,(\rho\,\varepsilon)^{-1}\aver{B(y,\rho\,\varepsilon)}|u(z)|\,dz
\\
\leq C\,\varepsilon^{-1}\sup_{z\in B(y,\rho\,\varepsilon)}|u(z)|
\leq C\,\varepsilon^{-1}\,{\mathcal{N}}_{\kappa}u(x').
\end{multline}
Next, consider a point $y=(y',y_n)\in\Gamma_{\kappa/4}(x')\setminus K_\star$
with $y_n\geq\varepsilon$. Then, as before, \eqref{eq:BFB} holds.
Let us also note that any $z\in B(y,2\,\rho\,\varepsilon)$ satisfies
$|z-x^\star|>3r/4$ since, upon recalling that $\varepsilon<r/2$ and $\rho\leq 1/4$,
we may estimate
\begin{equation}\label{yrdfd}
r<|y-x^\star|\leq|y-z|+|z-x^\star|
<2\,\rho\,\varepsilon+|z-x^\star|
\leq r/4+|z-x^\star|.
\end{equation}
Thus,
\begin{equation}\label{eq:BFB2}
B(y,2\,\rho\,\varepsilon)\cap\overline{B(x^\star,3r/4)}=\emptyset.
\end{equation}
In particular, $L^\top G=0$ in $B(y,2\,\rho\,\varepsilon)$. As such, we can use interior
estimates for $G$ (cf.~Theorem~\ref{ker-sbav}) in this ball and \eqref{bound-NK-G}
in order to write (with the help of \eqref{eq:BFB} and \eqref{eq:BFB2})
\begin{align}\label{TDY-igfd.2}
|\nabla G(y)| & \leq C\,(\rho\,\varepsilon)^{-1}\aver{B(y,\rho\,\varepsilon)}|G(z)|\,dz
\leq C\,\varepsilon^{-1}\sup_{z\in B(y,\rho\,\varepsilon)}|G(z)|
\\[4pt]
& \leq C\,\varepsilon^{-1}\,{\mathcal{N}}_{\kappa}^{K^c}G(x')
\leq C\,\varepsilon^{-1}\,\frac{1+\log_{+}|x'|}{1+|x'|^{n-1}},\nonumber
\end{align}
where
$
K:=\overline{B(x^\star,3r/4)}\subset{\mathbb{R}}^n_{+}.
$

From \eqref{Yagav-8tds}, \eqref{TDY-igfd}, \eqref{TDY-igfd.2}, and \eqref{bound-NK-G} we deduce that
\begin{align}\label{ajabIHB}
{\mathcal{N}}_{\kappa/4}^{K_\star^c}\vec{F}(x')\leq
C_{\varepsilon,\kappa,x^\star}\,{\mathcal{N}}_{\kappa}u(x')\,
\frac{1+\log_{+}|x'|}{1+|x'|^{n-1}},\qquad\forall\,x'\in{\mathbb{R}}^{n-1}.
\end{align}
Consequently, based on \eqref{ajabIHB} and the assumption on $\mathcal{N}_\kappa u$ in
\eqref{unq:Nu}, we obtain that
\begin{align}\label{ajabIHB.Dv}
\int_{\mathbb{R}^{n-1}} {\mathcal{N}}_{\kappa/4}^{K_\star^c}\vec{F}(x')\,dx'
\leq C_{\varepsilon,\kappa,x^\star}\,\int_{\mathbb{R}^{n-1}}{\mathcal{N}}_{\kappa}u(x')\,
\frac{1+\log_{+}|x'|}{1+|x'|^{n-1}}\,dx'<\infty.
\end{align}
The above estimate shows that ${\mathcal{N}}_{\kappa/4}^{K_\star^c}
\vec{F}\in L^1(\partial{\mathbb{R}}^n_{+})$ which, together with \eqref{N-Nal.11a},
implies ${\mathcal{N}}_{\kappa}^{K_\star^c}\vec{F}\in L^1(\partial{\mathbb{R}}^n_{+})$.
Hence, condition $(b)$ in Theorem~\ref{theor:div-thm} holds as well.

Having verified all hypotheses in Theorem~\ref{theor:div-thm}, from
\eqref{eqn:div-form}, \eqref{Yagav-8iah}, \eqref{div-F}, and \eqref{oasg}, we obtain that
\begin{align}\label{Uahg-ihG}
0 &=-\int_{\partial\mathbb{R}^n_{+}}e_n\cdot
\bigl(\vec F\,\big|^{{}^{\rm n.t.}}_{\partial\mathbb{R}^n_{+}}\bigr)\,d{\mathscr{L}}^{n-1}
={}_{(\mathscr{C}_b^\infty(\mathbb{R}^n_{+}))^\ast}\big\langle{\rm div}\vec{F},1
\big\rangle_{\mathscr{C}_b^\infty(\mathbb{R}^n_{+})}
\\[4pt]
& =-u_\gamma(x^\star)\,-\int_{\mathbb{R}^n_{+}}(\partial_j\psi_\varepsilon)\,
u_\alpha a^{\beta\alpha}_{kj}\,\partial_k G_{\beta\gamma}\,d\mathscr{L}^n
\nonumber\\[4pt]
& \qquad
-\int_{\mathbb{R}^n_{+}}u_\beta\,(\partial_j G_{\alpha\gamma})a^{\alpha\beta}_{kj}\,
\partial_k\psi_\varepsilon\,d\mathscr{L}^n
-\int_{\mathbb{R}^n_{+}}u_\beta\,G_{\alpha\gamma}\,a^{\alpha\beta}_{kj}\,
(\partial_j\partial_k\psi_\varepsilon)\,d\mathscr{L}^n.\nonumber
\end{align}
We claim that the three integrals in the rightmost side of \eqref{Uahg-ihG} converge to zero
as $\varepsilon\to 0^+$. This, in turn, will imply that $u(x^\star)=0$ and
since $x^\star\in\mathbb{R}^n_{+}$ is an arbitrary point we may ultimately conclude
that $u\equiv 0$ in ${\mathbb{R}}^n_{+}$, as desired.

An inspection of the aforementioned integrals reveals that we need to prove that
\begin{equation}\label{Green-2terms}
\lim_{\varepsilon\to 0^{+}}\frac1{\varepsilon}\,
\int_{\{x=(x',\,x_n)\in\mathbb{R}^n:\,\varepsilon<x_n<2\,\varepsilon\}}
|u|\,|\nabla G|d\mathscr{L}^n=0,
\end{equation}
and
\begin{equation}\label{Green-2terms.H}
\lim_{\varepsilon\to 0^{+}}
\frac1{\varepsilon^2}\,\int_{\{x=(x',\,x_n)\in\mathbb{R}^n:\,\varepsilon<x_n<2\,\varepsilon\}}
|u|\,|G|\,d\mathscr{L}^n=0.
\end{equation}
From \eqref{ghagUGDS}, and the Mean Value Theorem we have
\begin{align}\label{Gruah}
& \frac1{\varepsilon^2}\,\int_{\{x=(x',\,x_n)\in\mathbb{R}^n:\,\varepsilon<x_n<2\,\varepsilon\}}
|u|\,|G|\,d\mathscr{L}^n
\\[4pt]
& \hskip 0.250in
=\frac1{\varepsilon^2}\,\int_{\{x=(x',\,x_n)\in\mathbb{R}^n:\,\varepsilon<x_n<2\,\varepsilon\}}
|u(x',x_n)|\,|G(x',x_n)-G(x',0)|\,dx
\nonumber\\[4pt]
& \hskip 0.250in
\leq\frac{C}{\varepsilon}\,\int_{\{x=(x',\,x_n)\in\mathbb{R}^n:\,\varepsilon<x_n<2\,\varepsilon\}}
|u(x',x_n)|\,\Big(\sup_{0<t<2\varepsilon}|(\partial_{x_n}G)(x',t)|\,\Big)\,dx
\nonumber\\[4pt]
& \hskip 0.250in
\leq\frac{C}{\varepsilon}\,\int_{\{x=(x',\,x_n)\in\mathbb{R}^n:\,\varepsilon<x_n<2\,\varepsilon\}}
|u(x',x_n)|\,\Big(\sup_{0<t<2\varepsilon}|(\nabla G)(x',t)|\,\Big)\,dx.\nonumber
\end{align}
Note that the integral in \eqref{Green-2terms} can also be controlled by the
last quantity in \eqref{Gruah}. Therefore, matters have been reduced to showing that
\begin{equation}\label{Gruah.2}
\lim_{\varepsilon\to 0^{+}}\frac{1}{\varepsilon}\,
\int_{\{x=(x',\,x_n)\in\mathbb{R}^n:\,\varepsilon<x_n<2\,\varepsilon\}}
|u(x',x_n)|\,\Big(\sup_{0<t<2\varepsilon}|(\nabla G)(x',t)|\,\Big)\,dx=0.
\end{equation}
In this regard, by \eqref{bouMNN} we have
\begin{align}\label{Gruah.3}
& \frac{C}{\varepsilon}\,\int_{\{x=(x',\,x_n)\in\mathbb{R}^n:\,\varepsilon<x_n<2\,\varepsilon\}}
|u(x',x_n)|\,\Big(\sup_{0<t<2\varepsilon}|(\nabla G)(x',t)|\,\Big)\,dx
\\[4pt]
& \hskip 0.50in
\leq C\,\int_{\mathbb{R}^{n-1}}\mathcal{N}^{(2\varepsilon)}_\kappa u(x')\,
\mathcal{N}^{(2\varepsilon)}_\kappa(\nabla G)(x')\,dx'
\nonumber\\[4pt]
& \hskip 0.50in
\leq C\,\int_{\mathbb{R}^{n-1}}\mathcal{N}^{(2\varepsilon)}_\kappa u(x')\,
\mathcal{N}^{K^c}_\kappa(\nabla G)(x')\,dx'
\nonumber\\[4pt]
& \hskip 0.50in
\leq C\,\int_{\mathbb{R}^{n-1}}\mathcal{N}^{(2\varepsilon)}_\kappa u(x')\,
\frac{1}{1+|x'|^{n-1}}\,dx'
\nonumber\\[4pt]
& \hskip 0.50in
\leq C\,\int_{\mathbb{R}^{n-1}}\mathcal{N}^{(2\varepsilon)}_\kappa u(x')\,
\frac{1+\log_{+}|x'|}{1+|x'|^{n-1}}\,dx',\nonumber
\end{align}
where $\mathcal{N}^{(2\varepsilon)}_\kappa$ is the truncated nontangential
maximal function defined as in \eqref{Gruah.4}. Notice that $\lim\limits_{\varepsilon\to 0^{+}}
\mathcal{N}^{(2\varepsilon)}_\kappa u(x')=0$ for a.e.~$x'\in{\mathbb{R}}^{n-1}$
by the fact that $u\big|_{\partial\mathbb{R}^{n}_{+}}^{{}^{\rm n.t.}}=0$. On the other hand,
$0\leq\mathcal{N}^{(2\varepsilon)}_\kappa u(x')\leq\mathcal{N}_\kappa u(x')$
for each $x'\in{\mathbb{R}}^{n-1}$, and therefore the integrand is uniformly controlled by
an $L^1(\mathbb{R}^{n-1})$ function thanks to our assumption in \eqref{unq:Nu}.
Thus, the desired formula \eqref{Gruah.2} follows on account of \eqref{Gruah.3},
\eqref{unq:Nu}, and Lebesgue's Dominated Convergence Theorem.
This finishes the proof of Theorem~\ref{thm:uniqueness}.
\end{proof}

We conclude this section with a remark which, in particular, shows that in the special case when $L=\Delta$
the hypotheses in Theorem~\ref{thm:uniqueness} may be slightly relaxed.

\begin{remark}\label{Tgg-GRR}
If the system $L$ (assumed to be as in \eqref{L-def}-\eqref{L-ell.X}) is such that
its fundamental solution $E^L$ from Theorem~\ref{FS-prop} is a radial function
when restricted to ${\mathbb{R}}^n\setminus\{0\}$, then the logarithm in \eqref{unq:Nu} may be omitted.
This is seen by inspecting the proof of Theorem~\ref{thm:uniqueness} and making use of
part {\it (1)} in Theorem~\ref{ta.av-GGG.2A}.
\end{remark}

\section{Well-posedness for the Dirichlet problem}
\setcounter{equation}{0}
\label{S-4}

In this section, Theorems~\ref{thm:existence} and \ref{thm:uniqueness} will be used
to prove Theorem~\ref{Them-General}, Corollary~\ref{tfav.tRR}, Corollary~\ref{tfav.tRR.BEUR},
and Theorem~\ref{corol:Xw-Dir}.

\vskip 0.08in
\begin{proof}[Proof of Theorem~\ref{Them-General}]
For existence, invoke Theorem~\ref{thm:existence} (whose applicability is ensured by the
first condition in \eqref{Fi-AN.1}) and note that if $u$ is as in \eqref{exist:u}
then the first, second, and last conditions in \eqref{Dir-BVP-XY} are satisfied.
In addition, \eqref{eJHBb} is simply \eqref{exist:Nu-Mf}. Together, \eqref{Fi-AN.2} and \eqref{eJHBb}
then permit us to conclude that the third condition in \eqref{Dir-BVP-XY} is also satisfied.
Hence, $u$ solves \eqref{Dir-BVP-XY}. For uniqueness, assume that both $u_1$ and $u_2$ solve
\eqref{Dir-BVP-XY} for the same datum $f$ and set $u:=u_1-u_2\in{\mathscr{C}}^\infty({\mathbb{R}}^n_{+})$.
Then $Lu=0$ in ${\mathbb{R}}^n_{+}$ and, since
${\mathcal{N}}u_1,{\mathcal{N}}u_2\in{\mathbb{Y}}$, the estimate
$0\leq{\mathcal{N}}u\leq{\mathcal{N}}u_1+{\mathcal{N}}u_2\leq 2\max\big\{{\mathcal{N}}u_1\,,\,{\mathcal{N}}u_2\big\}$
forces ${\mathcal{N}}u\in{\mathbb{Y}}$ by the properties of the function lattice ${\mathbb{Y}}$.
Granted this, Theorem~\ref{thm:uniqueness} applies (thanks to the second condition in \eqref{Fi-AN.1})
and gives that $u\equiv 0$ in ${\mathbb{R}}^n_{+}$, hence $u_1=u_2$ as wanted.
\end{proof}

Before presenting the proof of Corollary~\ref{tfav.tRR}, some comments are in order.
Having fixed some $q\in(1,\infty)$ (whose actual choice is ultimately immaterial),
one may define the {\tt Hardy} {\tt space} $H^1(\mathbb{R}^{n-1})$ as
\begin{align}\label{eq:f-H1-atoms}
H^1(\mathbb{R}^{n-1}):=\Big\{f&\in L^1({\mathbb{R}}^{n-1}):\,
f=\sum_{j\in{\mathbb{N}}}\lambda_j\,a_j\quad\mbox{a.e.~in }\mathbb{R}^{n-1},
\\[0pt]
& \mbox{for some $(1,q)$-atoms $\{a_j\}_{j\in{\mathbb{N}}}$ and scalars
$\{\lambda_j\}_{j\in{\mathbb{N}}}\in\ell^1$}\Big\}.\nonumber
\end{align}
For each $f\in H^1(\mathbb{R}^{n-1})$ we then set
$\|f\|_{H^1(\mathbb{R}^{n-1})}:=\inf\sum_{j\in{\mathbb{N}}}|\lambda_j|$ with the
infimum taken over all atomic representations of $f$ as $\sum_{j\in{\mathbb{N}}}\lambda_j\,a_j$.

Recall that a Lebesgue measurable function $a:\mathbb{R}^{n-1}\rightarrow\mathbb{C}$
is said to be an $(1,q)$-{\tt atom} if, for some cube $Q\subset\mathbb{R}^{n-1}$, one has
\begin{equation}\label{defi-atom}
{\rm supp}\,a\subset Q,\qquad\|a\|_{L^q(\mathbb{R}^{n-1})}\leq |Q|^{1/q-1},
\qquad\int_{\mathbb{R}^{n-1}}a(y')\,dy'=0.
\end{equation}

\vskip 0.08in
\begin{proof}[Proof of Corollary~\ref{tfav.tRR}]
Having already established Theorem~\ref{Them-General}, we only need to check that
the implication in \eqref{eq:BV333} holds if ${\mathbb{X}}=H^1({\mathbb{R}}^{n-1})$ and
${\mathbb{Y}}=L^1({\mathbb{R}}^{n-1})$ (recall that we have
$H^1({\mathbb{R}}^{n-1})\subset L^1({\mathbb{R}}^{n-1})$, thus
\eqref{Fi-AN.1} and the first condition
in \eqref{Fi-AN.2} are clear for this choice). To this end, assume first that
\begin{equation}\label{eq:Fb}
u(x',t):=(P_t\ast a)(x'),\qquad\forall\,(x',t)\in{\mathbb{R}}^n_{+},
\end{equation}
where $a:\mathbb{R}^{n-1}\rightarrow\mathbb{C}^M$ is a Lebesgue measurable function (whose scalar
components are) as in \eqref{defi-atom}. Then \eqref{exist:Nu-Mf}, H\"older's inequality, the $L^q$-boundedness
of the Hardy-Littlewood maximal function, and the normalization of the atom permit us to write
\begin{align}\label{eq:NBV1uj}
\int_{2\sqrt{n}Q}{\mathcal{N}}_\kappa u\,d{\mathscr{L}}^{n-1}
& \leq C\int_{2\sqrt{n}Q}{\mathcal{M}}a\,d{\mathscr{L}}^{n-1}
\\[4pt]
&
\leq C|Q|^{1/q'}\Big(\int_{2\sqrt{n}Q}\big({\mathcal{M}}a\big)^q\,d{\mathscr{L}}^{n-1}\Big)^{1/q}
\nonumber\\[4pt]
&\leq C|Q|^{1/q'}\Big(\int_{\mathbb{R}^{n-1}}\big({\mathcal{M}}a\big)^q\,d{\mathscr{L}}^{n-1}\Big)^{1/q}
\nonumber\\[4pt]
&\leq C|Q|^{1/q'}\|a\|_{L^q(\mathbb{R}^{n-1})}\leq C,\nonumber
\end{align}
for some constant $C\in(0,\infty)$ depending only on $n,L,\kappa$.
To proceed, fix an arbitrary point $x'\in{\mathbb{R}}^{n-1}\setminus 2\sqrt{n}Q$.
If $\ell(Q)$ and $x'_Q$ are, respectively, the side-length and center of the cube $Q$,
this choice entails
\begin{equation}\label{eq:rEEb}
|z'-x_Q'|\leq\max\{\kappa,2\}\big(t+|z'-\xi'|\big),
\qquad\forall\,(z',t)\in\Gamma_\kappa(x'),\quad\forall\,\xi'\in Q.
\end{equation}
Indeed, if $(z',t)\in\Gamma_\kappa(x')$ and $\xi'\in Q$ then, first,
$|z'-x'_Q|\leq |z'-\xi'|+|\xi'-x'_Q|$ and, second,
\begin{equation}
|\xi'-x'_Q|\leq\frac{\sqrt{n}}{2}\ell(Q)\leq\frac{1}{2}|x'-x'_Q|\leq\frac{1}{2}(|x'-z'|+|z'-x'_Q|)
\leq\frac{1}{2}(\kappa t+|z'-x'_Q|),
\end{equation}
from which \eqref{eq:rEEb} follows.
Next, using \eqref{eq:KDEF}, the vanishing moment condition for the atom, the Mean Value Theorem
together with \eqref{derv-K} and \eqref{eq:rEEb}, H\"older's inequality and, finally, the
support and normalization of the atom, for each $(z',t)\in\Gamma_\kappa(x')$ we may estimate
\begin{align}\label{rrff4rf}
|(P^L_t\ast a)(z')| & =\Big|\int_{{\mathbb{R}}^{n-1}}\big[K^L(z'-y',t)-K^L(z'-x'_Q,t)\big]a(y')\,dy'\Big|
\\[4pt]
&\leq\int_{Q}\big|K^L(z'-y',t)-K^L(z'-x'_Q,t)\big||a(y')|\,dy'
\nonumber\\[4pt]
&\leq C\frac{\ell(Q)}{\big(t+|z'-x'_Q|\big)^n}\int_{Q}|a(y')|\,dy'
\nonumber\\[4pt]
&\leq C\frac{\ell(Q)}{\big(t+|z'-x'_Q|\big)^n}\,
|Q|^{1/q'}\|a\|_{L^q(\mathbb{R}^{n-1})}
\nonumber\\[4pt]
&\leq\frac{C\ell(Q)}{\big(t+|z'-x'_Q|\big)^n}.\nonumber
\end{align}
In turn, \eqref{rrff4rf} implies that for each
$x'\in{\mathbb{R}}^{n-1}\setminus 2\sqrt{n}Q$ we have
\begin{multline}\label{rrff4rf.2}
\big({\mathcal{N}}_\kappa u\big)(x')=\sup_{(z',t)\in\Gamma_\kappa(x')}|(P^L_t\ast a)(z')|
\\
\leq\sup_{(z',t)\in\Gamma_\kappa(x')}\frac{C\ell(Q)}{\big(t+|z'-x'_Q|\big)^n}
\le\frac{C\ell(Q)}{|x'-x'_Q|^n},
\end{multline}
hence
\begin{equation}\label{eq:NBVGaa}
\int_{{\mathbb{R}}^{n-1}\setminus 2\sqrt{n}Q}{\mathcal{N}}_\kappa u\,d{\mathscr{L}}^{n-1}
\leq C\int_{{\mathbb{R}}^{n-1}\setminus 2\sqrt{n}Q}\frac{\ell(Q)}{|x'-x'_Q|^n}\,dx'=C,
\end{equation}
for some constant $C\in(0,\infty)$ depending only on $n,L$. From \eqref{eq:NBV1uj} and
\eqref{eq:NBVGaa} we deduce that whenever $u$ is as in \eqref{eq:Fb} then
\begin{equation}\label{eq:NBhf}
\int_{{\mathbb{R}}^{n-1}}{\mathcal{N}}_\kappa u\,d{\mathscr{L}}^{n-1}\leq C,
\end{equation}
for some constant $C\in(0,\infty)$ independent of the atom.

To conclude, for each $p\in[1,\infty)$ define the tent spaces
\begin{equation}\label{eq:REWx}
{\mathcal{T}}^p({\mathbb{R}}^n_{+}):=\big\{u:{\mathbb{R}}^n_{+}\to{\mathbb{C}}^M:\,
\mbox{$u$ measurable and ${\mathcal{N}}_\kappa u\in L^p({\mathbb{R}}^{n-1})$}\big\}
\end{equation}
equipped with the norm $\|u\|_{{\mathcal{T}}^p({\mathbb{R}}^n_{+})}:=
\|{\mathcal{N}}_\kappa u\|_{L^p({\mathbb{R}}^{n-1})}$. It may be actually checked that the pair
$\big({\mathcal{T}}^p({\mathbb{R}}^n_{+})\,,\,\|\cdot\|_{{\mathcal{T}}^p({\mathbb{R}}^n_{+})}\big)$
is a K\"othe function space, relative to the background measure space $({\mathbb{R}}^n_{+},{\mathscr{L}}^n)$.
In this context, with $q\in(1,\infty)$ the exponent intervening in \eqref{eq:f-H1-atoms},
consider the assignment
\begin{equation}\label{Fi-AN.aman.2}
\begin{array}{c}
T:L^q({\mathbb{R}}^{n-1})\longrightarrow{\mathcal{T}}^q({\mathbb{R}}^n_{+})\,\,\mbox{ given by}
\\[4pt]
\mbox{$Tf:=u$, where $u$ is associated with $f$ as in \eqref{exist:u}}.
\end{array}
\end{equation}
Thanks to Theorem~\ref{thm:existence}, $T$ is a well-defined linear and bounded operator
and, given what we have just proved in \eqref{eq:NBhf}, it has the property that
$\|Ta\|_{{\mathcal{T}}^1({\mathbb{R}}^{n-1})}\leq C$
for every $(1,q)$-atom $a$, for some constant $C\in(0,\infty)$ independent of the atom in question.
Granted these, it follows (see \cite{AM} for very general results of this nature) that $T$ extends
as a linear and bounded operator from $H^1({\mathbb{R}}^{n-1})$ into
${\mathcal{T}}^1({\mathbb{R}}^{n-1})$. In light of \eqref{Fi-AN.aman.2}, this shows
that the implication in \eqref{eq:BV333} is indeed true
if ${\mathbb{X}},{\mathbb{Y}}$ are as in \eqref{eq:BVCc.2}. This proves that,
for each system $L$ as in \eqref{L-def}-\eqref{L-ell.X}, the corresponding
$(H^1,L^1)$-Dirichlet boundary value problem in $\mathbb{R}^{n}_{+}$,
\begin{equation}\label{Dir-BVP-H1L1}
\left\{
\begin{array}{l}
u\in{\mathscr{C}}^\infty({\mathbb{R}}^n_{+}),
\\[4pt]
Lu=0\,\,\mbox{ in }\,\,\mathbb{R}^{n}_{+},
\\[4pt]
\mathcal{N}_\kappa u\in L^1({\mathbb{R}}^{n-1}),
\\[6pt]
u\bigl|_{\partial\mathbb{R}^{n}_{+}}^{{}^{\rm n.t.}}=f\in H^1({\mathbb{R}}^{n-1}),
\end{array}
\right.
\end{equation}
has a unique solution. Moreover, the above argument also shows that the following
naturally accompanying estimate holds
\begin{equation}\label{Jbvv75r}
\|{\mathcal{N}}_\kappa u\|_{L^1({\mathbb{R}}^{n-1})}\leq C\,\|f\|_{H^1({\mathbb{R}}^{n-1})},
\end{equation}
for some $C=C(n,L,\kappa)\in(0,\infty)$. Hence, \eqref{Dir-BVP-H1L1} is well-posed.

In closing, we wish to note that one can give a proof of \eqref{Jbvv75r}, and also of \eqref{eq:BV333},
which avoids working with tent spaces by reasoning directly as follows (incidentally, this is also
going to be useful later on, in the proof of Corollary~\ref{tfav.tRR.BEUR}).
Let $f\in H^1({\mathbb{R}}^{n-1})$ and consider a quasi-optimal atomic decomposition,
say $f=\sum_{j\in{\mathbb{N}}}\lambda_ja_j$ with
\begin{equation}\label{Jbvv75r.A}
\frac{1}{2}\sum_{j\in{\mathbb{N}}}|\lambda_j|\leq\|f\|_{H^1({\mathbb{R}}^{n-1})}
\leq\sum_{j\in{\mathbb{N}}}|\lambda_j|.
\end{equation}
For each $N\in{\mathbb{N}}$, write $f_N:=\sum_{j=1}^N\lambda_ja_j$.
Clearly, $f_N\to f$ in $L^1({\mathbb{R}}^{n-1})$ as $N\to\infty$, hence also a.e.~after eventually
passing to a subsequence. To prove \eqref{Jbvv75r} in the case when
$u$ is defined as in \eqref{eq:FDww} we proceed as follows.
For each $N\in{\mathbb{N}}$, introduce $u_N(x',t):=(P^L_t\ast f_N)(x')$ for all
$(x',t)\in{\mathbb{R}}^n_{+}$ (this makes sense since $f_N\in L^1({\mathbb{R}}^{n-1})$).
Then, using \eqref{exist:Nu-Mf}, we may write
\begin{align}\label{Jbvv75r.B}
\|{\mathcal{N}}_\kappa u-{\mathcal{N}}_\kappa (u_N)\|_{L^{1,\infty}({\mathbb{R}}^{n-1})}
& \leq\|{\mathcal{N}}_\kappa (u-u_N)\|_{L^{1,\infty}({\mathbb{R}}^{n-1})}
\\[4pt]
& \leq C\,\|{\mathcal{M}}(f-f_N)\|_{L^{1,\infty}({\mathbb{R}}^{n-1})}
\nonumber\\[4pt]
& \leq C\,\|f-f_N\|_{L^1({\mathbb{R}}^{n-1})}\longrightarrow 0
\,\,\,\mbox{ as }\,\,N\to\infty.\nonumber
\end{align}
Thus, ${\mathcal{N}}_\kappa (u_N)\to{\mathcal{N}}_\kappa u$ in $L^{1,\infty}({\mathbb{R}}^{n-1})$
as $N\to\infty$ and, by passing to a subsequence $\{N_j\}_{j\in{\mathbb{N}}}$, we may ensure that
${\mathcal{N}}_\kappa (u_{N_j})\to{\mathcal{N}}_\kappa u$ pointwise a.e.~in ${\mathbb{R}}^{n-1}$
as $j\to\infty$ (cf., e.g., the discussion in \cite[Example~6, pp.\,4776-4777]{MMMZ}).
In turn, if for each $j\in{\mathbb{N}}$ we set $v_j(x',t):=(P^L_t\ast a_j)(x')$ for
$(x',t)\in{\mathbb{R}}^n_{+}$, this readily gives
\begin{equation}\label{Jbvv75r.C}
{\mathcal{N}}_\kappa u\leq\sum_{j\in{\mathbb{N}}}|\lambda_j|\,{\mathcal{N}}_\kappa (v_j)
\,\,\,\mbox{ a.e.~in }\,\,{\mathbb{R}}^{n-1}.
\end{equation}
From \eqref{Jbvv75r.C}, \eqref{eq:NBhf}, and \eqref{Jbvv75r.A} we then conclude that
\begin{equation}\label{Jbvv75r.D}
\|{\mathcal{N}}_\kappa u\|_{L^1({\mathbb{R}}^{n-1})}
\leq C\,\sum_{j\in{\mathbb{N}}}|\lambda_j|\leq C\|f\|_{H^1({\mathbb{R}}^{n-1})},
\end{equation}
finishing the alternative proof of \eqref{Jbvv75r} and the implication in \eqref{eq:BV333}.
\end{proof}

As a preamble to the proof of Corollary~\ref{tfav.tRR.BEUR} we first properly define the spaces
intervening in \eqref{eq:BVCc.2BEUR}. Given $p\in(1,\infty)$ define the {\tt Beurling} {\tt space}
${\rm A}^p(\mathbb{R}^{n-1})$ as the collection of $p$-th power locally
integrable functions $f$ in $\mathbb{R}^{n-1}$ satisfying (with $p'$ denoting the H\"older
conjugate exponent of $p$)
\begin{equation}\label{PP-h7y}
\|f\|_{{\rm A}^p(\mathbb{R}^{n-1})}:=\sum_{k=0}^\infty 2^{k(n-1)/p'}
\|f{\mathbf{1}}_{C_k}\|_{L^p({\mathbb{R}}^{n-1})}<\infty,
\end{equation}
where $C_0:=B_{n-1}(0',1)$ and
$C_k:=B_{n-1}(0',2^k)\setminus\overline{B_{n-1}(0',2^{k-1})}$ for each $k\in{\mathbb{N}}$.
This readily implies that
\begin{equation}\label{eq:rgGH}
\begin{array}{c}
\mbox{$\big({\rm A}^p(\mathbb{R}^{n-1}),\|\cdot\|_{{\rm A}^p(\mathbb{R}^{n-1})}\big)$
is a Banach space, which is a function }
\\[4pt]
\mbox{lattice, and embeds continuously into $L^1({\mathbb{R}}^{n-1})$}.
\end{array}
\end{equation}
Next, call a function $a\in L^1_{\rm loc}({\mathbb{R}}^{n-1})$ a
{\tt central} $(1,p)$-{\tt atom} provided there exists a cube $Q$ in ${\mathbb{R}}^{n-1}$,
centered at the origin and having side-length $\ell(Q)\geq 1$ such that
\begin{equation}\label{eq:ADr}
{\rm supp}\,a\subseteq Q,\quad\|a\|_{L^p({\mathbb{R}}^{n-1})}\leq |Q|^{1/p-1}
\,\,\mbox{and }\,\,\int_{{\mathbb{R}}^{n-1}}a(x')\,dx'=0.
\end{equation}
Then, following \cite{GC}, we define the Beurling-Hardy space as
\begin{multline}\label{eq:f-H1-atoms.BEUR}
{\rm HA}^p({\mathbb{R}}^{n-1}):=\Big\{f\in L^1({\mathbb{R}}^{n-1}):\,
f=\sum_{j\in{\mathbb{N}}}\lambda_j\,a_j\ \ \mbox{a.e.~in }\mathbb{R}^{n-1},\mbox{ for some}
\\[0pt]
\mbox{central $(1,p)$-atoms $\{a_j\}_{j\in{\mathbb{N}}}$ and
$\{\lambda_j\}_{j\in{\mathbb{N}}}\in\ell^1$}\Big\},
\end{multline}
and for each $f\in {\rm HA}^p(\mathbb{R}^{n-1})$ set
$\|f\|_{{\rm HA}^p(\mathbb{R}^{n-1})}:=\inf\sum_{j\in{\mathbb{N}}}|\lambda_j|$ with the
infimum taken over representations of $f=\sum_{j\in{\mathbb{N}}}\lambda_j\,a_j$ as in
\eqref{eq:f-H1-atoms.BEUR}. Various alternative characterizations of ${\rm HA}^p(\mathbb{R}^{n-1})$
may be found in \cite[Theorem~3.1, p.\,505]{GC}. Here we only wish to note that, as is apparent
from definitions,
\begin{equation}\label{eq:RTTT}
{\rm HA}^p(\mathbb{R}^{n-1})\hookrightarrow H^1(\mathbb{R}^{n-1}).
\end{equation}

Given a system $L$ as in \eqref{L-def}-\eqref{L-ell.X} and having fixed some $\kappa>0$
and $p\in(1,\infty)$, the $({\rm HA}^p,{\rm A}^p)$-Dirichlet boundary value problem
for $L$ in $\mathbb{R}^{n}_{+}$ is then formulated as
\begin{equation}\label{Dir-BVP-BEUR}
\left\{
\begin{array}{l}
u\in{\mathscr{C}}^\infty({\mathbb{R}}^n_{+}),
\\[4pt]
Lu=0\,\,\mbox{ in }\,\,\mathbb{R}^{n}_{+},
\\[4pt]
\mathcal{N}_\kappa u\in {\rm A}^p({\mathbb{R}}^{n-1}),
\\[6pt]
u\bigl|_{\partial\mathbb{R}^{n}_{+}}^{{}^{\rm n.t.}}=f\in{\rm HA}^p({\mathbb{R}}^{n-1}).
\end{array}
\right.
\end{equation}
We are now ready to present the proof of Corollary~\ref{tfav.tRR.BEUR} dealing with
the well-posedness of \eqref{Dir-BVP-BEUR} for each $p\in(1,\infty)$.

\vskip 0.08in
\begin{proof}[Proof of Corollary~\ref{tfav.tRR.BEUR}]
Fix $p\in(1,\infty)$. Granted Theorem~\ref{Them-General}, we are left with verifying that
the implication in \eqref{eq:BV333} holds if ${\mathbb{X}}={\rm HA}^p({\mathbb{R}}^{n-1})$ and
${\mathbb{Y}}={\rm A}^p({\mathbb{R}}^{n-1})$ (since \eqref{Fi-AN.1} and the first condition
in \eqref{Fi-AN.2} are clear for this choice, thanks to \eqref{eq:rgGH}). With this goal
in mind, pick a Lebesgue measurable function $a:\mathbb{R}^{n-1}\rightarrow\mathbb{C}^M$
whose scalar components are as in \eqref{eq:ADr} and define $u$ as in \eqref{eq:Fb}.
Also, with the cube $Q\subset{\mathbb{R}}^{n-1}$ centered at the origin and side-length
$\ell(Q)\geq 1$ as in \eqref{eq:ADr}, let $N_a$ be the smallest nonnegative integer which
is larger than or equal to ${\rm log}_2(n\,\ell(Q))$. Using \eqref{exist:Nu-Mf}, the $L^p$-boundedness
of the Hardy-Littlewood maximal function, and the normalization of the central $(1,p)$-atom
we obtain
\begin{align}\label{eq:NBEUR}
\sum_{k=0}^{N_a}2^{k(n-1)/p'}\big\|\big({\mathcal{N}}_\kappa u\big){\mathbf{1}}_{C_k}\big\|_{L^p({\mathbb{R}}^{n-1})}
& \leq C\big\|{\mathcal{M}}a\big\|_{L^p({\mathbb{R}}^{n-1})}\sum_{k=0}^{N_a}2^{k(n-1)/p'}
\\[4pt]
&\leq C\|a\|_{L^p(\mathbb{R}^{n-1})}2^{N_a(n-1)/p'}
\nonumber\\[4pt]
&\leq C|Q|^{1/p-1}\big(2^{{\rm log}_2(n\ell(Q))}\big)^{(n-1)/p'}
\nonumber\\[4pt]
&=C<\infty,\nonumber
\end{align}
where the constant $C$ is independent of the central $(1,p)$-atom $a$.
Next, fix an arbitrary integer $k\geq N_a+1$ along with some point $x'\in C_k$.
This choice entails $|x'|>2^{k-1}\geq 2^{N_a}\geq n\ell(Q)$ which, in turn, forces
$x'\in{\mathbb{R}}^{n-1}\setminus 2\sqrt{n}Q$. Granted this, the same type of estimates as in
\eqref{rrff4rf}-\eqref{rrff4rf.2} (this time with $x'_Q=0'$) lead to the conclusion that
there exists a constant $C\in(0,\infty)$ depending only on $n,L,\kappa$ with the property that
\begin{equation}\label{rrff4rf.2BEU}
\big({\mathcal{N}}_\kappa u\big)(x')\leq\frac{C\ell(Q)}{|x'|^n}\,\,\mbox{ whenever $x'\in C_k$ with
$k\geq N_a+1$}.
\end{equation}
Having established this we may then estimate
\begin{align}\label{eq:NBEUR.45t}
&\sum_{k=N_a+1}^\infty 2^{k(n-1)/p'}
\big\|\big({\mathcal{N}}_\kappa u\big){\mathbf{1}}_{C_k}\big\|_{L^p({\mathbb{R}}^{n-1})}
\\[4pt]
&\hskip2cm\leq C\ell(Q)\sum_{k=N_a+1}^\infty 2^{k(n-1)/p'}\Big(\int_{C_k}|x'|^{-np}\,dx'\Big)^{1/p}
\nonumber\\[4pt]
&\hskip2cm\leq C\ell(Q)\sum_{k=N_a+1}^\infty 2^{k(n-1)/p'}2^{-n(k-1)}2^{k(n-1)/p}
\nonumber\\[4pt]
&\hskip2cm\leq C\ell(Q)\sum_{k=N_a+1}^\infty 2^{-k}
\nonumber\\[4pt]
&\hskip2cm\leq C\ell(Q)2^{-N_a}=C<\infty,\nonumber
\end{align}
for some constant $C$ independent of $a$. In concert, \eqref{eq:NBEUR} and
\eqref{eq:NBEUR.45t} prove that there exists some constant $C\in(0,\infty)$
such that whenever $u$ is as in \eqref{eq:Fb} for some central $(1,p)$-atom $a$ then
\begin{equation}\label{eq:NBhf.BEU}
\big\|{\mathcal{N}}_\kappa u\big\|_{{\rm A}^p({\mathbb{R}}^{n-1})}\leq C.
\end{equation}

Going further, we shall make use of \eqref{eq:NBhf.BEU} in order to show that, if
for an arbitrary function $f\in{\rm HA}^p({\mathbb{R}}^{n-1})$ we set $u(x',t):=(P^L_t\ast f)(x')$
for all $(x',t)\in{\mathbb{R}}^n_{+}$, then
\begin{equation}\label{JbvvBEU}
\|{\mathcal{N}}_\kappa u\|_{{\rm A}^p({\mathbb{R}}^{n-1})}
\leq C\|f\|_{{\rm HA}^p({\mathbb{R}}^{n-1})},
\end{equation}
for some finite constant $C>0$ independent of $f$. Specifically, \eqref{JbvvBEU} is justified
with the help of \eqref{eq:NBhf.BEU}, \eqref{eq:RTTT}, and \eqref{eq:rgGH} by reasoning almost
verbatim as in \eqref{Jbvv75r.A}-\eqref{Jbvv75r.D}. This proves the implication in \eqref{eq:BV333}
in the current context, and shows that \eqref{Dir-BVP-BEUR} is well-posed.
\end{proof}

The proof of Theorem~\ref{corol:Xw-Dir} requires some prerequisites, and
we begin by discussing rearrangement invariant spaces. To set the stage, let $\mu_f$
denote the distribution function of a given $f\in\mathbb{M}$, i.e.,
\begin{equation}
\mu_f(\lambda):=\big|\{x'\in\mathbb{R}^{n-1}:\,|f(x')|>\lambda\}\big|,
\qquad\forall\,\lambda\geq 0.
\end{equation}
Call two functions $f,g\in\mathbb{M}$ equimeasurable provided $\mu_f=\mu_g$.
A {\tt rearrangement} {\tt invariant} {\tt space} (or, r.i.~space for short) is a
K\"othe function space $\mathbb{X}$ with the property that equimeasurable functions
have the same function norm in ${\mathbb{X}}$ (i.e., if $\|f\|_\mathbb{X}=\|g\|_\mathbb{X}$
for all $f,\,g\in\mathbb{X}$ such that $\mu_f=\mu_g$). In particular, if $\mathbb{X}$
is an r.i.~space, one can check that its K\"othe dual space $\mathbb{X}'$ is also rearrangement invariant.

Given $f\in\mathbb{M}$, the {\tt decreasing} {\tt rearrangement} of $f$ with respect
to the Lebes\-gue measure in ${\mathbb{R}}^{n-1}$ is the function $f^\ast$, with domain $[0,\infty)$, defined by
\begin{equation}\label{Ygav-iyg}
f^\ast(t):=\inf\,\{\lambda\geq 0:\,\mu_f(\lambda)\leq t\},\qquad 0\leq t<\infty.
\end{equation}
Relative to the original function $f$, its decreasing rearrangement satisfies, for each $\lambda\geq 0$,
\begin{equation}\label{trra-1}
\big|\{x'\in{\mathbb{R}}^{n-1}:\,|f(x')|>\lambda\}\big|
=\big|\{t\in[0,\infty):\,f^\ast(t)>\lambda\}\big|.
\end{equation}
Applying the Luxemburg representation theorem yields the following: given
an r.i.~space $\mathbb{X}$, there exists a unique r.i.~space $\overline{\mathbb{X}}$ on
$[0,\infty)$ such that for each $f\in{\mathbb{M}}$ one has
$f\in\mathbb{X}$ if and only if $f^\ast\in\overline{\mathbb{X}}$ and, in this case,
$\|f\|_{\mathbb{X}}=\|f^\ast\|_{\overline{\mathbb{X}}}$.
Furthermore, $(\overline{\mathbb{X}})'=\overline{\mathbb{X}'}$, and so
$\|f\|_{\mathbb{X}'}=\|f^\ast\|_{\overline{\mathbb{X}}'}$ for every $f\in\mathbb{M}$.

Using this representation we can now introduce the Boyd indices of an r.i.~space
$\mathbb{X}$.  Given $f\in\overline{\mathbb{X}}$, consider the dilation operator $D_t$,
$0<t<\infty$, by setting $D_t f(s):=f(s/t)$ for each $s\geq 0$. Writing
\begin{equation}\label{trra-2}
h_\mathbb{X}(t):=\sup\big\{\|D_{t}f\|_{\overline{\mathbb{X}}}:\,f\in\overline{\mathbb{X}}
\,\,\mbox{ with }\,\,\|f\|_{\overline{\mathbb{X}}}\leq 1\big\},\qquad t\in(0,\infty),
\end{equation}
the lower and upper {\tt Boyd} {\tt indices} may, respectively, be defined as
\begin{equation}\label{trra-3}
p_\mathbb{X}:=\lim_{t\rightarrow\infty}\frac{\log t}{\log h_\mathbb{X}(t)}
=\sup_{1<t<\infty}\frac{\log t}{\log h_\mathbb{X}(t)},
\end{equation}
and
\begin{equation}
\label{eq:dADEDEA}
q_\mathbb{X}:=\lim_{t\rightarrow 0^+}\frac{\log t}{\log h_\mathbb{X}(t)}
=\inf_{0<t<1}\frac{\log t}{\log h_\mathbb{X}(t)}.
\end{equation}
By design, $1\leq p_\mathbb{X}\leq q_\mathbb{X}\leq\infty$.
The Boyd indices for $\mathbb{X}$ are related to those for $\mathbb{X}'$ via
\begin{equation}\label{Jbab-in}
p_{\mathbb{X}'}=(q_\mathbb{X})'\,\,\,\mbox{ and }\,\,\,q_{\mathbb{X}'}=(p_\mathbb{X})'.
\end{equation}

\begin{remark}\label{trra-4}
Some authors (including \cite{bennett-sharpley88}) define the Boyd indices as the
reciprocals of $p_\mathbb{X}$ and $q_\mathbb{X}$ defined above. We have chosen the present
definition since it yields $p_\mathbb{X}=q_\mathbb{X}=p$ if $\mathbb{X}=L^p({\mathbb{R}}^{n-1})$.
\end{remark}

The importance of Boyd indices stems from the fact that they play a significant role in interpolation
(see, e.g., \cite[Chapter~3]{bennett-sharpley88}). For example, the classical result of Lorentz-Shimogaki
states that the Hardy-Littlewood maximal operator ${\mathcal{M}}$ is bounded on an r.i.~space $\mathbb{X}$
if and only if $p_\mathbb{X}>1$. Additionally, Boyd's theorem asserts that the Hilbert transform is
bounded on an r.i.~space $\mathbb{X}$ on ${\mathbb{R}}$ if and only if $1<p_\mathbb{X}\leq q_\mathbb{X}<\infty$.
See \cite[Chapter~3]{bennett-sharpley88} for the precise statements and complete references.

Given an r.i.~space $\mathbb{X}$ on $\mathbb{R}^{n-1}$, we wish to introduce a weighted
version $\mathbb{X}(w)$ of $\mathbb{X}$ via an analogous definition in which the underlying
measure in $\mathbb{R}^{n-1}$ now is $d\mu(x'):=w(x')\,dx'$. These spaces appeared
in~\cite{curbera-garcia-cuerva-martell-perez06} as an abstract generalization of a variety
of weighted function spaces. Specifically, fix a weight $w\in A_\infty({\mathbb{R}}^{n-1})$
(in particular, $0<w<\infty$ a.e.). Given $f\in{\mathbb{M}}$, let $w_f$ denote the distribution
function of $f$ with respect to the measure $w(x')\,dx'$:
\begin{equation}\label{eq:aaa.1}
w_f(\lambda):=w\big(\{x'\in\mathbb{R}^{n-1}:\,|f(x')|>\lambda\}\big),\qquad\lambda\geq 0.
\end{equation}
We also let $f_w^\ast$ denote the decreasing rearrangement of $f$ with respect to the measure $w(x')\,dx'$,
i.e.,
\begin{equation}\label{eq:aaa.2}
f_w^\ast(t):=\inf\big\{\lambda\geq 0:\,w_f(\lambda)\leq t\big\},\qquad 0\leq t<\infty.
\end{equation}
Granted these, define the weighted space $\mathbb{X}(w)$ by
\begin{equation}\label{eq:aaa.3}
\mathbb{X}(w):=\big\{f\in\mathbb{M}:\,\|f^\ast_w\|_{\overline{\mathbb{X}}}<\infty\big\}.
\end{equation}
This may be viewed as a K\"othe function space, but
with underlying measure $w(x')\,dx'$, and with the function norm
\begin{equation}\label{eq:aaa.4}
\|f\|_{\mathbb{X}(w)}:=\|f_w^\ast\|_{\overline{\mathbb{X}}}.
\end{equation}
Note that if $\mathbb{X}$ is the Lebesgue space $L^p({\mathbb{R}}^{n-1})$, $p\in(1,\infty)$,
it follows that $\mathbb{X}(w)=L^p({\mathbb{R}}^{n-1},\,w)$, the Lebesgue space of $p$-th power integrable
functions in the measure space $\big(\mathbb{R}^{n-1},\,w(x')\,dx'\big)$.

\medskip

The boundedness of the Hardy-Littlewood maximal operator on these weighted r.i.~spaces
was considered in \cite{curbera-garcia-cuerva-martell-perez06}, \cite{CMP}, from which
we quote the following result.

\begin{lemma}[{\cite{curbera-garcia-cuerva-martell-perez06}}]\label{lemma:M:Xw}
Let $\mathbb{X}$ be an r.i.~space whose lower Boyd index satisfies
$p_\mathbb{X}>1$. Then for every $w\in A_{p_\mathbb{X}}({\mathbb{R}}^{n-1})$,
the Hardy-Littlewood maximal operator $\mathcal{M}$ is bounded on $\mathbb{X}(w)$.
\end{lemma}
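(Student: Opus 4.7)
The plan is to deduce this from two classical inputs combined with an abstract extrapolation principle in the rearrangement invariant setting. First, I would invoke Muckenhoupt's theorem asserting that, for each fixed $p_0 \in (1,\infty)$ and each $w \in A_{p_0}({\mathbb{R}}^{n-1})$, the operator $\mathcal{M}$ is bounded on $L^{p_0}({\mathbb{R}}^{n-1},\, w)$. Second, I would use the self-improvement property of Muckenhoupt classes (a consequence of the reverse H\"older inequality): since $w \in A_{p_\mathbb{X}}$ and $p_\mathbb{X} > 1$, there exists some $p_0 \in (1, p_\mathbb{X})$ with $w \in A_{p_0}$.

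With these two ingredients secured, I would appeal to the rearrangement invariant version of the Rubio de Francia extrapolation theorem developed in \cite{curbera-garcia-cuerva-martell-perez06} (see also \cite{CMP}). That principle states that whenever a sublinear operator $T$ satisfies $\|Tf\|_{L^{p_0}(v)} \leq C\|f\|_{L^{p_0}(v)}$ for every $v \in A_{p_0}$, and $\mathbb{X}$ is an r.i.~space with lower Boyd index $p_\mathbb{X} > p_0$, then $T$ is bounded on $\mathbb{X}(v)$ for every $v \in A_{p_\mathbb{X}}$. Applying this with $T = \mathcal{M}$ and with the $p_0 \in (1, p_\mathbb{X})$ produced in the previous paragraph yields the desired boundedness of $\mathcal{M}$ on $\mathbb{X}(w)$.

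The main obstacle is, of course, the construction of the extrapolation algorithm in the abstract weighted r.i.~framework, which is the substantive content of \cite{curbera-garcia-cuerva-martell-perez06}. The delicate point is that the space $\mathbb{X}(w)$ is defined via the decreasing rearrangement $f_w^\ast$ taken with respect to $w(x')\,dx'$ rather than Lebesgue measure, so one must track how the Rubio de Francia iteration (producing an $A_1$ majorant) interacts with the dilation operator on $\overline{\mathbb{X}}$; convergence of the resulting geometric series in the norm of $\overline{\mathbb{X}}$ is precisely what the Boyd-index hypothesis $p_\mathbb{X} > p_0$ delivers. Once these pieces are in place, the passage from Muckenhoupt's theorem to boundedness on $\mathbb{X}(w)$ is immediate, and the lemma follows.
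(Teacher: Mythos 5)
The paper offers no internal proof: the lemma is quoted verbatim from \cite{curbera-garcia-cuerva-martell-perez06}, so there is nothing here to compare against. Your sketch is nonetheless worth examining, and it has a structural problem: the extrapolation route is circular. The Rubio de Francia iteration underlying the r.i.~extrapolation theorem you wish to invoke (Theorem~\ref{thm:CMP-extrapol-Xw} of this paper) constructs an $A_1$ majorant of the form $Rh=\sum_{k\geq 0}\mathcal{M}^k h/(2\|\mathcal{M}\|)^k$, and convergence of this series in the relevant norm requires precisely that $\mathcal{M}$ already be bounded on $\mathbb{X}(w)$ (or on its K\"othe dual $\mathbb{X}'(w)$). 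In other words, the extrapolation machinery takes the present lemma, applied to $\mathbb{X}$ or to $\mathbb{X}'$, as one of its inputs; it cannot also be used to deduce the lemma. This circularity is also reflected in the hypotheses: the extrapolation theorem is stated with the two-sided condition $1<p_\mathbb{X}\leq q_\mathbb{X}<\infty$ (so that, via Lorentz--Shimogaki and \eqref{Jbab-in}, \emph{both} $\mathbb{X}$ and $\mathbb{X}'$ support a bounded maximal operator), whereas the lemma requires only $p_\mathbb{X}>1$. The proof in \cite{curbera-garcia-cuerva-martell-perez06} is therefore direct rather than extrapolatory: it relies on a weighted analogue of the Herz/Lorentz--Shimogaki rearrangement comparison that dominates $(\mathcal{M}f)_w^\ast$ by a Hardy-type averaging operator applied to $f_w^\ast$ on $[0,\infty)$, and then uses the boundedness of that averaging operator on $\overline{\mathbb{X}}$, which is exactly what $p_\mathbb{X}>1$ delivers.

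A secondary remark: the reverse-H\"older self-improvement of $A_{p_\mathbb{X}}$ plays no role in the scheme you describe. In any Rubio de Francia extrapolation result the hypothesis must be verified for \emph{all} weights in some fixed class $A_{p_0}$, and the conclusion already covers every $w\in A_{p_\mathbb{X}}$; knowing that a particular $w$ lies in some smaller class $A_{p_0}$ with $p_0<p_\mathbb{X}$ contributes nothing. Likewise, the constraint ``$p_\mathbb{X}>p_0$'' you attach to the extrapolation principle is not what the theorem asserts; as quoted in the paper (and as in \cite{CMP}), no relation is imposed between $p_0$ and the Boyd indices. The high-level picture — Muckenhoupt's theorem combined with a transference principle into $\mathbb{X}(w)$ — is correct, but the transference here must be by rearrangement rather than by iteration.
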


\medskip

At the heart of the proof of Theorem~\ref{corol:Xw-Dir} there is an analog of \eqref{Lpw-hyp}
valid in the context of weighted rearrangement invariant function spaces. We are going to
derive this in Lemma~\ref{lemma:Xw-decay} below, by relying on the following
Rubio de Francia extrapolation for r.i.~spaces obtained in \cite{CMP}:

\begin{theorem}[\cite{CMP}]\label{thm:CMP-extrapol-Xw}
Let $\mathcal{F}$ be a given family of pairs $(f,g)$ of non-negative, measurable functions
that are not identically zero. Suppose that for some fixed exponent $p_0\in[1,\infty)$
and every weight $w_0\in A_{p_0}({\mathbb{R}}^{n-1})$, one has
\begin{equation}\label{p0-Ap-X}
\int_{\mathbb{R}^{n-1}}f(x')^{p_0}\,w_0(x')\,dx'
\leq C_{w_0}\int_{\mathbb{R}^{n-1}}g(x')^{p_0}\,w_0(x')\,dx',
\qquad\forall\,(f,g)\in\mathcal{F}.
\end{equation}

Then if $\mathbb{X}$ is an r.i.~space such that
$1<p_\mathbb{X}\leq q_\mathbb{X}<\infty$, it follows that for
each weight $w\in A_{p_\mathbb{X}}({\mathbb{R}}^{n-1})$ there holds
\begin{equation}\label{p-Ap-X}
\|f\|_{\mathbb{X}(w)}\leq C_w\|g\|_{\mathbb{X}(w)},
\qquad\forall\,(f,g)\in\mathcal{F}.
\end{equation}
\end{theorem}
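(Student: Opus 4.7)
The plan is to prove Theorem~\ref{thm:CMP-extrapol-Xw} by adapting the classical Rubio de Francia extrapolation algorithm to the weighted rearrangement invariant setting. The whole argument rests on duality for the K\"othe space $\mathbb{X}(w)$ combined with a carefully chosen iteration of the Hardy–Littlewood maximal operator that manufactures an $A_{1}$ weight on top of which the structural hypothesis \eqref{p0-Ap-X} can be activated.

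First, I would make a routine reduction: replacing $f,g$ by $|f|,|g|$ we may take both nonnegative, and we may assume $g\in\mathbb{X}(w)$ (otherwise \eqref{p-Ap-X} is trivial). Viewing $\mathbb{X}(w)$ as a K\"othe function space with respect to the measure $w(x')\,dx'$, the Fatou property grants the dual representation
\begin{equation*}
\|f\|_{\mathbb{X}(w)}
=\sup\Big\{\int_{\mathbb{R}^{n-1}}f(x')\,h(x')\,w(x')\,dx':\,h\geq 0,\,\|h\|_{\mathbb{X}'(w)}\leq 1\Big\}.
\end{equation*}
The Boyd-index hypothesis \eqref{eq:RRF} makes this duality the right tool: the condition $p_{\mathbb{X}}>1$ gives, via Lemma~\ref{lemma:M:Xw}, that $\mathcal{M}$ is bounded on $\mathbb{X}(w)$ for $w\in A_{p_{\mathbb{X}}}$, while $q_{\mathbb{X}}<\infty$ translates through $p_{\mathbb{X}'}=(q_{\mathbb{X}})'>1$ into boundedness of $\mathcal{M}$ on the companion space dual to $\mathbb{X}(w)$.

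Next comes the core construction. Given $h\geq 0$ in the unit ball of the dual, define the Rubio de Francia iterate
\begin{equation*}
Rh:=\sum_{k=0}^{\infty}\frac{\mathcal{M}^{k}h}{2^{k}\,\|\mathcal{M}\|^{k}},
\end{equation*}
where $\|\mathcal{M}\|$ is the operator norm of $\mathcal{M}$ on the relevant companion space. By construction $Rh\geq h$ pointwise, the norm of $Rh$ in the companion space is at most $2\|h\|$, and $\mathcal{M}(Rh)\leq 2\|\mathcal{M}\|\,Rh$, so that $Rh\in A_{1}({\mathbb{R}}^{n-1})$ with a controlled constant. When $p_{0}>1$, a second, dual, iteration $\widetilde R$ can be performed on a fixed benchmark function, and the factorization $w_{1}\,w_{2}^{1-p_{0}}\in A_{p_{0}}$ for $w_{1},w_{2}\in A_{1}$ then produces an honest $A_{p_{0}}$ weight $w_{0}$ built out of $Rh$, $\widetilde R(\cdots)$, and $w$ itself; the case $p_{0}=1$ is simpler, requiring only the single iteration to land in $A_{1}\subset A_{p_{0}}$.

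Finally, I would invoke the hypothesis \eqref{p0-Ap-X} with this manufactured $w_{0}$, and then unwind via H\"older's inequality in the form $\frac{1}{p_{0}}+\frac{1}{p_{0}'}=1$ to redistribute the factors between $f$ and $g$, producing an estimate of the shape
\begin{equation*}
\int_{\mathbb{R}^{n-1}}f(x')\,h(x')\,w(x')\,dx'
\leq C_{w}\,\|g\|_{\mathbb{X}(w)}\,\|h\|_{\mathbb{X}'(w)}.
\end{equation*}
Taking the supremum over admissible $h$ and comparing with the dual formula for $\|f\|_{\mathbb{X}(w)}$ displayed above yields \eqref{p-Ap-X}. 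The main obstacle, and the reason both Boyd-index bounds are needed, is choosing the ``right'' companion space in which to run the iteration: the iterate must simultaneously satisfy an $A_{1}$ bound with respect to Lebesgue measure (so it can be fed into \eqref{p0-Ap-X}) and carry a controlled norm in a space in duality with $\mathbb{X}(w)$ with respect to $w\,dx'$ (so that the H\"older step recaptures the $\mathbb{X}(w)$-norm of $g$). In the $L^{p}(w)$ case this double role is trivially fulfilled, but in the abstract r.i.~setting reconciling the two structural demands is the delicate technical point.
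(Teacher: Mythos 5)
The paper does not prove Theorem~\ref{thm:CMP-extrapol-Xw}; it records it as a known result and cites \cite{CMP} (see also \cite{curbera-garcia-cuerva-martell-perez06}), so there is no in-paper proof for your sketch to match. Your high-level architecture is the right one and does reflect the strategy in those references: duality in $\mathbb{X}(w)$, a Rubio de Francia iteration producing an $A_1$ ingredient with controlled norm, the factorization $w_1 w_2^{1-p_0}\in A_{p_0}$ for $w_1,w_2\in A_1$, and a H\"older split that recovers $\|g\|_{\mathbb{X}(w)}$ via the generalized H\"older inequality \eqref{Holder-X}.

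The concrete gap is in the operator you iterate on the dual function $h$. You set $Rh:=\sum_{k\geq 0}\mathcal{M}^k h/(2^k\|\mathcal{M}\|^k)$ with the ordinary Hardy--Littlewood $\mathcal{M}$. For the resulting sum to lie (up to a constant) in the unit ball of $\mathbb{X}'(w)$ you would need $\mathcal{M}$ bounded on $\mathbb{X}'(w)$, which by Lemma~\ref{lemma:M:Xw} and \eqref{Jbab-in} requires $w\in A_{p_{\mathbb{X}'}}=A_{(q_{\mathbb{X}})'}$; this is \emph{not} implied by the hypothesis $w\in A_{p_{\mathbb{X}}}$ (already for $\mathbb{X}=L^p$ with $p>2$ one has $A_p\not\subset A_{p'}$). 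What the proof actually iterates on the $h$-side is the weighted ``dual'' maximal operator $h\mapsto\mathcal{M}(hw)/w$, which \emph{is} bounded on $\mathbb{X}'(w)$ exactly under $w\in A_{p_{\mathbb{X}}}$ together with $q_{\mathbb{X}}<\infty$, and whose iterate $R'h$ satisfies $(R'h)\,w\in A_1$ — not $R'h\in A_1$ as you assert. In the same vein, the ``second iteration $\widetilde R$ on a fixed benchmark function'' misidentifies the target: the companion iteration runs on $g$ itself, using plain $\mathcal{M}$ (bounded on $\mathbb{X}(w)$ thanks to $p_{\mathbb{X}}>1$), yielding $R_1 g\geq g$ with $R_1 g\in A_1$ and $\|R_1 g\|_{\mathbb{X}(w)}\lesssim\|g\|_{\mathbb{X}(w)}$. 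Only then does the factorization deliver an honest $A_{p_0}$ weight $W:=\big((R'h)\,w\big)(R_1 g)^{1-p_0}$, and the three-term H\"older split
\begin{equation*}
\int_{\mathbb{R}^{n-1}} f\,(R'h)\,w\,dx'\leq
\Big(\int_{\mathbb{R}^{n-1}} f^{p_0}\,W\,dx'\Big)^{1/p_0}
\Big(\int_{\mathbb{R}^{n-1}} (R_1 g)(R'h)\,w\,dx'\Big)^{1/p_0'}
\end{equation*}
combined with \eqref{p0-Ap-X} applied to $w_0=W$ closes the argument. Without threading the weight $w$ through the $h$-side iteration, the algorithm does not produce an admissible $A_{p_0}$ weight and the duality step does not close.
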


\begin{remark}\label{trra-5}
As discussed in \cite{CMP}, inequalities of the form \eqref{p0-Ap-X} or \eqref{p-Ap-X}
(both in hypotheses and in the conclusion) are assumed to hold for any $(f,g)\in\mathcal{F}$
for which the left-hand side is finite.
\end{remark}

\begin{lemma}\label{lemma:Xw-decay}
Let $\mathbb{X}$ be an r.i.~space with the property that its lower and upper Boyd indices satisfy
$1<p_\mathbb{X}\leq q_\mathbb{X}<\infty$. For every $w\in A_{p_\mathbb{X}}({\mathbb{R}}^{n-1})$,
there exists $C=C(n,\mathbb{X},w)\in(0,\infty)$ such that for each $h\in\mathbb{X}(w)$ there holds
\begin{equation}\label{eqn:f-M(1B):Xw}
\int_{\mathbb{R}^{n-1}} |h(x')|\,\mathcal{M}^{(2)}\big({\bf 1}_{B_{n-1}(0',1)}\big)(x')\,dx'
\leq C\,\big\|{\bf 1}_{B_{n-1}(0',1)}\big\|_{\mathbb{X}(w)}^{-1}\,\|h\|_{\mathbb{X}(w)}.
\end{equation}
In particular, from \eqref{eqn:f-M(1B):Xw} and Lemma~\ref{lemma:M-ball}
one has the continuous inclusion
\begin{equation}\label{eqn:f-M(1B):Xw.45}
\mathbb{X}(w)\hookrightarrow L^1\Big({\mathbb{R}}^{n-1}\,,\,\frac{1+\log_{+}|x'|}{1+|x'|^{n-1}}\,dx'\Big).
\end{equation}
\end{lemma}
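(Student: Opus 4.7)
The plan is to derive \eqref{eqn:f-M(1B):Xw} by combining the weighted $L^p$ estimate already carried out in \eqref{Lpw-hyp} with Rubio de Francia's extrapolation theorem in the r.i.\ space setting (Theorem~\ref{thm:CMP-extrapol-Xw}). The continuous embedding \eqref{eqn:f-M(1B):Xw.45} then follows at once by replacing $\mathcal{M}^{(2)}({\bf 1}_{B_{n-1}(0',1)})$ with its pointwise equivalent $(1+\log_{+}|x'|)/(1+|x'|^{n-1})$ supplied by Lemma~\ref{lemma:M-ball}.

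First, I would recast \eqref{eqn:f-M(1B):Xw} in a form suitable for extrapolation. For each $h\in\mathbb{M}$, set
\[
T(h):=\int_{\mathbb{R}^{n-1}}|h(x')|\,\mathcal{M}^{(2)}\big({\bf 1}_{B_{n-1}(0',1)}\big)(x')\,dx'\in[0,\infty],
\]
which (for fixed $h$) is a constant as a function of $x'$. Since $T(h)$ is a scalar, for any K\"othe function space norm one has $\bigl\|T(h)\,{\bf 1}_{B_{n-1}(0',1)}\bigr\|=T(h)\,\bigl\|{\bf 1}_{B_{n-1}(0',1)}\bigr\|$; thus \eqref{eqn:f-M(1B):Xw} is equivalent to the inequality
\[
\bigl\|T(h)\,{\bf 1}_{B_{n-1}(0',1)}\bigr\|_{\mathbb{X}(w)}\leq C\,\|h\|_{\mathbb{X}(w)}.
\]

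Second, I would observe that the analogous inequality with $\mathbb{X}(w)$ replaced by $L^{p_0}(w_0)$ has essentially already been proved in \eqref{Lpw-hyp}. Indeed, the calculation there shows that for any fixed $p_0\in(1,\infty)$ and any $w_0\in A_{p_0}(\mathbb{R}^{n-1})$ there exists $C_{w_0,p_0}\in(0,\infty)$ with
\[
T(h)\leq C_{w_0,p_0}\,\|h\|_{L^{p_0}(w_0)}\,w_0\bigl(B_{n-1}(0',1)\bigr)^{-1/p_0}
=C_{w_0,p_0}\,\|h\|_{L^{p_0}(w_0)}\,\bigl\|{\bf 1}_{B_{n-1}(0',1)}\bigr\|_{L^{p_0}(w_0)}^{-1},
\]
and multiplying through by $\bigl\|{\bf 1}_{B_{n-1}(0',1)}\bigr\|_{L^{p_0}(w_0)}$ yields
\[
\bigl\|T(h)\,{\bf 1}_{B_{n-1}(0',1)}\bigr\|_{L^{p_0}(w_0)}\leq C_{w_0,p_0}\,\|h\|_{L^{p_0}(w_0)}.
\]

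Third, I would invoke extrapolation. Introduce the family
\[
\mathcal{F}:=\bigl\{\bigl(T(h)\,{\bf 1}_{B_{n-1}(0',1)}\,,\,|h|\bigr):\,h\in\mathbb{M},\ h\not\equiv 0\bigr\},
\]
and fix some $p_0\in(1,\infty)$. The display above shows that the hypothesis \eqref{p0-Ap-X} of Theorem~\ref{thm:CMP-extrapol-Xw} is satisfied for every $w_0\in A_{p_0}(\mathbb{R}^{n-1})$. Since by assumption $1<p_\mathbb{X}\leq q_\mathbb{X}<\infty$, Theorem~\ref{thm:CMP-extrapol-Xw} applies and yields, for every $w\in A_{p_\mathbb{X}}(\mathbb{R}^{n-1})$, a constant $C_w\in(0,\infty)$ such that
\[
\bigl\|T(h)\,{\bf 1}_{B_{n-1}(0',1)}\bigr\|_{\mathbb{X}(w)}\leq C_w\,\|h\|_{\mathbb{X}(w)}\qquad\text{for each }h\in\mathbb{M}.
\]
Factoring out the scalar $T(h)$ recovers \eqref{eqn:f-M(1B):Xw}. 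The embedding \eqref{eqn:f-M(1B):Xw.45} is then an immediate consequence of \eqref{eqn:f-M(1B):Xw} and \eqref{eq:M2-ball}.

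The only mildly subtle point is the algebraic manoeuvre of transferring the factor $\bigl\|{\bf 1}_{B_{n-1}(0',1)}\bigr\|^{-1}$ from the right-hand side of \eqref{eqn:f-M(1B):Xw} to the left-hand side so as to produce a genuine norm inequality that fits the template of Theorem~\ref{thm:CMP-extrapol-Xw}; once this is done, the argument reduces mechanically to quoting \eqref{Lpw-hyp} and applying extrapolation in r.i.\ spaces. No further technical obstacle is anticipated.
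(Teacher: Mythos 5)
Your overall plan --- recast \eqref{eqn:f-M(1B):Xw} as a norm inequality for the pair $(T(h)\,{\bf 1}_{B_{n-1}(0',1)}, |h|)$, verify the weighted $L^{p_0}$ hypothesis via \eqref{Lpw-hyp}, and extrapolate with Theorem~\ref{thm:CMP-extrapol-Xw} --- is exactly the paper's. But there is a genuine gap: you never address the possibility $T(h)=+\infty$. Recall Remark~\ref{trra-5}: both \eqref{p0-Ap-X} and the conclusion \eqref{p-Ap-X} are assumed/asserted only for pairs whose left-hand sides are \emph{finite}. If $T(h)=\infty$ then $T(h)\,{\bf 1}_{B_{n-1}(0',1)}$ is identically $+\infty$ on a set of positive measure, hence not an element of $\mathbb{M}$, so the pair is not a legitimate member of $\mathcal{F}$; and if you tacitly restrict $\mathcal{F}$ to $h$ with $T(h)<\infty$, the extrapolation output is valid only for those $h$ --- it does not show that every $h\in\mathbb{X}(w)$ satisfies $T(h)<\infty$, which is precisely what the lemma must establish. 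Your assertion ``for each $h\in\mathbb{M}$'' at the end of the third step is therefore unjustified.

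The fix is the paper's truncation: set $h_N:=h\,{\bf 1}_{\{x'\in B_{n-1}(0',N):\,|h(x')|\leq N\}}$, so that $I_N(h):=\int_{\mathbb{R}^{n-1}}|h_N|\,\mathcal{M}^{(2)}\big({\bf 1}_{B_{n-1}(0',1)}\big)\,d\mathscr{L}^{n-1}\leq N\,|B_{n-1}(0',N)|<\infty$ automatically. Then \eqref{Lpw-hyp} applied to $h_N$ gives \eqref{p0-Ap-X} with a constant independent of $N$, extrapolation yields $I_N(h)\,\|{\bf 1}_{B_{n-1}(0',1)}\|_{\mathbb{X}(w)}\leq C\,\|h_N\|_{\mathbb{X}(w)}\leq C\,\|h\|_{\mathbb{X}(w)}$ for all $h\in\mathbb{M}$, and Lebesgue's Monotone Convergence Theorem allows $N\to\infty$. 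With this repair, the remainder of your argument is correct and matches the paper's.
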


\begin{proof}
We obtain this result via extrapolation using Theorem~\ref{thm:CMP-extrapol-Xw}.
Fix a sufficiently large integer $N$ and, for every $h\in\mathbb{M}$,
set $h_N:=h\,{\bf 1}_{\{x'\in B_{n-1}(0',N):\,|h(x')|\leq N\}}$. In particular,
\begin{multline}\label{Dcvb.1}
I_N(h):=\int_{\mathbb{R}^{n-1}}|h_N(x')|\,\mathcal{M}^{(2)}\big({\bf 1}_{B_{n-1}(0',1)}\big)(x')\,dx'
\\
\leq N\,|B_{n-1}(0',N)|=C_N<\infty.
\end{multline}
We now consider the family of pairs:
\begin{equation}\label{Dcvb.2}
\mathcal{F}_N:=\big\{(F_1,F_2)=\big(I_N(h)\,{\bf 1}_{B_{n-1}(0',1)}\,,\,|h_N|\big):\,h\in\mathbb{M}\big\}.
\end{equation}
Given $p\in(1,\infty)$ and $w\in A_p({\mathbb{R}}^{n-1})$, there exists
$C=C(n,p,w)\in(0,\infty)$ such that for every $N\geq 1$ we may write
(as in \eqref{Lpw-hyp} with $h_N$ replacing $h$)
\begin{align}\label{Dcvb.3}
I_N(h)\leq C\,\|h_N\|_{L^p({\mathbb{R}}^{n-1},\,w)}\,w\big(B_{n-1}(0',1)\big)^{-\frac1p}.
\end{align}
Thus, for every $(F_1,F_2)\in\mathcal{F}_N$ we have
\begin{align}\label{Dcvb.4}
\int_{\mathbb{R}^{n-1}}F_1(x')^p\,w(x')\,dx'
&=I_N(h)^p\,w\big(B_{n-1}(0',1)\big)
\leq C\,\|h_N\|_{L^p({\mathbb{R}}^{n-1},\,w)}^p
\\[4pt]
&= C\,\int_{\mathbb{R}^{n-1}}F_2(x')^p\,w(x')\,dx',\nonumber
\end{align}
where $C\in(0,\infty)$ is independent of $N$. Notice that the left-hand side of the previous
estimate is finite thanks to \eqref{Dcvb.1}. Granted this, we may invoke Theorem~\ref{thm:CMP-extrapol-Xw}
to conclude that for $\mathbb{X}$ as in the statement and every $w\in A_{p_\mathbb{X}}({\mathbb{R}}^{n-1})$
we have
\begin{multline}\label{eqn:Xw-decay:N}
I_N(h)\,\big\|{\bf 1}_{B_{n-1}(0',1)}\big\|_{\mathbb{X}(w)}
=\|F_1\|_{\mathbb{X}(w)}\leq C\,\|F_2\|_{\mathbb{X}(w)}\\
=C\,\|h_N\|_{\mathbb{X}(w)}\leq C\,\|h\|_{\mathbb{X}(w)},
\end{multline}
with $C\in(0,\infty)$ independent of $N$. Note that this estimate holds for every $h\in\mathbb{M}$
since the left-hand side is always finite by \eqref{Dcvb.1}. Consequently, \eqref{eqn:f-M(1B):Xw}
follows from \eqref{eqn:Xw-decay:N} and Lebesgue's Monotone Convergence Theorem upon letting $N\to\infty$.
\end{proof}

We are finally ready to present the proof of Theorem~\ref{corol:Xw-Dir}.

\vskip 0.08in
\begin{proof}[Proof of Theorem~\ref{corol:Xw-Dir}]
The idea is to invoke Theorem~\ref{Them-General} with $\mathbb{X}=\mathbb{Y}=\mathbb{X}(w)$.
Note that \eqref{eqn:f-M(1B):Xw.45} takes care of the second embedding in \eqref{Fi-AN.1} from which,
as pointed out before, the first embedding in \eqref{Fi-AN.1} also follows. The two conditions in
\eqref{Fi-AN.2} are verified upon noting that, by design, $\mathbb{X}(w)$ is a function lattice,
and by referencing Lemma~\ref{lemma:M:Xw}. As such, Theorem~\ref{Them-General} applies and
yields existence and uniqueness for the Dirichlet problem \eqref{Dir-BVP-Xw} in the desired manner.
To complete the proof of Theorem~\ref{corol:Xw-Dir} there remains to observe that the bound
in \eqref{Dir-BVP-Xw:bounds} is a direct consequence of \eqref{eJHBb} and Lemma~\ref{lemma:M:Xw}.
\end{proof}

\section{Return to the Poisson Kernel}
\setcounter{equation}{0}
\label{sect:Poisson}

One aspect left open by Theorem~\ref{ya-T4-fav} is the uniqueness of the
Agmon-Douglis-Nirenberg Poisson kernel in the conceivably larger class of such kernels
outlined by Definition~\ref{defi:Poisson}. The goal here is to address this issue
and also establish the semigroup property for this unique Poisson kernel.

\begin{theorem}\label{taf87h6g}
Let $L$ be a second-order elliptic system with complex coefficients as in
\eqref{L-def}-\eqref{L-ell.X}. Then there exists a unique Poisson kernel
$P^L$ for $L$ in $\mathbb{R}^{n}_{+}$ in the sense of Definition~\ref{defi:Poisson}.
Moreover, this Poisson kernel satisfies the semigroup property
\begin{equation}\label{eq:re4fd}
P^L_{t_1}\ast P^L_{t_2}=P^L_{t_1+t_2}\,\,\,\mbox{ for every }\,\,t_1,t_2>0.
\end{equation}
\end{theorem}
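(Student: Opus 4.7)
The plan is to establish both uniqueness of $P^L$ and the semigroup property by leveraging the well-posedness of the $L^2$-Dirichlet problem for $L$ in $\mathbb{R}^n_{+}$. The crucial enabling observation is already recorded in Remark~\ref{hyFF.a}: for \emph{any} $P^L$ satisfying Definition~\ref{defi:Poisson}, and any $f$ in the class \eqref{exist:f}, the function $u(x',t):=(P^L_t\ast f)(x')$ inherits all the properties \eqref{exist:u2} and \eqref{exist:Nu-Mf}. Since $L^2(\mathbb{R}^{n-1})\subset L^1(\mathbb{R}^{n-1},(1+|x'|^n)^{-1}dx')$ (by Cauchy–Schwarz), every $f\in L^2(\mathbb{R}^{n-1})$ produces a candidate solution to the $L^2$-Dirichlet problem \eqref{Dir-BVP-Lpw-intro}, which is uniquely solvable thanks to Theorem~\ref{Theorem-Nice} (whose uniqueness ingredient, Theorem~\ref{thm:uniqueness}, depends solely on $\mathcal{N}u$ being suitably integrable and makes no reference to any Poisson kernel).

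For uniqueness, I would let $P^L$ and $\widetilde{P}^L$ be two Poisson kernels as in Definition~\ref{defi:Poisson}, fix $f\in L^2(\mathbb{R}^{n-1})$, and observe that both $(x',t)\mapsto(P^L_t\ast f)(x')$ and $(x',t)\mapsto(\widetilde{P}^L_t\ast f)(x')$ solve the $L^2$-Dirichlet problem with datum $f$. By uniqueness they coincide in $\mathbb{R}^n_{+}$, so $\bigl(P^L_t-\widetilde{P}^L_t\bigr)\ast f=0$ for every $f\in L^2$ and every $t>0$. Taking $f$ to range over an approximate identity (or invoking the Fourier transform, since $P^L_t,\widetilde{P}^L_t\in L^1\cap L^\infty$) forces $P^L_t\equiv\widetilde{P}^L_t$ a.e.; by the smoothness from part $(ii)$ of Remark~\ref{Ryf-uyf} they agree pointwise. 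Specializing to $t=1$ yields $P^L=\widetilde{P}^L$.

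For the semigroup identity \eqref{eq:re4fd}, fix $t_2>0$ and $f\in L^2(\mathbb{R}^{n-1})$, set $u(x',t):=(P^L_t\ast f)(x')$, and define
\begin{equation*}
v(x',t):=u(x',t+t_2)\quad\text{for}\quad(x',t)\in\mathbb{R}^n_{+}.
\end{equation*}
Then $v\in\mathscr{C}^\infty(\mathbb{R}^n_{+})$ with $Lv=0$ in $\mathbb{R}^n_{+}$. The shift inclusion $\Gamma_\kappa(x')+t_2 e_n\subset\Gamma_\kappa(x')$ yields $\mathcal{N}v\leq\mathcal{N}u\leq C\mathcal{M}f\in L^2(\mathbb{R}^{n-1})$, while the continuity of $u$ in $\mathbb{R}^n_{+}$ gives $v\bigl|_{\partial\mathbb{R}^n_{+}}^{{}^{\rm n.t.}}=u(\cdot,t_2)=P^L_{t_2}\ast f$, which lies in $L^2(\mathbb{R}^{n-1})$ since $P^L_{t_2}\in L^1$. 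Hence $v$ solves the $L^2$-Dirichlet problem with datum $P^L_{t_2}\ast f$. Invoking uniqueness again, $v$ must also coincide with the Poisson extension of its boundary trace, giving
\begin{equation*}
(P^L_{t_1+t_2}\ast f)(x')=v(x',t_1)=\bigl(P^L_{t_1}\ast(P^L_{t_2}\ast f)\bigr)(x')
=\bigl((P^L_{t_1}\ast P^L_{t_2})\ast f\bigr)(x').
\end{equation*}
Letting $f$ run over an approximate identity in $L^2$ (and using $P^L_{t_1}\ast P^L_{t_2}\in L^1$ by Young's inequality) produces \eqref{eq:re4fd}.

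The main technical point, as I see it, is ensuring that the candidate $v$ truly fits the framework of the $L^2$-Dirichlet problem with boundary datum $P^L_{t_2}\ast f$, and in particular that the nontangential boundary trace of $v$ is indeed computed by evaluating $u$ at height $t_2$. This is genuinely routine given the continuity of $u$ on $\overline{\mathbb{R}^n_{+}}\cap\{x_n\geq t_2\}$ and the cone-inclusion above, but it is the step where one must be careful not to accidentally invoke properties of the Poisson kernel (such as \eqref{tghn-jan-1} or \eqref{derv-K}) that are only established for the ADN kernel of Theorem~\ref{ya-T4-fav} rather than a generic $P^L$ as in Definition~\ref{defi:Poisson}. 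Everything needed is already contained in Remark~\ref{hyFF.a}, which is precisely why that remark is recorded there.
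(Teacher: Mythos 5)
Your proof is correct and follows essentially the same strategy as the paper's: both parts hinge on uniqueness for the $L^p$-Dirichlet problem (the paper takes $f\in\mathscr{C}^\infty_0$ and arbitrary $p\in(1,\infty)$; you take $f\in L^2$ and pass to a density/Fourier argument at the end), both use Remark~\ref{hyFF.a} to ensure a generic Poisson kernel produces a bona fide solution, and both prove the semigroup law by recognizing the upward shift $u(\cdot,\cdot+t_2)$ as the unique Poisson extension of $P^L_{t_2}\ast f$. The paper's choice of $\mathscr{C}^\infty_0$ data lets it conclude $P^L=Q^L$ a.e.\ directly from convolution against test functions, which is marginally cleaner than the approximate-identity step you invoke, but the mathematical content is the same.
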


\vskip 0.08in

The convolution between the two matrix-valued functions in \eqref{eq:re4fd} is understood
in a natural fashion, taking into account the algebraic multiplication of matrices.
On this note, one significant consequence of identity \eqref{eq:re4fd} is the
commutativity of the convolution product for the matrix-valued functions $P^L_{t_1}$
and $P^L_{t_2}$, i.e., $P^L_{t_1}\ast P^L_{t_2}=P^L_{t_2}\ast P^L_{t_1}$ for each $t_1,t_2>0$.
We shall further elaborate on this topic after discussing the proof of Theorem~\ref{taf87h6g}.
Here we only wish to remark that, in the classical case $L=\Delta$, the semigroup property
\eqref{eq:re4fd} is proved in \cite[(vi) p.\,62]{St70} making use of the explicit formula for the
Fourier transform of $P^\Delta$. Instead, in the case of an arbitrary system $L$ as in
\eqref{L-def}-\eqref{L-ell.X}, our strategy is to rely on the well-posedness of the
$L^p$-Dirichlet problem \eqref{Dir-BVP-Lpw-intro}.

\vskip 0.08in
\begin{proof}[Proof of Theorem~\ref{taf87h6g}]
Let $P^L$ stand for the Agmon-Douglis-Nirenberg Poisson kernel for $L$ from Theorem~\ref{ya-T4-fav}
and assume that $Q^L$ is another Poisson kernel for $L$ in $\mathbb{R}^{n}_{+}$ in
the sense of Definition~\ref{defi:Poisson}. Fix an arbitrary vector-valued function
$f\in{\mathscr{C}}^\infty_0({\mathbb{R}}^{n-1})$ and define for each $(x',t)\in{\mathbb{R}}^n_{+}$
\begin{equation}\label{eBJkj}
u_1(x',t):=(P^L_t\ast f)(x')\,\,\,\mbox{ and }\,\,\, u_2(x',t):=(Q^L_t\ast f)(x').
\end{equation}
Then Theorem~\ref{thm:existence} and \eqref{jnabn88} in Remark~\ref{hyFF.a} imply that,
for any given $p\in(1,\infty)$, both $u_1$ and $u_2$ solve the $L^p$-Dirichlet boundary
value problem in ${\mathbb{R}}^n_{+}$ as formulated in \eqref{Dir-BVP-Lpw-intro}.
The well-posedness of this boundary value problem (cf. the discussion in
Example~1 in \S\ref{S-1}) then forces $u_1=u_2$ in ${\mathbb{R}}^n_{+}$ which further
translates into $(P^L_t\ast f)(x')=(Q^L_t\ast f)(x')$ for all $(x',t)\in{\mathbb{R}}^n_{+}$
and all $f\in{\mathscr{C}}^\infty_0({\mathbb{R}}^{n-1})$. In turn, this yields
$P^L=Q^L$ a.e.~in ${\mathbb{R}}^{n-1}$, hence everywhere by the continuity of $P^L$ and $Q^L$
(see part ${\it (ii)}$ in Remark~\ref{Ryf-uyf}). This finishes the proof of the first
claim in the statement of theorem.

Consider now the semigroup property \eqref{eq:re4fd}. To get started, fix $t_2>0$ and
pick an arbitrary vector-valued function $f\in{\mathscr{C}}^\infty_0({\mathbb{R}}^{n-1})$.
Let $P^L$ be the unique Poisson kernel for $L$ and, for each $(x',t)\in{\mathbb{R}}^n_{+}$,
define this time
\begin{equation}\label{eBJkj.2ww}
u_1(x',t):=\big(P^L_t\ast\big(P^L_{t_2}\ast f)\big)(x')
\,\,\,\mbox{ and }\,\,\,u_2(x',t):=(P^L_{t+t_2}\ast f)(x').
\end{equation}
Fix some $p\in(1,\infty)$ and observe that $P^L_{t_2}\ast f\in L^p({\mathbb{R}}^{n-1})$ by
\eqref{exTGFVC}. Finally, consider the $L^p$-Dirichlet boundary value problem
\begin{equation}\label{Dir-BVP-Lpw-inRF}
\left\{
\begin{array}{l}
u\in{\mathscr{C}}^\infty(\mathbb{R}^{n}_{+}),
\\[4pt]
Lu=0\,\,\mbox{ in }\,\,\mathbb{R}^{n}_{+},
\\[6pt]
\mathcal{N}u\in L^p({\mathbb{R}}^{n-1}),
\\[4pt]
u\big|_{\partial\mathbb{R}^{n}_{+}}^{{}^{\rm n.t.}}=P^L_{t_2}\ast f\in L^p({\mathbb{R}}^{n-1}).
\end{array}
\right.
\end{equation}
From the discussion in Example~1 in \S\ref{S-1} we know that
$u_1$ is the unique solution of \eqref{Dir-BVP-Lpw-inRF} and we claim that $u_2$
also solves \eqref{Dir-BVP-Lpw-inRF}. Assuming this momentarily, it follows that $u_1=u_2$
in ${\mathbb{R}}^n_{+}$, hence $\big((P^L_t\ast P^L_{t_2})\ast f\big)(x')=(P^L_{t+t_2}\ast f)(x')$
for all $x'\in{\mathbb{R}}^{n-1}$, all $t\in(0,\infty)$, and each
$f\in{\mathscr{C}}^\infty_0({\mathbb{R}}^{n-1})$. Much as before, this readily implies
$P^L_t\ast P^L_{t_2}=P^L_{t+t_2}$ in ${\mathbb{R}}^{n-1}$ for each $t\in(0,\infty)$,
and \eqref{eq:re4fd} follows from this by taking $t:=t_1$.

To finish the proof, there remains to check that, as claimed, $u_2$ from \eqref{eBJkj.2ww}
is a solution of \eqref{Dir-BVP-Lpw-inRF}. To this end, introduce
$v(x',t):=(P^L_{t}\ast f)(x')$ for $(x',t)\in{\mathbb{R}}^n_{+}$ and note that, by
Theorem~\ref{thm:existence}, $v$ satisfies
\begin{equation}\label{exist:u2FV}
v\in\mathscr{C}^\infty(\mathbb{R}^n_{+}),\quad
Lv=0\,\,\mbox{ in }\,\,\mathbb{R}^{n}_{+},\quad
\mathcal{N} v(x')\leq C\,\mathcal{M} f(x')
\,\,\mbox{ for all }\,x'\in\mathbb{R}^{n-1}.
\end{equation}
Since, by design, $u_2(x',t)=v(x',t+t_2)$ for all $(x',t)\in{\mathbb{R}}^n_{+}$,
we easily deduce from \eqref{exist:u2FV} that
\begin{equation}\label{exist:u2FV3}
u_2\in\mathscr{C}^\infty(\overline{\mathbb{R}^n_{+}}),\ \ 
Lu_2=0\,\mbox{ in }\,\mathbb{R}^{n}_{+},\ \ 
\mathcal{N}u_2(x')\leq C\,\mathcal{M} f(x')
\,\mbox{ for all }\,x'\in\mathbb{R}^{n-1}.
\end{equation}
Hence, $\mathcal{N}u_2\in L^p({\mathbb{R}}^{n-1})$ and since for each $x'\in{\mathbb{R}}^{n-1}$ we have
\begin{equation}\label{eq:6yH}
\big(u_2\big|_{\partial\mathbb{R}^{n}_{+}}^{{}^{\rm n.t.}}\big)(x')=u_2(x',0)
=v(x',t_2)=(P_{t_2}\ast f)(x'),
\end{equation}
it follows that $u_2$ solves \eqref{Dir-BVP-Lpw-inRF}. This finishes the proof of
Theorem~\ref{taf87h6g}.
\end{proof}

Theorem~\ref{taf87h6g} has several consequences of independent
interest, and here we wish to single out the following result.

\begin{corollary}\label{VCXga}
Let $L$ be a homogeneous second-order elliptic system with {\rm (}complex{\rm )}
constant coefficients, and let $P^L$ denote its unique Poisson kernel in $\mathbb{R}^{n}_{+}$
{\rm (}cf. Theorem~\ref{taf87h6g}{\rm )}.
Also, let $\mathbb{X}$ be a K\"othe function space with the property that
${\mathcal{M}}$ is bounded on $\mathbb{X}$. Then the family $\{T_t\}_{t>0}$, where for
each $t\in(0,\infty)$,
\begin{equation}\label{eq:Taghb8}
T_t:\mathbb{X}\to\mathbb{X},\quad T_tf(x'):=(P^L_t\ast f)(x')\,\,\mbox{ for every }\,
f\in\mathbb{X},\,\,x'\in{\mathbb{R}}^{n-1},
\end{equation}
is a semigroup of bounded linear operators on ${\mathbb{X}}$ which satisfies
\begin{equation}\label{eq:Taghb8.77}
\sup_{t>0}\big\|T_t\big\|_{{\mathcal{L}}(\mathbb{X})}<\infty,
\end{equation}
where ${\mathcal{L}}(\mathbb{X})$ is the Banach space of linear and bounded
operators on $\mathbb{X}$.

Furthermore, under the additional assumption that the function norm
in ${\mathbb{X}}$ is absolutely continuous,
meaning that for any given $f\in{\mathbb{X}}$ there holds
\begin{equation}\label{AB-CTxxx}
\left.
\begin{array}{r}
(A_j)_{j\in{\mathbb{N}}}\,\,\mbox{ measurable subsets of }\,\,{\mathbb{R}}^{n-1}
\\[4pt]
\mbox{with }\,\,{\mathbf{1}}_{A_j}\to 0\,\,\mbox{ a.e. in ${\mathbb{R}}^{n-1}$ as $j\to\infty$}
\end{array}
\right\}\Longrightarrow\,\lim_{j\to\infty}\big\|\,|f|\cdot{\mathbf{1}}_{A_j}\big\|_{{\mathbb{X}}}=0,
\end{equation}
it follows that $\{T_t\}_{t>0}$ is a strongly continuous semigroup in the sense that
\begin{equation}\label{eq:Rfav}
\lim_{t\to 0^{+}}T_tf=f\,\,\,\mbox{ in }\,\,{\mathbb{X}},\,\,\,\mbox{ for each }\,\,f\in{\mathbb{X}}.
\end{equation}
\end{corollary}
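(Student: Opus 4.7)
The argument proceeds in three stages. To establish uniform boundedness and \eqref{eq:Taghb8.77}, I would specialize estimate \eqref{exTGFVC} of Lemma~\ref{lennii} to $y'=x'$, obtaining the pointwise bound $|T_tf(x')|\leq C\,\mathcal{M}f(x')$ for every $x'\in\mathbb{R}^{n-1}$ and every $t>0$, with $C$ independent of $t$. The applicability of Lemma~\ref{lennii} to an arbitrary $f\in\mathbb{X}$ is guaranteed by \eqref{eq:jlk}: since $\mathcal{M}f\in\mathbb{X}\subset\mathbb{M}$ is a.e.~finite, the inclusion $\mathbb{X}\subset L^1\bigl(\mathbb{R}^{n-1},(1+|x'|^n)^{-1}dx'\bigr)$ holds. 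Monotonicity of the function norm together with the hypothesis on $\mathcal{M}$ then delivers $\|T_tf\|_{\mathbb{X}}\leq C\|\mathcal{M}f\|_{\mathbb{X}}\leq C'\|f\|_{\mathbb{X}}$ uniformly in $t>0$.

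For the semigroup law, given any $f\in\mathbb{X}$ and $t_1,t_2>0$, Fubini's theorem combined with \eqref{eq:re4fd} of Theorem~\ref{taf87h6g} gives
\begin{equation*}
(T_{t_1}T_{t_2}f)(x')=\bigl(P^L_{t_1}\ast(P^L_{t_2}\ast f)\bigr)(x')=\bigl((P^L_{t_1}\ast P^L_{t_2})\ast f\bigr)(x')=(P^L_{t_1+t_2}\ast f)(x').
\end{equation*}
The absolute convergence of the double integral (which is what justifies Fubini) follows from the pointwise majorization of $|P^L_t|$ by a constant multiple of the harmonic Poisson kernel (cf.~\eqref{tghn-jan-1}) and the integrability of $f$ against $(1+|x'|^n)^{-1}dx'$ established above.

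For strong continuity, I would combine pointwise a.e.~convergence with a dominated convergence argument in $\mathbb{X}$. Specializing \eqref{exTGFVC.2s} to $x'=x'_0$ (vertical approach, which lies inside every nontangential cone) and invoking the normalization $\int_{\mathbb{R}^{n-1}}P^L\,dx'=I_{M\times M}$ from Definition~\ref{defi:Poisson}(b) yields $T_tf(x')\to f(x')$ at every Lebesgue point $x'$ of $f$, hence a.e. Combining this with the pointwise estimate from the first stage and $|f(x')|\leq\mathcal{M}f(x')$ at Lebesgue points produces
\begin{equation*}
g_t(x'):=|T_tf(x')-f(x')|\leq(C+1)\,\mathcal{M}f(x'),\qquad\text{a.e.~}x'\in\mathbb{R}^{n-1},
\end{equation*}
with the majorant in $\mathbb{X}$. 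The conclusion \eqref{eq:Rfav} then amounts to a dominated convergence statement in $\mathbb{X}$.

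The main obstacle I anticipate is turning the absolute continuity hypothesis \eqref{AB-CTxxx} into this DCT, since \eqref{AB-CTxxx} is phrased sequentially while the limit in \eqref{eq:Rfav} is continuous. The remedy is to reduce to sequences: it suffices to show $\|g_{t_n}\|_{\mathbb{X}}\to 0$ for every $t_n\downarrow 0$. For such a sequence, fix $\varepsilon>0$ and $R>0$ and split
\begin{equation*}
g_{t_n}=g_{t_n}{\bf 1}_{A_n}+g_{t_n}{\bf 1}_{A_n^c\cap B_R}+g_{t_n}{\bf 1}_{B_R^c},\qquad A_n:=\{g_{t_n}>\varepsilon\},\quad B_R:=\{|x'|\leq R\}.
\end{equation*}
The first piece is controlled by $(C+1)\,\|\mathcal{M}f\cdot{\bf 1}_{A_n}\|_{\mathbb{X}}$, which tends to $0$ by \eqref{AB-CTxxx} (since ${\bf 1}_{A_n}\to 0$ a.e.~from the a.e.~vanishing of $g_{t_n}$); the second is bounded by $\varepsilon\,\|{\bf 1}_{B_R}\|_{\mathbb{X}}$; and the third by $(C+1)\,\|\mathcal{M}f\cdot{\bf 1}_{B_R^c}\|_{\mathbb{X}}$, which tends to $0$ as $R\to\infty$ by \eqref{AB-CTxxx} applied to ${\bf 1}_{B_R^c}\to 0$ a.e. Letting $n\to\infty$, then $\varepsilon\to 0$, and finally $R\to\infty$ yields \eqref{eq:Rfav}.
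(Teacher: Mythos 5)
Your proof is correct and, for the uniform boundedness and the semigroup law, follows essentially the same route as the paper: the pointwise domination $|T_tf(x')|\leq C\,\mathcal{M}f(x')$ extracted from \eqref{exTGFVC} (with applicability to $f\in\mathbb{X}$ secured via \eqref{eq:jlk}), followed by monotonicity of the function norm and the boundedness of $\mathcal{M}$; and the algebraic identity \eqref{eq:re4fd} from Theorem~\ref{taf87h6g} plus Fubini, with absolute convergence supplied by the harmonic-Poisson majorant.

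For the strong continuity, the paper is more economical: after establishing $T_tf\to f$ a.e.~and the domination $|T_tf|\leq C\,\mathcal{M}f\in\mathbb{X}$, it simply invokes Lebesgue's Dominated Convergence Theorem in $\mathbb{X}$, citing \cite[Proposition~3.6, p.\,16]{bennett-sharpley88} for the fact that DCT in a K\"othe space is equivalent to absolute continuity of the function norm, i.e.~to \eqref{AB-CTxxx}. Your three-way decomposition over $A_n$, $A_n^c\cap B_R$, and $B_R^c$ is in effect a direct, self-contained proof of the implication ``absolute continuity $\Rightarrow$ dominated convergence'' from that reference; both are valid, and yours buys transparency at the cost of length (it re-derives a textbook lemma). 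One small remark: the ``obstacle'' you flag about the sequential phrasing of \eqref{AB-CTxxx} versus the continuous limit in \eqref{eq:Rfav} is not a genuine one — in a normed space, $\lim_{t\to 0^+}T_tf=f$ is already equivalent to convergence along every sequence $t_n\downarrow 0$, so the reduction you perform is automatic rather than a gap to be filled.
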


\begin{proof}
From \eqref{eq:jlk}, \eqref{Lab-2}, the assumptions on ${\mathbb{X}}$, and
\eqref{eq:Eda}-\eqref{exTGFVC} in Lemma~\ref{lennii} it follows that for each $t\in(0,\infty)$
the operator $T_t:\mathbb{X}\to\mathbb{X}$ is well-defined, linear, and bounded.
Moreover, there exists a finite constant $C>0$ with the property that for each
$x'\in\mathbb{R}^{n-1}$,
\begin{equation}\label{exTsgs}
\sup_{t>0}\big|(T_t f)(x')\big|\leq C\,\mathcal{M} f(x'),
\qquad\forall\,f\in\mathbb{X}\subseteq L^1\Big({\mathbb{R}}^{n-1}\,,\,\frac{1}{1+|x'|^n}\,dx'\Big).
\end{equation}
Bearing in mind the assumptions on ${\mathbb{X}}$,
this readily gives \eqref{eq:Taghb8.77}.
The semigroup property for the family $\{T_t\}_{t>0}$
is then a consequence of \eqref{eq:re4fd}, \eqref{eq:jlk}, and \eqref{eq:Eda}.

Concerning the strong continuity property of the semigroup $\{T_t\}_{t>0}$,
fix an arbitrary $f\in{\mathbb{X}}$ and note that, as a consequence of the last
condition in \eqref{exist:u2}, we have $T_tf\to f$ a.e. in ${\mathbb{R}}^{n-1}$ as $t\to 0^{+}$.
In addition, $|T_tf|\leq C{\mathcal{M}}f\in{\mathbb{X}}$ by \eqref{exTsgs}
and the assumptions on ${\mathbb{X}}$. From these and Lebesgue's Dominated
Convergence Theorem in ${\mathbb{X}}$ (itself equivalent to the absolute continuity of the
function norm in ${\mathbb{X}}$; cf. \cite[Proposition~3.6, p.\,16]{bennett-sharpley88}),
it follows that \eqref{eq:Rfav} holds.
\end{proof}

For a thorough discussion pertaining to the absolute continuity of the function norm in
a K\"othe ${\mathbb{X}}$ the interested reader is referred to
\cite[Chapter~1, \S\,3]{bennett-sharpley88}. We conclude by giving a list of
examples of scales of spaces satisfying all hypotheses in Corollary~\ref{VCXga}
(i.e., K\"othe spaces with an absolutely continuous function norm on which the Hardy-Littlewood
maximal operator is bounded):

\begin{enumerate}
\item[$(i)$] Ordinary Lebesgue spaces $L^p({\mathbb{R}}^{n-1})$ with $p\in(1,\infty)$.
\item[$(ii)$] Variable exponent Lebesgue spaces $L^{p(\cdot)}(\mathbb{R}^{n-1})$ on which
the Hardy-Little\-wood maximal operator is bounded (see \cite[Theorem~2.62, p.\,47]{CU-F}
for the absolute continuity of the function norm in this setting).
\item[$(iii)$] Lorentz spaces $L^{p,q}({\mathbb{R}}^{n-1})$ with $1<p,q<\infty$
(which in this range are reflexive, hence have absolutely continuous function norms by
\cite[Chapter~1, \S\,4]{bennett-sharpley88}).
\item[$(iv)$]  Orlicz spaces $L^\Phi(\mathbb{R}^{n-1})$, where $\Phi$ is a Young function.
\end{enumerate}

For technical reasons, the weighted Lebesgue spaces $L^p({\mathbb{R}}^{n-1},\,w(x')\,dx')$,
with $p\in(1,\infty)$ and $w\in A_p({\mathbb{R}}^{n-1})$, do not fall directly under
the scope of Corollary~\ref{VCXga} (since they fail to be K\"othe spaces in the
ordinary sense adopted in this paper, i.e., with respect to the background measure
space $({\mathbb{R}}^{n-1},\,dx')$). Nonetheless, the same type of conclusions as in
Corollary~\ref{VCXga} hold, and this is actually the case for a more general scale of
weighted spaces. Specifically, consider
\begin{equation}\label{eq:Uyeaa}
\begin{array}{c}
\mbox{a rearrangement invariant space $\mathbb{X}$ with lower Boyd index $p_\mathbb{X}>1$}
\\[4pt]
\mbox{and also fix some Muckenhoupt weight $w\in A_{p_\mathbb{X}}({\mathbb{R}}^{n-1})$.}
\end{array}
\end{equation}
Finally, recall the weighted version $\mathbb{X}(w)$ of $\mathbb{X}$ defined in \eqref{eq:aaa.3},
and consider the condition that for every $f\in{\mathbb{X}}(w)$ one has
\begin{equation}\label{AB-CTxRfhg}
\left.
\begin{array}{r}
(A_j)_{j\in{\mathbb{N}}}\,\,\mbox{ measurable subsets of }\,\,{\mathbb{R}}^{n-1}
\\[4pt]
\mbox{with }\,\,{\mathbf{1}}_{A_j}\to 0\,\,\mbox{ a.e. in ${\mathbb{R}}^{n-1}$ as $j\to\infty$}
\end{array}
\right\}\Longrightarrow\,\lim_{j\to\infty}\big\|\,|f|\cdot{\mathbf{1}}_{A_j}\big\|_{{\mathbb{X}}(w)}=0.
\end{equation}
Then a cursory inspection of the proof of \cite[Proposition~3.6, p.\,16]{bennett-sharpley88}
reveals that \eqref{AB-CTxRfhg} implies Lebesgue's Dominated Convergence Theorem in ${\mathbb{X}}(w)$.
Based on this, Lemma~\ref{lennii}, and Lemma~\ref{lemma:M:Xw}, the same type of
reasoning as in the proof of Corollary~\ref{VCXga} works and yields the following
result.

\begin{corollary}\label{VCXga.222}
Assuming \eqref{eq:Uyeaa} and that the system $L$ is as
in \eqref{L-def}-\eqref{L-ell.X}, the family $\{T_t\}_{t>0}$, where for each $t\in(0,\infty)$,
\begin{equation}\label{eq:Taghb8.BB}
T_t:\mathbb{X}(w)\to\mathbb{X}(w)
\end{equation}
and
\begin{equation}\label{eq:Taghb8.BBaa}
T_tf(x'):=(P^L_t\ast f)(x')\,\,\mbox{ for every }\,
f\in\mathbb{X}(w),\,\,x'\in{\mathbb{R}}^{n-1},
\end{equation}
is a semigroup of bounded linear operators on ${\mathbb{X}}(w)$, satisfying
\begin{equation}\label{eq:Tkbbvv}
\sup_{t>0}\big\|T_t\big\|_{{\mathcal{L}}(\mathbb{X}(w))}<\infty.
\end{equation}
Moreover, under the additional assumption that \eqref{AB-CTxRfhg} holds,
this semigroup is strongly continuous.
\end{corollary}

Of course, Corollary~\ref{VCXga.222} contains as particular cases the scale of weighted
Lebesgue spaces $L^p({\mathbb{R}}^{n-1},\,w)$ with $p\in(1,\infty)$ and $w\in A_p({\mathbb{R}}^{n-1})$,
as well as the scales of weighted Lorentz spaces that are reflexive
(cf. \cite[Corollary~4.4, p.\,23]{bennett-sharpley88}) and weighted Orlicz spaces,
discussed in the last part of \S\ref{S-1}.

\section{A Fatou Type Theorem}
\setcounter{equation}{0}
\label{sect:Fatou}

The goal in this section is to use the tools developed in \S\ref{S-3}
in order to prove the following Fatou type result.

\begin{theorem}\label{tuFatou}
Let $L$ be a system as in \eqref{L-def}-\eqref{L-ell.X} and let $P^L$ be its Poisson kernel
in ${\mathbb{R}}^n_{+}$. Assume that
\begin{align}\label{Tafva.111}
\begin{array}{l}
u\in{\mathscr{C}}^\infty({\mathbb{R}}^n_{+}),\quad
Lu=0\,\mbox{ in }\,{\mathbb{R}}^n_{+},\quad
{\mathcal{N}}u\in L^1\Big({\mathbb{R}}^{n-1}\,,\,\frac{1+\log_{+}|x'|}{1+|x'|^{n-1}}\,dx'\Big),
\\[6pt]
\mbox{and there exists a sequence $\{t_j\}_{j\in{\mathbb{N}}}\subset(0,\infty)$
satisfying $\lim\limits_{j\to\infty}t_j=0$}
\\[6pt]
\mbox{and such that ${\mathcal{M}}u(\cdot,t_j)\in
L^1\Big({\mathbb{R}}^{n-1}\,,\,\frac{1+\log_{+}|x'|}{1+|x'|^{n-1}}\,dx'\Big)$ for every $j\in{\mathbb{N}}$.}
\end{array}
\end{align}
Then
\begin{align}\label{Tafva.2222}
\begin{array}{l}
u\big|^{{}^{\rm n.t.}}_{\partial{\mathbb{R}}^n_{+}}\,\,\mbox{ exists a.e.~in }\,\,
{\mathbb{R}}^{n-1},
\\[10pt]
u\big|^{{}^{\rm n.t.}}_{\partial{\mathbb{R}}^n_{+}}\in
L^1\Big({\mathbb{R}}^{n-1}\,,\,\frac{1+\log_{+}|x'|}{1+|x'|^{n-1}}\,dx'\Big),
\\[12pt]
u(x',t)=\Big(P^L_t\ast\big(u\big|^{{}^{\rm n.t.}}_{\partial{\mathbb{R}}^n_{+}}\big)\Big)(x'),\quad
\forall\,(x',t)\in{\mathbb{R}}^n_{+}.
\end{array}
\end{align}

In particular, the conclusions in \eqref{Tafva.2222} hold whenever
\begin{align}\label{Tafva}
u\in{\mathscr{C}}^\infty({\mathbb{R}}^n_{+}),\ \
Lu=0\,\mbox{ in }\,{\mathbb{R}}^n_{+},\ \ 
{\mathcal{M}}\big({\mathcal{N}}u\big)\in
L^1\Big({\mathbb{R}}^{n-1}\,,\,\frac{1+\log_{+}|x'|}{1+|x'|^{n-1}}\,dx'\Big).
\end{align}
\end{theorem}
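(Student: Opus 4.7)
The plan is to reduce the Fatou statement to the existence/uniqueness pair from \S\ref{S-3}, bridging to small $t$ via a weak compactness argument in the weighted $L^1$ space dictated by the hypotheses. Throughout, let $\omega(x'):=\frac{1+\log_{+}|x'|}{1+|x'|^{n-1}}$ and $d\mu:=\omega\,dx'$. The key geometric remark is that $(x',t_j)\in\Gamma_\kappa(x')$ for any $\kappa,t_j>0$, so $|u(x',t_j)|\le\mathcal{N}u(x')$; hence if I put $f_j(x'):=u(x',t_j)$, the assumption $\mathcal{N}u\in L^1(\mu)$ forces $|f_j|\le\mathcal{N}u\in L^1(\mu)\subset L^1\bigl(\mathbb{R}^{n-1},(1+|x'|^n)^{-1}dx'\bigr)$, so Theorem~\ref{thm:existence} applies to each $f_j$.

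First I would upgrade this into a pointwise representation of $u$ on each slab $\{t>t_j\}$. Set $w_j(x',t):=u(x',t+t_j)-(P^L_t\ast f_j)(x')$. Since $u\in{\mathscr{C}}^\infty(\mathbb{R}^n_{+})$ and $t_j>0$, the translate $u(\cdot+(0,t_j))$ is smooth across $\partial\mathbb{R}^n_{+}$ with pointwise boundary values $f_j$; meanwhile, by Lemma~\ref{lennii} and the continuity of $f_j$, every $x'_0\in\mathbb{R}^{n-1}$ is a Lebesgue point and $(P^L_t\ast f_j)\bigl|^{{}^{\rm n.t.}}_{\partial\mathbb{R}^n_{+}}=f_j$. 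Hence $w_j$ has vanishing n.t. boundary trace, $Lw_j=0$, $w_j\in{\mathscr{C}}^\infty(\mathbb{R}^n_{+})$, and
\[
\mathcal{N}w_j\le\mathcal{N}u+C\,\mathcal{M}f_j=\mathcal{N}u+C\,\mathcal{M}u(\cdot,t_j)\in L^1(\mu)
\]
by the two hypotheses in \eqref{Tafva.111}. Theorem~\ref{thm:uniqueness} then forces $w_j\equiv0$, giving
\begin{equation}\label{plan:rep}
u(x',t+t_j)=(P^L_t\ast f_j)(x'),\qquad\forall\,(x',t)\in\mathbb{R}^n_{+},\ \forall\,j\in\mathbb{N}.
\end{equation}

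Next I would let $j\to\infty$ via weak compactness. The family $\{f_j\}$ is dominated by $\mathcal{N}u\in L^1(\mu)$, hence equi-integrable and bounded in $L^1(\mu)$; by Dunford--Pettis, a subsequence $\{f_{j_m}\}$ converges weakly in $L^1(\mu)$ to some $f\in L^1(\mu)$. For fixed $(x',t)\in\mathbb{R}^n_{+}$ the multiplier $y'\mapsto P^L_t(x'-y')/\omega(y')$ lies in $L^\infty(\mathbb{R}^{n-1})$ (using $|P^L_t(x'-y')|\lesssim t/(t+|x'-y'|)^n$ together with the exact order of $\omega$ near $0$ and at infinity), so $g\mapsto(P^L_t\ast g)(x')$ is a bounded linear functional on $L^1(\mu)$. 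Since $u$ is continuous on $\mathbb{R}^n_{+}$ and $t_{j_m}\to0$, letting $m\to\infty$ in \eqref{plan:rep} yields
\[
u(x',t)=(P^L_t\ast f)(x'),\qquad\forall\,(x',t)\in\mathbb{R}^n_{+}.
\]
Then a final appeal to the nontangential-limit part of Lemma~\ref{lennii}, combined with $\int_{\mathbb{R}^{n-1}}P^L=I_{M\times M}$, gives $u\bigl|^{{}^{\rm n.t.}}_{\partial\mathbb{R}^n_{+}}=f$ a.e., which simultaneously establishes all three conclusions in \eqref{Tafva.2222} and makes the weak limit $f$ unique (a posteriori independent of subsequence).

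For the ``in particular'' assertion under \eqref{Tafva}, I would just verify that \eqref{Tafva.111} is implied. Since $\mathcal{M}(\mathcal{N}u)\not\equiv\infty$, the standard argument (averaging over balls containing a finite point of $\mathcal{M}(\mathcal{N}u)$) gives $\mathcal{N}u\in L^1_{\rm loc}$, whence $\mathcal{N}u\le\mathcal{M}(\mathcal{N}u)\in L^1(\mu)$ a.e.; and for \emph{any} sequence $t_j\to0^+$ one has $|u(\cdot,t_j)|\le\mathcal{N}u$, so $\mathcal{M}u(\cdot,t_j)\le\mathcal{M}(\mathcal{N}u)\in L^1(\mu)$. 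The main technical obstacle is the weak-convergence passage in \eqref{plan:rep}: specifically, checking that the Poisson kernel's decay is compatible with $\omega^{-1}$ well enough that $(P^L_t\ast\,\cdot)(x')$ defines a continuous functional on $L^1(\mu)$; once this routine size estimate is in hand, the remainder is a clean combination of Theorems~\ref{thm:existence}, \ref{thm:uniqueness}, and Lemma~\ref{lennii}.
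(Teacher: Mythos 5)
Your proof is correct and follows the same overall strategy as the paper: shift in time to get smooth boundary data $f_j:=u(\cdot,t_j)$, use the uniqueness theorem to identify $u(\cdot,\cdot+t_j)$ with the Poisson extension of $f_j$, then pass to the limit by weak compactness of $\{f_j\}$ in a weighted $L^1$ space. The two variants you introduce are minor. First, you apply Theorem~\ref{thm:uniqueness} to obtain the representation $u(x',t+t_j)=(P^L_t\ast f_j)(x')$ for \emph{every} $j$ before extracting a subsequence, whereas the paper first extracts a weakly convergent subsequence $f_{j_k}$ and only then invokes uniqueness along that subsequence; both orderings work. Second, and more substantively, you invoke Dunford--Pettis directly in $L^1(\mu)$ to get a weakly convergent subsequence (dominance by $\mathcal{N}u\in L^1(\mu)$ gives boundedness, uniform integrability, and tightness simultaneously, the last of which is needed since $\mu$ is an infinite measure) and then test against the $L^\infty$ multiplier $y'\mapsto P^L_t(x'-y')/\omega(y')$. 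The paper instead formulates a bespoke compactness statement (Lemma~\ref{ydadHBB}) built from Alaoglu's theorem on $\mathscr{C}_{\rm van}({\mathbb{R}}^{n-1})^\ast$ plus a Radon--Nikodym argument, giving only convergence against $\mathscr{C}_{\rm van}$-testers --- weaker than full weak $L^1$ convergence, but sufficient because (as the paper checks, and your $L^\infty$ bound also yields) the relevant tester $y'\mapsto P^L_t(x'-y')/\omega(y')$ actually lies in $\mathscr{C}_{\rm van}({\mathbb{R}}^{n-1})$. Your Dunford--Pettis route is cleaner if you are willing to cite that theorem in its $\sigma$-finite form; the paper's route is more self-contained. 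Both give precisely the same limit identity $u(x',t)=(P^L_t\ast f)(x')$, after which the conclusion follows from Lemma~\ref{lennii} exactly as you say, and your verification that \eqref{Tafva} implies \eqref{Tafva.111} matches the paper's.
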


Prior to presenting the proof of Theorem~\ref{tuFatou}, we isolate a useful weak compactness
result. To state it, denote by ${\mathscr{C}}_{\rm van}({\mathbb{R}}^{n-1})$
the space of continuous functions in ${\mathbb{R}}^{n-1}$ vanishing at infinity.

\begin{lemma}\label{ydadHBB}
Let $v:{\mathbb{R}}^{n-1}\to(0,\infty)$ be a Lebesgue measurable function. Consider a sequence $\{f_j\}_{j\in{\mathbb{N}}}\subset L^1({\mathbb{R}}^{n-1}\,,v)$
such that $F:=\sup\limits_{j\in{\mathbb{N}}}|f_j|\in L^1({\mathbb{R}}^{n-1}\,,v)$.
Then there exists a subsequence $\big\{f_{j_k}\big\}_{k\in{\mathbb{N}}}$ of $\{f_j\}_{j\in{\mathbb{N}}}$
and a function $f\in L^1({\mathbb{R}}^{n-1}\,,v)$ with the property that
\begin{equation}\label{eq:16t44}
\int_{{\mathbb{R}}^{n-1}}f_{j_k}(x')\varphi(x')v(x')\,dx'\longrightarrow
\int_{{\mathbb{R}}^{n-1}}f(x')\varphi(x')v(x')\,dx'\,\,\mbox{ as }\,\,k\to\infty,
\end{equation}
for every $\varphi\in{\mathscr{C}}_{\rm van}({\mathbb{R}}^{n-1})$.
\end{lemma}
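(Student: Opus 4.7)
The plan is to recognize $\{f_j\}_{j\in\mathbb{N}}$ as a uniformly integrable family in $L^1(\mathbb{R}^{n-1},\,v\,dx')$ and then extract a weakly convergent subsequence via the Dunford--Pettis theorem. Since $v$ is strictly positive and Lebesgue measurable on $\mathbb{R}^{n-1}$, the measure $d\nu(x'):=v(x')\,dx'$ is $\sigma$-finite. The hypothesis that $F:=\sup_{j\in\mathbb{N}}|f_j|$ belongs to $L^1(\mathbb{R}^{n-1},v)$, combined with the pointwise bound $|f_j|\leq F$ for every $j\in\mathbb{N}$, shows that $\{f_j\}_{j\in\mathbb{N}}$ is norm-bounded in $L^1(\nu)$ and dominated by the single integrable function $F$. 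In particular, the family is uniformly integrable with respect to $\nu$.

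Granted this, I would invoke the Dunford--Pettis theorem in the $\sigma$-finite setting to produce a subsequence $\{f_{j_k}\}_{k\in\mathbb{N}}$ and a function $f\in L^1(\nu)$ such that $f_{j_k}\to f$ weakly in $L^1(\nu)$, i.e.,
\[
\int_{\mathbb{R}^{n-1}}f_{j_k}(x')\,g(x')\,v(x')\,dx'\longrightarrow\int_{\mathbb{R}^{n-1}}f(x')\,g(x')\,v(x')\,dx'\quad\mbox{as }k\to\infty,
\]
for every $g\in L^\infty(\nu)$. Any $\varphi\in\mathscr{C}_{\rm van}(\mathbb{R}^{n-1})$ is bounded and Borel measurable, hence lies in $L^\infty(\nu)$; specializing the display to $g:=\varphi$ yields \eqref{eq:16t44}. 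If the $f_j$ are vector-valued, the argument is applied componentwise, which is harmless since the domination hypothesis is preserved.

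No serious obstacle is anticipated: the proof reduces to the verification of uniform integrability (which is immediate from $|f_j|\leq F\in L^1(\nu)$) and to the embedding $\mathscr{C}_{\rm van}(\mathbb{R}^{n-1})\subset L^\infty(\nu)$. The only point worth double-checking is that the Dunford--Pettis conclusion places the weak limit $f$ in $L^1(\nu)$ itself rather than merely in a larger space of measures, but this is precisely the content of the classical statement in a $\sigma$-finite measure space.
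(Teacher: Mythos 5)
Your proposal is correct, but it follows a genuinely different route from the paper's. The paper multiplies the $f_j$ by $v$ to bring everything into unweighted $L^1(\mathbb{R}^{n-1})$, embeds $L^1$ into the space $\mathscr{M}$ of finite Borel measures (the dual of $\mathscr{C}_{\rm van}$), extracts a weak-$*$ convergent subsequence via Alaoglu's theorem, proves that the limit measure $\mu$ is absolutely continuous with respect to Lebesgue measure by exploiting the uniform $L^1$ domination, and finally applies the Radon--Nikodym theorem and divides by $v$. You instead work directly in $L^1(\nu)$ with $d\nu:=v\,dx'$ and invoke the Dunford--Pettis theorem (together, implicitly, with Eberlein--Smulian to pass from relative weak compactness to weak sequential compactness). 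Your route is shorter, produces the limiting function at once without any detour through measures, and in fact yields the stronger conclusion of convergence against every $g\in L^\infty(\nu)$, not merely against $\varphi\in\mathscr{C}_{\rm van}(\mathbb{R}^{n-1})$. The price is that Dunford--Pettis is a heavier hammer than Alaoglu (whose proof is essentially Tychonoff), which is presumably why the paper opted for the more hands-on argument culminating in Radon--Nikodym.

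Two small points deserve a slightly fuller justification than you gave. First, $\sigma$-finiteness of $\nu$ is not a consequence of strict positivity of $v$ alone; what one actually uses is that $v$ takes values in $(0,\infty)$ (so, in particular, is finite everywhere), whence the sets $E_k:=\{x'\in B_{n-1}(0',k):\,v(x')\leq k\}$ satisfy $\nu(E_k)\leq k\,|B_{n-1}(0',k)|<\infty$ and exhaust $\mathbb{R}^{n-1}$. This establishes $\sigma$-finiteness and, importantly, guarantees that $(L^1(\nu))^*=L^\infty(\nu)$, so that weak convergence in $L^1(\nu)$ is indeed tested precisely against $L^\infty(\nu)$. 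Second, in a $\sigma$-finite but infinite measure space, Dunford--Pettis requires, in addition to uniform integrability in the $\varepsilon$--$\delta$ sense, a tightness condition controlling the escape of mass to sets of infinite measure; both conditions are of course immediate from the uniform domination $|f_j|\leq F$ with $F\in L^1(\nu)$, but it is worth stating this explicitly rather than appealing to ``uniform integrability'' alone, since in the finite-measure setting that single phrase suffices whereas here it does not.
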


\begin{proof}
Set $\widetilde{f}_j:=f_jv$ for each $j\in{\mathbb{N}}$, and $\widetilde{F}:=Fv$. Then
\begin{equation}\label{eq:utfc1}
|\widetilde{f}_j|\leq\widetilde{F}\in L^1({\mathbb{R}}^{n-1})\,\,\,
\mbox{ for each }\,\,j\in{\mathbb{N}}.
\end{equation}
Let ${\mathscr{M}}$ be the space of finite Borel regular measures in ${\mathbb{R}}^{n-1}$,
viewed as a Banach space when equipped with the norm induced by the total variation.
Then
\begin{equation}\label{eq:utfc2}
L^1({\mathbb{R}}^{n-1})\hookrightarrow
{\mathscr{M}}=\big({\mathscr{C}}_{\rm van}({\mathbb{R}}^{n-1})\big)^\ast.
\end{equation}
From \eqref{eq:utfc1}-\eqref{eq:utfc2} and Alaoglu's theorem it follows that
there exists a subsequence $\big\{\widetilde{f}_{j_k}\big\}_{k\in{\mathbb{N}}}$
and some $\mu\in{\mathscr{M}}$ with the property that
\begin{equation}\label{eq:1533}
\int_{{\mathbb{R}}^{n-1}}\widetilde{f}_{j_k}(x')\varphi(x')\,dx'\longrightarrow
\int_{{\mathbb{R}}^{n-1}}\varphi(x')\,d\mu(x')\,\,\mbox{ as }\,\,k\to\infty,
\end{equation}
for every $\varphi\in{\mathscr{C}}_{\rm van}({\mathbb{R}}^{n-1})$. We claim that
\begin{equation}\label{eq:7gf5f}
\mu\ll{\mathscr{L}}^{n-1}.
\end{equation}
To justify this claim, fix a Lebesgue measurable set $E_0\subset{\mathbb{R}}^{n-1}$
with ${\mathscr{L}}^{n-1}(E_0)=0$. Given the goals we have in mind, there is
no loss of generality in assuming that $E_0$ is bounded. To proceed, pick an arbitrary
$\varepsilon>0$. Since $\widetilde{F}$ is a nonnegative function in $L^1({\mathbb{R}}^{n-1})$,
there exists $\delta>0$ such that
\begin{equation}\label{eq:hre}
\int_{U}\widetilde{F}\,d{\mathscr{L}}^{n-1}<\varepsilon,\ \
\mbox{for each measurable set $U\subset{\mathbb{R}}^{n-1}$ with
${\mathscr{L}}^{n-1}(U)<\delta$.}
\end{equation}
By the outer regularity of ${\mathscr{L}}^{n-1}$, there exists an open and bounded subset $U_0$
of ${\mathbb{R}}^{n-1}$ containing $E_0$ and such that ${\mathscr{L}}^{n-1}(U_0)<\delta$.
For any $\varphi\in{\mathscr{C}}({\mathbb{R}}^{n-1})$ supported in $U_0$ we may then
use \eqref{eq:1533} and \eqref{eq:hre} to estimate
\begin{multline}\label{eq:4d4d}
\Big|\int_{{\mathbb{R}}^{n-1}}\varphi\,d\mu\Big| =\lim_{k\to\infty}
\Big|\int_{{\mathbb{R}}^{n-1}}\widetilde{f}_{j_k}\varphi\,d{\mathscr{L}}^{n-1}\Big|
\\[4pt]
 \leq\|\varphi\|_{L^\infty({\mathbb{R}}^{n-1})}
\int_{U_0}\widetilde{F}\,d{\mathscr{L}}^{n-1}
\leq\|\varphi\|_{L^\infty({\mathbb{R}}^{n-1})}\,\varepsilon.
\end{multline}
In turn, this forces $|\mu|(U_0)\leq\varepsilon$, hence $|\mu|(E_0)\leq\varepsilon$.
Since $\varepsilon>0$ is arbitrary, we conclude that $|\mu|(E_0)=0$ and \eqref{eq:7gf5f}
follows. Next, from \eqref{eq:7gf5f} and the Radon-Nikodym Theorem we conclude that
there exists $\widetilde{f}\in L^1({\mathbb{R}}^{n-1})$ such that
\begin{equation}\label{eq:7gf5f.2}
d\mu=\widetilde{f}\,d{\mathscr{L}}^{n-1}.
\end{equation}
At this stage, \eqref{eq:16t44} follows with $f:=\widetilde{f}/v\in L^1({\mathbb{R}}^{n-1}\,,v)$
based on \eqref{eq:1533} and \eqref{eq:7gf5f.2}.
\end{proof}

We are now ready to tackle the proof of Theorem~\ref{tuFatou}.

\vskip 0.08in
\begin{proof}[Proof of Theorem~\ref{tuFatou}]
By assumption, the function $u$ satisfies
\begin{align}\label{Fi-AN.1uttr}
{\mathcal{N}}u\in L^1({\mathbb{R}}^{n-1}\,,v)\quad\mbox{ where }\,\,
v(x'):=\frac{1+\log_{+}|x'|}{1+|x'|^{n-1}},\quad\forall\,x'\in{\mathbb{R}}^{n-1}.
\end{align}
For each $j\in{\mathbb{N}}$ consider the function $u_j$ defined by $u_j(x',t):=u(x',t+t_j)$ for
each $(x',t)\in{\mathbb{R}}^n_{+}$.  Observe that, for each $j\in{\mathbb{N}}$, the function
$u_j$ belongs to ${\mathscr{C}}^\infty\big(\overline{{\mathbb{R}}^n_{+}}\,\big)$, thus
\begin{equation}\label{eq:443ee}
f_j:=u_j\big|^{{}^{\rm n.t.}}_{\partial{\mathbb{R}}^n_{+}}
=u_j\big|_{\partial{\mathbb{R}}^n_{+}}=u(\cdot,t_j)
\end{equation}
exists and satisfies
\begin{align}\label{cCh-cds}
|f_j|\leq{\mathcal{N}}u_j\leq{\mathcal{N}}u\quad\mbox{ in }\,\,{\mathbb{R}}^{n-1}.
\end{align}
In particular, we conclude from \eqref{Fi-AN.1uttr}-\eqref{cCh-cds} that
\begin{align}\label{cCh-cds.5443}
f_j,\, {\mathcal{N}}u_j\in L^1({\mathbb{R}}^{n-1}\,,v)\,\,\,\mbox{ for each }\,\,j\in{\mathbb{N}}.
\end{align}
Keeping in mind \eqref{Fi-AN.1uttr}-\eqref{cCh-cds.5443}, we may
then invoke Lemma~\ref{ydadHBB} (with $F\le{\mathcal{N}}u$) to conclude that there exists a
subsequence $\big\{f_{j_k}\big\}_{k\in{\mathbb{N}}}$ of $\big\{f_j\big\}_{j\in{\mathbb{N}}}$
and a function $f\in L^1({\mathbb{R}}^{n-1}\,,v)$ with the property that
\begin{equation}\label{eq:16t44.aa}
\int_{{\mathbb{R}}^{n-1}}f_{j_k}(x')\varphi(x')v(x')\,dx'\longrightarrow
\int_{{\mathbb{R}}^{n-1}}f(x')\varphi(x')v(x')\,dx'\,\,\mbox{ as }\,\,k\to\infty,
\end{equation}
for every $\varphi\in{\mathscr{C}}_{\rm van}({\mathbb{R}}^{n-1})$.

To proceed, let us observe that for each $k\in{\mathbb{N}}$ the function $f_{j_k}$
clearly satisfies \eqref{exist:f}, by \eqref{Fi-AN.1uttr} and \eqref{cCh-cds.5443}.
Next for each $k\in{\mathbb{N}}$ define
\begin{align}\label{cCh-cjbg}
U_k(x',t):=\big(P^L_t\ast f_{j_k}\big)(x'),\qquad
\forall\,(x',t)\in{\mathbb{R}}^n_{+}.
\end{align}
Note that, thanks to Theorem~\ref{thm:existence}, this entails
\begin{equation}\label{Tafva.644}
\begin{array}{c}
U_k\in{\mathscr{C}}^\infty({\mathbb{R}}^n_{+}),\qquad
LU_k=0\,\mbox{ in }\,{\mathbb{R}}^n_{+},
\\[8pt]
U_k\big|^{{}^{\rm n.t.}}_{\partial{\mathbb{R}}^n_{+}}
=f_{j_k}\,\,\mbox{ a.e.~in }\,\,{\mathbb{R}}^{n-1},\quad
{\mathcal{N}}U_k\in L^1({\mathbb{R}}^{n-1}\,,v),
\end{array}
\end{equation}
where the last condition is a consequence of \eqref{cCh-cjbg}, \eqref{exist:Nu-Mf},
\eqref{eq:443ee}, and the last line in \eqref{Tafva.111}.
On the other hand, for each $k\in{\mathbb{N}}$, the function $u_{j_k}$ satisfies
the same quartet of conditions as $U_k$ in \eqref{Tafva.644} (where, this time,
the condition ${\mathcal{N}}u_{j_k}\in L^1({\mathbb{R}}^{n-1}\,,v)$ is seen
straight from \eqref{cCh-cds.5443}). As such, Theorem~\ref{thm:uniqueness} applies
to the difference $u_{j_k}-U_k$ and yields $u_{j_k}=U_k$ in ${\mathbb{R}}^n_{+}$
for each $k\in{\mathbb{N}}$. Hence,
\begin{align}\label{cCh-cds-3}
u(x',t+t_{j_k})=u_{j_k}(x',t)=\big(P^L_t\ast f_{j_k}\big)(x'),\qquad
\forall\,(x',t)\in{\mathbb{R}}^n_{+},\quad\forall\,k\in{\mathbb{N}}.
\end{align}
Going further, fix $(x',t)\in{\mathbb{R}}^n_{+}$ and consider the function
\begin{equation}\label{eq:trF}
\varphi_{x',t}(y'):=\frac{1}{v(y')}P^L_t(x'-y')=\frac{1+|x'|^{n-1}}{1+\log_{+}|x'|}
P^L_t(x'-y'),\quad\forall\,y'\in{\mathbb{R}}^{n-1},
\end{equation}
and note that, from part $(a)$ in Definition~\ref{defi:Poisson} and part ({\it ii})
in Remark~\ref{Ryf-uyf}, it follows that $\varphi_{x',t}\in{\mathscr{C}}_{\rm van}({\mathbb{R}}^{n-1})$.
Granted this, by combining \eqref{cCh-cds-3}, \eqref{eq:trF}, \eqref{eq:16t44.aa}, and also bearing
in mind that $u$ is continuous in ${\mathbb{R}}^n_{+}$ and $\lim\limits_{k\to\infty}t_{j_k}=0$,
we conclude that for each $x'\in{\mathbb{R}}^{n-1}$ and each $t\in(0,\infty)$,
\begin{align}\label{cCh-cds-4}
u(x',t) & =\lim_{k\to\infty}u(x',t+t_{j_k})
=\lim_{k\to\infty}\int_{{\mathbb{R}}^{n-1}}P^L_t(x'-y')f_{j_k}(y')\,dy'
\\[4pt]
&=\lim_{k\to\infty}\int_{{\mathbb{R}}^{n-1}}\varphi_{x',t}(y')f_{j_k}(y')v(y')\,dy'
\nonumber\\[4pt]
&=\int_{{\mathbb{R}}^{n-1}}\varphi_{x',t}(y')f(y')v(y')\,dy'
=(P^L_t\ast f)(x').\nonumber
\end{align}
Hence, $u(x',t)=(P^L_t\ast f)(x')$ for each $(x',t)\in{\mathbb{R}}^n_{+}$ for some
$f\in L^1({\mathbb{R}}^{n-1}\,,v)$. Having established this, Lemma~\ref{lennii} (with $P=P^L$)
yields that the non-tangential limit of $u$ on $\partial{\mathbb{R}}^n_{+}$ exists and equals $f$,
proving the first conclusion in \eqref{Tafva.2222}. The second conclusion in \eqref{Tafva.2222}
is immediate from \eqref{cCh-cds-4} and \eqref{eq:aaAa}-\eqref{exTGFVC.2s}, keeping in mind
that $L^1({\mathbb{R}}^{n-1}\,,v)\subseteq
L^1\Big({\mathbb{R}}^{n-1}\,,\,\frac{1}{1+|x'|^n}\,dx'\Big)$.
Finally, the third conclusion in \eqref{Tafva.2222}
is implicit in \eqref{cCh-cds-4}.

There remains to show that \eqref{Tafva} implies \eqref{Tafva.111}
(parenthetically, we note that ${\mathcal{M}}$ acts in a meaningful way on $\mathbb{M}$,
hence on the lower semicontinuous function ${\mathcal{N}}u$).
Indeed, the membership in \eqref{Tafva} implies that
${\mathcal{M}}\big({\mathcal{N}}u)<\infty$ a.e.~in ${\mathbb{R}}^{n-1}$,
which further entails ${\mathcal{N}}u\in L^1_{\rm loc}({\mathbb{R}}^{n-1})$. From this and
Lebesgue's Differentiation Theorem we then deduce that ${\mathcal{N}}u\leq{\mathcal{M}}\big({\mathcal{N}}u\big)$
a.e.~in ${\mathbb{R}}^{n-1}$ which, in light of the last condition in \eqref{Tafva}, ultimately
yields the membership in the first line of \eqref{Tafva.111}. Moreover, the fact that
$|u(\cdot,t)|\leq{\mathcal{N}}u$ in ${\mathbb{R}}^{n-1}$ for each $t\in(0,\infty)$ implies
${\mathcal{M}}u(\cdot,t)\leq{\mathcal{M}}\big({\mathcal{N}}u\big)$ in ${\mathbb{R}}^{n-1}$
for each $t\in(0,\infty)$, so the last condition in \eqref{Tafva.111} also follows from \eqref{Tafva}.
\end{proof}

It is clear that the Fatou-type result from Theorem~\ref{tuFatou} (cf. \eqref{Tafva}, in particular)
is valid in the class of null-solutions $u$ of $L$ for which ${\mathcal{N}}u$ belongs to
weighted Lebesgue spaces as in Example~2, variable exponent Lebesgue spaces as in Example~3,
weighted Lorentz spaces as in Example~4, as well as weighted Orlicz spaces as in Example~5.
Indeed, the discussion in \S\ref{S-1} shows that the Fatou type result from Theorem~\ref{tuFatou}
holds in the settings of Theorem~\ref{Theorem-Nice} and Theorem~\ref{corol:Xw-Dir}.
The case of ordinary Lebesgue spaces deserves special mention, and a precise statement,
which also includes the end-point case $p=1$, is presented below.

\begin{corollary}\label{tuFatou.Lp}
Assume the system $L$ is as in \eqref{L-def}-\eqref{L-ell.X}. Then for each $p\in[1,\infty)$,
\begin{align}\label{Tafva.Lp}
\left.
\begin{array}{r}
u\in{\mathscr{C}}^\infty({\mathbb{R}}^n_{+})
\\[4pt]
Lu=0\,\mbox{ in }\,{\mathbb{R}}^n_{+}
\\[6pt]
{\mathcal{N}}u\in L^p({\mathbb{R}}^{n-1})
\end{array}
\right\}
\
\Longrightarrow\ 
\left\{
\begin{array}{l}
u\big|^{{}^{\rm n.t.}}_{\partial{\mathbb{R}}^n_{+}}\,\mbox{ exists a.e.~in }\,
{\mathbb{R}}^{n-1},
\\[6pt]
\mbox{belongs to $L^p({\mathbb{R}}^{n-1})$,}
\\[4pt]
\mbox{and }\,u(x',t)=\Big(P^L_t\ast\big(u\big|^{{}^{\rm n.t.}}_{\partial{\mathbb{R}}^n_{+}}\big)\Big)(x'),
\\[4pt]
\mbox{for every }(x',t)\in{\mathbb{R}}^n_{+},
\end{array}
\right.
\end{align}
where $P^L$ is the Poisson kernel for $L$ in ${\mathbb{R}}^n_{+}$.
\end{corollary}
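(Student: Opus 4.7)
The plan is to deduce Corollary~\ref{tuFatou.Lp} directly from the general Fatou-type Theorem~\ref{tuFatou}. Once that result produces the nontangential trace $f := u\big|^{{}^{\rm n.t.}}_{\partial\mathbb{R}^n_{+}}$ together with the Poisson representation $u(x',t) = (P^L_t \ast f)(x')$, the membership $f\in L^p(\mathbb{R}^{n-1})$ is automatic: wherever the nontangential limit exists one has $|f(x')| \leq \mathcal{N}u(x')$, and the right-hand side lies in $L^p$ by hypothesis. Thus the entire task reduces to verifying the two conditions collected in \eqref{Tafva.111} for the weight $v(x'):=(1+\log_{+}|x'|)/(1+|x'|^{n-1})$.

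The key preliminary observation is that $v\in L^{p'}(\mathbb{R}^{n-1})$ for every $p\in[1,\infty)$, where $p'\in(1,\infty]$ denotes the H\"older conjugate. The case $p=1$ is immediate since $v$ is bounded on $\mathbb{R}^{n-1}$. For $p\in(1,\infty)$, passing to polar coordinates in $\mathbb{R}^{n-1}$ shows that $\int_1^\infty(1+\log r)^{p'}\,r^{n-2-p'(n-1)}\,dr$ converges precisely because $p'>1$ forces $p'(n-1) > n-1$. H\"older's inequality then gives
\begin{equation*}
\int_{\mathbb{R}^{n-1}} \mathcal{N}u(x')\,v(x')\,dx' \leq \|\mathcal{N}u\|_{L^p(\mathbb{R}^{n-1})}\,\|v\|_{L^{p'}(\mathbb{R}^{n-1})} <\infty,
\end{equation*}
establishing the first membership required in \eqref{Tafva.111}.

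For the second hypothesis I would prove the stronger statement that $\mathcal{M}u(\cdot,t)\in L^1(v\,dx')$ for \emph{every} $t>0$, so that any sequence $t_j\searrow 0$ qualifies. Fix $t>0$. Since $(y',t)\in\Gamma_\kappa(y')$, the pointwise bound $|u(\cdot,t)|\leq\mathcal{N}u$ holds, yielding $u(\cdot,t)\in L^p(\mathbb{R}^{n-1})$. Independently, interior estimates for null-solutions of $L$, applied on balls of radius $\sim t$ exactly as in \eqref{TDY-igfd}, produce
\begin{equation*}
|u(y',t)| \leq C\aver{B_{n-1}(y',ct)} \mathcal{N}u(z')\,dz' \leq C\,t^{-(n-1)/p}\,\|\mathcal{N}u\|_{L^p(\mathbb{R}^{n-1})},
\end{equation*}
so $u(\cdot,t)\in L^\infty(\mathbb{R}^{n-1})$. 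Hence $u(\cdot,t)\in L^p\cap L^\infty\subset L^r(\mathbb{R}^{n-1})$ for every $r\geq p$. Select any such $r$ satisfying in addition $r>1$; the $L^r$-boundedness of the Hardy-Littlewood maximal operator yields $\mathcal{M}u(\cdot,t)\in L^r$, while the computation above furnishes $v\in L^{r'}$, and a second H\"older pairing delivers $\mathcal{M}u(\cdot,t)\in L^1(v\,dx')$.

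The only genuinely delicate instance is the endpoint $p=1$, where one cannot simply dominate $\mathcal{M}u(\cdot,t)$ by $\mathcal{M}(\mathcal{N}u)$ and pair against $v$ through H\"older, because $\mathcal{M}$ is only of weak type $(1,1)$ and $v$ fails to lie in $L^1(\mathbb{R}^{n-1})$. The interior-regularity detour through $L^\infty$ circumvents this obstruction uniformly across all $p\in[1,\infty)$. With both hypotheses of Theorem~\ref{tuFatou} in hand, the conclusions displayed in \eqref{Tafva.Lp} follow directly by invoking that theorem and combining the resulting $L^1(v\,dx')$ trace with the pointwise bound $|f|\leq\mathcal{N}u\in L^p(\mathbb{R}^{n-1})$.
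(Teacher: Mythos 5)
Your proof is correct, and it closely parallels the paper's argument for the endpoint $p=1$, while differing slightly in how the case $p\in(1,\infty)$ is handled. The paper splits into two regimes: for $p>1$ it simply notes that $\mathcal{M}$ is bounded on $L^p$, so $\mathcal{M}(\mathcal{N}u)\in L^p\subset L^1\big(v\,dx'\big)$, and invokes the stronger hypothesis \eqref{Tafva} of Theorem~\ref{tuFatou} (thereby bypassing interior estimates entirely for $p>1$); it reserves the interior-regularity detour through $L^\infty\cap L^1\subset L^2$ exclusively for $p=1$. Your argument instead verifies the weaker hypotheses in \eqref{Tafva.111} uniformly across all $p\in[1,\infty)$: the H\"older pairing with $v\in L^{p'}$ gives the first condition, and the interior estimate $\|u(\cdot,t)\|_{L^\infty}\lesssim t^{-(n-1)/p}\|\mathcal{N}u\|_{L^p}$ combined with $|u(\cdot,t)|\leq\mathcal{N}u\in L^p$ yields $u(\cdot,t)\in L^r$ for any $r\geq\max\{p,2\}$, from which the second condition follows by boundedness of $\mathcal{M}$ on $L^r$ and a second H\"older pairing with $v\in L^{r'}$. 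Your route is mildly less economical for $p>1$ but has the virtue of a single argument covering all $p$, and your diagnosis of why the endpoint $p=1$ resists the direct $\mathcal{M}(\mathcal{N}u)$ domination (weak $(1,1)$ plus $v\notin L^1$) is precisely correct.
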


\begin{proof}
For $p\in(1,\infty)$, the desired conclusion follows directly from the implication
\eqref{Tafva}$\Rightarrow$\eqref{Tafva.2222} in Theorem~\ref{tuFatou},
the boundedness of the Hardy-Littlewood maximal operator on $L^p({\mathbb{R}}^{n-1})$,
and the fact that $L^p({\mathbb{R}}^{n-1})\subset
L^1\Big({\mathbb{R}}^{n-1}\,,\,\frac{1+\log_{+}|x'|}{1+|x'|^{n-1}}\,dx'\Big)$ by H\"older's
inequality. There remains to treat the case $p=1$, and this will follow from the implication
\eqref{Tafva.111}$\Rightarrow$\eqref{Tafva.2222} in Theorem~\ref{tuFatou}
as soon as we check that, under the current assumptions, ${\mathcal{M}}u(\cdot,t)\in
L^1\Big({\mathbb{R}}^{n-1}\,,\,\frac{1+\log_{+}|x'|}{1+|x'|^{n-1}}\,dx'\Big)$ for
every fixed $t\in(0,\infty)$. To this end, from interior estimates (cf. Theorem~\ref{ker-sbav})
we first deduce that $\|u(\cdot,t)\|_{L^\infty({\mathbb{R}}^{n-1})}
\leq C_{L,n,t}\|{\mathcal{N}}u\|_{L^1({\mathbb{R}}^{n-1})}$. Since we also have
$|u(\cdot,t)|\leq{\mathcal{N}}u\in L^1({\mathbb{R}}^{n-1})$, it ultimately follows that
$u(\cdot,t)\in L^\infty({\mathbb{R}}^{n-1})\cap L^1({\mathbb{R}}^{n-1})
\subset L^2({\mathbb{R}}^{n-1})$. Hence, given that ${\mathcal{M}}$
is bounded on $L^2({\mathbb{R}}^{n-1})$, we conclude that
${\mathcal{M}}u(\cdot,t)\in L^2({\mathbb{R}}^{n-1})\subset
L^1\Big({\mathbb{R}}^{n-1}\,,\,\frac{1+\log_{+}|x'|}{1+|x'|^{n-1}}\,dx'\Big)$, as wanted.
\end{proof}

\appendix
\section{Auxiliary results}
\setcounter{equation}{0}
\label{sect:Green}

We begin by recording a suitable version of the divergence theorem recently obtained in \cite{Div-MMM}.
To state it requires a few preliminaries which we dispense with first. As usual, let
$\mathcal{D}'({\mathbb{R}}^n_{+})$ denote the space of distributions in ${\mathbb{R}}^n_{+}$
and write $\mathcal{E}'({\mathbb{R}}^n_{+})$ for the space of distributions in
${\mathbb{R}}^n_{+}$ that are compactly supported. Hence,
\begin{equation}\label{TDanab.4r}
\mathcal{E}'({\mathbb{R}}^n_{+})\hookrightarrow\mathcal{D}'({\mathbb{R}}^n_{+})
\,\,\,\mbox{ and }\,\,\,
L^1_{\rm loc}({\mathbb{R}}^n_{+})\hookrightarrow\mathcal{D}'({\mathbb{R}}^n_{+}).
\end{equation}
For each compact set $K\subset{\mathbb{R}}^n_{+}$, define
$\mathcal{E}'_K({\mathbb{R}}^n_{+}):=\big\{u\in\mathcal{E}'({\mathbb{R}}^n_{+})
:\,{\rm supp}\,u\subset K\big\}$ and consider
\begin{multline}\label{TDY-87764}
\mathcal{E}'_K({\mathbb{R}}^n_{+})+L^1({\mathbb{R}}^n_{+})
:=\big\{u\in\mathcal{D}'({\mathbb{R}}^n_{+}):\,\exists\,v_1\in
\mathcal{E}'_K({\mathbb{R}}^n_{+})\,\mbox{ and }\,
\exists\,v_2\in L^1({\mathbb{R}}^n_{+})
\\[4pt]
\mbox{such that }\,
u=v_1+v_2\,\mbox{ in }\,\mathcal{D}'({\mathbb{R}}^n_{+})\big\}.
\end{multline}
Also, introduce $\mathscr{C}_b^\infty({\mathbb{R}}^n_{+}):=
\mathscr{C}^\infty({\mathbb{R}}^n_{+})\cap L^\infty({\mathbb{R}}^n_{+})$
and let $\big(\mathscr{C}_b^\infty({\mathbb{R}}^n_{+})\big)^\ast$ denote its
algebraic dual. Moreover, we let
${}_{(\mathscr{C}_b^\infty({\mathbb{R}}^n_{+}))^\ast}\big\langle\cdot\,,\,\cdot
\big\rangle_{\mathscr{C}_b^\infty({\mathbb{R}}^n_{+})}$ denote the natural duality pairing
between these spaces. It is useful to observe that
for every compact set $K\subset{\mathbb{R}}^n_{+}$ one has
\begin{equation}\label{TDY-877ii}
\mathcal{E}'_K({\mathbb{R}}^n_{+})+L^1({\mathbb{R}}^n_{+})\subset
\big(\mathscr{C}_b^\infty({\mathbb{R}}^n_{+})\big)^\ast.
\end{equation}

\begin{theorem}[\cite{Div-MMM}]\label{theor:div-thm}
Assume that $K\subset{\mathbb{R}}^n_{+}$ is a compact set and that
$\vec{F}\in L^1_{\rm loc}({\mathbb{R}}^n_{+},\mathbb{C}^n)$ is a vector
field satisfying the following conditions:
\begin{enumerate}\itemsep=0.2cm
\item [(a)] ${\rm div}\,\vec{F}\in\mathcal{E}'_K({\mathbb{R}}^n_{+})+L^1({\mathbb{R}}^n_{+})$,
where the divergence is taken in the sense of distributions;
\item[(b)] ${\mathcal{N}}_\kappa^{K^c}\vec{F}\in L^1({\mathbb{R}}^{n-1})$,
where $\kappa>0$ and $K^c:={\mathbb{R}}^n_{+}\setminus K$;
\item [(c)] there exists $\vec{F}\big|_{\partial{\mathbb{R}}^n_{+}}^{{}^{\rm n.t.}}$
a.e.~in ${\mathbb{R}}^{n-1}$.
\end{enumerate}

\noindent Then, with $e_n:=(0,\dots,0,1)\in{\mathbb{R}}^n$ and ``dot" denoting the
standard inner product in ${\mathbb{R}}^n$,
\begin{equation}\label{eqn:div-form}
{}_{(\mathscr{C}_b^\infty({\mathbb{R}}^n_{+}))^\ast}\big\langle{\rm div}\,\vec{F},1
\big\rangle_{\mathscr{C}_b^\infty({\mathbb{R}}^n_{+})}
=-\int_{{\mathbb{R}}^{n-1}}e_n\cdot
\bigl(\vec F\,\big|^{{}^{\rm n.t.}}_{\partial{\mathbb{R}}^n_{+}}\bigr)\,d{\mathscr{L}}^{n-1}.
\end{equation}
\end{theorem}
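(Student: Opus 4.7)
The plan is to verify \eqref{eqn:div-form} by approximating the characteristic function of $\mathbb{R}^n_{+}$ via a two-parameter family of compactly supported smooth cutoffs, testing ${\rm div}\,\vec F$ against each cutoff, and then passing to the limit in an order dictated by conditions (a)--(c). First I would use (a) to fix once and for all a decomposition ${\rm div}\,\vec F = w_1 + w_2$ with $w_1 \in \mathcal{E}'_K(\mathbb{R}^n_{+})$ and $w_2 \in L^1(\mathbb{R}^n_{+})$. Then the left-hand side of \eqref{eqn:div-form} equals $\langle w_1, \chi\rangle_{\mathcal{E}'\!,\,\mathscr{C}^\infty} + \int_{\mathbb{R}^n_{+}} w_2\,d\mathscr{L}^n$ for any $\chi \in \mathscr{C}^\infty(\mathbb{R}^n_{+})$ with $\chi\equiv 1$ on a neighborhood of $K$. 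Next, introduce $\Phi_R \in \mathscr{C}^\infty_c(\mathbb{R}^n)$ radial with $\Phi_R \equiv 1$ on $B(0,R)$, $\mathrm{supp}\,\Phi_R \subset B(0,2R)$, $|\nabla \Phi_R|\le C/R$; and $\eta_\varepsilon \in \mathscr{C}^\infty(\mathbb{R})$ with $\eta_\varepsilon\equiv 0$ on $(-\infty,\varepsilon]$, $\eta_\varepsilon\equiv 1$ on $[2\varepsilon,\infty)$, $0\le \eta_\varepsilon'\le C/\varepsilon$, $\int\eta_\varepsilon'=1$. The product $\theta_{R,\varepsilon}(x',x_n):=\Phi_R(x)\eta_\varepsilon(x_n)$ lies in $\mathscr{C}^\infty_c(\mathbb{R}^n_{+})$ and equals $1$ on $K$ once $R$ is large enough and $\varepsilon$ small enough.

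Testing ${\rm div}\,\vec F$ against $\theta_{R,\varepsilon}$ (permissible since $\vec F\in L^1_{\rm loc}$ and $\nabla\theta_{R,\varepsilon}$ is bounded with compact support) and integrating by parts gives
\[
\langle w_1, \chi\rangle + \int_{\mathbb{R}^n_{+}}\theta_{R,\varepsilon}\,w_2\,d\mathscr{L}^n
= -\int_{\mathbb{R}^n_{+}} \vec F\cdot\nabla\theta_{R,\varepsilon}\,d\mathscr{L}^n.
\]
Splitting $\nabla\theta_{R,\varepsilon} = \eta_\varepsilon\nabla\Phi_R + \Phi_R\,\eta_\varepsilon'(x_n)\,e_n$, I would analyze the two pieces separately. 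The \emph{horizontal} piece is supported in the annulus $\{R\le|x|\le 2R\}$, which lies in $K^c$ for $R$ large; since $(x',x_n)\in\Gamma_\kappa(x')$ for every $x_n>0$, the pointwise bound $|\vec F(x',x_n)|\le \mathcal{N}_\kappa^{K^c}\vec F(x')$ holds on $K^c$, and combined with $|\nabla\Phi_R|\le C/R$ and Fubini, this piece is majorized by $C\int_{|x'|\gtrsim R}\mathcal{N}_\kappa^{K^c}\vec F\,d\mathscr{L}^{n-1}$, which tends to $0$ as $R\to\infty$ by (b). The \emph{vertical} piece is recognized as a weighted slab average of $-e_n\cdot\vec F$ over $\{\varepsilon<x_n<2\varepsilon\}$ against a weight integrating to $1$ in $x_n$; by (c) applied along the vertical ray (which belongs to every cone $\Gamma_\kappa(x')$), this slab average converges a.e.\ as $\varepsilon\to 0^+$ to $-e_n\cdot(\vec F|^{\rm n.t.}_{\partial\mathbb{R}^n_{+}})(x')\,\Phi_R(x',0)$, with uniform majorant $\mathcal{N}_\kappa^{K^c}\vec F(x')$. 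Invoking Lebesgue's dominated convergence theorem with this majorant (integrable by (b)), and noting that the error from replacing $\Phi_R(x',x_n)$ by $\Phi_R(x',0)$ in the slab is $O(\varepsilon/R)$, yields the boundary integral $-\int \Phi_R(x',0)\,e_n\cdot(\vec F|^{\rm n.t.}_{\partial\mathbb{R}^n_{+}})(x')\,dx'$ after sending $\varepsilon\to 0^+$. Finally, letting $R\to\infty$ and using the elementary dominated convergence on the $w_2$ term (with $|\theta_{R,\varepsilon}|\le 1$ and $w_2\in L^1$) produces \eqref{eqn:div-form}.

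The main obstacle is the slab-to-boundary passage to the limit: hypothesis (c) only records the \emph{pointwise} existence of the nontangential trace, so one must be careful to justify the a.e.\ convergence of the slab average and produce an integrable dominant. The key observation making everything fit together is that the vertical approach $x_n\to 0^+$ at a fixed $x'\in\mathbb{R}^{n-1}$ already lies inside $\Gamma_\kappa(x')$, so (c) directly gives pointwise convergence of $\vec F(x',\cdot)$ to the trace, and (b) supplies the majorant $\mathcal{N}_\kappa^{K^c}\vec F$. The truly delicate point addressed in~\cite{Div-MMM} is interpreting $\langle{\rm div}\,\vec F, 1\rangle$ as a well-defined scalar for a merely $L^1_{\rm loc}$ vector field whose divergence is a distribution; this is precisely what the splitting in (a) accomplishes, via the embedding \eqref{TDY-877ii}.
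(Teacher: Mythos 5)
The paper does not prove Theorem~\ref{theor:div-thm}; it is quoted from \cite{Div-MMM}, so there is no ``paper's own proof'' to compare against. Your overall strategy -- test ${\rm div}\,\vec F$ against a two-parameter cutoff, split $\nabla\theta_{R,\varepsilon}$, and pass to the limit using (a) for the $w_1+w_2$ splitting, (c) for the boundary slab, and (b) for the majorant -- is the natural one, and your treatment of the $w_1$, $w_2$, and vertical slab pieces is essentially correct.

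However, the control of the annular (``horizontal'') piece contains a genuine error. You take $\Phi_R$ radial in all of $\mathbb{R}^n$, so $\nabla\Phi_R$ is supported in the full spherical shell $\{R\le|x|\le 2R\}\cap\mathbb{R}^n_{+}$, and you claim this piece is ``majorized by $C\int_{|x'|\gtrsim R}\mathcal{N}_\kappa^{K^c}\vec F\,d\mathscr{L}^{n-1}$.'' That is false: the shell contains a large polar cap near the $x_n$-axis (e.g., $x'=0'$, $x_n=\tfrac32R$), where $|x'|$ is small but $x_n\approx R$. On that cap the pointwise bound $|\vec F(x',x_n)|\le\mathcal{N}_\kappa^{K^c}\vec F(x')$ gives, after multiplying by $|\nabla\Phi_R|\le C/R$ and integrating in $x_n$ over a range of length $\approx R$, a contribution on the order of $\int_{|x'|\lesssim R}\mathcal{N}_\kappa^{K^c}\vec F(x')\,dx'$, which does \emph{not} tend to $0$ as $R\to\infty$. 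So the chosen cutoff and the claimed estimate do not close the argument.

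The issue persists, in a subtler form, even if one switches to a purely horizontal cutoff $\phi_R(x')$ (depending only on $x'$): the $x_n$-integral in the annular piece then runs over the whole ray $(0,\infty)$, and the bound $|\vec F(x',x_n)|\le\mathcal{N}_\kappa^{K^c}\vec F(x')$ alone is not integrable in $x_n$. To proceed one must combine it with the decay estimate
\[
|\vec F(y',y_n)|\le\frac{C_\kappa}{y_n^{\,n-1}}\,\big\|\mathcal{N}_\kappa^{K^c}\vec F\big\|_{L^1(\mathbb{R}^{n-1})}
\quad\text{for }y\in K^c,
\]
obtained by averaging over $x'\in B_{n-1}(y',\kappa y_n)$ as in \eqref{Drsgy-1jab}. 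Optimizing the crossover height between the two bounds gives
$\int_0^\infty|\vec F(x',x_n)|\,dx_n\lesssim\big(\mathcal{N}_\kappa^{K^c}\vec F(x')\big)^{(n-2)/(n-1)}$
(for $n\ge 3$), and then an application of H\"older's inequality on the annulus $\{R\le|x'|\le 2R\}$ produces a vanishing quantity. This interpolation step is the missing idea, and the case $n=2$ (where $y_n^{1-n}=y_n^{-1}$ is borderline) requires yet further care. As written, your argument cannot be completed without supplying it.
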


The theorem below summarizes properties of a distinguished fundamental solution
for constant (complex) coefficient, homogeneous systems. A proof of the present formulation
may be found in \cite[Theorem~11.1, pp.\,347-348]{DM} and \cite[Theorem~7.54, pp.\,270-271]{DM}
(cf.~also \cite{IMM} and the references therein). Below, $S^{n-1}$ is the unit sphere centered
at the origin in ${\mathbb{R}}^n$, $\sigma$ is its canonical surface measure, and
$\omega_{n-1}:=\sigma(S^{n-1})$ denotes its area.

\begin{theorem}\label{FS-prop}
Fix $n,m,M\in\mathbb{N}$ with $n\geq 2$, and consider an $M\times M$
system of homogeneous differential operators of order $2m$,
\begin{equation}\label{op-Liii}
{\mathfrak{L}}:=\sum_{|\alpha|=2m}A_{\alpha}\partial^\alpha,
\end{equation}
with matrix coefficients $A_{\alpha}\in{\mathbb{C}}^{M\times M}$.
Assume that ${\mathfrak{L}}$ satisfies the weak ellipticity condition
\begin{equation}\label{LH}
{\rm det}\big[{\mathfrak{L}}(\xi)\big]\not=0,
\qquad\forall\,\xi\in{\mathbb{R}}^n\setminus\{0\},
\end{equation}
where
\begin{equation}\label{LH.2}
{\mathfrak{L}}(\xi):=\sum_{|\alpha|=2m}\xi^\alpha A_{\alpha}\in{\mathbb{C}}^{M\times M},
\qquad\forall\,\xi\in{\mathbb{R}}^n.
\end{equation}
Then the $M\times M$ matrix $E$ defined at
each $x\in{\mathbb{R}}^{n}\setminus\{0\}$ by
\begin{equation}\label{Def-ES1-GLOB}
E(x):=\frac{1}{4(2\pi\,i)^{n-1}(2m-1)!}\Delta_x^{(n-1)/2}
\int_{S^{n-1}}(x\cdot\xi)^{2m-1}\,{\rm sgn}\,(x\cdot\xi)
\big[{\mathfrak{L}}(\xi)\big]^{-1}\,d\sigma(\xi)\quad
\end{equation}
if $n$ is odd, and
\begin{equation}\label{Def-ES2-GLOB}
E(x):=\frac{-1}{(2\pi\,i)^{n}(2m)!}\Delta_x^{n/2}\int_{S^{n-1}}
(x\cdot\xi)^{2m}\ln|x\cdot\xi|\big[{\mathfrak{L}}(\xi)\big]^{-1}\,d\sigma(\xi)
\end{equation}
if $n$ is even, satisfies the following properties.
\begin{list}{(\theenumi)}{\usecounter{enumi}\leftmargin=.7cm
\labelwidth=.7cm\itemsep=0.2cm\topsep=.2cm}
\item[(1)] Each entry in $E$ is a tempered distribution in ${\mathbb{R}}^n$,
and a real-analytic function in $\mathbb{R}^n\setminus\{0\}$ {\rm (}hence, in particular,
it belongs to ${\mathscr{C}}^\infty(\mathbb{R}^n\setminus\{0\})${\rm )}. Moreover,
\begin{equation}\label{smmth-odd}
E(-x)=E(x)\quad\mbox{for all }\quad x\in{\mathbb{R}}^n\setminus\{0\}.
\end{equation}
\item[(2)] If $I_{M\times M}$ is the $M\times M$ identity matrix, then for each $y\in{\mathbb{R}}^n$
\begin{equation}\label{fs-GLOB}
{\mathfrak{L}}_x\bigl[E(x-y)\bigr]=\delta_y(x)\,I_{M\times M}
\end{equation}
in the sense of tempered distributions in ${\mathbb{R}}^n$,
where the subscript $x$ denotes the fact that the operator ${\mathfrak{L}}$ in
\eqref{fs-GLOB} is applied to each column of $E$ in the variable $x$.
\item[(3)] Define the $M\times M$ matrix-valued function
\begin{equation}\label{Fm-PjkX}
{\mathcal{P}}(x):=\frac{-1}{(2\pi\,i)^n(2m-n)!}\int_{S^{n-1}}
(x\cdot\xi)^{2m-n}\big[{\mathfrak{L}}(\xi)\big]^{-1}\,d\sigma(\xi),\ \ \forall\,x\in{\mathbb{R}}^n.
\end{equation}
Then the entries of ${\mathcal{P}}$ are identically zero when either $n$ is odd or $n>2m$,
and are homogeneous polynomials of degree $2m-n$ when $n\leq 2m$. Moreover,
there exists a ${\mathbb{C}}^{M\times M}$-valued function $\Phi$, with entries in
${\mathscr{C}}^\infty(\mathbb{R}^n\setminus\{0\})$, that
is positive homogeneous of degree $2m-n$ such that
\begin{equation}\label{fs-str}
E(x)=\Phi(x)+\bigl(\ln|x|\bigr){\mathcal{P}}(x),\qquad \forall\,x\in\mathbb{R}^n\setminus\{0\}.
\end{equation}
\item[(4)] For each $\beta\in\mathbb{N}_0^n$ with $|\beta|\geq 2m-1$,
the restriction to ${\mathbb{R}}^n\setminus\{0\}$ of the matrix distribution
$\partial^\beta E$ is of class ${\mathscr{C}}^\infty$ and positive homogeneous
of degree $2m-n-|\beta|$.
\item[(5)] For each $\beta\in\mathbb{N}_0^n$ there exists $C_\beta\in(0,\infty)$
such that the estimate
\begin{equation}\label{fs-est}
|\partial^\beta E(x)|\leq\left\{
\begin{array}{l}
\displaystyle\frac{C_\beta}{|x|^{n-2m+|\beta|}}
\quad\mbox{ if either $n$ is odd, or $n>2m$, or if $|\beta|>2m-n$},
\\[16pt]
\displaystyle\frac{C_{\beta}(1+|\ln|x||)}{|x|^{n-2m+|\beta|}}
\quad\mbox{ if }\,\,0\leq |\beta|\leq 2m-n,
\end{array}\right.
\end{equation}
holds for each $x\in{\mathbb{R}}^n\setminus\{0\}$.
\item[(6)] When restricted to ${\mathbb{R}}^n\setminus\{0\}$, the entries of
$\widehat{E}$ {\rm (}with ``hat" denoting the Fourier transform{\rm )}
are ${\mathscr{C}}^\infty$ functions and, moreover,
\begin{equation}\label{E-ftXC}
\widehat{E}(\xi)=(-1)^m\bigl[{\mathfrak{L}}(\xi)\bigr]^{-1}
\quad\mbox{for each}\quad\xi\in{\mathbb{R}}^n\setminus\{0\}.
\end{equation}
\item[(7)] Writing $E^{\mathfrak{L}}$ in place of $E$ to emphasize
the dependence on ${\mathfrak{L}}$, the fundamental solution $E^{\mathfrak{L}}$ with
entries as in \eqref{Def-ES1-GLOB}-\eqref{Def-ES2-GLOB} satisfies
\begin{equation}\label{E-Trans}
\begin{array}{c}
\bigl(E^{\mathfrak{L}}\bigr)^\top=E^{{\mathfrak{L}}^\top},\quad
\overline{E^{\mathfrak{L}}}=E^{\overline{{\mathfrak{L}}}}\,,\quad
\bigl(E^{\mathfrak{L}}\bigr)^\ast=E^{{\mathfrak{L}}^\ast},
\\[8pt]
\mbox{and}\quad E^{\lambda{\mathfrak{L}}}=\lambda^{-1} E^{\mathfrak{L}}
\,\,\mbox{ for each }\,\,\lambda\in{\mathbb{C}}\setminus\{0\},
\end{array}
\end{equation}
where ${\mathfrak{L}}^\top$, $\overline{{\mathfrak{L}}}$, and
${\mathfrak{L}}^\ast=\overline{{\mathfrak{L}}}^\top$ denote the
transposed, the complex conjugate, and the Hermitian adjoint of ${\mathfrak{L}}$,
respectively.
\item[(8)] Any fundamental solution ${\mathbb{E}}$ of the system ${\mathfrak{L}}$ in ${\mathbb{R}}^n$,
whose entries are tempered distributions in ${\mathbb{R}}^n$, is of the form
${\mathbb{E}}=E+Q$ where $E$ is as in \eqref{Def-ES1-GLOB}-\eqref{Def-ES2-GLOB} and
$Q$ is an $M\times M$ matrix whose entries are polynomials in ${\mathbb{R}}^n$ and whose columns,
$Q_k$, $k\in\{1,\dots,M\}$, satisfy the pointwise equations
${\mathfrak{L}}\,Q_k=0\in{\mathbb{C}}^M$ in ${\mathbb{R}}^n$ for each $k\in\{1,\dots,M\}$.
\item[(9)] In the particular case when $M=1$ and $m=1$, i.e., in the situation when
${\mathfrak{L}}={\rm div}A\nabla$ for some matrix $A=(a_{jk})_{1\leq j,k\leq n}\in{\mathbb{C}}^{n\times n}$,
and when in place of \eqref{LH} the strong ellipticity condition
\begin{equation}\label{sec-or-a}
{\rm Re}\Bigg[\sum\limits_{j,k=1}^n a_{jk}\xi_j\xi_k\Bigg]\geq C|\xi|^2,
\qquad\forall\,\xi=(\xi_1,\dots,\xi_n)\in{\mathbb{R}}^n,
\end{equation}
is imposed, the fundamental solution $E$ of ${\mathfrak{L}}$ from
\eqref{Def-ES1-GLOB}-\eqref{Def-ES2-GLOB} takes the explicit form
\begin{equation}\label{YTcxb-ytSH}
E(x)=\left\{
\begin{array}{ll}
-\dfrac{1}{(n-2)\omega_{n-1}\sqrt{{\rm det}\,(A_{\rm sym})}}
\Big[\big((A_{\rm sym})^{-1}x\big)\cdot x\Big]^{\frac{2-n}{2}}
& \mbox{if }\,n\geq 3,
\\[18pt]
\dfrac{1}{4\pi\sqrt{{\rm det}\,(A_{\rm sym})}}
\log\Big[\big((A_{\rm sym})^{-1}x\big)\cdot x\Big] & \mbox{if }\,n=2.
\end{array}
\right.
\end{equation}
Here, $A_{\rm sym}:=\frac{1}{2}(A+A^\top)$ stands for the symmetric part of
the coefficient matrix $A=(a_{rs})_{1\leq r,s\leq n}$
and $\log$ denotes the principal branch of the complex logarithm function
{\rm (}defined by the requirement that $z^t=e^{t\log z}$ for all
$z\in{\mathbb{C}}\setminus(-\infty,0]$ and all $t\in{\mathbb{R}}${\rm )}.
\end{list}
\end{theorem}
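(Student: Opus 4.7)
The plan is to produce the matrix $E$ via Fourier inversion, guided by the formal identity obtained from taking $\widehat{\,\cdot\,}$ in \eqref{fs-GLOB}. Since ${\mathfrak{L}}$ is homogeneous of order $2m$, applying ${\mathfrak{L}}(\partial)$ on the Fourier side multiplies by $(-1)^m\,{\mathfrak{L}}(\xi)$, so the natural candidate for $\widehat{E}$ is $(-1)^m[{\mathfrak{L}}(\xi)]^{-1}$, a matrix-valued function that is real-analytic on $\mathbb{R}^n\setminus\{0\}$ by the weak ellipticity \eqref{LH} and positive homogeneous of degree $-2m$. This immediately suggests item $(6)$ and reduces the construction of $E$ to inverting, in the sense of tempered distributions, a homogeneous matrix-valued symbol of degree $-2m$.

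The key technical device I would use is the method of plane waves (in the spirit of Herglotz--Petrovsky--Leray and F.\,John). Combining the Radon decomposition of $\delta_0$ on $\mathbb{R}^n$ with one-dimensional Fourier calculus, any suitably regular homogeneous distribution of degree $-2m$ can be represented as an integral over $S^{n-1}$ of a ridge function in the variable $x\cdot\xi$ multiplied by $[{\mathfrak{L}}(\xi)]^{-1}$. Carrying this out for $[{\mathfrak{L}}(\xi)]^{-1}$ and separating the cases $n$ odd and $n$ even produces exactly the formulas \eqref{Def-ES1-GLOB}--\eqref{Def-ES2-GLOB}: the outer $\Delta_x^{(n-1)/2}$ or $\Delta_x^{n/2}$ comes from integrating by parts in a radial/spherical decomposition; the kernel $(x\cdot\xi)^{2m-1}\mathrm{sgn}(x\cdot\xi)$ (resp.\ $(x\cdot\xi)^{2m}\ln|x\cdot\xi|$) arises as the canonical tempered fundamental solution of $(d/dt)^{2m}$ on $\mathbb{R}$, with the appropriate parity dictated by whether $n$ is odd or even; the numerical constants $4(2\pi i)^{n-1}(2m-1)!$ and $(2\pi i)^{n}(2m)!$ come from the corresponding one-dimensional Fourier transforms.

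Once the explicit formula is in hand, properties $(1)$--$(9)$ are verified by inspection. Real-analyticity away from the origin, the symmetry $E(-x)=E(x)$ of item $(1)$, and the $\log$-polynomial splitting $(3)$ are apparent from the integrands once one isolates the polynomial part ${\mathcal P}$ in the regime $n\leq 2m$ by expanding $\ln|x\cdot\xi|$ against the polynomial outside; the homogeneity/smoothness of item $(4)$ follows directly from \eqref{fs-str}; the pointwise bounds of item $(5)$ reduce to differentiation under the integral, using Leibniz on $\ln|x\cdot\xi|$ to explain the logarithmic factor in the range $0\le|\beta|\le 2m-n$; the defining identity $(2)$ is proved by applying ${\mathfrak{L}}$ inside the integral on the Fourier side and using ${\mathfrak{L}}(\xi)[{\mathfrak{L}}(\xi)]^{-1}=I_{M\times M}$ together with the Radon inversion formula to recover $\delta_0$; item $(7)$ is immediate because the construction depends polynomially on the coefficients $A_\alpha$ (or on $\lambda$); uniqueness in $(8)$ is standard since any two tempered fundamental solutions differ by a tempered distribution annihilated by ${\mathfrak{L}}$, which by ellipticity and a Liouville argument must be a polynomial; and the explicit second-order formulas in $(9)$ follow by direct spherical integration in \eqref{Def-ES1-GLOB}--\eqref{Def-ES2-GLOB} with ${\mathfrak{L}}(\xi)=-A_{\rm sym}\xi\cdot\xi$, or more efficiently by diagonalizing $A_{\rm sym}$ via a linear change of variables and reducing to the classical Newtonian potential for $\Delta$.

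The main obstacle is the first step: justifying the inverse Fourier transform of the strongly singular, matrix-valued symbol $[{\mathfrak{L}}(\xi)]^{-1}$ as a genuine tempered distribution on $\mathbb{R}^n$, and verifying that the spherical integrals in \eqref{Def-ES1-GLOB}--\eqref{Def-ES2-GLOB} (after absorbing the powers of the Laplacian) actually realize this inverse. The delicate bookkeeping is between the parity of $n$ and the homogeneity degree $2m-n$: differentiating the logarithm contributes one extra unit of homogeneity, which is what forces the dichotomy between the odd and even cases and explains the appearance of the polynomial summand ${\mathcal P}$ precisely when $n\leq 2m$ and $n$ is even.
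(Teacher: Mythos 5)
The paper does not prove Theorem~\ref{FS-prop}: it is recalled in the appendix and cited directly from \cite[Theorem~11.1]{DM} and \cite[Theorem~7.54]{DM}, so there is no in-paper proof to compare against. That said, your outline is the standard route — Fourier inversion of the symbol $(-1)^m[{\mathfrak{L}}(\xi)]^{-1}$ implemented via the plane-wave (Herglotz--Petrovsky--Leray / F.\,John) representation over $S^{n-1}$, with the odd/even dichotomy in $n$ arising from the parity of the one-dimensional fundamental solution of $(d/dt)^{2m}$ and from the Radon inversion formula — and this is precisely the method used in the cited monograph. Your sketch correctly identifies all the structural points: the Fourier relation $\widehat{\partial^\alpha}=(i\xi)^\alpha$ giving the factor $(-1)^m$ in item $(6)$; the emergence of the polynomial summand ${\mathcal P}$ (and hence the logarithm in \eqref{fs-str}) only when $n\leq 2m$ and $n$ is even; the reduction of items $(1)$, $(3)$--$(5)$ to differentiation under the integral and Leibniz on $\ln|x\cdot\xi|$; the Liouville-type argument for $(8)$; and the diagonalization of $A_{\rm sym}$ for $(9)$. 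You also rightly flag the one genuinely delicate step, namely giving rigorous meaning to the inverse Fourier transform of the homogeneous, non-integrable symbol $[{\mathfrak{L}}(\xi)]^{-1}$ as a tempered distribution and matching it to the spherical-integral formulas; a complete proof would need to execute this carefully (e.g.\ via analytic continuation in the homogeneity degree or a regularization/subtraction of the singular part), but as an outline the proposal is sound.
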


Before introducing the notion of Green function we discuss several pieces of notation.
First, ${\rm diag}:=\{(x,x):\,x\in{\mathbb{R}}^n_{+}\}$ denotes the diagonal in the
Cartesian product ${\mathbb{R}}^n_{+}\times{\mathbb{R}}^n_{+}$.
Second, given a function $G(\cdot,\cdot)$ of two vector variables,
$(x,y)\in{\mathbb{R}}^n_{+}\times{\mathbb{R}}^n_{+}\setminus{\rm diag}$,
for each $k\in\{1,\dots,n\}$ we agree to write $\partial_{X_k} G$ and $\partial_{Y_k} G$,
respectively, for the partial derivative of $G$ with respect to $x_k$, and $y_k$
(the $k$-th components of $x$ and $y$, respectively).
This convention may be iterated, lending a natural meaning to
$\partial^\alpha_X\partial^\beta_Y G$, for each pair of multi-indices
$\alpha,\beta\in{\mathbb{N}}_0^n$. Also, we shall interpret $\nabla_X G$, and
$\nabla_Y G$, as the gradients of $G$ with respect to $x$, and $y$.
Third, for each point $y\in{\mathbb{R}}^n_{+}$ define
\begin{equation}\label{BnaGVBBB.a}
B_y:=B\big(y,\tfrac12\,{\rm dist}\,(y,\partial{\mathbb{R}}^n_{+})\big)
\,\,\mbox{ and, as usual, set }\,\,B_y^c:={\mathbb{R}}^n_{+}\setminus B_y.
\end{equation}

Given a function $u$ which is absolutely integrable over bounded subsets of ${\mathbb{R}}^n_{+}$,
define (whenever meaningful) the Sobolev trace as
\begin{equation}\label{Veri-S2TG.3}
\big({\rm Tr}\,u\big)(x'):=
\lim\limits_{r\to 0^{+}}\aver{B((x',0),\,r)\cap{\mathbb{R}}^n_{+}}
u\,d{\mathscr{L}}^n,\qquad x'\in\partial{\mathbb{R}}^{n-1}.
\end{equation}
For each $p\in(1,\infty)$ let $W^{1,p}({\mathbb{R}}^n_{+})$ be the classical $L^p$-based Sobolev space
of order one in ${\mathbb{R}}^n_{+}$, and use the symbol $\mathring{W}^{1,p}({\mathbb{R}}^n_{+})$
for the closure of ${\mathscr{C}}^\infty_0({\mathbb{R}}^n_{+})$ in $W^{1,p}({\mathbb{R}}^n_{+})$.
Then for each function $u\in W^{1,p}({\mathbb{R}}^n_{+})$, $1<p<\infty$, the trace ${\rm Tr}\,u$ exists
a.e.~on $\partial{\mathbb{R}}^n_{+}$ and belongs to $B^{p,p}_{1-1/p}({\mathbb{R}}^{n-1})$,
where for each $p\in(1,\infty)$ and $s\in(0,1)$ the Besov space $B^{p,p}_s({\mathbb{R}}^{n-1})$
is defined as the collection of all measurable functions $f$ in ${\mathbb{R}}^{n-1}$ with the
property that
\begin{equation}\label{Veri-S2TG.4}
\|f\|_{B^{p,p}_s({\mathbb{R}}^{n-1})}:=
\|f\|_{L^p({\mathbb{R}}^{n-1})}+\left(\int_{{\mathbb{R}}^{n-1}}
\int_{{\mathbb{R}}^{n-1}}\frac{|f(x')-f(y')|^p}{|x'-y'|^{n-1+sp}}\,dx'dy'\right)^{1/p}<\infty.
\end{equation}
In fact, for each $p\in(1,\infty)$ the operator
\begin{equation}\label{Veri-S2TG.5}
{\rm Tr}:W^{1,p}({\mathbb{R}}^n_{+})
\longrightarrow B^{p,p}_{1-1/p}({\mathbb{R}}^{n-1})
\end{equation}
is well-defined, linear and bounded, and has a linear and bounded right-inverse.

\begin{definition}\label{ta.av-GGG}
Let $L$ be a constant coefficient, second-order, elliptic differential operator
as in \eqref{L-def}. Call $G(\cdot,\cdot):{\mathbb{R}}^n_{+}\times{\mathbb{R}}^n_{+}
\setminus{\rm diag}\to{\mathbb{C}}^{M\times M}$ a {\tt Green function for $L$}
in ${\mathbb{R}}^n_{+}$ provided for each $y\in\mathbb{R}^n_{+}$ the following
properties hold:
\begin{align}\label{GHCewd-22.RRe}
& G(\cdot\,,y)\in L^1_{\rm loc}({\mathbb{R}}^n_{+}),
\\[4pt]
& G(\cdot\,,y)\big|^{{}^{\rm n.t.}}_{\partial{\mathbb{R}}^n_{+}}=0
\,\,\mbox{ a.e.~in }\,\,{\mathbb{R}}^{n-1},
\label{GHCewd-24.RRe}
\\[4pt]
& {\mathcal{N}}^{B^c_y}\,G(\cdot\,,y)\in\bigcup_{1<p<\infty}
L^p({\mathbb{R}}^{n-1}),
\label{GHCewd-25.RRe}
\\[4pt]
& L\big[G(\cdot\,,y)\big]=\delta_{y}\,I_{M\times M}\,\,\mbox{ in }\,\,
{\mathcal{D}}'({\mathbb{R}}^n_{+}),
\label{GHCewd-23.RRe}
\end{align}
where $L$ acts in the ``dot" variable on the columns of $G$.
\end{definition}

We remark that, in the context of Definition~\ref{ta.av-GGG}, we always have
\begin{equation}\label{GHCvCbN.2T}
G(\cdot\,,y)\in{\mathscr{C}}^\infty({\mathbb{R}}^n_{+}\setminus\{y\})
\,\,\,\mbox{ for each }\,\,y\in\mathbb{R}^n_{+},
\end{equation}
by \eqref{GHCewd-22.RRe}, \eqref{GHCewd-23.RRe}, and elliptic regularity
(cf. \cite[Theorem~10.9, p.\,318]{DM}). Other basic properties of the Green
function are collected in our next result.

\begin{theorem}[\cite{MaMiMiMi}]\label{ta.av-GGG.2A}
Assume that $L$ is a constant {\rm (}complex{\rm )} coefficient, second-order,
elliptic differential operator as in \eqref{L-def}. Then there exists a unique
Green function $G(\cdot,\cdot)=G^L(\cdot,\cdot)$ for $L$ in $\mathbb{R}^n_{+}$,
in the sense of Definition~\ref{ta.av-GGG}. Moreover, this Green function
also satisfies the following additional properties:
\begin{list}{(\theenumi)}{\usecounter{enumi}\leftmargin=.8cm
\labelwidth=.8cm\itemsep=0.2cm\topsep=.2cm}
\item[(1)] Given $\kappa>0$, for each $y\in{\mathbb{R}}^n_{+}$
and each compact neighborhood $K$ of $y$ in ${\mathbb{R}}^n_{+}$ there exists
a finite constant $C=C(n,L,\kappa,K,y)>0$ such that for every $x'\in\mathbb{R}^{n-1}$ there hold
\begin{equation}\label{bound-NK-G}
\mathcal{N}^{K^c}_\kappa\big(G(\cdot,y)\big)(x')\leq
C\,\frac{1+\log_{+}|x'|}{1+|x'|^{n-1}}
\end{equation}
{\rm (}if the fundamental solution $E^L$ of $L$ from Theorem~\ref{FS-prop} is
a radial function in ${\mathbb{R}}^n\setminus\{0\}$, then the logarithm in
\eqref{bound-NK-G} may actually be omitted{\rm )}.
Moreover, for any multi-indices $\alpha,\beta\in\mathbb{N}_0^n$ such that
$|\alpha|+|\beta|>0$, there exists $C=C(n,L,\kappa,\alpha,\beta,K,y)\in(0,\infty)$
such that
\begin{equation}\label{bouMNN}
\mathcal{N}^{K^c}_\kappa\big((\partial^\alpha_X \partial^\beta_Y G)(\cdot,y)\big)(x')\le
\frac{C}{1+|x'|^{n-2+|\alpha|+|\beta|}}.
\end{equation}
In particular,
\begin{equation}\label{GHCewd-22.RRe.4}
{\mathcal{N}}_\kappa^{K^c}\big((\partial_X^\alpha\partial_Y^\beta G^L)(\cdot\,,y)\big)
\in\bigcap_{1<p\leq\infty}L^p({\mathbb{R}}^{n-1}),
\qquad\forall\,\alpha,\beta\in{\mathbb{N}}_0^n.
\end{equation}
\item[(2)] For each fixed $y\in{\mathbb{R}}^n_{+}$, there holds
\begin{equation}\label{Fvabbb-7tF}
G^L(\cdot\,,y)\in{\mathscr{C}}^\infty
\big(\overline{{\mathbb{R}}^n_{+}}\setminus B(y,\varepsilon)\big)
\,\,\mbox{ for every }\,\,\varepsilon>0.
\end{equation}
\item[(3)] The function $G^L(\cdot,\cdot)$ is translation invariant in
the tangential variables in the sense that
\begin{equation}\label{JKBvc-ut4}
\begin{array}{c}
G^L\big(x-(z',0),y-(z',0)\big)=G^L(x,y)\,\,\mbox{ for every}
\\[8pt]
(x,y)\in{\mathbb{R}}^n_{+}\times{\mathbb{R}}^n_{+}
\setminus{\rm diag}\,\,\mbox{ and }\,\,z'\in{\mathbb{R}}^{n-1}.
\end{array}
\end{equation}
\item[(4)] With ${\rm Tr}$ denoting the Sobolev trace on $\partial{\mathbb{R}}^n_{+}$
(cf.~\eqref{Veri-S2TG.3}-\eqref{Veri-S2TG.5}), one has
\begin{equation}\label{Aivb-gVV}
\begin{array}{c}
G^L(\cdot\,,y)\in\bigcap\limits_{k\in{\mathbb{N}}}
\bigcap\limits_{\frac{n}{n-1}<p<\infty}W^{k,p}({\mathbb{R}}^n_{+}\setminus K)
\,\,\mbox{ and }\,\,{\rm Tr}\,\big[G^L(\cdot\,,y)\big]=0,
\\[10pt]
\mbox{for every $y\in{\mathbb{R}}^n_{+}$ and any compact
$K\subset{\mathbb{R}}^n_{+}$ with $y\in K^\circ$}.
\end{array}
\end{equation}
\item[(5)] If $G^{L^\top}\!\!(\cdot,\cdot)$ denotes the {\rm (}unique, by the first part of
the statement{\rm )} Green function for $L^\top$ in ${\mathbb{R}}^n_{+}$, then
\begin{equation}\label{GHCewd-22.RRe.5}
G^L(x,y)=\Big[G^{L^\top}\!\!(y,x)\Big]^\top,\qquad\forall\,(x,y)\in
{\mathbb{R}}^n_{+}\times{\mathbb{R}}^n_{+}\setminus{\rm diag}.
\end{equation}
Hence, as a consequence of \eqref{GHCewd-22.RRe.5}, \eqref{GHCewd-24.RRe}, and
\eqref{Fvabbb-7tF}, for each fixed $x\in{\mathbb{R}}^n_{+}$ and $\varepsilon>0$,
\begin{equation}\label{ghagUGDS}
G^L(x,\cdot)\in{\mathscr{C}}^\infty
\big(\overline{{\mathbb{R}}^n_{+}}\setminus B(x,\varepsilon)\big)
\,\,\,\mbox{ and }\,\,\,G^L(x,\cdot)\Big|_{\partial{\mathbb{R}}^n_{+}}=0
\,\,\mbox{ on }\,\,{\mathbb{R}}^{n-1}.
\end{equation}
\item[(6)] If $E^L$ denotes the fundamental solution of $L$
from Theorem~\ref{FS-prop}, then the matrix-valued function
\begin{equation}\label{defRRR}
R_L(x,y):=E^L(x-y)-G^L(x,y),\qquad\forall\,(x,y)\in{\mathbb{R}}^n_{+}
\times{\mathbb{R}}^n_{+}\setminus{\rm diag},
\end{equation}
extends to a function $R_L(\cdot,\cdot)\in{\mathscr{C}}^\infty\big({\mathbb{R}}^n_{+}
\times{\mathbb{R}}^n_{+}\big)$ which satisfies the following estimate: for any
multi-indices $\alpha,\beta\in\mathbb{N}_0^n$ there exists a finite
constant $C_{\alpha\beta}>0$ with the property that
for every $(x,y)\in{\mathbb{R}}^n_{+}\times{\mathbb{R}}^n_{+}$,
\begin{equation}\label{mainest}
\big|\big(\partial^\alpha_X\partial^\beta_Y R_L\big)(x,y)\big|\leq\left\{
\begin{array}{ll}
C_{\alpha\beta}\,|x-\overline{y}|^{2-n-|\alpha|-|\beta|} & \mbox{if $|\alpha|+|\beta|>0$, or $n\geq 3$},
\\[10pt]
C+C\big|\!\ln|x-\overline{y}|\big| & \mbox{if $|\alpha|=|\beta|=0$ and $n=2$},
\end{array}
\right.
\end{equation}
where $C\in(0,\infty)$, and $\overline{y}:=(y',-y_n)$ if
$y=(y',y_n)\in\mathbb{R}^n_{+}$.

\vskip 0.08in
\item[(7)] For any multi-indices $\alpha,\beta\in\mathbb{N}_0^n$
there exists a finite constant $C_{\alpha\beta}>0$ such that
\begin{equation}\label{mainest2G}
\begin{array}{c}
\big|\big(\partial^\alpha_X\partial^\beta_Y G^L\big)(x,y)\big|
\leq C_{\alpha\beta}|x-y|^{2-n-|\alpha|-|\beta|},
\\[10pt]
\mbox{$\forall\,(x,y)\in{\mathbb{R}}^n_{+}\times{\mathbb{R}}^n_{+}\setminus{\rm diag}$,
if either $n\geq 3$, or $|\alpha|+|\beta|>0$},
\end{array}
\end{equation}
and, corresponding to $|\alpha|=|\beta|=0$ and $n=2$, there exists
$C\in(0,\infty)$ such that
\begin{equation}\label{maKnaTTGB}
\big|G^L(x,y)\big|\leq C+C\big|\!\ln|x-\overline{y}|\big|,
\quad\forall\,(x,y)\in{\mathbb{R}}^2_{+}\times{\mathbb{R}}^2_{+}\setminus{\rm diag}.
\end{equation}
\item[(8)] For each $\alpha,\beta\in\mathbb{N}_0^n$ one has
\begin{equation}\label{mainest3G}
\begin{array}{c}
\sup\limits_{y\in{\mathbb{R}}^n_{+}}\big\|\big(\partial^\alpha_X
\partial^\beta_Y G^L\big)(\cdot,y)\big\|_{L^{\frac{n}{n-2+|\alpha|+|\beta|},\,\infty}
({\mathbb{R}}^n_{+})}<+\infty,
\\[10pt]
\mbox{if either $n\geq 3$, or $|\alpha|+|\beta|>0$}.
\end{array}
\end{equation}
In particular,
\begin{equation}
\label{mainest2G.1}
G^L(\cdot,y)\in L^{\frac{n}{n-2},\,\infty}({\mathbb{R}}^n_{+}),
\,\,\mbox{ uniformly in }\,\,y\in{\mathbb{R}}^n_{+},\,\,\mbox{ if }\,\,n\geq 3,
\end{equation}
\begin{equation}
\nabla_X G^L(\cdot,y),\nabla_Y G^L(\cdot,y)
\in L^{\frac{n}{n-1},\,\infty}({\mathbb{R}}^n_{+}),
\,\,\mbox{ uniformly in }\,\,y\in{\mathbb{R}}^n_{+},
\label{mainest2G.2}
\end{equation}
and
\begin{equation}
\begin{array}{c}
 \nabla^2_X G^L(\cdot,y),\nabla_X\nabla_Y G^L(\cdot,y),\,\, \mbox{ and }\,\,
\nabla^2_Y G^L(\cdot,y)\\[10pt]
\mbox{belong to } L^{1,\infty}({\mathbb{R}}^n_{+}),\,\,
\mbox{ uniformly in }\,\,y\in{\mathbb{R}}^n_{+}.\end{array}
\label{mainest2G.3}
\end{equation}
\item[(9)] If $p\in\big[1,\frac{n}{n-1}\big)$,
then for each $\zeta\in{\mathscr{C}}^\infty_0({\mathbb{R}}^n)$ one has
\begin{equation}\label{mainest4G}
\begin{array}{c}
\zeta G^L(\cdot,y)\in\mathring{W}^{1,p}({\mathbb{R}}^n_{+})
\,\,\mbox{ for each }\,\,y\in{\mathbb{R}}^n_{+}
\\[10pt]
\mbox{ and }\,\,\sup\limits_{y\in{\mathbb{R}}^n_{+}}
\big\|\zeta G^L(\cdot,y)\big\|_{W^{1,p}({\mathbb{R}}^n_{+})}<\infty.
\end{array}
\end{equation}
\item[(10)] If the fundamental solution $E^L$ for $L$ from Theorem~\ref{FS-prop}
is a radial function in ${\mathbb{R}}^n\setminus\{0\}$, then
{\rm (}with $\overline{y}\in{\mathbb{R}}^n_{-}$ denoting the
reflection of $y\in{\mathbb{R}}^n_{+}$ across $\partial{\mathbb{R}}^n_{+}${\rm )}
\begin{equation}\label{JKBvc-ut4.YFav}
G^L(x,y)=E^L(x-y)-E^L(x-\overline{y}),\qquad
\forall\,(x,y)\in{\mathbb{R}}^n_{+}\times{\mathbb{R}}^n_{+}\setminus{\rm diag}.
\end{equation}
\item[(11)] If $n\geq 3$, then for every $x=(x',t)\in{\mathbb{R}}^n_{+}$
and every $y\in{\mathbb{R}}^n_{+}\setminus\{x\}$ one has {\rm (}with $P^L$ denoting
the Agmon-Douglis-Nirenberg Poisson kernel for $L$ in ${\mathbb{R}}^n_{+}$
from Theorem~\ref{ya-T4-fav}{\rm )}
\begin{equation}\label{GHCewd-2PiK}
G^L(x,y)=E^L(x-y)-
P^L_t\ast\Big(\big[E^L(\cdot-y)\big]\big|_{\partial{\mathbb{R}}^n_{+}}\Bigr)(x'),
\end{equation}
with the convolution applied to each column of the matrix inside
the round parentheses.

\vskip 0.08in
\item[(12)] The Agmon-Douglis-Nirenberg Poisson kernel
$P^L=\big(P^L_{\gamma\alpha}\big)_{1\leq\gamma,\alpha\leq M}$ for $L$ in ${\mathbb{R}}^n_{+}$
from Theorem~\ref{ya-T4-fav} is related to the Green function $G^L$ for $L$ in ${\mathbb{R}}^n_{+}$
according to the formula
\begin{equation}\label{Ua-eD.LBBw.3B}
\begin{array}{c}
P^L_{\gamma\alpha}(z')=-a^{\beta\alpha}_{nn}
\big(\partial_{Y_n} G^{L}_{\gamma\beta}\big)\big((z',1),0\big),
\quad\forall\,z'\in{\mathbb{R}}^{n-1},
\\[6pt]
\mbox{for each }\,\,\alpha,\gamma\in\{1,\dots,M\}.
\end{array}
\end{equation}
In particular, formulas \eqref{Ua-eD.LBBw.3B} and \eqref{JKBvc-ut4.YFav} imply that whenever the
fundamental solution $E^L=\big(E^L_{\gamma\beta}\big)_{1\leq\gamma,\beta\leq M}$
of $L$ from Theorem~\ref{FS-prop} is a radial function then for
each $\alpha,\gamma\in\{1,\dots,M\}$ one has
\begin{equation}\label{Ua-eD.kab}
P^L_{\gamma\alpha}(z')=2a^{\beta\alpha}_{nn}(\partial_n E^L_{\gamma\beta})(z',1),
\qquad\forall\,z'\in{\mathbb{R}}^{n-1}.
\end{equation}
\end{list}
\end{theorem}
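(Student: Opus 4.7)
\medskip

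\noindent\textbf{Proof proposal for Theorem~\ref{ta.av-GGG.2A}.}
The plan is to construct $G^L(\,\cdot\,,y)$ for a fixed pole $y\in{\mathbb{R}}^n_{+}$ by
correcting the fundamental solution $E^L(\,\cdot\,-y)$ from Theorem~\ref{FS-prop} by its
Poisson-type harmonic extension so that the resulting function has vanishing boundary trace.
More precisely, when $n\geq 3$, the restriction $f_y:=E^L(\,\cdot\,-y)\big|_{\partial{\mathbb{R}}^n_{+}}$
decays like $|x'|^{2-n}$ at infinity, hence $f_y\in L^p({\mathbb{R}}^{n-1})$ for every $p>n/(n-2)$.
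I would then invoke the well-posedness of the $L^p$-Dirichlet problem \eqref{Dir-BVP-Lpw-intro}
(established earlier in Example~1) to produce a solution $w_y(x):=(P^L_t\ast f_y)(x')$
(with $x=(x',t)$) and set
\begin{equation*}
G^L(x,y):=E^L(x-y)-w_y(x),\qquad (x,y)\in{\mathbb{R}}^n_{+}\times{\mathbb{R}}^n_{+}\setminus{\rm diag},
\end{equation*}
which yields formula \eqref{GHCewd-2PiK} by construction. For $n=2$, where $E^L$ grows
logarithmically, I would instead work with a suitably truncated version of $E^L$ along
concentric half-annuli, or (when $E^L$ is radial) use the method of images directly as
in \eqref{JKBvc-ut4.YFav}, and pass to the limit in the approximation. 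Uniqueness of $G^L$
follows immediately from Theorem~\ref{thm:uniqueness}: if $G_1,G_2$ are two Green functions,
the columns of their difference have vanishing nontangential trace and nontangential maximal
function in $L^p({\mathbb{R}}^{n-1})$ for some $p\in(1,\infty)$ thanks to \eqref{GHCewd-25.RRe},
which, via H\"older with the weight in \eqref{unq:Nu}, forces them to vanish.

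Next, granted existence and uniqueness, I would read off the translation invariance \eqref{JKBvc-ut4}
from the fact that $L$ commutes with tangential translations and $P^L$ is a convolution operator.
The decay estimates \eqref{bound-NK-G}, \eqref{bouMNN} then follow by combining the corresponding
bounds for $E^L$ given in Theorem~\ref{FS-prop} (item~(5)) with estimate \eqref{tghn-jan-1} on
$P^L_t$ and the argument in \eqref{Drsgy-1jab}; the membership \eqref{GHCewd-22.RRe.4} is a direct
consequence. The smoothness up to the boundary \eqref{Fvabbb-7tF} comes from the
explicit formula $w_y(x)=(P^L_t\ast f_y)(x')$ together with the regularity of $K^L$ stated
in Theorem~\ref{ya-T4-fav} away from the origin, combined with the smoothness of
$E^L(\,\cdot\,-y)$ away from $y$. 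To prove symmetry \eqref{GHCewd-22.RRe.5} I would apply the
divergence theorem \eqref{eqn:div-form} to the bilinear vector field
\begin{equation*}
\vec{F}_j:=G^{L^\top}_{\alpha\gamma}(\,\cdot\,,x)\,a^{\alpha\beta}_{jk}\,
\partial_k G^L_{\beta\delta}(\,\cdot\,,y)
-G^L_{\alpha\delta}(\,\cdot\,,y)\,a^{\beta\alpha}_{kj}\,\partial_k G^{L^\top}_{\beta\gamma}(\,\cdot\,,x),
\end{equation*}
on the complement of small balls around $x$ and $y$, exactly as in the proof of
Theorem~\ref{thm:uniqueness} (with $\psi_\varepsilon$ forcing the boundary term to vanish);
the pole distributions then pair against each other and produce \eqref{GHCewd-22.RRe.5}.
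Property \eqref{JKBvc-ut4.YFav} in the radial case is verified by inspection: its right-hand
side is a null-solution of $L$ in ${\mathbb{R}}^n_{+}$ with the correct singularity and zero
boundary values, so uniqueness identifies it with $G^L$.

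With these structural facts in hand, the estimates on $R_L$ in \eqref{mainest} become a
routine consequence of the formula $R_L(x,y)=w_y(x)$, the decay \eqref{tghn-jan-1} of $P^L$,
and the known bounds on $f_y=E^L(\,\cdot\,-y)\big|_{\partial{\mathbb{R}}^n_{+}}$; the key
observation is that the convolution integral is governed by the distance $|x-\overline{y}|$
to the reflected pole. The pointwise bounds \eqref{mainest2G}-\eqref{maKnaTTGB} follow
by combining \eqref{mainest} with the estimates on $E^L$ from Theorem~\ref{FS-prop}~(5);
the weak-$L^{p}$ statements \eqref{mainest3G}-\eqref{mainest2G.3} are then immediate from
these pointwise bounds, and \eqref{mainest4G} follows by the same route, using
\eqref{GHCewd-22.RRe.4} to handle the exterior part and the standard
$\mathring{W}^{1,p}$ embedding of the singular part of $E^L$ for $p<n/(n-1)$. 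Finally,
the Poisson kernel formula \eqref{Ua-eD.LBBw.3B} is proved by fixing
$f\in{\mathscr{C}}^\infty_0({\mathbb{R}}^{n-1})$, setting $u:=P^L_t\ast f$, and applying
the divergence theorem \eqref{eqn:div-form} (again as in Theorem~\ref{thm:uniqueness})
to the vector field built out of $u$ and $G^{L^\top}(\cdot,x^\star)$; the boundary
integration by parts identifies the boundary trace of the co-normal derivative of $G$
with $P^L$.

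The principal obstacle, as in \cite{MaMiMiMi}, is carrying out the construction in
dimension $n=2$, where $E^L$ is only logarithmically bounded and the boundary restriction
$f_y$ is not in any $L^p({\mathbb{R}})$, so the direct $L^p$-Dirichlet machinery does not
apply; handling this requires localizing near $y$, solving an auxiliary Dirichlet problem
with compactly supported data (or using the reflected ansatz when $E^L$ is radial and then
perturbing), and verifying by hand the uniform estimate \eqref{mainest} for the regular part.
A secondary difficulty is making the symmetry argument \eqref{GHCewd-22.RRe.5} rigorous
globally, since the divergence-theorem setup needs the nontangential integrability
\eqref{bound-NK-G} for both $G^L$ and $G^{L^\top}$ simultaneously; this forces one to
establish \eqref{bound-NK-G} before invoking symmetry, rather than deducing it from symmetry.
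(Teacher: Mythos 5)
The first thing to flag is that Theorem~\ref{ta.av-GGG.2A} is \emph{cited} from \cite{MaMiMiMi} and carries no proof in the present paper, so there is no internal argument to match against; the proposal must therefore be judged on its own consistency. And there it has a serious logical flaw: circularity. You propose to obtain uniqueness of $G^L$ ``immediately from Theorem~\ref{thm:uniqueness}'' and to construct $G^L$ by invoking ``the well-posedness of the $L^p$-Dirichlet problem \eqref{Dir-BVP-Lpw-intro}''. But in this paper the proof of Theorem~\ref{thm:uniqueness} explicitly uses the existence, decay, and structural properties of the Green function from Theorem~\ref{ta.av-GGG.2A} (see the use of \eqref{bound-NK-G}, \eqref{bouMNN}, \eqref{mainest2G.1}--\eqref{mainest2G.2}, \eqref{ghagUGDS}, \eqref{maKnaTTGB} in the proof of Theorem~\ref{thm:uniqueness}), and the $L^p$-Dirichlet well-posedness of Example~1 comes from Theorem~\ref{Them-General}, which again goes through Theorem~\ref{thm:uniqueness}. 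You cannot close this loop: a proof of Theorem~\ref{ta.av-GGG.2A} has to establish uniqueness of the Green function by a direct mechanism (e.g.\ an energy/variational argument based on the Sobolev membership \eqref{mainest4G}, or a divergence-theorem identity that does not presuppose Dirichlet well-posedness), not by appealing to results that are logically downstream of the Green function theorem itself. The existence part of your construction (set $w_y(x',t):=(P^L_t\ast f_y)(x')$ and $G^L=E^L(\cdot-y)-w_y$, using only the Agmon--Douglis--Nirenberg Poisson kernel from Theorem~\ref{ya-T4-fav}) is fine and not circular; it is the uniqueness step that is.

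A second, more technical, gap concerns the decay estimate \eqref{bound-NK-G}. You propose to ``combine the corresponding bounds for $E^L$ given in Theorem~\ref{FS-prop}~(5) with estimate \eqref{tghn-jan-1} on $P^L_t$ and the argument in \eqref{Drsgy-1jab}.'' That naive combination only yields $|G^L(x,y)|\lesssim |x-y|^{2-n}+|x-\overline{y}|^{2-n}$ and hence $\mathcal{N}^{K^c}_\kappa G(\cdot,y)(x')\lesssim |x'|^{2-n}$, which for $n\geq 3$ is strictly weaker than the claimed bound $C\,\frac{1+\log_+|x'|}{1+|x'|^{n-1}}$. The extra power of decay comes from a genuine cancellation forced by the vanishing boundary traces in both variables (cf.\ \eqref{GHCewd-24.RRe} and \eqref{ghagUGDS}): one must write, e.g.,
\begin{equation*}
G^L(x,y)=\int_0^{y_n}\!\!\int_0^{x_n}\big(\partial_{X_n}\partial_{Y_n}G^L\big)\big((x',\tau),(y',s)\big)\,d\tau\,ds
\end{equation*}
and feed in the derivative bounds \eqref{mainest2G}, rather than estimate $E^L$ and the Poisson extension separately. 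Your sketch does not mention this double integration by parts, and without it the logarithmic factor (and indeed the gain from $|x'|^{2-n}$ to $|x'|^{1-n}$) is not visible. Finally, you correctly flag that $n=2$ is a real obstacle (and that the symmetry argument requires the decay bound to be in place beforehand), but those are acknowledged difficulties rather than filled gaps.
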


We shall now record the following versatile version of interior estimates for
higher-order elliptic systems. A proof may be found in \cite[Theorem~11.9, p.\,364]{DM}.

\begin{theorem}\label{ker-sbav}
Consider a homogeneous, constant coefficient, higher-order system ${\mathfrak{L}}$
as in \eqref{op-Liii}, satisfying the weak ellipticity condition \eqref{LH}.
Then for each null-solution $u$ of ${\mathfrak{L}}$ in a ball $B(x,R)$
{\rm (}where $x\in{\mathbb{R}}^n$ and $R>0${\rm )}, $0<p<\infty$, $\lambda\in(0,1)$,
$\ell\in{\mathbb{N}}_0$, and $0<r<R$, one has
\begin{equation}\label{detraz}
\sup_{z\in B(x,\lambda r)}|\nabla^\ell u(z)|
\leq\frac{C}{r^\ell}\left(\aver{B(x,r)}|u|^p\,d{\mathscr{L}}^n\right)^{1/p},
\end{equation}
where $C=C(L,p,\ell,\lambda,n)>0$ is a finite constant.
\end{theorem}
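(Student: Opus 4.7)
The plan is to establish the bound via a representation formula using the fundamental solution $E^{\mathfrak{L}}$ from Theorem~\ref{FS-prop}, combined with a cutoff argument, and then to reach small exponents through a standard self-improvement step. Since $\mathfrak{L}$ has constant coefficients and is homogeneous of order $2m$, translating $x$ to the origin and rescaling $u(y)\mapsto u(ry)$ reduces everything to the case $x=0$, $R=1$, with prescribed $0<\lambda<1$; the factor $r^{-\ell}$ in \eqref{detraz} reappears upon undoing the dilation.

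First I would establish the $\ell$-th order derivative estimate in its $L^1$ form, namely $|\nabla^\ell u(z)|\leq C_\ell\|u\|_{L^1(B(0,1))}$ for every $z\in B(0,\lambda)$. Pick an intermediate radius $\mu\in(\lambda,1)$ and a cutoff $\eta\in{\mathscr{C}}^\infty_0(B(0,\mu))$ with $\eta\equiv 1$ on $B\bigl(0,(\lambda+\mu)/2\bigr)$. Since $\mathfrak{L}u=0$ on $B(0,1)$, the Leibniz rule gives $\mathfrak{L}(\eta u)=[\mathfrak{L},\eta]u$, a smooth compactly supported function whose support lies in the annulus $A:=B(0,\mu)\setminus\overline{B\bigl(0,(\lambda+\mu)/2\bigr)}$ and which involves only derivatives of $u$ of order at most $2m-1$. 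Convolving with $E^{\mathfrak{L}}$ and then integrating by parts to transfer every $u$-derivative onto $E^{\mathfrak{L}}(z-\cdot)$ produces, for every $z\in B(0,\lambda)$,
\begin{equation*}
u(z)=\int_{A}K(z,w)\,u(w)\,dw,
\end{equation*}
where $K(z,w)$ is a fixed linear combination of products of derivatives of $\eta$ and of $E^{\mathfrak{L}}(z-w)$. Because $|z-w|\geq(\mu-\lambda)/2$ uniformly on $B(0,\lambda)\times A$, the off-diagonal smoothness afforded by part~$(4)$ of Theorem~\ref{FS-prop} ensures that $K$ is ${\mathscr{C}}^\infty$ in $z$, uniformly in $w$, with all $z$-derivatives of $K$ bounded by a constant depending only on $n,\mathfrak{L},\lambda,\mu$ and the differentiation order. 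Differentiating the representation in $z$ under the integral sign then delivers the claimed $L^1$-based bound.

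Next I would upgrade the right-hand side from $L^1$ to $L^p$ for every $p\in(0,\infty)$. For $p\geq 1$ this is immediate from H\"older's inequality. For $p\in(0,1)$ I would run the standard self-improvement: applying the previous construction with any pair $\lambda\leq\rho<\rho'\leq 1$ in place of $(\lambda,\mu)$ and then rescaling yields
\begin{equation*}
M(\rho):=\sup_{B(0,\rho)}|u|\leq C\,(\rho'-\rho)^{-N}\,\|u\|_{L^1(B(0,\rho'))}
\end{equation*}
for some $N=N(n,m)$, and splitting $\|u\|_{L^1(B(0,\rho'))}\leq C\,M(\rho')^{1-p}\,\|u\|_{L^p(B(0,1))}^{p}$ produces the multiplicative inequality
\begin{equation*}
M(\rho)\leq C\,(\rho'-\rho)^{-N}\,M(\rho')^{1-p}\,\|u\|_{L^p(B(0,1))}^{p}.
\end{equation*}
Young's inequality with exponents $\tfrac{1}{1-p}$ and $\tfrac{1}{p}$ linearises this to $M(\rho)\leq\tfrac{1}{2}\,M(\rho')+C'\,(\rho'-\rho)^{-N/p}\,\|u\|_{L^p(B(0,1))}$, and a standard iteration along a telescoping sequence $\rho_0=\lambda<\rho_1<\cdots<\rho_k\uparrow 1$ with $\rho_{k+1}-\rho_k$ geometrically decreasing absorbs the $M(\rho')$ term. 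Combining the resulting bound $M(\lambda)\leq C_{p,\lambda}\|u\|_{L^p(B(0,1))}$ with the $\ell$-derivative estimate from the previous paragraph (applied on a slightly smaller ball) completes the proof.

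The main technical point is the absorption in the $p<1$ range: one must invoke Young's inequality to replace the nonlinear factor $M(\rho')^{1-p}$ by a linear term before any iteration lemma can be used, and one must track carefully how the accumulating constants depend on $\lambda$ and $p$ so that the final constant matches $C=C(L,p,\ell,\lambda,n)$ in the statement. The remainder is routine symbol manipulation with the fundamental solution, rendered harmless by its off-diagonal regularity from Theorem~\ref{FS-prop}.
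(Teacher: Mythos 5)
The paper does not prove Theorem~\ref{ker-sbav} itself; it simply cites \cite[Theorem~11.9]{DM}, and your argument (cutoff plus fundamental-solution representation, then the standard $L^p$ self-improvement for $p<1$) is the standard textbook proof of this estimate, essentially what the cited reference does. The construction of the kernel $K(z,w)$ by integrating the commutator term by parts is sound, and the scaling reduction and Young-plus-iteration steps are the right machinery.

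Two small points of precision. First, the reference for the off-diagonal regularity of $E^{\mathfrak{L}}$ should be part $(1)$ (real-analyticity on $\mathbb{R}^n\setminus\{0\}$) or part $(5)$ (the pointwise estimates \eqref{fs-est}) of Theorem~\ref{FS-prop}; part $(4)$ only covers derivatives of order $\geq 2m-1$, whereas your kernel $K$ involves derivatives of all orders up to $2m-1+\ell$. Second, in the absorption step the geometric ratio $\tau$ of the telescoping sequence $\rho_{k+1}-\rho_k\sim\tau^k$ must be chosen close enough to $1$ so that $\tfrac12\,\tau^{-n/p}<1$; with the naive choice $\tau=\tfrac12$ the series $\sum_k 2^{-k}(\rho_{k+1}-\rho_k)^{-n/p}$ diverges whenever $p<n$, which is always the case here. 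You gesture at this by saying the constants must be tracked carefully, but it is worth stating the constraint on $\tau$ explicitly since it is exactly where naive iteration fails.
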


Finally, we discuss the dependence of the size of the nontangential maximal function,
corresponding to various apertures, in weighted $L^p$ spaces.

\begin{proposition}\label{prop:cones-Lpw}
For every $\kappa,\kappa'>0$, $p\in(0,\infty)$ and $w\in A_\infty({\mathbb{R}}^{n-1})$, there exist
finite constants $C_0,C_1>0$ such that
\begin{equation}\label{N-Nal:w}
C_0\|{\mathcal{N}}_\kappa u\|_{L^p({\mathbb{R}}^{n-1},\,w)}
\leq\|{\mathcal{N}}_{\kappa'}\,u\|_{L^p({\mathbb{R}}^{n-1},\,w)}
\leq C_1\|{\mathcal{N}}_\kappa u\|_{L^p({\mathbb{R}}^{n-1},\,w)},
\end{equation}
for each function $u:{\mathbb{R}}^n_{+}\to{\mathbb{C}}$.
\end{proposition}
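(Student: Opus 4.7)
My plan is to establish the nontrivial direction of the equivalence via a good-$\lambda$ type argument combined with the $A_\infty$ property of the weight. First, by symmetry of the statement, I may assume without loss of generality that $\kappa<\kappa'$. Since $\Gamma_\kappa(x')\subseteq\Gamma_{\kappa'}(x')$ pointwise, the estimate ${\mathcal{N}}_\kappa u\leq{\mathcal{N}}_{\kappa'}u$ is immediate, yielding one of the two inequalities in \eqref{N-Nal:w}. Consequently, the task reduces to proving the reverse bound $\|{\mathcal{N}}_{\kappa'}u\|_{L^p({\mathbb{R}}^{n-1},w)}\leq C\|{\mathcal{N}}_\kappa u\|_{L^p({\mathbb{R}}^{n-1},w)}$.

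The core of the argument is a purely geometric comparison of level sets. For each $\lambda>0$, setting
$$E_\lambda:=\big\{x'\in{\mathbb{R}}^{n-1}:\,{\mathcal{N}}_\kappa u(x')>\lambda\big\},$$
I claim there exists a constant $c=c(n,\kappa,\kappa')\in(0,1)$ such that
$$\big\{{\mathcal{N}}_{\kappa'}u>\lambda\big\}\subseteq\big\{{\mathcal{M}}({\mathbf{1}}_{E_\lambda})\geq c\big\}.$$
Indeed, if ${\mathcal{N}}_{\kappa'}u(x')>\lambda$ then there is some $y=(y',y_n)\in\Gamma_{\kappa'}(x')$ with $|u(y)|>\lambda$; and whenever $z'\in B_{n-1}(y',\kappa y_n)$ one has $|z'-y'|<\kappa y_n$, so $y\in\Gamma_\kappa(z')$, forcing $z'\in E_\lambda$. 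Since $B_{n-1}(y',\kappa y_n)\subseteq B_{n-1}(x',(\kappa+\kappa')y_n)$, this yields
$$\frac{|B_{n-1}(x',(\kappa+\kappa')y_n)\cap E_\lambda|}{|B_{n-1}(x',(\kappa+\kappa')y_n)|}\geq\Big(\frac{\kappa}{\kappa+\kappa'}\Big)^{n-1}=:c,$$
establishing the claim via the definition of ${\mathcal{M}}$ in \eqref{MMax} after rephrasing the averages over cubes versus balls (which are comparable with dimensional constants, as noted after \eqref{Ap-CaD3}).

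Next, I bring in the weight. Since $w\in A_\infty({\mathbb{R}}^{n-1})=\bigcup_{1\leq q<\infty}A_q({\mathbb{R}}^{n-1})$, I may pick some $q\in[1,\infty)$ with $w\in A_q({\mathbb{R}}^{n-1})$. The standard weak-type $(q,q)$ bound for ${\mathcal{M}}$ on $L^q({\mathbb{R}}^{n-1},w)$ (item $(i)$ in the list following \eqref{A1-alt} when $q>1$, and the $A_1$ weak-$(1,1)$ bound when $q=1$) applied to ${\mathbf{1}}_{E_\lambda}$ gives
$$w\big(\{{\mathcal{M}}({\mathbf{1}}_{E_\lambda})\geq c\}\big)\leq\frac{C}{c^q}\int_{{\mathbb{R}}^{n-1}}{\mathbf{1}}_{E_\lambda}(x')^q\,w(x')\,dx'=\frac{C}{c^q}w(E_\lambda).$$

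Finally, invoking the layer-cake representation $\|v\|_{L^p(w)}^p=p\int_0^\infty\lambda^{p-1}w(\{v>\lambda\})\,d\lambda$ (valid for all $p\in(0,\infty)$), combining the inclusion from the geometric step with the weighted weak-type inequality gives
$$\|{\mathcal{N}}_{\kappa'}u\|_{L^p({\mathbb{R}}^{n-1},w)}^p=p\int_0^\infty\lambda^{p-1}w(\{{\mathcal{N}}_{\kappa'}u>\lambda\})\,d\lambda\leq\frac{Cp}{c^q}\int_0^\infty\lambda^{p-1}w(E_\lambda)\,d\lambda=\frac{C}{c^q}\|{\mathcal{N}}_\kappa u\|_{L^p({\mathbb{R}}^{n-1},w)}^p,$$
which is the desired inequality. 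The proof requires no real obstacle; the only delicate matter is the geometric comparison of level sets in the second paragraph, and the mild care needed in the $q=1$ endpoint where one uses the weak-$(1,1)$ bound $w(\{{\mathcal{M}}f>\lambda\})\leq C\lambda^{-1}\int|f|w\,dx$, valid for $w\in A_1$. No restriction on the measurability of $u$ is needed, since ${\mathcal{N}}_\kappa u$ and ${\mathcal{N}}_{\kappa'} u$ are both lower semicontinuous as functions on ${\mathbb{R}}^{n-1}$, hence measurable.
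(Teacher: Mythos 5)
Your argument is correct and follows essentially the same route as the paper: the same geometric observation that a cone condition at aperture $\kappa'$ forces a positive-density inclusion of $\{\mathcal{N}_\kappa u>\lambda\}$ inside a comparable ball centered at $x'$, followed by the weighted boundedness of $\mathcal{M}$ for $w\in A_q$ and the layer-cake formula. The only cosmetic differences are that you phrase the geometric step as a direct level-set inclusion rather than via a density-point set $A_\kappa^\gamma$, and you invoke the weak-$(q,q)$ bound for $\mathcal{M}$ where the paper uses the strong one; both are harmless.
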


\begin{proof}
As in the unweighted case, the proof of this result is based on a point-of-density argument.
Fix $\lambda>0$ and for every $\kappa>0$ write
\begin{equation}\label{treee-6}
O_{\kappa}=\big\{x'\in\mathbb{R}^{n-1}:\,\big(\mathcal{N}_{\kappa}u\big)(x')>\lambda\big\}.
\end{equation}
It is easy to show that $O_{\kappa}$ is open. Pick $0<\gamma<1$ so that
$1-\big(\kappa/(\kappa+\kappa')\big)^{n-1}<\gamma<1$ and write
$A_\kappa:=\mathbb{R}^{n-1}\setminus O_\kappa$. Also, for every $\gamma\in(0,1)$ introduce
\begin{equation}\label{treee-7}
A_{\kappa}^\gamma:=\Big\{x'\in\mathbb{R}^{n-1}:\,
\frac{|A_\kappa\cap B_{n-1}(x',r)|}{|B_{n-1}(x',r)|}\geq\gamma\,\,\mbox{ for each }\,\,r>0\Big\}.
\end{equation}
We are going to show that $O_{\kappa'}\subset\mathbb{R}^{n-1}\setminus A_{\kappa}^\gamma$.
Given $x'\in O_{\kappa'}$, we can take $(y',t)\in\Gamma_{\kappa'}(x')$ such that $|u(y',t)|>\lambda$.
Note that $B_{n-1}(y',\kappa\,t)\subset B_{n-1}\big(x',(\kappa+\kappa')\,t\big)$.
On the other hand, we have that $B_{n-1}(y',\kappa\,t)\subset O_{\kappa}$: if
$z'\in B_{n-1}(y',\kappa\,t)$ then $(y',t)\in\Gamma_{\kappa}(z')$ and, therefore,
$\mathcal{N}_\kappa u(z')\geq|u(y',t)|>\lambda$. All these show that
$B_{n-1}(y',\kappa\,t)\subset O_\kappa\cap B_{n-1}\big(x',(\kappa+\kappa')\,t\big)$.
This implies that
\begin{multline}\label{treee-8}
\frac{\big|B_{n-1}\big(x',(\kappa+\kappa')\,t\big)\cap A_{\kappa}\big|}
{\big|B_{n-1}\big(x',(\kappa+\kappa')\,t\big)\big|}
=1-\frac{\big|B_{n-1}\big(x',(\kappa+\kappa')\,t\big)\cap O_\kappa\big|}
{\big|B_{n-1}\big(x',(\kappa+\kappa')\,t\big)\big|}
\\[4pt]
\leq 1-\frac{|B_{n-1}(y',\kappa\,t)|}{\big|B_{n-1}\big(x',(\kappa+\kappa')\,t\big)\big|}
=1-\Bigl(\frac{\kappa}{\kappa+\kappa'}\Bigr)^{n-1}<\gamma,
\end{multline}
which forces $x'\notin A_{\kappa}^\gamma$ in light of \eqref{treee-7}.
In turn, this shows that
\begin{equation}\label{treee-9}
O_{\kappa'}\subset\mathbb{R}^{n-1}\setminus A_{\kappa}^\gamma
\subseteq\big\{x'\in\mathbb{R}^{n-1}:\,\mathcal{M}({\bf 1}_{O_\kappa})(x')
\geq c_n(1-\gamma)\big\},
\end{equation}
for some dimensional constant $c_n\in(0,\infty)$ (whose appearance is due to the fact that
the Hardy-Littlewood maximal operator has been defined in \eqref{MMax} using cubes rather
than balls). Since $w\in A_\infty({\mathbb{R}}^{n-1})$, we can take $q\in(1,\infty)$ such that
$w\in A_q({\mathbb{R}}^{n-1})$. Thus, $\mathcal{M}$ is bounded on $L^q({\mathbb{R}}^{n-1},w)$
and, consequently,
\begin{multline}\label{treee-10}
w(O_{\kappa'})
 \leq w\Big(\big\{x'\in\mathbb{R}^{n-1}:\,\mathcal{M}({\bf 1}_{O_\kappa})(x')
\geq c_n(1-\gamma)\big\}\Big)
\\[4pt]
 \leq [c_n(1-\gamma)]^{-q}\,\|\mathcal{M}({\bf 1}_{O_\kappa})\|_{L^q({\mathbb{R}}^{n-1},\,w)}^q
\leq C\,w(O_{\kappa}),
\end{multline}
where $C\in(0,\infty)$ depends only on $n,\kappa,\kappa',q,w$.
The level set estimate just derived readily yields \eqref{N-Nal:w}.
\end{proof}

It follows from Proposition~\ref{prop:cones-Lpw} and \eqref{NT-Fct.23PPPP} that,
for every $\kappa,\kappa'>0$ and $p\in(0,\infty)$, there exist finite
constants $C_0,C_1>0$ such that
\begin{align}\label{N-Nal.11a}
C_0\|{\mathcal{N}}^E_\kappa u\|_{L^p({\mathbb{R}}^{n-1})}
\leq\|{\mathcal{N}}^E_{\kappa'}\,&u\|_{L^p({\mathbb{R}}^{n-1})}
\leq C_1\|{\mathcal{N}}^E_\kappa u\|_{L^p({\mathbb{R}}^{n-1})},
\\[4pt]
C_0\|{\mathcal{N}}^{(\varepsilon)}_\kappa u\|_{L^p({\mathbb{R}}^{n-1})}
\leq\|{\mathcal{N}}^{(\varepsilon)}_{\kappa'}\,&u\|_{L^p({\mathbb{R}}^{n-1})}
\leq C_1\|{\mathcal{N}}^{(\varepsilon)}_\kappa u\|_{L^p({\mathbb{R}}^{n-1})},
\end{align}
for each function $u$, set $E\subset{\mathbb{R}}^n$, and number $\varepsilon>0$.



{\makeatletter
\renewcommand*\@makefnmark{}
\footnotetext{\noindent The first author has been supported in part by MINECO Grant MTM2010-16518,
ICMAT Severo Ochoa project SEV-2011-0087. He also acknowledges that
the research leading to these results has received funding from the European Research
Council under the European Union's Seventh Framework Programme (FP7/2007-2013)/ ERC
agreement no. 615112 HAPDEGMT. The second author has been supported in part by a
Simons Foundation grant $\#\,$200750, the third author has
been supported in part by US NSF grant $\#\,$0547944, while the fourth author has been
supported in part by the Simons Foundation grant $\#\,$281566, and by a University of
Missouri Research Leave grant. This work has been possible thanks to the support and hospitality
of \textit{Temple University} (USA), \textit{University of Missouri} (USA), and
\textit{ICMAT, Consejo Superior de Investigaciones Cient{\'\i}ficas} (Spain).
The authors express their gratitude to these institutions.}
\makeatother}

\begin{thebibliography}{99}
\parskip=0.7pt







\newcommand{\RMIauthor}[1]{{\sc #1:}}
\newcommand{\RMIpaper}[1]{{#1.}}
\newcommand{\RMIbook}[1]{{\em #1.}}
\newcommand{\RMIjournal}[1]{{\em #1}}


\bibitem{ADNI} 
\RMIauthor{Agmon, S., Douglis, A., and Nirenberg, L.}
\RMIpaper{Estimates near the
boundary for solutions of elliptic partial differential equations satisfying general
boundary conditions, I}
\RMIjournal{Comm. Pure Appl. Math.} 
\textbf{12} (1959), 623--727.

\bibitem{ADNII} 
\RMIauthor{Agmon, S., Douglis, A., and Nirenberg, L.}
\RMIpaper{Estimates near the boundary for solutions of elliptic partial differential equations satisfying general
boundary conditions, II}
\RMIjournal{Comm. Pure Appl. Math.} 
\textbf{17} (1964), 35--92.

\bibitem{AAAHK} 
\RMIauthor{Alfonseca, M.A., Auscher, P., Axelsson, A., Hofmann, S., and Kim, S.}
\RMIpaper{Analyticity of layer potentials and $L^2$ solvability of boundary value problems for 
divergence form elliptic equations with complex $L^\infty$ coefficients}
\RMIjournal{Adv. Math.} 
\textbf{226} (2011), no. 5, 4533--4606.

\bibitem{AM} 
\RMIauthor{Alvarado, R. and Mitrea, M.}
\RMIbook{Hardy Spaces on Ahlfors-Regular Quasi-Metric Spaces. A Sharp Theory}
Lecture Notes in Mathematics, Vol. 2142, Springer, 2015.

\bibitem{ABR}
\RMIauthor{Axler, S., Bourdon, P., and Ramey, W.} 
\RMIbook{Harmonic Function Theory}
2nd edition, Graduate Texts in Mathematics, Vol. 137, Springer-Verlag, New York, 2001.

\bibitem{bennett-sharpley88} 
\RMIauthor{Bennett, C. and Sharpley, R.} 
\RMIbook{Interpolation of operators}
volume 129 of Pure and Applied Mathematics, Academic Press Inc., Boston, MA, 1988.

\bibitem{Beu} 
\RMIauthor{Beurling, A.} 
\RMIpaper{Construction and analysis of some convolution algebras}
\RMIjournal{Ann. Inst. Fourier (Grenoble)} 
\textbf{14} (1964), 1--32.

\bibitem{CL}
\RMIauthor{Chen, Y.Z. And Lau, K.S.} 
\RMIpaper{Some new classes of Hardy spaces}
\RMIjournal{J. Funct. Anal.} 
\textbf{84} (1989), 255--278.

\bibitem{CF} 
\RMIauthor{Chiarenza, F. and Frasca, M.} 
\RMIpaper{Morrey spaces and Hardy-Littlewood maximal function}
\RMIjournal{Rend. Mat. Appl.} 
\textbf{7} (1987), no. 3-4, 273--279.

\bibitem{CU-F} 
\RMIauthor{Cruz-Uribe, D. and Fiorenza, A.} 
\RMIbook{Variable Lebesgue Spaces: Foundations and Harmonic Analysis}
Birkhauser, Applied and Numerical Harmonic Analysis, 2013.

\bibitem{cruz-uribe-fiorenza-neugebauer03}
\RMIauthor{Cruz-Uribe, D., Fiorenza, A., and Neugebauer, C.J.} 
\RMIpaper{The maximal function on variable $L^p$ spaces}
\RMIjournal{Ann. Acad. Sci. Fenn. Math.} 
\textbf{28} (2003), 223--238.

\bibitem{CMP} 
\RMIauthor{Cruz-Uribe, D., Martell, J.M., and P\'erez, C.} 
\RMIbook{Weights, extrapolation and the theory of Rubio de Francia}
Operator Theory: Advances and Applications, Vol. 215, Birkh\"auser/Springer Basel AG, Basel, 2011.

\bibitem{curbera-garcia-cuerva-martell-perez06}
\RMIauthor{Curbera, G., Garc{\'i}a-Cuerva, J., Martell, J.M., and Per\'ez, C.} 
\RMIpaper{Extrapolation with weights, rearrangement-invariant function spaces, modular inequalities and applications to
singular integrals}
\RMIjournal{Adv. Math.} 
\textbf{203} (2006), 256--318.

\bibitem{diening05} 
\RMIauthor{Diening, L.} 
\RMIpaper{Maximal function on Musielak-Orlicz spaces and generalized Lebesgue spaces}
\RMIjournal{Bull. Sci. Math.}
\textbf{129} (2005), 657--700.

\bibitem{DHHR} 
\RMIauthor{Diening, L., Harjulehto,  P.,  H\"ast\"o, P., and R{\r{u}}{\v{z}}i{\v{c}}ka, M.} 
\RMIbook{Lebesgue and Sobolev spaces with variable exponents} 
Lecture Notes in Mathematics, Vol. 2017. Springer, Heidelberg, 2011.

\bibitem{GC} 
\RMIauthor{ Garc{\'\i}a-Cuerva, J.} 
\RMIpaper{Hardy spaces and Beurling algebras}
\RMIjournal{J. Lond. Math. Soc. (2)}
\textbf{39} (1989), 499--513.

\bibitem{GCRF85} 
\RMIauthor{Garc{\'\i}a-Cuerva, J.  and  Rubio de Francia, J.L.} 
\RMIbook{Weighted Norm Inequalities and Related Topics}
North Holland, Amsterdam, 1985.

\bibitem{He} 
\RMIauthor{Helms, L.L.} 
\RMIbook{Introduction to Potential Theory}
Wiley-Interscience, 1969.

\bibitem{HofMitMor} 
\RMIauthor{Hofmann, S., Mitrea, M., and Morris, A.} 
\RMIpaper{The method of layer potentials in $L^p$ and endpoint spaces for elliptic operators with $L^\infty$ coefficients}
\RMIjournal{Proc. Lond. Mat. Soc. (3)}
\textbf{111} (2015), no. 3, 681--716.


\bibitem{Ke94} 
\RMIauthor{Kenig, C.E.} 
\RMIbook{Harmonic Analysis Techniques for Second Order Elliptic  Boundary Value Problems}
CBMS Regional Conference Series in Mathematics, Vol. 83, 
Amer. Math. Soc., Providence, RI, 1994.

\bibitem{KMR2} 
\RMIauthor{Kozlov, V.A., Maz'ya, V.G., and Rossmann, J.} 
\RMIbook{Spectral Problems Associated with Corner Singularities of Solutions to Elliptic
Equations}
AMS, 2001.

\bibitem{LT} 
\RMIauthor{Lindenstrauss, J.  and Tzafriri, L.} 
\RMIbook{Classical Banach Spaces, I and II}
Springer-Verlag, Berlin Heidelberg, New York, 1977.

\bibitem{LionsMagenes} 
\RMIauthor{Lions, J.L. and Magenes, E.}
\RMIbook{Non-Homogeneous Boundary Value Problems  and Applications}
Die Grundlehren der mathematischen Wissenschaften 181,
Springer, Berlin, Heidelberg, 1972.

\bibitem{MaMiMiMi} 
\RMIauthor{Martell, J.M., Mitrea, D., Mitrea, I., and Mitrea, M.}
\RMIpaper{Poisson kernels and boundary problems for elliptic
systems in the upper-half space} 
Preprint, 2016.

\bibitem{MazShap} 
\RMIauthor{Maz'ya, V.G. and Shaposhnikova, T.O.} 
\RMIbook{Theory of Multipliers in Spaces of Differentiable Functions}
Monographs and Studies in Mathematics, Vol. 23, 
Pitman Advanced Publishing Program, Boston, MA, 1985.

\bibitem{DM} 
\RMIauthor{Mitrea, D.} 
\RMIbook{Distributions, Partial Differential Equations, and Harmonic Analysis}
Universitext, Springer, 2013.

\bibitem{Div-MMM} 
\RMIauthor{Mitrea, D., Mitrea, I. and Mitrea, M.} 
\RMIpaper{A sharp divergence theorem with nontangential pointwise traces and applications} 
Preprint, 2014.

\bibitem{MMMZ} 
\RMIauthor{Mitrea, D., Mitrea, I., Mitrea, M., and Ziad\'e, E.} 
\RMIpaper{Abstract capacitary estimates and the completeness and separability of certain classes of non-locally convex
topological vector spaces}
\RMIjournal{J. Funct. Anal.}
\textbf{262} (2012) 4766--4830.

\bibitem{IMM} 
\RMIauthor{Mitrea, I. and Mitrea, M.} 
\RMIbook{Multi-Layer Potentials and Boundary Problems for Higher-Order Elliptic Systems in Lipschitz Domains}
Lecture Notes in Mathematics, Vol. 2063, Springer, 2013.

\bibitem{nekvinda04} 
\RMIauthor{Nekvinda, A.} 
\RMIpaper{Hardy-Littlewood maximal operator on $L^{p(x)}(\mathbb R)$}
\RMIjournal{Math. Inequal. Appl.} 
\textbf{7} (2004), 255--265.

\bibitem{Shen} 
\RMIauthor{Shen, Z.} 
\RMIpaper{Boundary value problems in Morrey spaces for elliptic systems on Lipschitz domains}
\RMIjournal{Amer. J. Math.} 
\textbf{125} (2003), no. 5, 1079--1115.

\bibitem{Si88} 
\RMIauthor{Siegel, D.} 
\RMIpaper{The Dirichlet problem in a half-space and a new  Phragmen-Lindel\"of principle}, pp. 208-217 in 
\RMIbook{Maximum Principles and Eigenvalue  Problems in Partial Differential Equations}
P.W. Schaefer, ed., Longman, Harlow, 1988.

\bibitem{ST96} 
\RMIauthor{Siegel, D. and Talvila, E.O.} 
\RMIpaper{Uniqueness for the $n$-dimensional half space Dirichlet problem}
\RMIjournal{Pacific J. Math.} 
\textbf{175} (1996), no. 2, 571--587.

\bibitem{Sol} 
\RMIauthor{Solonnikov, V.A.} 
\RMIpaper{Bounds for the solutions of general boundary value problems for elliptic systems} 
\RMIjournal{Doklady Akad. Nauk. SSSR} 
\textbf{151} (1963), 783--785 (Russian). English translation in 
\RMIjournal{Soviet Math.} 
4 (1963), 1089--1091.

\bibitem{Sol1} 
\RMIauthor{Solonnikov, V.A.}  
\RMIpaper{General boundary value problems for systems elliptic in the sense of A. Douglis and L. Nirenberg. I}
(Russian) \RMIjournal{Izv. Akad. Nauk SSSR, Ser. Mat.} 
\textbf{28} (1964), 665--706.

\bibitem{Sol2} 
\RMIauthor{Solonnikov, V.A.} 
\RMIpaper{General boundary value problems for systems elliptic in the sense of A. Douglis and L. Nirenberg. II}, (Rusian)
\RMIjournal{Trudy Mat. Inst. Steklov}
\textbf{92} (1966), 233--297.

\bibitem{St70} 
\RMIauthor{Stein, E.M.} 
\RMIbook{Singular Integrals and Differentiability Properties of Functions}
Princeton Mathematical Series, No. 30,
Princeton University Press, Princeton, NJ, 1970.

\bibitem{Stein93} 
\RMIauthor{Stein, E.M.} 
\RMIbook{Harmonic Analysis: Real-Variable Methods,
Orthogonality, and Oscillatory Integrals}
Princeton Mathematical Series, Vol. 43, Monographs in Harmonic Analysis, III, Princeton University Press,
Princeton, NJ, 1993.

\bibitem{StWe71} 
\RMIauthor{Stein E.M., and Weiss, G.}
\RMIbook{Introduction to Fourier Analysis on Euclidean Spaces}
Princeton Mathematical Series, Princeton University Press, 
Princeton, NJ, 1971.

\bibitem{Taylor} 
\RMIauthor{Taylor, M.E.}
\RMIbook{Partial Differential Equations}
Second edition,  Applied Mathematical Sciences, Springer, New York, 2011.

\bibitem{Yo96} 
\RMIauthor{Yoshida, H.}
\RMIpaper{A type of uniqueness for the Dirichlet problem on a half-space with continuous data}
\RMIjournal{Pacific J. Math.} 
\textbf{172} (1996), no. 2, 591--909.

\bibitem{Wolf41} 
\RMIauthor{Wolf, F.} 
\RMIpaper{The Poisson integral. A study in the uniqueness of harmonic functions}
\RMIjournal{Acta Math.} 
\textbf{74} (1941), 65--100.

\bibitem{Zaa} 
\RMIauthor{Zaanen, A.C.}
\RMIbook{Integration}
North-Holland Publishing Company, Amsterdam, 1967.

\end{thebibliography}
\end{document}